\theoremstyle{plain} 
\newtheorem{theorem}{Theorem}[section]
\newtheorem{lemma}[theorem]{Lemma}
\newtheorem{proposition}[theorem]{Proposition}
\newtheorem{corollary}[theorem]{Corollary} 
\theoremstyle{definition} 
\newtheorem{definition}[theorem]{Definition}
\newtheorem{remark}[theorem]{Remark}
\newtheorem{example}[theorem]{Example}
\numberwithin{equation}{section}
\setlist[itemize]{itemsep=0ex,label=--}
\setlist[enumerate]{itemsep=0ex,topsep=0ex}
\newcommand{\GG}{\mathbb{G}}
\newcommand{\HH}{\mathbb{H}}
\newcommand{\RR}{\mathbb{R}}
\newcommand{\NN}{\mathbb{N}} 
\newcommand{\ZZ}{\mathbb{Z}} 
\newcommand{\CC}{\mathbb{C}}
\newcommand{\PP}{\mathbb{P}} 
\newcommand{\XX}{\mathbb{X}} 
\newcommand{\Aa}{\mathcal{A}}
\newcommand{\Ff}{\mathcal{F}}
\newcommand{\Hh}{\mathcal{H}}
\newcommand{\Dd}{\mathcal{D}}
\newcommand{\Mm}{\mathcal{M}}
\newcommand{\Ll}{\mathcal{L}}
\DeclareSymbolFont{bbold}{U}{bbold}{m}{n}
\DeclareSymbolFontAlphabet{\mathbbold}{bbold}
\newcommand{\GGamma}{\mathbbold{\Gamma}}
\newcommand{\LLambda}{\mathbbold{\Lambda}}
\newcommand{\Cst}{$C^*$-\relax}
\newcommand{\qdim}{\operatorname{dim_{\text{$q$}}}}
\renewcommand{\dim}{\operatorname{dim}}
\newcommand{\Tr}{\operatorname{Tr}}
\newcommand{\qTr}{\operatorname{qTr}}
\newcommand{\Supp}{\operatorname{Supp}}
\newcommand{\Span}{\operatorname{Span}}
\newcommand{\Corep}{\operatorname{Corep}}
\newcommand{\Rep}{\operatorname{Rep}}
\newcommand{\Hom}{\operatorname{Hom}}
\newcommand{\End}{\operatorname{End}}
\newcommand{\id}{\mathord{\operatorname{id}}} 
\newcommand{\Dom}{\operatorname{Dom}}
\newcommand{\sgn}{\operatorname{sgn}}
\newcommand{\ad}{\mathrm{ad}}
\newcommand{\Ad}{\operatorname{Ad}}
\newcommand{\ev}{{\operatorname{ev}}}
\newcommand{\ts}{\textstyle}
\newcommand{\bs}{\backslash}
\newcommand{\one}{1\!\!1}
\newcommand{\Pol}{\mathcal{O}}
\newcommand{\Polc}{\mathcal{O}_c}
\newcommand{\Pold}{c_c}
\newcommand{\Tdisc}{T}
\newcommand{\dc}[1]{\llbracket #1\rrbracket}
\newcommand{\cc}{c}
\newcommand{\qc}{\mathrm{q}c}
\newcommand{\qmult}{\operatorname{qmult}}
\newcommand{\mult}{\operatorname{mult}}
\newcommand{\diag}{\operatorname{diag}}
\newcommand{\CokerX}[2]{N(#1\curvearrowright #2)}
\newcommand{\Coker}[2]{\CokerX{#1}{#1/#2}}
\newcommand{\Cokeralg}[2]{\Aa(#1\curvearrowright #1/#2)}
\title[Hecke algebras of quantum groups]{Hecke algebras and the Schlichting completion for discrete quantum groups}
\author{Adam Skalski}
\address{Adam Skalski, Institute of Mathematics of the Polish Academy of Sciences, ul.\ \'Snia\-dec\-kich 8, 00-656 Warszawa, Poland}
\email{a.skalski@impan.pl}
\author{Roland Vergnioux}
\address{Roland Vergnioux, Normandie Univ, UNICAEN, CNRS, LMNO, 14000 Caen, France} 
\email{roland.vergnioux@unicaen.fr}
\author{Christian Voigt}
\address{Christian Voigt, School of Mathematics \& Statistics,
	University of Glasgow,
	University Place,
	Glasgow G12 8QQ,
	United Kingdom} 
\email{christian.voigt@glasgow.ac.uk }
\begin{document}

\begin{abstract}
  We introduce Hecke algebras associated to discrete quantum groups with commensurated quantum subgroups. We study their modular properties and the associated
  Hecke operators. In order to investigate their analytic properties we adapt the construction of the Schlichting completion to the quantum setting, thus
  obtaining locally compact quantum groups with compact open quantum subgroups. We study in detail a class of examples arising from quantum HNN extensions.
\end{abstract}

\subjclass[2020]{Primary 46L67; Secondary 16T20, 20C08, 20G42, 46L05, 46L65}

\keywords{Discrete quantum groups; quantum subgroups; Hecke algebras;
  Schlichting completion}
	
\maketitle

\section{Introduction}

Hecke algebras, originally studied in the analysis of Hecke operators for
elliptic modular forms, play a prominent r\^ole in representation theory and
harmonic analysis. In applications to number theory one is typically interested 
in Hecke operators associated with arithmetic groups. Abstractly, the relevant operators can be
described starting from a discrete group $\Gamma$ together with a commensurated
subgroup, i.e.\ a subgroup $\Lambda\subset\Gamma$ such that
$\Lambda\cap \Lambda^g$ has finite index in $\Lambda$ for all $g\in\Gamma$,
where $\Lambda^g = g\Lambda g^{-1}$.  At this level of generality, Hecke
operators can be viewed as $\Gamma$-equivariant bounded operators on
$\ell^2(\Gamma/\Lambda)$ and can be described using the Hecke algebra
$\Hh(\Gamma,\Lambda)$, which is nothing but the space of functions on double
cosets $c_c(\Lambda\bs\Gamma/\Lambda)$ equipped with a suitable convolution
product.

In their seminal paper \cite{BostConnes}, Bost and Connes exhibited an
intriguing connection between Hecke algebras, number theory and noncommutative
geometry. The Hecke algebra underlying the Bost-Connes system
is part of a quantum statistical mechanical system whose equilibrium states are
intimately related to class field theory, and the time evolution of the system can
be explicitly described by means of the modular function
\begin{displaymath}
  \nabla : g\mapsto [\Lambda:\Lambda\cap\Lambda^g] / [\Lambda^g:\Lambda\cap\Lambda^g],
\end{displaymath}
comparing the number of left and right cosets in a double coset.

Hecke operators and Hecke algebras can also be defined for locally compact
groups $G$ together with compact open subgroups $H\subset G$. Moreover, both
situations are related by the Schlichting completion construction which
associates to each discrete Hecke pair $(\Gamma,\Lambda)$ in a canonical way a pair $(G,H)$
consisting of a totally disconnected locally compact group $G$ and a compact open subgroup $H\subset G$ 
such that $\Hh(\Gamma,\Lambda)\cong\Hh(G,H)$ \cite{Schlichting}. 
Analytical properties of the algebra of Hecke operators are often easier to analyze at 
the level of the Schlichting completion, see \cite{Tzanev}, \cite{Anantharaman_Approx} and
\cite{KLQ_Schlichting}.

\bigskip

The aim of this article is to extend some of this theory to the case of locally
compact quantum groups, both in the discrete and compact open settings. This 
exhibits new combinatorial behavior which is invisible in the classical case, related 
to the ``relative dimension'' constants naturally associated with quantum subgroups of 
discrete quantum groups. A major motivation for the passage to the quantum
framework, apart from producing interesting examples of von Neumann algebras, is
to create new locally compact quantum groups out of known discrete quantum
groups. For this purpose we develop a generalization of the Schlichting
completion procedure, which provides a slightly new perspective 
even for classical groups, and leads to a deeper understanding of the quantum
quotient spaces $\GGamma/\LLambda$ for discrete quantum groups. The Schlichting
completion yields algebraic quantum groups in the sense of Van Daele
\cite{VanDaele_Algebraic}, and we obtain concrete examples by considering pairs
$(\GGamma,\LLambda)$ of discrete quantum groups arising from HNN extensions.

\bigskip

Let us describe the main results obtained in the article. After collecting some
preliminaries in Section \ref{sec_quantum_groups}, we begin our analysis in the
setting of a subgroup $\LLambda$ in a discrete quantum group $\GGamma$ in
Section \ref{sec_discrete_hecke}. We give a detailed description of the
noncommutative quotient space $\GGamma/\LLambda$ and of the associated module
category, see Proposition~\ref{prp_quotient_description} and
Theorem~\ref{thm_strong_inv}. This allows us to obtain an explicit formula for the
quantum analogue $\mu$ of the counting measure on $\GGamma/\LLambda$, in terms
of the equivalence relation induced by $\LLambda$ on irreducible
corepresentations of $\GGamma$, see Definition~\ref{def_scalar}. We also give in
Proposition~\ref{prp_kappa_cat} a categorical interpretation of the constants
$\kappa$ that appear in this formula. These constants are trivial in the
classical case.

With this in place it is easy to write down the definition of the
convolution product of the Hecke algebra $\Hh(\GGamma,\LLambda)$, see
Definitions~\ref{def_convol} and~\ref{df_hecke_algebra}. We prove in
Theorem~\ref{thm_adjoint} that this algebra is canonically isomorphic to the
algebra of Hecke operators, i.e.\ $\GGamma$-equivariant linear maps on
$c_c(\GGamma/\LLambda)$. Moreover, in Theorem~\ref{thm_bounded} we give a
combinatorial characterization of the boundedness of the Hecke operators on
$\ell^2(\GGamma/\LLambda)$, in terms of the constants $\kappa$.

Next we investigate the modular properties of the canonical state on
$\Hh(\GGamma,\LLambda)$ and give an explicit formula for the corresponding
modular operator in Proposition~\ref{prop_compute_nabla} and
Theorem~\ref{thm_modularautomorphism}. This formula involves the number of left
and right cosets in double cosets, as in the classical case, but in general also the
modular structure of the discrete quantum group $\GGamma$, as well as the constants $\kappa$. 
In Paragraph~\ref{sec_HNN} we consider an explicit class of examples arising from HNN extensions.

In Section~\ref{sec_compact_open_hecke} we switch to the setting of a locally
compact quantum group $\GG$ with a compact open quantum subgroup $\HH$, working in the
framework of algebraic quantum groups. In this case it is easier to define the
Hecke algebra $\Hh(\GG,\HH)$ since compactly supported functions on
$\HH\bs\GG/\HH$ are also compactly supported on $\GG$, and one can directly use
the convolution product of the quantum algebra of functions $\Polc(\GG)$. Again,
we establish a canonical isomorphism of $\Hh(\GG,\HH)$ with the algebra of
$\GG$-equivariant maps on $\Pold(\GG/\HH)$ in Proposition~\ref{prp_endotd}. As
an example, we discuss the case of the quantum doubles $\GG = \HH\bowtie\hat\HH$ of 
a compact quantum group $\HH$: in this case $\Hh(\GG,\HH)$ identifies with the
algebra of characters of $\HH$.

In Paragraph \ref{sec_schlichting} we associate a pair $(\GG,\HH)$ to each
discrete Hecke pair $(\GGamma,\LLambda)$ by means of a quantum analogue of the
Schlichting completion. More precisely, we construct $\Polc(\GG)$ directly as a
subalgebra of $\ell^\infty(\GGamma)$ using the Hecke convolution product between
$c_c(\GGamma/\LLambda)$ and $c_c(\LLambda\bs\GGamma)$, see
Definition~\ref{def_schlichting}. This seems to be a new point of view even in
the classical case. Moreover we establish in Proposition~\ref{prp_heckeident} a
canonical identification between $\Hh(\GG,\HH)$ and $\Hh(\GGamma,\LLambda)$. It
follows in particular that Hecke operators are bounded on $\ell^2(\GG/\HH)$ as
well as on $\ell^2(\GGamma/\LLambda)$, and this yields an analytic proof of
the combinatorial property of the constants $\kappa$ mentioned above, see
Definition~\ref{def_RT} and Theorem~\ref{thm_bounded}.

Finally, in Paragraph \ref{sec_reduction} we study the notion of reduced pair
both in the discrete setting and in the compact open one, with the property 
of being reduced corresponding to faithfulness of the $\GGamma$-action on $\GGamma/\LLambda$, 
resp. of the $\GG$-action on $\GG/\HH$. We construct a reduced pair associated to an
arbitrary Hecke pair in Propositions~\ref{reductiondiscrete}
and~\ref{prop_reduction_lc}. Moreover we prove that the Schlichting completion
$\GG$ is non-discrete whenever the Hecke pair $(\GGamma,\LLambda)$ is reduced
and $\LLambda$ is infinite, see Lemma~\ref{lem_schlichting_discrete}. It follows
that the Schlichting completions of the Hecke pairs constructed in Section~\ref{sec_HNN} 
via HNN extensions yield non-discrete locally compact quantum groups, whose modular 
automorphisms can be computed by the explicit formulas of Paragraph~\ref{sec_hecke_discrete}.

We would like to thank the anonymous referee for their careful reading of our original manuscript and a 
number of valuable suggestions and comments. 

\section{(Quantum group) preliminaries}
\label{sec_quantum_groups}

In this short section we introduce general conventions, fix our notation and
offer a brief review of some definitions and facts from the theory of quantum
groups. For more details we refer the reader to the following sources:
\cite{KlimykSchmuedgen_Book}, \cite{Woronowicz_CQG}, \cite{VanDaele_Algebraic},
\cite{KustermansVaes}, \cite{NeshveyevTuset_Book}, \cite{VoigtYuncken_Book}.

Tensor products of algebras, minimal/spatial tensor products of $C^*$-algebras
and Hilbert space tensor product of spaces/operators will be usually denoted by
$\otimes$; if we want to stress that we are dealing with the algebraic tensor
product we will use the symbol $\odot$. If $\varphi$ is a linear form on an
algebra $A$ and $a\in A$ we denote $a\varphi$, $\varphi a$ the forms given by
$a\varphi(b)=\varphi(ba)$ and $\varphi a(b) = \varphi(ab)$, for all $b \in A$.

\subsection{Algebraic quantum groups}

By definition, an algebraic quantum group $\GG$ is given by a multiplier Hopf
$*$-algebra $\Polc(\GG)$ together with positive invariant functionals
\cite{VanDaele_Algebraic}. Recall from \cite{KustermansVanDaele_CstarAlgebraic}
that one can associate to $\GG$ in a canonical way a locally compact quantum
group, i.e.\ a (reduced) Hopf $C^*$-algebra $C_0(\GG)$ satisfying the axioms of
Kustermans and Vaes \cite{KustermansVaes} and containing $\Polc(\GG)$ as a dense
$*$-subalgebra. We denote by $\varphi$, $\psi$ the left and right Haar weights
of $\GG$, which are defined on $\Polc(\GG)$. We denote the dual multiplier Hopf
algebra by $\Dd(\GG)$.

Not all locally compact quantum groups arise in this way. In particular classical locally compact groups which fit into the algebraic quantum group framework
are precisely the ones admitting a compact open subgroup \cite[Section 3]{LandstadVanDaele_CompactOpen}, and this is precisely the class of groups which
naturally appears in the study of Hecke algebras.

More specifically, if $G$ is a locally compact group with a compact open
subgroup $H\subset G$, then we get an algebraic quantum group by considering
\begin{displaymath}
  \Polc(G) 
  = \Span G \cdot \Pol(H) \subset C_0(G),
\end{displaymath}
where $\Pol(H)$ is the usual space of representative functions on $H$, which
embeds canonically in $C_0(G)$ via extension by $0$, and $(g \cdot f)(x)=f(xg)$
for $g\in G$ and $f\in C_0(G)$ is the action by right translation.
The resulting multiplier Hopf $*$-algebra is independent of the choice of $H$,
and in fact uniquely determined \cite{LandstadVanDaele_CompactOpen}.

A morphism between algebraic quantum groups
from $\GG_1$ to $\GG_2$ is given by a nondegenerate $*$-homomorphism
$\pi: \Polc(\GG_2)\rightarrow\Mm(\Polc(\GG_1))$, compatible with the
comultiplications. Observe that the algebraic multiplier algebra
$\Mm(\Polc(\GG_1))$ typically contains operators which are unbounded at the
Hilbert space level. However, if $W_{\GG_2}$ denotes the multiplicative unitary associated with $\GG_2$, then $(\pi \otimes \id)(W_{\GG_2})$ is a unitary
element of $\Mm(\Polc(\GG_1) \odot \Dd(\GG_2))$, and hence it induces a bounded
(unitary) operator on $L^2(\GG_1) \otimes L^2(\GG_2)$.
From the properties of the multiplicative unitary at the algebraic level it
follows that this yields in fact a bicharacter at the $C^*$-level, and from
\cite{MeyerRoyWoronowicz_Homomorphisms} we conclude that $\pi$ extends to a
nondegenerate $*$-homomorphism $C^u_0(\GG_2) \rightarrow M(C_0^u(\GG_1))$
between the universal completions constructed in
\cite{Kustermans_UnivAlgebraic}.  That is, $\pi$ determines a morphism between
the associated locally compact quantum groups.  For simplicity we will
abbreviate $C_b(\GG_1)=M(C_0^u(\GG_1))$.

If $\GG$ is an algebraic quantum group, then an algebraic compact open quantum
subgroup $\HH\subset\GG$ is given by a non-zero central projection
$p_\HH\in \Polc(\GG)$ such that
$\Delta(p_\HH){(1\otimes p_\HH)} = {p_\HH\otimes p_\HH}$, compare \cite[Theorem
4.3, Proposition 4.4, Corollary 3.8]{KalantarKasprzakSkalski_Open}. Then
$C(\HH) = p_\HH C_0(\GG)$ is a Woronowicz-\Cst algebra and the canonical
morphism from $\HH$ to $\GG$ corresponds to the Hopf $*$-homomorphism
$\pi_\HH : C_0(\GG)\to C(\HH)$,
$f\mapsto p_\HH f$. We note that it seems unclear whether central projections of
$C_0(\GG)$ satisfying the above condition (which a priori describe all open
compact quantum subgroups of $\GG$) automatically lie in $\Polc(\GG)$.

\subsection{Discrete quantum groups}

An algebraic quantum group $\GGamma$ is called discrete if the corresponding
algebra $\Polc(\GGamma)$ is a direct sum of matrix algebras. The existence of
invariant functionals is then automatic and we have
$C_0(\GGamma) = C_0^u(\GGamma)$. In this case we use a different notation to
emphasize the analogy with the classical situation: we put
$\Polc(\GGamma) = c_c(\GGamma)$, $C_0(\GGamma) = c_0(\GGamma)$,
$C_b(\GGamma) = \ell^\infty(\GGamma)$, $\Mm(\Polc(\GGamma)) = c(\GGamma)$,
$\Dd(\GGamma) = \CC[\GGamma]$. The left, resp.\ right Haar weights of $\GGamma$
are denoted $h_L$, $h_R$ and their modular groups $\sigma^L$, $\sigma^R$; the
scaling automorphism group will be denoted $\tau$. We will also use the antipode
$S$, the unitary antipode $R$ and the co-unit $\epsilon$.

We denote by $\Corep(\GGamma)$ the category of finite dimensional normal unital
$*$-representations $\alpha : \ell^\infty(\GGamma) \to B(H_\alpha)$, equipped
with the tensor structure $\tilde{\otimes}$ coming from the coproduct:
$\alpha\tilde\otimes\beta := (\alpha\otimes\beta)\Delta$; for simplicity we will
simply write $\alpha \otimes \beta$ in what follows. We denote $\dim(\alpha)$,
resp.\ $\qdim(\alpha)$ the classical, resp.\ quantum dimension of a
corepresentation $\alpha$. The trivial corepresentation is denoted
$1 \in I(\GGamma)$. We denote $\bar\alpha$ the conjugate corepresentation of
$\alpha$, which is unique up to equivalence; by definition we have non-zero
morphisms $t_\alpha \in \Hom(1,\bar\alpha\otimes\alpha)$ which are uniquely
determined up to a scalar when $\alpha$ is irreducible and satisfy the conjugate
equation.

We choose a set $I(\GGamma)$ of representatives of irreducible objects up to
equivalence. We can then identify
$\ell^\infty(\GGamma) = \ell^\infty\text{-}\bigoplus_{\alpha\in I(\GGamma)} B(H_\alpha)$
and we have $c_c(\GGamma) = \bigoplus B(H_\alpha)$ and
$c(\GGamma) = \prod B(H_\alpha)$.  We denote by $p_\alpha \in c_c(\GGamma)$,
$\alpha \in I(\GGamma)$ the minimal central projections and write
$a_\alpha = p_\alpha a$ for $a \in c(\GGamma)$. The coproduct is determined by
the formula $(p_\beta\otimes p_\gamma)\Delta(a) v = v a_\alpha$ for any
$v \in \Hom(\alpha,\beta\otimes\gamma)$ and $a \in c_c(\GGamma)$. Let us record the
following fact which is certainly well-known to experts.

\begin{lemma}\label{lem_subobjects}
	Let $\alpha, \beta \in I(\GGamma)$.
  The tensor product corepresentation $\alpha\otimes\beta$ contains at most
  $\dim(\beta)^2$ irreducible subobjects (counted with multiplicities).
\end{lemma}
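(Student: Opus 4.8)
The plan is to bound the number of irreducible subobjects of $\alpha\otimes\beta$, counted with multiplicities, by the dimension of the endomorphism algebra $\End(\alpha\otimes\beta)$, and then to estimate the latter by Frobenius reciprocity in the rigid $C^*$-tensor category $\Corep(\GGamma)$.

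First I would write $\alpha\otimes\beta\cong\bigoplus_{\gamma\in I(\GGamma)}\gamma^{\oplus N^\gamma}$, where $N^\gamma=\dim\Hom(\gamma,\alpha\otimes\beta)$ and only finitely many $N^\gamma$ are non-zero. The number of irreducible subobjects counted with multiplicities is $\sum_\gamma N^\gamma$, while semisimplicity of $\Corep(\GGamma)$ together with Schur's lemma gives $\dim\End(\alpha\otimes\beta)=\sum_\gamma (N^\gamma)^2$. Since each $N^\gamma$ is a non-negative integer we have $N^\gamma\le (N^\gamma)^2$, hence $\sum_\gamma N^\gamma\le\dim\End(\alpha\otimes\beta)$. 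It therefore suffices to show $\dim\End(\alpha\otimes\beta)\le\dim(\beta)^2$.

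For this, apply Frobenius reciprocity using the conjugate equation morphisms for $\beta$ (i.e.\ $t_\beta\in\Hom(1,\bar\beta\otimes\beta)$ and its partner in $\Hom(1,\beta\otimes\bar\beta)$): these induce a linear isomorphism
\[
  \End(\alpha\otimes\beta)\;\cong\;\Hom\bigl(\alpha,\ \alpha\otimes\beta\otimes\bar\beta\bigr).
\]
Because $\alpha$ is irreducible, the space on the right is the multiplicity of $\alpha$ in the corepresentation $\alpha\otimes\beta\otimes\bar\beta$ carried by the Hilbert space $H_\alpha\otimes H_\beta\otimes H_{\bar\beta}$; since the $\alpha$-isotypic component alone has dimension (multiplicity)$\times\dim(\alpha)$, this multiplicity is at most $\dim(\alpha\otimes\beta\otimes\bar\beta)/\dim(\alpha)$. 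Using $\dim(\bar\beta)=\dim(H_\beta)=\dim(\beta)$, this quotient equals $\dim(\beta)^2$, which finishes the proof.

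The argument is elementary; the only points needing care are the correct bookkeeping in Frobenius reciprocity and the trivial inequality $n\le n^2$ for non-negative integers, so I do not anticipate a genuine obstacle. I would remark that dualising on the $\alpha$-side instead yields the complementary estimate $\dim\End(\alpha\otimes\beta)\le\dim(\alpha)^2$, but the bound involving $\dim(\beta)$ is the one needed in the sequel.
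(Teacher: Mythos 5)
Your proof is correct, but it takes a different route from the paper's. The paper decomposes $\alpha\otimes\beta=\bigoplus_{i=1}^p\gamma_i$ and uses Frobenius reciprocity in the form $\alpha\subset\gamma_i\otimes\bar\beta$ to get the lower bound $\dim(\gamma_i)\geq\dim(\alpha)/\dim(\beta)$ on each irreducible summand; summing the dimensions then gives $\dim(\alpha)\dim(\beta)\geq p\,\dim(\alpha)/\dim(\beta)$, i.e.\ $p\leq\dim(\beta)^2$ directly. You instead pass through the endomorphism algebra: the inequality $\sum_\gamma N^\gamma\leq\sum_\gamma (N^\gamma)^2=\dim\End(\alpha\otimes\beta)$, followed by the Frobenius identification $\End(\alpha\otimes\beta)\cong\Hom(\alpha,\alpha\otimes\beta\otimes\bar\beta)$ and the isotypic-component dimension count (using $\dim(\bar\beta)=\dim(\beta)$, which holds for discrete quantum groups). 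Both arguments hinge on Frobenius reciprocity plus a dimension count, but yours yields the slightly stronger intermediate statement $\dim\End(\alpha\otimes\beta)\leq\dim(\beta)^2$, at the cost of one extra reduction step ($n\leq n^2$); the paper's is marginally more economical for the stated bound. Every step of your argument checks out, so there is nothing to fix.
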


\begin{proof}
  Decompose $\alpha\otimes\beta = \bigoplus_{i=1}^p \gamma_i$ with
  $\gamma_i \in I(\GGamma)$. By Frobenius reciprocity we have
  $\alpha \subset \gamma_i\otimes\bar\beta$, hence
  $\dim(\gamma_i) \geq \dim(\alpha)/\dim(\beta)$. Then we can write
  \begin{align*}
    \dim(\alpha)\dim(\beta) = {\ts\sum} \dim(\gamma_i) 
    \geq p ~ \frac{\dim(\alpha)}{\dim(\beta)},
  \end{align*}
  which yields the result.
\end{proof}

A quantum subgroup $\LLambda$ of $\GGamma$ is given by a surjective
$*$-homomorphism $\pi : \ell^\infty(\GGamma) \to \ell^\infty(\LLambda)$ such
that $(\pi\otimes\pi)\Delta = \Delta\pi$. Due to the very special structure of
$\ell^\infty(\GGamma)$, one can identify
$\ell^\infty(\LLambda)$ with $p_\LLambda\ell^\infty(\GGamma)$ for a uniquely
determined central projection $p_\LLambda\in\ell^\infty(\GGamma)$, in such a way
that $\pi(a) = p_\LLambda a$. The coproduct of $\ell^\infty(\LLambda)$ is then
$\Delta_\LLambda(a) = (p_\LLambda\otimes p_\LLambda)\Delta(a) =
{(p_\LLambda\otimes \id)}\Delta(a) = (\id\otimes p_\LLambda)\Delta(a)$ for
$a \in \ell^\infty(\LLambda)$. Note that we have in the same way
$c_c(\LLambda) = p_\LLambda c_c(\GGamma)$,
$c(\LLambda) = p_\LLambda c(\GGamma)$. Using precomposition with $\pi$ the
category $\Corep(\LLambda)$ is fully and faithfully embedded in
$\Corep(\GGamma)$. We take for $I(\LLambda)$ the subset of $I(\GGamma)$ such
that $\alpha \in I(\LLambda)$ iff $p_\LLambda p_\alpha\neq 0$. We have
$1\in I(\LLambda)$ and if $\alpha$, $\beta \in I(\LLambda)$,
$\gamma \in I(\GGamma)$ and $\gamma \subset \alpha\otimes\beta$ then
$\gamma \in I(\LLambda)$ and $\bar\alpha$, $\bar\beta\in I(\LLambda)$.

\section{Discrete quantum Hecke pairs}
\label{sec_discrete_hecke}

In this section we define the Hecke algebra associated to a discrete quantum
group $\GGamma$ and a commensurated quantum subgroup $\LLambda$. We start by
studying the structure of the quotient spaces $\GGamma/\LLambda$,
$\LLambda\bs\GGamma$, first at the level of the set of irreducible
corepresentations $I(\GGamma)$, and then at the finer level of the quantum
algebras of functions $\ell^\infty(\GGamma/\LLambda)$,
$\ell^\infty(\LLambda\bs\GGamma)$. We obtain in particular in
Theorem~\ref{thm_strong_inv} a description of $\ell^\infty(\GGamma/\LLambda)$
using the classical quotient space $I(\GGamma)/\LLambda$.

We can then define the Hecke algebra $\Hh(\GGamma,\LLambda)$ and its convolution
product, and prove that it is represented by Hecke operators on
$c_c(\GGamma/\LLambda)$. We give a combinatorial characterization of the
$\ell^2$-boundedness of Hecke operators and describe the modular properties of
the canonical state of $\Hh(\GGamma,\LLambda)$. Finally we investigate examples
arising from quantum HNN extensions.

\subsection{Quotient spaces}\label{sec_quotient_spaces}

\subsubsection{Classical cosets}

Suppose that $\GGamma$ is a discrete quantum group with a quantum subgroup
$\LLambda$.  Associated to $\LLambda$ is the equivalence relation $\sim$ on
$I(\GGamma)$ such that $\alpha \sim \beta$ iff
$\alpha \subset \beta\otimes\gamma$ for some $\gamma \in I(\LLambda)$ iff
$\Delta(p_\alpha)(p_\beta\otimes p_\LLambda)\neq 0$, see
\cite[Lemma~2.3]{Vergnioux_Amalg}. There is corresponding left version:
$\alpha\backsim\beta$ iff $\alpha \subset \gamma\otimes\beta$ for some
$\gamma \in I(\LLambda)$ iff
$\Delta(p_\alpha)(p_\LLambda\otimes p_\beta)\neq 0$. We denote by
$I(\GGamma)/\LLambda$, $\LLambda\bs I(\GGamma)$ the corresponding quotient
spaces and by $[\alpha]$ the class of $\alpha \in I(\GGamma)$ in
$I(\GGamma)/\LLambda$ or in $\LLambda\bs I(\GGamma)$, with the context
determining the choice of left or right cosets. For
$\sigma\in I(\GGamma)/\LLambda$ or $\LLambda\bs I(\GGamma)$ we write
$p_\sigma = \sum_{\alpha\in\sigma} p_\alpha \in \ell^\infty(\GGamma)$. Note that
in both cases $p_{[1]} = p_\LLambda$, and the projections $p_\sigma$ are
pairwise orthogonal, central and sum up to $1$. Note that they need not be
minimal central projections in $\ell^\infty(\GGamma/\LLambda)$, but are finite
sums of such, see \cite[Section 5]{DCKSS_DiscreteActions} and also the
discussion below.

Recall that the quantum quotient spaces are given by the algebras
\begin{align*}
  \ell^\infty(\GGamma/\LLambda) = \{ a\in\ell^\infty(\GGamma) \mid (1\otimes
  p_\LLambda)\Delta(a) = a\otimes p_\LLambda\}, \\
  \ell^\infty(\LLambda\bs\GGamma) = \{ a\in\ell^\infty(\GGamma) \mid
  (p_\LLambda\otimes 1)\Delta(a) = p_\LLambda\otimes a\}. 
\end{align*}
One can use the same conditions to define $c(\GGamma/\LLambda)$,
$c(\LLambda\bs\GGamma)$ but one has to be a bit more careful for the spaces of
finitely supported functions. One can check that
$p_\sigma \in c(\GGamma/\LLambda)$ for any $\sigma \in I(\GGamma)/\LLambda$ and
one defines
$c_c(\GGamma/\LLambda) = \Span \{ p_\sigma c(\GGamma/\LLambda), \sigma\in
I(\GGamma)/\LLambda\}$ (and similarly for the left versions). One can show that
$p_\sigma c(\GGamma/\LLambda) = p_\sigma \ell^\infty(\GGamma/\LLambda)$ for any
$\sigma \in I(\GGamma)/\LLambda$ \cite[Lemma~3.3]{VergniouxVoigt}. It is easy to
check that the coproduct $\Delta$ restricts to von Neumann, resp.\ algebraic left
coactions
$\Delta : \ell^\infty(\GGamma/\LLambda) \to \ell^\infty(\GGamma) \bar\otimes
\ell^\infty(\GGamma/\LLambda)$, resp.\
$\Delta : c_c(\GGamma/\LLambda) \to \Mm(c_c(\GGamma) \otimes
c_c(\GGamma/\LLambda))$, and similarly to right coactions
$\Delta : \ell^\infty(\LLambda\bs \GGamma) \to \ell^\infty(\LLambda\bs\GGamma)
\bar\otimes \ell^\infty(\GGamma)$ resp.
$\Delta : c_c(\LLambda\bs\GGamma) \to \Mm(c_c(\LLambda\bs\GGamma) \otimes
c_c(\GGamma))$.  We have e.g.
$(c_c(\GGamma)\otimes 1)\Delta(c_c(\GGamma/\LLambda)) = c_c(\GGamma)\otimes
c_c(\GGamma/\LLambda)$.

The next Lemma works in general for open quantum subgroups of locally compact
quantum groups, see \cite[Lemma 3.1, Corollary
3.9]{KalantarKasprzakSkalski_Open}. The fact that the unitary antipode exchanges
the left and right von Neumann algebraic homogeneous spaces for any closed
quantum subgroup can be found for example in
\cite{KasprzakSoltan_Projection}. We include a simple proof for the discrete
case.

\begin{lemma} \label{lemma_quotient_antipode} Let $\LLambda$ be a quantum
  subgroup of $\GGamma$. We have
  $R(c(\GGamma/\LLambda)) = c(\LLambda\bs \GGamma)$ and
  $S(c(\GGamma/\LLambda)) = c(\LLambda\bs\GGamma)$. The groups $\tau$,
  $\sigma^L$, $\sigma^R$ stabilize $c(\GGamma/\LLambda)$ and
  $c(\LLambda\bs\GGamma)$.
\end{lemma}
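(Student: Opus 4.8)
The plan is to prove the three assertions in turn, exploiting the defining relations of the quotient spaces together with the standard compatibility identities between $R$, $S$, the modular groups $\sigma^L,\sigma^R$, the scaling group $\tau$ and the coproduct $\Delta$. Throughout I will use that $p_\LLambda$ is a central projection fixed by all these operators: indeed $\pi:\ell^\infty(\GGamma)\to\ell^\infty(\LLambda)$ intertwines the coproducts, hence $\LLambda$ inherits all the structure maps of $\GGamma$ by restriction, which forces $R(p_\LLambda)=S(p_\LLambda)=p_\LLambda$ and $\sigma^L_t(p_\LLambda)=\sigma^R_t(p_\LLambda)=\tau_t(p_\LLambda)=p_\LLambda$ (a central projection is sent to a central projection, and it must be the support projection of $\ell^\infty(\LLambda)$ since these maps restrict to the corresponding maps on $\ell^\infty(\LLambda)$).

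For the statement $R(c(\GGamma/\LLambda))=c(\LLambda\bs\GGamma)$: recall that $R$ is a $*$-anti-automorphism satisfying $\Delta R=(R\otimes R)\Delta^{\op}=\sigma(R\otimes R)\Delta$, where $\sigma$ is the flip. Take $a\in c(\GGamma/\LLambda)$, so $(1\otimes p_\LLambda)\Delta(a)=a\otimes p_\LLambda$. Apply $R\otimes R$ and flip: $\Delta(R(a))(p_\LLambda\otimes 1)=(R\otimes R)\big(\sigma((1\otimes p_\LLambda)\Delta(a))\big)=(R\otimes R)(p_\LLambda\otimes a)=p_\LLambda\otimes R(a)$, using $R(p_\LLambda)=p_\LLambda$ and being careful that on the von Neumann algebra level one multiplies the identity through. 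This shows $R(a)\in c(\LLambda\bs\GGamma)$; applying $R$ to the other inclusion (and $R^2=\id$) gives equality. The identity $S(c(\GGamma/\LLambda))=c(\LLambda\bs\GGamma)$ then follows from $S=R\circ\tau_{i/2}=\tau_{-i/2}\circ R$ once we know $\tau$ stabilizes $c(\LLambda\bs\GGamma)$, which is part of the last assertion; alternatively one argues directly with $S$ using $\Delta S=(S\otimes S)\Delta^{\op}$ on the subalgebra where $S$ is defined and then extends to multipliers, but routing through $R$ and $\tau$ is cleanest.

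For the stabilization claim: $\tau$, $\sigma^L$ and $\sigma^R$ are all one-parameter groups of automorphisms of $\ell^\infty(\GGamma)$ that interact with $\Delta$ via known identities — for $\tau$ one has $\Delta\tau_t=(\tau_t\otimes\tau_t)\Delta$, for the left modular group $\Delta\sigma^L_t=(\tau_t\otimes\sigma^L_t)\Delta$, and for the right modular group $\Delta\sigma^R_t=(\sigma^R_t\otimes\tau_t)\Delta$ (these are the standard Kustermans–Vaes relations, valid for the discrete case where everything is algebraic). Given $a\in c(\GGamma/\LLambda)$, applying $\tau_t\otimes\sigma^L_t$ to $(1\otimes p_\LLambda)\Delta(a)=a\otimes p_\LLambda$ yields, using $\Delta\sigma^L_t=(\tau_t\otimes\sigma^L_t)\Delta$ and $\sigma^L_t(p_\LLambda)=p_\LLambda$, the identity $(1\otimes p_\LLambda)\Delta(\sigma^L_t(a))=\sigma^L_t(a)\otimes p_\LLambda$, i.e.\ $\sigma^L_t(a)\in c(\GGamma/\LLambda)$; the cases of $\tau_t$ and $\sigma^R_t$ on $c(\GGamma/\LLambda)$, and all three on $c(\LLambda\bs\GGamma)$, are verified by the same pattern with the appropriate intertwining relation. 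I expect the only real subtlety — the ``main obstacle'', such as it is — to be bookkeeping with multipliers and the precise form of the structural identities in the algebraic/discrete setting: one should cite the discrete-quantum-group conventions fixed in Section~\ref{sec_quantum_groups} (and references therein) for the exact relations between $\Delta$ and $R,\tau,\sigma^L,\sigma^R$, and check that the manipulations, carried out a priori on $c_c(\GGamma)$ or at the von Neumann level, descend correctly to $c(\GGamma/\LLambda)=\ell^\infty(\GGamma/\LLambda)\cap c(\GGamma)$. None of the individual steps is deep; the proof is a short sequence of such applications of the coproduct identities.
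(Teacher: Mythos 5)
Your strategy is essentially the paper's: show $p_\LLambda$ is fixed by all the structure maps and push the defining relation of the coset algebras through the intertwining identities with $\Delta$. Two points, however, need repair or comment. First, in the modular-group step your displayed computation does not give what you claim: applying $\Delta\sigma^L_t=(\tau_t\otimes\sigma^L_t)\Delta$ to $(1\otimes p_\LLambda)\Delta(a)=a\otimes p_\LLambda$ yields $(1\otimes p_\LLambda)\Delta(\sigma^L_t(a))=\tau_t(a)\otimes p_\LLambda$, with $\tau_t(a)$ — not $\sigma^L_t(a)$ — in the first leg (the same crossing occurs for $\sigma^R_t$ acting on $c(\LLambda\bs\GGamma)$, while $\sigma^R_t$ on $c(\GGamma/\LLambda)$ and $\sigma^L_t$ on $c(\LLambda\bs\GGamma)$ do come out clean). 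The conclusion is nevertheless correct, either because applying $\id\otimes\epsilon$ to both sides forces $\sigma^L_t(a)=\tau_t(a)$, or — as the paper does — because in the discrete case $\tau_t=\sigma^L_t=\sigma^R_{-t}$, so that only the (unproblematic) case of $\tau$ needs to be checked; but as written your claim does not follow from the cited relation alone. Second, deducing the statement for $S$ from $S=R\circ\tau_{i/2}$ uses the analytic continuation $\tau_{i/2}$, not a real-parameter element of the scaling group; one must note that each block $p_\alpha c(\GGamma/\LLambda)$ is a finite-dimensional $\tau_t$-invariant subspace, hence $\tau_z$-invariant for all complex $z$ by analyticity. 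This is routine, but the paper's direct computation with $\Delta S=\sigma(S\otimes S)\Delta$ (valid here since $S$ is defined on all of $c(\GGamma)$ in the discrete setting) avoids the issue entirely. The remaining ingredients — centrality of $p_\LLambda$, $R(p_\LLambda)=S(p_\LLambda)=p_\LLambda$ via closure of $I(\LLambda)$ under conjugation, and the $R$-computation — match the paper's proof.
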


\begin{proof}
  For any $\alpha \in I(\GGamma)$ and $t \in \mathbb{R}$ we have
  $S(p_\alpha) = R(p_\alpha) = p_{\bar\alpha}$ and
  $\tau_t(p_\alpha) = \sigma_t^R(p_\alpha) = \sigma_t^L(p_\alpha) = p_\alpha$
  hence
  $S(p_\LLambda) = R(p_\LLambda) = \tau_t(p_\LLambda) = \sigma_t^R(p_\LLambda) =
  \sigma_t^R(p_\LLambda)$.  For $a \in c(\GGamma/\LLambda)$ we have
  $(p_\LLambda\otimes 1)\Delta(S(a)) = \sigma(S\otimes S)[\Delta(a)(1\otimes
  S^{-1}(p_\LLambda))] = \sigma(S\otimes S)[\Delta(a)(1\otimes p_\LLambda)] =
  \sigma(S\otimes S)(a\otimes p_\LLambda) = p_\LLambda \otimes S(a)$ (with
  $\sigma$ denoting the tensor flip), hence $S(a) \in
  c(\LLambda\bs\GGamma)$. The same holds for $R(a)$ since we also have 
  $\Delta(R(a)) = \sigma(R\otimes R)\Delta(a)$. On the other hand for any
  $t \in \mathbb{R}$ we have
  $(1\otimes p_\LLambda)\Delta(\tau_t(a)) = (\tau_t\otimes\tau_t)[(1\otimes
  p_\LLambda)\Delta(a)] = \tau_t(a)\otimes p_\LLambda$ hence
  $\tau_t(a) \in c(\GGamma/\LLambda)$, and similarly on the left. Finally, note
  that in the discrete case we have $\tau_t = \sigma^L_t = \sigma^R_{-t}$.
\end{proof}

\bigskip

One can proceed similarly for double cosets. More precisely we denote
$c(\LLambda\bs\GGamma/\LLambda) = c(\GGamma/\LLambda) \cap
c(\LLambda\bs\GGamma)$. We consider the equivalence relation $\approx$ on
$I(\GGamma)$ generated by $\sim$ and $\backsim$, and we have in fact
$\alpha\approx\beta$ iff there exist $\delta$, $\gamma \in I(\LLambda)$ such
that $\beta \subset \delta\otimes\alpha\otimes\gamma$ --- indeed the set of such
$\beta$'s is closed under $\sim$ and $\backsim$, and decomposing
$\alpha\otimes\gamma$ into $\alpha'$s one sees that
$\beta\backsim\alpha'\sim\alpha$ for one of these $\alpha'$s. We denote by
$\LLambda\bs I(\GGamma)/\LLambda$ the corresponding quotient space and write
$\dc \alpha$ for the corresponding class of $\alpha \in I(\GGamma)$. For
$\sigma \in \LLambda\bs I(\GGamma)/\LLambda$ we denote
$p_\sigma = \sum_{\alpha\in\sigma} p_\alpha \in \ell^\infty(\GGamma)$, obtaining
again a family of central, pairwise orthogonal projections summing up to $1$. We
clearly have $p_\sigma \in c(\LLambda\bs\GGamma/\LLambda)$ and we denote
$c_c(\LLambda\bs\GGamma/\LLambda) = \Span \{p_\sigma
c(\LLambda\bs\GGamma/\LLambda), \sigma \in \LLambda\bs I(\GGamma)/\LLambda\}
\subset c(\LLambda\bs\GGamma/\LLambda)$.

For $\tau \in \LLambda\bs I(\GGamma)/\LLambda$ we have
$p_\tau = \sum \{p_\sigma, \sigma \in I(\GGamma)/\LLambda, \sigma\subset\tau\} =
\sum \{p_\sigma, \sigma \in \LLambda\bs I(\GGamma), \sigma\subset\tau\}$.  We
denote
$R(\tau) = \# \{\sigma \in I(\GGamma)/\LLambda, \sigma\subset\tau\} \in \NN \cup
\{+\infty\}$ and
$L(\tau) = \#\{\sigma \in \LLambda\bs I(\GGamma), \sigma\subset\tau\}$. We have
$c_c(\GGamma/\LLambda)\cap c(\LLambda\bs\GGamma) \subset
c_c(\LLambda\bs\GGamma/\LLambda)$, and similarly
$c(\GGamma/\LLambda)\cap c_c(\LLambda\bs\GGamma) \subset
c_c(\LLambda\bs\GGamma/\LLambda)$.  Already in the classical case these
inclusions can be strict.

\begin{proposition}
  Let $\LLambda$ be a quantum subgroup of $\GGamma$. The subset
  $I(\GGamma') = \{\alpha \in I(\GGamma) \mid L(\dc\alpha), R(\dc\alpha) <
  +\infty\}$ defines an intermediate quantum subgroup
  $\LLambda \subset \GGamma' \subset \GGamma$ such that
  $c_c(\LLambda\bs\GGamma'/\LLambda) = c_c(\GGamma/\LLambda)\cap
  c_c(\LLambda\bs\GGamma)$.
\end{proposition}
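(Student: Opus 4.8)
The plan is to show three things: (1) the set $I(\GGamma')$ is closed under the operations defining a quantum subgroup (i.e. contains $1$, is closed under conjugation, and under taking subobjects of tensor products $\alpha\otimes\beta$ with $\alpha,\beta\in I(\GGamma')$), so that it corresponds to an intermediate subgroup $\LLambda\subset\GGamma'\subset\GGamma$; (2) the double coset space $\LLambda\bs I(\GGamma')/\LLambda$ is precisely the set of double cosets $\tau\in\LLambda\bs I(\GGamma)/\LLambda$ with $L(\tau),R(\tau)<+\infty$, and these embed into $\LLambda\bs I(\GGamma)/\LLambda$; (3) the resulting identification of $c_c(\LLambda\bs\GGamma'/\LLambda)$ with $c_c(\GGamma/\LLambda)\cap c_c(\LLambda\bs\GGamma)$.

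First I would verify that the conditions $L(\dc\alpha),R(\dc\alpha)<+\infty$ depend only on the double coset $\dc\alpha$, which is immediate since $L$ and $R$ are defined on $\LLambda\bs I(\GGamma)/\LLambda$. Clearly $1\in I(\GGamma')$ since $\dc 1 = I(\LLambda)$, which has exactly one left and one right coset (namely $[1]=I(\LLambda)$ itself). For closure under conjugation, note that $\bar\alpha\approx\bar\beta$ whenever $\alpha\approx\beta$ (conjugating the relation $\beta\subset\delta\otimes\alpha\otimes\gamma$ gives $\bar\beta\subset\bar\gamma\otimes\bar\alpha\otimes\bar\delta$ with $\bar\gamma,\bar\delta\in I(\LLambda)$), and conjugation swaps the roles of left and right cosets within a double coset, so $L(\dc{\bar\alpha})=R(\dc\alpha)$ and $R(\dc{\bar\alpha})=L(\dc\alpha)$; hence $\bar\alpha\in I(\GGamma')$. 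The key closure property is under subobjects of tensor products. Suppose $\alpha,\beta\in I(\GGamma')$ and $\gamma\subset\alpha\otimes\beta$. I want to bound $R(\dc\gamma)$ and $L(\dc\gamma)$. The idea is that a left coset $[\delta]\in I(\GGamma)/\LLambda$ meeting $\dc\gamma$ satisfies $\delta\subset\epsilon_1\otimes\gamma\otimes\epsilon_2$ for some $\epsilon_1,\epsilon_2\in I(\LLambda)$, hence $\delta$ lies in a tensor product built from $\alpha$, $\beta$ and elements of $I(\LLambda)$; using that $\dc\alpha$ and $\dc\beta$ each contain only finitely many left (and right) cosets, together with Lemma~\ref{lem_subobjects} to control how tensoring with a fixed irreducible can spread mass across cosets, one obtains a finite bound on the number of left cosets in $\dc\gamma$, and symmetrically for right cosets.

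The main obstacle I anticipate is step (3): upgrading the set-theoretic identification of double cosets to an identification of the function algebras. One containment, $c_c(\LLambda\bs\GGamma'/\LLambda)\subset c_c(\GGamma/\LLambda)\cap c_c(\LLambda\bs\GGamma)$, should follow because for $\sigma\in\LLambda\bs I(\GGamma')/\LLambda$ the support projection $p_\sigma$ lies in $c_c(\GGamma)$ (it is a finite sum of $p_\alpha$'s, since $\sigma$ is a finite union of left cosets, each a finite union of $p_\alpha$'s? — here one must check each coset is finite-dimensional-supported, which is where $R(\dc\alpha),L(\dc\alpha)<\infty$ enters together with the observation that a single left coset $[\alpha]$ is contained in one double coset), and the quotient-space conditions are inherited. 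For the reverse containment, given $a\in c_c(\GGamma/\LLambda)\cap c_c(\LLambda\bs\GGamma)$, its support is a finite union of left cosets and also a finite union of right cosets; one argues that this forces the support to be contained in finitely many double cosets each having finitely many left and right cosets, hence $a$ is supported on $I(\GGamma')$, and then $a\in c_c(\LLambda\bs\GGamma'/\LLambda)$ follows from compatibility of the coactions of $\GGamma'$ and $\GGamma$ with the embedding. The delicate point throughout is keeping straight that $p_\sigma\in c_c(\GGamma)$ exactly when $\sigma$ is a finite set of irreducibles, and relating this to the finiteness of coset counts; I would isolate this as a preliminary observation before assembling the proof.
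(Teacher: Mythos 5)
Your overall architecture (closure of $I(\GGamma')$ under conjugation and tensor products, then identification of the biinvariant function spaces) matches the paper's, and your first part is essentially on track: the conjugation step via $L(\dc\alpha)=R(\dc{\bar\alpha})$ is exactly right, and reducing the tensor-product step to the finiteness of cosets in $\dc\alpha$ and $\dc\beta$ is the correct idea. One caveat: Lemma~\ref{lem_subobjects} is not the relevant tool there. The difficulty is not bounding the subobjects of a single tensor product, but covering the infinite family $\{\gamma\subset\lambda\otimes\delta\otimes\mu : \lambda,\mu\in I(\LLambda)\}$ by finitely many right cosets. The paper does this by a shuffling argument: decompose $\lambda\otimes\alpha$ into irreducibles $\alpha'$, each of which lies in $\dc\alpha$ and hence in $\alpha_i\otimes\lambda'$ for one of the finitely many right cosets $[\alpha_i]\subset\dc\alpha$ and some $\lambda'\in I(\LLambda)$; then repeat with $\lambda'\otimes\beta$ to push all $\LLambda$-factors to the right. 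Your plan gestures at the right ingredients but does not supply this mechanism, which is the actual content of the closure step.

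The genuine gaps are in your step (3). First, the forward inclusion as you set it up rests on the claim that $p_\sigma\in c_c(\GGamma)$ for a double coset $\sigma$ of $\GGamma'$; this is false whenever $\LLambda$ is infinite, since already a single coset $[\alpha]$ is then an infinite set of irreducibles. What makes the inclusion work is that $c_c(\GGamma/\LLambda)$ is by definition the span of the spaces $p_\sigma c(\GGamma/\LLambda)$ over \emph{right cosets} $\sigma$ --- finiteness is measured in cosets, not in irreducibles --- so $R(\tau)<\infty$ directly gives $p_\tau c(\LLambda\bs\GGamma/\LLambda)\subset c_c(\GGamma/\LLambda)$, and $L(\tau)<\infty$ the left-hand version. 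The ``preliminary observation'' you propose to isolate is therefore the wrong one and would derail the argument. Second, and more seriously, your reverse inclusion is missing its key idea. Knowing that $a$ is supported on finitely many left cosets and finitely many right cosets does \emph{not} by itself force $L(\dc\alpha),R(\dc\alpha)<\infty$ for $\alpha$ in the support: a priori $a$ could vanish on most of the (possibly infinitely many) cosets inside $\dc\alpha$. What rules this out is a propagation property of invariant elements: if $a\in c(\GGamma/\LLambda)$ and $p_\alpha a\neq 0$, then $p_\beta a\neq 0$ for every $\beta\sim\alpha$ (the injectivity statement in Proposition~\ref{prp_quotient_description}, or \cite[Lemma~3.3]{VergniouxVoigt}), and similarly on the left using $a\in c(\LLambda\bs\GGamma)$; combining the two, $a$ is nonzero on every irreducible of $\dc\alpha$, hence on every coset of $\dc\alpha$, and only then does finite coset support yield $R(\dc\alpha),L(\dc\alpha)<\infty$. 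Without this propagation step the argument does not close.
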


\begin{proof}
  It suffices to show that $I(\LLambda) \subset I(\GGamma')$,
  $I(\GGamma')\otimes I(\GGamma') \subset \mathbb{Z} I(\GGamma')$ and
  $\overline{I(\GGamma')} \subset I(\GGamma')$, see
  e.g. \cite[Section~2]{Vergnioux_Amalg}. Since $\dc 1 = I(\LLambda) = [1]$ we
  have $R(\dc 1) = L(\dc 1) = 1$ and $\dc 1 = I(\LLambda) \subset I(\GGamma')$.
  Since $\alpha \sim \beta$ $\iff$ $\bar\alpha \backsim\bar\beta$, we have
  $L(\dc\alpha) = R(\dc{\bar\alpha})$ hence $\bar\alpha \in I(\GGamma')$ if
  $\alpha \in I(\GGamma')$. Let $\alpha$, $\beta \in I(\GGamma')$ and let
  $\delta \in I(\GGamma)$, $\delta \subset \alpha\otimes\beta$. Decompose into
  finite unions of right cosets $\dc\alpha = \bigsqcup [\alpha_i]$,
  $\dc\beta = \bigsqcup [\beta_j]$, and consider the finite set of all the
  elements $\delta_k \in I(\GGamma)$ such that
  $\delta_k\subset \alpha_i\otimes\beta_j$ for some $i$, $j$. Take $\lambda$,
  $\mu\in I(\LLambda)$ and $\gamma \subset \lambda\otimes\delta\otimes\mu$. Then
  we have $\gamma \subset \lambda\otimes\alpha\otimes\beta\otimes\mu$.
  Decomposing $\lambda \otimes\alpha$ into $\alpha'$s we have by irreducibility
  $\gamma \subset \alpha'\otimes\beta\otimes\mu$ for some
  $\alpha'\in I(\GGamma)$ and since $\alpha' \in \dc\alpha$ we have
  $\alpha' \subset \alpha_i\otimes\lambda'$ for some $i$ and some
  $\lambda'\in I(\LLambda)$. Then
  $\gamma \subset \alpha_i\otimes\lambda'\otimes\beta\otimes \mu$. Proceeding
  similarly with $\lambda'\otimes\beta$ we find $j$ and
  $\lambda'' \in I(\LLambda)$ such that
  $\gamma \subset \alpha_i\otimes\beta_j\otimes \lambda''\otimes\mu$. It follows
  that $\gamma \in [\delta_k]$ for some $k$. As a result $\dc\delta$ is covered
  by a finite number of right cosets. Applying this to $\bar\delta$ we see that
  $\delta \in I(\GGamma')$.

  By definition, for $\tau \in \LLambda\bs\GGamma'/\LLambda$ we can write
  $p_\tau$ as a finite sum $p_\tau = \sum p_{\sigma_i}$ with
  $\sigma_i \in I(\GGamma)/\LLambda$. It follows that
  $c_c(\LLambda\bs\GGamma'/\LLambda) \subset c_c(\GGamma/\LLambda)$. Similarly
  $c_c(\LLambda\bs\GGamma'/\LLambda) \subset c_c(\LLambda\bs\GGamma)$.
  Conversely, let $a \in c_c(\GGamma/\LLambda)\cap c_c(\LLambda\bs\GGamma)$ and
  consider $\alpha \in I(\GGamma)$ such that $p_\alpha a \neq 0$. Then we claim
  that $p_\gamma a \neq 0$ for all $\gamma \in \dc\alpha$. Indeed we have
  $p_{[\alpha]} a \neq 0$ and since $a \in c(\GGamma/\LLambda)$ Lemma~3.3 in
  \cite{VergniouxVoigt} shows that $p_{\beta} a \neq 0$ for all
  $\beta \subset \alpha\otimes\lambda$, $\lambda \in I(\LLambda)$. Using now the
  fact that $a \in c(\LLambda\bs\GGamma)$ and again
  \cite[Lemma~3.3]{VergniouxVoigt} we have $p_\gamma a \neq 0$ for all
  $\gamma \subset \mu\otimes\beta$, $\mu \in I(\LLambda)$.  In particular we
  have $p_\sigma a \neq 0$ for all $\sigma \in I(\GGamma)/\LLambda$,
  $\sigma \subset \dc\alpha$. But since $a \in c_c(\GGamma/\LLambda)$, there is
  at most a finite number of $\sigma$'s such that $p_\sigma a\neq 0$. Hence
  $R(\dc\alpha) < +\infty$. Similarly $L(\dc\alpha)<+\infty$. As a result
  $\alpha \in I(\GGamma')$ and we can conclude that $a \in c(\GGamma')$.
\end{proof}

\begin{definition} \label{def_commensurator} Let $\LLambda$ be a quantum
  subgroup of $\GGamma$. The quantum subgroup
  $\LLambda \subset \GGamma' \subset \GGamma$ of the previous proposition is
  called the {\em commensurator} of $\LLambda$ in $\GGamma$. We say that
  $(\GGamma,\LLambda)$ is a {\em Hecke pair}\/ (or that $\LLambda$ is almost
  normal in $\GGamma$) if $R(\tau)$, $L(\tau) < +\infty$ for any
  $\tau\in \LLambda\bs I(\GGamma)/\LLambda$, or equivalently, if
  $c_c(\GGamma/\LLambda) \cap c_c(\LLambda\bs\GGamma) =
  c_c(\LLambda\bs\GGamma/\LLambda)$, i.e.\ $\GGamma' = \GGamma$.
\end{definition}

\subsubsection{Quantum cosets}\label{sec_mod_cat}

We give now a more precise description of the quantum quotient space algebra
$\ell^\infty(\GGamma/\LLambda)$ and of the corresponding
$\Corep(\GGamma)$-module-category.

For every von Neumann subalgebra $M \subset \ell^\infty(\GGamma)$ we have a
restriction functor from $\Corep(\GGamma) = \Rep (\ell^\infty(\GGamma))$ to the
category $\Rep(M)$ of finite dimensional normal $*$-repre\-sentations of $M$ which
factors the respective forgetful functors to the category of finite
dimensional Hilbert spaces. If $M$ is left invariant, i.e.
$\Delta(M) \subset \ell^\infty(\GGamma)\bar\otimes M$, then $\Rep(M)$ is
naturally equipped with the structure of a left $\Corep(\GGamma)$-module-\Cst
category, by considering $\alpha\otimes\pi := (\alpha\otimes\pi)\Delta$ and the
Hilbert space tensor product of morphisms.

When $M = \ell^\infty(\GGamma/\LLambda)$ this module category is equivalent to
the one naturally associated with the $\hat\GGamma$-\Cst algebra
$C^*_r(\LLambda)$, compare \cite[Theorem 6.4]{DeCommerYamashita}. We
write $\Rep(M) = \Corep(\GGamma/\LLambda)$ in this case and choose a set
$I(\GGamma/\LLambda)$ of representatives of irreducible objects in this category up to
equivalence, not to be confused with $I(\GGamma)/\LLambda$.  We denote
$\Hom_{\GGamma/\LLambda}$ the corresponding morphism spaces, and note that
we have a restriction functor from $\Corep(\GGamma)$ to
$\Corep(\GGamma/\LLambda)$. Moreover, to describe $\Corep(\GGamma/\LLambda)$ is
suffices to describe its full subcategory with objects from $\Corep(\GGamma)$,
i.e.\ the spaces $\Hom_{\GGamma/\LLambda}(\alpha,\beta)$ for $\alpha$,
$\beta\in I(\GGamma)$ --- one can then recover $\Corep(\GGamma/\LLambda)$ via
idempotent completion.

The next proposition is a variant and improvement of
\cite[Lemma~3.3]{VergniouxVoigt} and \cite[Theorem 5.2, Theorem
5.6]{DCKSS_DiscreteActions}. It gives a concrete description of
$\ell^\infty(\GGamma/\LLambda)$ and $\Corep(\GGamma/\LLambda)$. Recall 
that $\Corep(\LLambda)$ is faithfully and fully embedded in $\Corep(\GGamma)$.

\begin{definition} \label{def_proj}
  For $v\in\Corep(\GGamma)= \Rep(\ell^\infty(\GGamma))$ we denote by
  $v_\LLambda$ the largest subobject of $v$ belonging to $\Corep(\LLambda)$,
  given by the projection $v(p_\LLambda) \in B(H_v)$.
\end{definition}

Let us apply Definition \ref{def_proj} to the space $B(H_\alpha,H_\beta)$, viewed as a corepresentation of $\GGamma$
via the identification with $H_{\bar\alpha}\otimes H_\beta$ given by
$S \mapsto (\id\otimes S)t_\alpha$. By Frobenius reciprocity we then have
\begin{align*}
  B(H_\alpha,H_\beta)_\LLambda &= \Span\{ v(1\otimes\eta) \mid \lambda \in \Corep(\LLambda),
                                 v\in\Hom(\alpha\otimes\lambda,\beta), \eta \in H_\lambda\} 
\end{align*}
In the next proposition we use the commutant of
$B(H_\alpha, H_\alpha)_\LLambda=B(H_\alpha)_\LLambda$ inside $B(H_\alpha)$:
\begin{align*}
  B(H_\alpha)_\LLambda'&=\{b\in B(H_\alpha) \mid bf=fb \textup{ for all } f \in B(H_\alpha, H_\alpha)_\LLambda\} \\
                       &=\{b\in B(H_\alpha) \mid (b\otimes\id)w=wb \textup{ for all }\lambda \in I(\LLambda), w \in \Hom(\alpha, \alpha \otimes\lambda)\}.
\end{align*}

Recall that for $a\in \ell^\infty(\GGamma)$ and $\alpha\in I(\GGamma)$ we denote by $a_\alpha \in B(H_\alpha)$ the component $p_\alpha a$ of $a$. 

\begin{proposition} \label{prp_quotient_description} Let $\LLambda$ be a quantum
  subgroup of $\GGamma$ and $\alpha$, $\beta\in I(\GGamma)$. The map
  $(a \mapsto a_\alpha)$ is an injective $*$-homomorphism from
  $p_{[\alpha]} \ell^\infty(\GGamma/\LLambda)$ to $B(H_\alpha)$, with image
  $p_{\alpha}\ell^\infty(\GGamma/\LLambda) = B(H_\alpha)_\LLambda'$. More generally we
  have $\Hom_{\GGamma/\LLambda}(\alpha,\beta) = B(H_\alpha,
  H_\beta)_\LLambda$.
\end{proposition}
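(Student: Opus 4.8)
The plan is to identify the algebra $p_{[\alpha]}\ell^\infty(\GGamma/\LLambda)$ concretely by using the defining condition $(1\otimes p_\LLambda)\Delta(a) = a\otimes p_\LLambda$ together with the formula for the coproduct in terms of intertwiners. First I would show that $a\mapsto a_\alpha$ is injective on $p_{[\alpha]}\ell^\infty(\GGamma/\LLambda)$: if $a_\alpha = 0$ and $a\in\ell^\infty(\GGamma/\LLambda)$, then applying the coproduct identity and using the characterization of $\sim$ via $\Delta(p_\beta)(p_\alpha\otimes p_\LLambda)\neq 0$ (from \cite[Lemma~2.3]{Vergnioux_Amalg}) one propagates the vanishing of $a_\alpha$ to $a_\beta = 0$ for every $\beta\sim\alpha$, i.e.\ $p_{[\alpha]}a = 0$; this is essentially \cite[Lemma~3.3]{VergniouxVoigt}. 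That it is a $*$-homomorphism is immediate since it is just compression to $p_\alpha$ and $\ell^\infty(\GGamma/\LLambda)$ is a subalgebra.

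The heart of the matter is the identification of the image with $B(H_\alpha)_\LLambda'$. Here I would rewrite the condition $a\in\ell^\infty(\GGamma/\LLambda)$ component-by-component. Using $(p_\beta\otimes p_\gamma)\Delta(a)v = va_\alpha$ for $v\in\Hom(\alpha,\beta\otimes\gamma)$, the condition $(1\otimes p_\LLambda)\Delta(a) = a\otimes p_\LLambda$ translates, after pairing with intertwiners $w\in\Hom(\alpha,\alpha\otimes\lambda)$ for $\lambda\in I(\LLambda)$ (and noting $p_\LLambda p_\gamma\neq 0$ forces $\gamma=\lambda\in I(\LLambda)$, while the output-component forces $\beta$ to be in $[\alpha]$, in fact the defining identity only constrains the $\alpha$-block once we fix the target at $p_\alpha$), into the relation $(a_\alpha\otimes\id)w = w a_\alpha$ for all such $w$. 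That is precisely membership in $B(H_\alpha)_\LLambda'$ as spelled out in the second displayed description of that commutant. Conversely, given $b\in B(H_\alpha)_\LLambda'$, one defines $a\in\ell^\infty(\GGamma)$ by spreading $b$ across the class $[\alpha]$ — concretely $a_\beta$ is determined on each $\beta\sim\alpha$ by choosing $\gamma\in I(\LLambda)$ and $v\in\Hom(\beta,\alpha\otimes\gamma)$ and setting $(p_\beta\otimes p_\gamma)\Delta(a)v := vb$ — and one checks consistency (independence of the choices of $v,\gamma$) using exactly the intertwining relation defining $B(H_\alpha)_\LLambda'$, together with the conjugate equations. This produces an element of $p_{[\alpha]}\ell^\infty(\GGamma/\LLambda)$ mapping to $b$.

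For the last assertion, $\Hom_{\GGamma/\LLambda}(\alpha,\beta) = B(H_\alpha,H_\beta)_\LLambda$: by definition morphisms in the module category $\Rep(M)$ with $M = \ell^\infty(\GGamma/\LLambda)$ are the $M$-module maps $H_\alpha\to H_\beta$, i.e.\ $T$ with $T\alpha(a) = \beta(a)T$ for all $a\in\ell^\infty(\GGamma/\LLambda)$. Passing through the identification of $B(H_\alpha,H_\beta)$ with $H_{\bar\alpha}\otimes H_\beta$ and the description $p_\alpha\ell^\infty(\GGamma/\LLambda) = B(H_\alpha)'_\LLambda$ just obtained (applied to $\alpha$ and to $\beta$, and to the tensor product), the condition that $T$ intertwines the $M$-actions becomes, via Frobenius reciprocity, exactly the statement that $T$ lies in the span $\Span\{v(1\otimes\eta)\mid \lambda\in\Corep(\LLambda),\ v\in\Hom(\alpha\otimes\lambda,\beta),\ \eta\in H_\lambda\} = B(H_\alpha,H_\beta)_\LLambda$ recorded before the proposition.

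The main obstacle I anticipate is the \emph{surjectivity} onto $B(H_\alpha)_\LLambda'$, i.e.\ checking that the element $a$ built from $b$ is well-defined and genuinely lies in $\ell^\infty(\GGamma/\LLambda)$: one must verify that the formula defining $a_\beta$ does not depend on the intermediate choices and that the global element so assembled satisfies the coproduct condition on the nose, not just on the $\alpha$-component. This is where the precise form of the second description of $B(H_\alpha)_\LLambda'$ — phrased in terms of all $w\in\Hom(\alpha,\alpha\otimes\lambda)$, $\lambda\in I(\LLambda)$ — and the closure properties of $I(\LLambda)$ under subobjects, tensor products and conjugates (recorded at the end of Section~\ref{sec_quantum_groups}) are essential; Frobenius reciprocity is used repeatedly to move intertwiners across tensor factors.
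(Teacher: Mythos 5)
Your plan follows essentially the same route as the paper's proof: injectivity by propagating the vanishing of $a_\alpha$ through the class $[\alpha]$, the forward inclusion by pairing the invariance condition with intertwiners $w\in\Hom(\alpha,\alpha\otimes\lambda)$, and surjectivity by spreading $b\in B(H_\alpha)'_\LLambda$ over $[\alpha]$ via isometric intertwiners $v_\beta\in\Hom(\beta,\alpha\otimes\lambda_\beta)$ (the paper sets $a_\beta = v_\beta^*(b\otimes\id)v_\beta$) and verifying the coproduct condition with the extended commutation relations obtained from the conjugate equations. The only blemish is the garbled formula ``$(p_\beta\otimes p_\gamma)\Delta(a)v := vb$'', which with the paper's conventions should read $(p_\alpha\otimes p_\gamma)\Delta(a)v = (b\otimes\id)v$, forcing $a_\beta := v^*(b\otimes\id)v$; your surrounding description makes the correct intent clear.
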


\begin{proof}
  Let $a \in \ell^\infty(\GGamma/\LLambda)$. We have then
  $(p_\alpha\otimes p_\LLambda)\Delta(a) = a_\alpha\otimes p_\LLambda$. . Since
  $(p_\alpha\otimes p_\LLambda)\Delta$ is non-zero, hence also injective on any
  matrix block $p_\beta \ell^\infty(\GGamma)$ with
  $\beta\subset \alpha\otimes \lambda$, $\lambda \in I(\LLambda)$, we see that
  $a_\alpha=0$ $\Rightarrow$ $p_{[\alpha]}a=0$. Denote $b = a_\alpha$, and let
  $v \in \Hom(\alpha,\alpha\otimes\lambda)$ with $\lambda \in I(\LLambda)$. We
  have $b\otimes p_\lambda = (p_\alpha\otimes p_\lambda)\Delta(a)$ hence
  $(b\otimes \id)v = (p_\alpha\otimes p_\lambda)\Delta(a)v = v p_\alpha a = vb$.

  Conversely, start from an element $b \in B(H_\alpha)'_\LLambda$. Then, for any
  $\lambda$, $\mu \in I(\LLambda)$ and any
  $v \in \Hom(\alpha,\alpha\otimes\lambda\otimes\mu)$ we have
  $vb = (b\otimes\id\otimes\id)v$ --- it suffices to decompose
  $\lambda\otimes\mu$ into irreducibles, which are still in $I(\LLambda)$. Even
  more, for any $v \in \Hom(\alpha\otimes\lambda,\alpha\otimes\mu)$ we have
  $v (b\otimes\id) = (b\otimes\id)v$ --- apply the previous property to
  $w = (v\otimes \id) (\id\otimes t_\lambda)\in
  \Hom(\alpha,\alpha\otimes\mu\otimes\bar\lambda)$ and the conjugate equation.

  Now, for any $\beta \in [\alpha]$, choose $\lambda_\beta\in I(\LLambda)$ and
  $v_\beta \in \Hom(\beta,\alpha\otimes\lambda_\beta)$ isometric --- with
  $\lambda_\alpha = 1$ and $v_\alpha = \id$. Define $a_\beta\in B(H_\beta)$ by
  putting $ a_\beta = v_\beta^*(b\otimes\id)v_\beta$.  Take $\beta$,
  $\beta' \in [\alpha]$, $\mu\in I(\LLambda)$ and
  $w \in \Hom(\beta,\beta'\otimes\mu)$. Then we have, applying the identity
  $(b\otimes\id)u = u(b\otimes\id)$ to
  $u = (v_{\beta'}\otimes\id)w v_\beta^* \in
  \Hom(\alpha\otimes\lambda_\beta,\alpha\otimes\lambda_{\beta' }\otimes\mu)$:
  \begin{align*}
    w a_\beta &= (v_{\beta'}^*\otimes\id)(v_{\beta'}\otimes\id)w v_\beta^* (b\otimes\id)v_\beta \\
              &= (v_{\beta'}^*\otimes\id) (b\otimes\id\otimes\id) (v_{\beta'}\otimes\id)w v_\beta^* v_\beta
                = (a_{\beta'}\otimes\id) w.
  \end{align*} 
  This shows that
  $(p_{\beta'}\otimes p_\mu)\Delta(a_\beta) = a_{\beta'}\otimes p_\mu$.  Putting
  all $a_\beta$ together we obtain $a \in p_{[\alpha]}\ell^\infty(\GGamma)$ such
  that $(\id\otimes p_\LLambda)\Delta(a) = a\otimes p_\LLambda$, i.e.
  $a \in p_{[\alpha]}\ell^\infty(\GGamma/\LLambda)$. Moreover by our choice of
  $v_\alpha$ we have $a_\alpha = b$.
\end{proof}

\begin{remark}
  In particular $p_{[\alpha]}\ell^\infty(\GGamma/\LLambda)$ is a finite
  dimensional \Cst algebra for all $[\alpha] \in I(\GGamma)/\LLambda$, hence
  $\ell^\infty(\GGamma/\LLambda)$ is a von Neumann direct product of matrix
  algebras. The inclusion
  $p_\alpha \ell^\infty(\GGamma/\LLambda) \subset B(H_\alpha)$ can be strict,
  even for all elements $\alpha$ in a given class
  $\sigma \in I(\GGamma)/\LLambda$ --- e.g. we always have
  $p_{[1]}\ell^\infty(\GGamma/\LLambda) = \CC$, and for the dual of $SO(3)$ seen
  as a subgroup of the dual of $SU(2)$ we have
  $p_\sigma\ell^\infty(\GGamma/\LLambda) = \CC$ for both classes
  $\sigma \in I(\GGamma)/\LLambda$. It can also happen, e.g. for
  $\GGamma_1\subset \GGamma_1\times\GGamma_2$, that
  $p_\alpha\ell^\infty(\GGamma/\LLambda)$ is a proper, non-trivial sub-\Cst
  algebra of $B(H_\alpha)$.
  
  Further, recall that \cite[Theorem 5.2, Theorem 5.6]{DCKSS_DiscreteActions}
  show that each $[\alpha]$ corresponds to a certain equivalence class of
  minimal central projections in $\ell^\infty(\GGamma/\LLambda)$, determined by
  the left adjoint action of $\widehat{\GGamma}$, and $p_{[\alpha]}$ is equal to
  the sum of the projections in the aforementioned equivalence class. Thus the
  fact that each $p_\alpha\ell^\infty(\GGamma/\LLambda)$ is simple is equivalent
  to the equivalence relation above being trivial. This need not be the case, as
  already classical Clifford theory shows.
\end{remark}

One can reformulate the Hecke condition of Definition~\ref{def_commensurator}
using the left action of $\LLambda$ on $\ell^\infty(\GGamma/\LLambda)$ given by
$\alpha(a) = (p_\LLambda\otimes 1)\Delta(a) \in \ell^\infty(\LLambda)\bar\otimes
\ell^\infty(\GGamma/\LLambda)$ for $a \in
\ell^\infty(\GGamma/\LLambda)$. Following \cite[Section
4]{DKSS_ClosedSubgroups}, define an equivalence relation on
$I(\GGamma/\LLambda)$ by putting $i\equiv_\LLambda j$ if
$\Delta(q_i)(p_\LLambda\otimes q_j) \neq 0$, where $q_i$, $q_j$ are the minimal
central projections in $\ell^\infty(\GGamma/\LLambda)$ corresponding to $i$ and
$j$ respectively. Equivalently, $i\equiv_\LLambda j$ iff
$j\subset \lambda\otimes i$ for some $\lambda\in I(\LLambda)$ with respect to
the module-category structure mentioned previously. Then the Hecke condition is
satisfied iff the action of $\LLambda$ on $\GGamma/\LLambda$ ``has finite
orbits'', as stated precisely in the next proposition.

\begin{proposition} \label{prop_hecke_orbits} The pair $(\GGamma,\LLambda)$ is a
  Hecke pair iff the equivalence relation $\equiv_\LLambda$ on
  $I(\GGamma/\LLambda)$ has finite classes.
\end{proposition}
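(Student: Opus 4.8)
The plan is to translate the combinatorial Hecke condition, which is phrased in terms of the double-coset spaces $\LLambda\bs I(\GGamma)/\LLambda$, into the language of the module category $\Corep(\GGamma/\LLambda)$ and its minimal central projections $q_i$ ($i \in I(\GGamma/\LLambda)$). By Proposition~\ref{prp_quotient_description} and the remark following it, $\ell^\infty(\GGamma/\LLambda)$ is a von Neumann direct product of matrix algebras, and each $p_{[\alpha]}$ decomposes as a finite sum $\sum q_i$ over a finite subset of $I(\GGamma/\LLambda)$ (finiteness because $p_{[\alpha]}\ell^\infty(\GGamma/\LLambda)$ is finite-dimensional). Dually, each $q_i$ is supported on a single class $[\alpha] \in I(\GGamma)/\LLambda$, so there is a well-defined surjective map $\pi : I(\GGamma/\LLambda) \to I(\GGamma)/\LLambda$ sending $i \mapsto [\alpha]$ whenever $q_i \le p_{[\alpha]}$, with finite fibres.

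The key step is to compare the equivalence relation $\equiv_\LLambda$ on $I(\GGamma/\LLambda)$ (defined by $i \equiv_\LLambda j$ iff $j \subset \lambda \otimes i$ for some $\lambda \in I(\LLambda)$, in the module category) with the double-coset relation $\approx$ on $I(\GGamma)$. I would show that the map $\pi$ descends to a \emph{bijection} between $(\equiv_\LLambda)$-classes in $I(\GGamma/\LLambda)$ and $(\approx)$-classes in $I(\GGamma)$, i.e.\ that $i \equiv_\LLambda j$ implies $\pi(i) \approx \pi(j)$ in $\LLambda\bs I(\GGamma)/\LLambda$, and conversely that a full $\approx$-class $\tau$ is hit by exactly one $\equiv_\LLambda$-class. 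The forward implication: if $j \subset \lambda \otimes i$ with $\lambda \in I(\LLambda)$, pick $\alpha$ with $q_i \le p_{[\alpha]}$; then at the level of $I(\GGamma)$ the module-category tensoring $\lambda \otimes (-)$ corresponds to $\gamma \subset \lambda \otimes \beta$ for $\beta$ in the relevant $I(\GGamma)/\LLambda$-class, so $\pi(j) \backsim$-related (after a right-coset move) to $\pi(i)$, hence $\pi(i) \approx \pi(j)$. For surjectivity onto a single class, I would use the explicit description $\alpha \approx \beta$ iff $\beta \subset \delta \otimes \alpha \otimes \gamma$ with $\delta, \gamma \in I(\LLambda)$, established in the excerpt, together with the fact (from Proposition~\ref{prp_quotient_description}) that the $\LLambda$-action on $\GGamma/\LLambda$ realizes exactly the moves $\beta \mapsto (\text{subobjects of } \delta \otimes \beta)$ within $\GGamma/\LLambda$.

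Once this correspondence of classes is in place, the proposition follows by a finiteness bookkeeping argument. A $(\equiv_\LLambda)$-class $C \subset I(\GGamma/\LLambda)$ is finite iff $\sum_{i \in C} q_i$ is a finite sum of matrix blocks; under $\pi$ this projection is $p_\tau$ for the corresponding double coset $\tau = \dc\alpha$, which by the formula recalled before the proposition equals $\sum\{p_\sigma : \sigma \in I(\GGamma)/\LLambda,\ \sigma \subset \tau\}$, and each $p_\sigma$ is a finite sum of the $q_i$'s (finiteness of fibres of $\pi$). Therefore $C$ is finite iff $R(\tau) = \#\{\sigma \in I(\GGamma)/\LLambda : \sigma \subset \tau\} < +\infty$. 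Running the same argument on the other side (right quotient $\LLambda\bs\GGamma$, or equivalently applying the unitary antipode via Lemma~\ref{lemma_quotient_antipode} which swaps left and right homogeneous spaces) controls $L(\tau)$. Hence all $\equiv_\LLambda$-classes are finite iff $R(\tau), L(\tau) < +\infty$ for all $\tau$, which by Definition~\ref{def_commensurator} is exactly the Hecke condition.

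The main obstacle I anticipate is making the correspondence between the module-category tensoring $\lambda \otimes (-)$ on $I(\GGamma/\LLambda)$ and the concrete subobject relation on $I(\GGamma)$ fully precise — in particular, checking that $\pi$ is well-defined (each $q_i$ sits under a \emph{unique} $p_{[\alpha]}$, which is clear since the $p_{[\alpha]}$ are orthogonal and sum to $1$) and that $\equiv_\LLambda$ is, as claimed in the excerpt, genuinely generated by the relation $j \subset \lambda \otimes i$. One should be careful that a single $\equiv_\LLambda$-class can spread over several $I(\GGamma)/\LLambda$-classes $\sigma$ within one double coset $\tau$, so the counting identity must be set up at the level of projections $p_\tau = \sum q_i$ rather than naively equating class sizes; the bijection is between $\equiv_\LLambda$-classes and $\approx$-classes, not between $\equiv_\LLambda$-classes and $\sim$-classes. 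Granting the structural results already proved in Section~\ref{sec_quotient_spaces}, the remaining verifications are routine manipulations with Frobenius reciprocity and orthogonality of central projections.
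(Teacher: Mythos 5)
Your overall strategy (compare $\equiv_\LLambda$ on $I(\GGamma/\LLambda)$ with the double--coset decomposition via the finite--fibre map $\pi$) is close to what the paper does, but the structural claim you build it on --- that $\pi$ induces a \emph{bijection} between $\equiv_\LLambda$-classes and $\approx$-classes, so that $\sum_{i\in C}q_i=p_\tau$ for a single class $C$ --- is false. Already for a normal quantum subgroup it fails: take $\GGamma=\hat G$ for a finite group $G$ with a normal subgroup $N$ and $\LLambda=\widehat{G/N}$. Then $\ell^\infty(\GGamma/\LLambda)=\CC[N]$, so $I(\GGamma/\LLambda)=\hat N$, and the module action of $\lambda\in I(\LLambda)$ (an irreducible representation of $G$ trivial on $N$) on $i\in\hat N$ is $\lambda\otimes i=\lambda|_N\otimes i=\dim(\lambda)\,i$; hence $\equiv_\LLambda$ is the identity relation and all its classes are singletons. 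On the other hand $I([\pi])$ for $\pi\in\hat G$ is the full $G$-orbit of the constituents of $\pi|_N$ (Clifford theory), which may have several elements even though $\dc\pi=[\pi]$ is a single right class. For $G=S_3$, $N=A_3\cong\ZZ/3\ZZ$ and $\pi$ the $2$-dimensional representation, the double coset $\dc\pi$ meets the two distinct $\equiv_\LLambda$-classes $\{\omega\}$ and $\{\omega^2\}$, where $\omega,\omega^2$ are the nontrivial characters of $A_3$. So ``a full $\approx$-class is hit by exactly one $\equiv_\LLambda$-class'' is wrong, and your deduction ``$C$ is finite iff $R(\tau)<+\infty$'' does not follow for an individual class $C$.

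The proposition survives because only a weaker, two-sided containment is needed, and this is what the paper's proof uses. Namely: (i) if $i\equiv_\LLambda j$ then $\pi(i)$ and $\pi(j)$ lie in the same double coset (your forward implication, which is correct); and (ii) if $[\alpha]$ and $[\beta]$ lie in the same double coset then $\Delta(p_{[\alpha]})(p_\LLambda\otimes p_{[\beta]})\neq 0$, hence \emph{some} $i\in I([\alpha])$ is $\equiv_\LLambda$-related to \emph{some} $j\in I([\beta])$ --- not all to all. Combining (i), (ii) with the finiteness of the fibres $I([\alpha])$ gives: all $\equiv_\LLambda$-classes are finite iff for each $[\alpha]$ only finitely many $[\beta]$ satisfy $\dc\beta=\dc\alpha$, i.e.\ $R(\tau)<+\infty$ for all $\tau$. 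This is equivalent to the Hecke condition because $L(\dc\alpha)=R(\dc{\bar\alpha})$, a point worth making explicit: your ``run the argument on the other side'' produces a relation on $I(\LLambda\bs\GGamma)$, which is not the relation appearing in the statement. So the argument is repairable along these lines, but as written it rests on a false lemma.
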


\begin{proof}
  Denote by $q_i\in\ell^\infty(\GGamma/\LLambda)$ the minimal central projection
  associated with $i\in I(\GGamma/\LLambda)$. For any
  $[\alpha] \in I(\GGamma)/\LLambda$ we have a finite set
  $I([\alpha]) \subset I(\GGamma/\LLambda)$ such that
  $p_{[\alpha]} = \sum_{i\in I([\alpha])} q_i$. This implies that the action of
  $\LLambda$ on $\ell^{\infty}(\GGamma/\LLambda)$ has finite orbits if and only
  if for any $[\alpha] \in I(\GGamma)/\LLambda$ there exist only finitely many
  $[\beta] \in I(\GGamma)/\LLambda$ such that
  $\Delta(p_{[\alpha]})(p_\LLambda \otimes p_{[\beta]})\neq 0$. This means that
  we can find $\alpha'\in[\alpha]$, $\beta'\in[\beta]$ such that
  $\alpha'\backsim\beta'$, in other words $\dc\alpha=\dc\beta$. This ends the
  proof.
\end{proof}

We will now represent invariant functions using $\Delta(p_\LLambda)$, see
Theorem~\ref{thm_strong_inv}, which is essentially a consequence of
Proposition~\ref{prp_quotient_description} and ``strong invariance'' of the Haar
weights (\cite[Proposition 5.24]{KustermansVaes}). The quantities $\kappa$ below
will play a fundamental role in the sequel. Recall that for
$v \in \Corep(\GGamma)$ we denote by $v_\LLambda$ the sum of all irreducible
subobjects of $v$ equivalent to an element of $I(\LLambda)$.

\begin{definition} \label{def_kappa} For $\alpha$, $\beta \in I(\GGamma)$ we put
  $\kappa_{\alpha,\beta} = \qdim (\alpha\otimes\beta)_\LLambda$ and
  $\kappa_\alpha = \kappa_{\bar\alpha,\alpha}$.
\end{definition}

We first make the connection between the constants $\kappa_{\alpha,\beta}$ and
the Hopf algebra structure of $\ell^\infty(\GGamma)$. Recall that we denote
$a\varphi = \varphi(\,\cdot\, a)$, $\varphi a = \varphi(a \,\cdot\,)$ if
$\varphi$ is a linear form on an algebra and $a$ an element of this algebra.

\begin{lemma} \label{lem_kappa} For every $\alpha$, $\beta \in I(\GGamma)$ we
  have
  \begin{displaymath}
    (h_Rp_\LLambda\otimes p_\beta)\Delta(p_\alpha) = \frac{\qdim(\alpha)}{\qdim(\beta)} ~
    \kappa_{\alpha,\bar\beta} p_\beta.
  \end{displaymath}
  In particular
  $(h_Rp_\LLambda\otimes p_\alpha)\Delta(p_\alpha) =
  \kappa_{\bar\alpha}p_\alpha$.
\end{lemma}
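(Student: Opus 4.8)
The plan is to compute $(h_R p_\LLambda \otimes p_\beta)\Delta(p_\alpha)$ by first reducing to a scalar (since it lands in $p_\beta \ell^\infty(\GGamma) = B(H_\beta)$ and, by left-invariance of $h_R$ in the appropriate slot, it should be a multiple of $p_\beta$) and then identifying that scalar as a quantum trace. First I would note that $(h_R p_\LLambda \otimes \id)\Delta(p_\alpha)$ is an element of $c(\GGamma)$, and I would apply a standard invariance identity for the right Haar weight: since $h_R$ is right-invariant, $(\id \otimes h_R)\Delta(x) = h_R(x) 1$, and there is a companion ``strong left invariance'' identity relating $(h_R \otimes \id)\Delta$ to the antipode, as in \cite[Proposition 5.24]{KustermansVaes}. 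The cleanest route is to pair the expression against a vector in a morphism space: for $v \in \Hom(\beta, \alpha \otimes \lambda)$ with $\lambda \in I(\LLambda)$, the coproduct formula $(p_\beta \otimes p_\gamma)\Delta(a)v = v a_\alpha$ (stated in the preliminaries, suitably adapted) lets me express the $B(H_\beta)$-component in terms of the $B(H_\alpha \otimes H_\lambda)$-data, so that applying $h_R p_\LLambda$ in the first leg forces $\lambda \in I(\LLambda)$ and produces a sum of $\qdim$'s over exactly the $\LLambda$-subobjects of $\alpha \otimes \bar\beta$ — which by Definition~\ref{def_kappa} is $\kappa_{\alpha,\bar\beta}$ — weighted by the ratio $\qdim(\alpha)/\qdim(\beta)$ coming from the normalization of $h_R$ on the block $B(H_\alpha)$ versus $B(H_\beta)$.

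Concretely, the key computation runs as follows. Recall $h_R$ restricted to the block $B(H_\gamma)$ is proportional to the quantum trace; on the conjugate-pairing $H_{\bar\beta} \otimes H_\beta \cong B(H_\beta)$ one reads off $(\alpha \otimes \bar\beta)_\LLambda$ using the Frobenius reciprocity description of $B(H_\alpha, H_\beta)_\LLambda$ given just before Proposition~\ref{prp_quotient_description}: indeed $(p_\LLambda)$ cuts $\alpha \otimes \bar\beta$ down to its largest $\LLambda$-subobject, and applying $h_R$ to that projection gives its quantum dimension. The factor $\qdim(\alpha)/\qdim(\beta)$ arises because the conjugate equations for $t_\alpha$ relate the normalizations: $(h_R \otimes \id)$ applied in the first leg of $\Delta(p_\alpha)$ carries a weight involving $\qdim(\alpha)$, while the output sits in the $\beta$-block whose trace normalization involves $\qdim(\beta)$. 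Collecting terms yields $\frac{\qdim(\alpha)}{\qdim(\beta)} \qdim(\alpha \otimes \bar\beta)_\LLambda \, p_\beta = \frac{\qdim(\alpha)}{\qdim(\beta)} \kappa_{\alpha,\bar\beta} \, p_\beta$.

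For the last assertion, I would simply substitute $\beta = \alpha$: then $\qdim(\alpha)/\qdim(\beta) = 1$ and $\kappa_{\alpha,\bar\beta} = \kappa_{\alpha,\bar\alpha}$. I need $\kappa_{\alpha,\bar\alpha} = \kappa_{\bar\alpha}$, which is immediate from Definition~\ref{def_kappa} once one knows $(\alpha \otimes \bar\alpha)_\LLambda$ and $(\bar\alpha \otimes \alpha)_\LLambda$ have the same quantum dimension; this follows since $I(\LLambda)$ is stable under conjugation (stated at the end of the preliminaries on discrete quantum groups) and taking conjugates is a $\qdim$-preserving bijection between the $\LLambda$-subobjects of $\alpha \otimes \bar\alpha$ and those of $\bar\alpha \otimes \alpha$.

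The main obstacle I anticipate is bookkeeping the normalization constants correctly: the precise proportionality between $h_R$ and the quantum trace on each block $B(H_\gamma)$, and tracking how the conjugate-equation morphisms $t_\alpha$, $t_\beta$ interact when one slices $\Delta(p_\alpha)$ — it is easy to be off by a factor of $\qdim$ or to confuse $\kappa_{\alpha,\bar\beta}$ with $\kappa_{\bar\alpha,\beta}$. Getting the direction of the antipode/strong-invariance identity right (i.e.\ whether the weight is $\qdim(\alpha)$ or $\qdim(\alpha)^{-1}$, and in which leg $p_\LLambda$ must sit) is where I would be most careful, and I would double-check it against the special case $\LLambda = \{e\}$ (where $p_\LLambda = p_1$, $\kappa_{\alpha,\bar\beta} = \delta_{\alpha,\beta}$) and against the trivial case $\LLambda = \GGamma$.
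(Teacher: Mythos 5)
Your main computation follows essentially the same route as the paper: reduce $(h_Rp_\LLambda\otimes p_\beta)\Delta(p_\alpha)$ to a scalar multiple of $p_\beta$ by an intertwiner argument, express $h_R$ on a matrix block via the quantum trace implemented by the conjugate morphisms $t_\lambda$, use Frobenius reciprocity to convert $\dim\Hom(\alpha,\lambda\otimes\beta)$ into $\dim\Hom(\lambda,\alpha\otimes\bar\beta)$, and sum $\qdim(\lambda)$ over $\lambda\in I(\LLambda)$ with these multiplicities to obtain $\kappa_{\alpha,\bar\beta}$. The paper does exactly this (first fixing $\lambda$, evaluating the resulting scalar against $h_R$, then summing), so up to the normalization bookkeeping you explicitly defer, this part is sound.

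The justification of the ``in particular'' statement, however, contains a genuine error. You claim that $\kappa_{\alpha,\bar\alpha}=\kappa_{\bar\alpha}$ requires knowing that $(\alpha\otimes\bar\alpha)_\LLambda$ and $(\bar\alpha\otimes\alpha)_\LLambda$ have the same quantum dimension, and you argue this via a conjugation bijection between their $\LLambda$-subobjects. That bijection does not exist: conjugation carries subobjects of $\alpha\otimes\bar\alpha$ to subobjects of $\overline{\alpha\otimes\bar\alpha}\simeq\alpha\otimes\bar\alpha$, i.e.\ back to the same object, not to $\bar\alpha\otimes\alpha$. Indeed the equality $\qdim(\alpha\otimes\bar\alpha)_\LLambda=\qdim(\bar\alpha\otimes\alpha)_\LLambda$ is precisely the statement $\kappa_{\bar\alpha}=\kappa_\alpha$, which fails in general --- see Example~\ref{exple_free_kappa}, where $\kappa_\alpha=1$ but $\kappa_{\bar\alpha}=\qdim(v)^2$. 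Fortunately the identity you actually need holds for a much simpler reason: by Definition~\ref{def_kappa}, $\kappa_{\bar\alpha}=\kappa_{\overline{\bar\alpha},\bar\alpha}=\kappa_{\alpha,\bar\alpha}$ since $\overline{\bar\alpha}\simeq\alpha$. So the conclusion does follow from your main formula with $\beta=\alpha$, but by the definition alone, not by the argument you give; as written, that argument would also ``prove'' the false identity $\kappa_\alpha=\kappa_{\bar\alpha}$.
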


\begin{proof}
  We first show that for any $\alpha$, $\beta$, $\lambda \in I(\GGamma)$ the
  linear map $(h_Rp_\lambda\otimes p_\beta)\Delta(p_\alpha) \in B(H_\beta)$ is
  an intertwiner, hence a scalar.  Indeed, we have
  $h_R(a) = \qdim(\alpha) t_\alpha^*(1\otimes a)t_\alpha$ for any
  $a \in p_\alpha c(\GGamma) = B(H_\alpha)$, where
  $t_\alpha \in \Hom(1,\bar\alpha\otimes\alpha)$ is such that
  $\|t_\alpha\|^2 = \qdim(\alpha)$. Then, if $(v_i)_i$ is an ONB of
  $\Hom(\alpha, \lambda\otimes\beta)$ and
  $P_\alpha^{\lambda,\beta} = \sum v_iv_i^*\in B(H_\lambda \otimes H_\beta)$
  denotes the orthogonal projection onto the $\alpha$-isotypic component of
  $\lambda\otimes\beta$, we have
  \begin{align*}
    (h_Rp_\lambda\otimes p_\beta)\Delta(p_\alpha) &= 
                                                    \qdim(\lambda) ~ {\ts \sum_i}
                                                    (t_\lambda^* \otimes\id)(\id\otimes v_i p_\alpha v_i^*) 
                                                    (t_\lambda \otimes\id) \\
                                                  &= \qdim(\lambda) ~ (t_\lambda^* \otimes\id)(\id\otimes P_\alpha^{\lambda,\beta}) 
                                                    (t_\lambda \otimes\id) \in \Hom(\beta,\beta).
  \end{align*}
  Hence there is a scalar $\kappa_{\alpha,\bar\beta}^{\lambda}\geq 0$ such that
  $(h_Rp_\lambda\otimes p_\beta)\Delta(p_\alpha) =
  \kappa_{\alpha,\bar\beta}^{\lambda} p_\beta$.  This scalar can be computed by
  evaluating both sides against $h_R$. We obtain, after dividing both sides
  by $\qdim(\beta)$:
  \begin{align*}
    \qdim(\beta) \times \kappa_{\alpha,\bar\beta}^{\lambda} &= 
                                                              \qdim(\lambda) ~ t_\beta^* (\id\otimes t_\lambda^* \otimes\id)
                                                              (\id\otimes\id\otimes P_\alpha^{\lambda,\beta}) 
                                                              (\id\otimes t_\lambda \otimes\id)t_\beta \\
                                                            &= \qdim(\lambda) ~ t_{\lambda\otimes\beta}^*(\id_{\bar\beta\otimes\bar\lambda}
                                                              \otimes P_\alpha^{\lambda,\beta}) t_{\lambda\otimes\beta} 
                                                              = \qdim(\lambda) ~ {\ts\sum_i} 
                                                              t_{\lambda\otimes\beta}^*(\id_{\bar\beta\otimes\bar\lambda}
                                                              \otimes v_iv_i^*) t_{\lambda\otimes\beta} \\
                                                            &= \qdim(\lambda)  \cc_\alpha^{\lambda,\beta} t_\alpha^*t_\alpha = 
                                                              \cc_\alpha^{\lambda,\beta} \qdim(\lambda) \qdim(\alpha),
  \end{align*}
  where $\cc_\alpha^{\lambda,\beta}=\dim(\Hom(\alpha,\lambda\otimes\beta))$. Note that an analogous formula appears already in work of Izumi, see the remark before 
	Corollary 3.7 in \cite{Izumi}. 
	
	By Frobenius reciprocity we also have 
  $\cc_\alpha^{\lambda,\beta} = \cc_\lambda^{\alpha,\bar\beta}$. Summing over
  $\lambda \in I(\LLambda)$ we get
  \begin{align*}
    \qdim(\beta) \times (h_Rp_\LLambda\otimes p_\beta)\Delta(p_\alpha) &= \qdim(\alpha) \left(
      \ts{\sum_{\lambda\in I(\LLambda)}} \cc_\lambda^{\alpha,\bar\beta} \qdim (\lambda) \right)p_\beta \\
    &= \qdim(\alpha) \qdim (\alpha\otimes\bar\beta)_\LLambda p_\beta,
  \end{align*}
  which yields the claim.
\end{proof}

Theorem~\ref{thm_strong_inv} below is a very useful tool for the study of Hecke
algebras associated to discrete quantum groups. It is merely a materialization
of Proposition~\ref{prp_quotient_description}, which says that one can recover
$a \in p_{[\alpha]} c(\GGamma/\LLambda)$ from $a_\alpha$, and it is a simple
consequence of the so-called strong invariance properties of the Haar
weights. It is well-known in the case when $\LLambda = \{e\}$ --- then
$p_\LLambda$ is the support of the co-unit and $\kappa_\alpha = 1$ for every
$\alpha$. It has several corollaries important for what follows.

\begin{theorem} \label{thm_strong_inv} For any $a \in c_c(\GGamma/\LLambda)$,
  $b \in c_c(\LLambda\bs\GGamma)$ and any choices of representatives
  $\alpha \in [\alpha]$, $\beta\in[\beta]$ we have
  \begin{align}
    \label{eq_strong_inv_right}
    a &= \sum_{[\alpha]\in I(\GGamma)/\LLambda} \kappa_\alpha^{-1} 
        (S^{-1}(a_\alpha)h_R\otimes\id) \Delta(p_\LLambda)
        ~= \sum_{[\alpha]\in I(\GGamma)/\LLambda} \kappa_\alpha^{-1} 
        (h_RS(a_\alpha)\otimes\id) \Delta(p_\LLambda) \\
    \label{eq_strong_inv_left}
    b &= \sum_{[\beta]\in \LLambda\bs I(\GGamma)} \kappa_{\bar\beta}^{-1} 
        (\id\otimes h_LS^{-1}(b_\beta)) \Delta(p_\LLambda)
        ~= \sum_{[\beta]\in \LLambda\bs I(\GGamma)} \kappa_{\bar\beta}^{-1} 
        (\id\otimes S(b_\beta)h_L) \Delta(p_\LLambda)
  \end{align}
\end{theorem}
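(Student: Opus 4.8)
The plan is to use Proposition~\ref{prp_quotient_description} to reduce everything to an identity in a single matrix block. By linearity and since $c_c(\GGamma/\LLambda)=\Span\{p_\sigma c(\GGamma/\LLambda):\sigma\in I(\GGamma)/\LLambda\}$, it suffices to prove \eqref{eq_strong_inv_right} for $a\in p_{[\alpha]}c_c(\GGamma/\LLambda)$ with $\alpha$ the chosen representative of $[\alpha]$; then the sum on the right collapses to its $[\alpha]$-term. I would first note that the two expressions on the right of \eqref{eq_strong_inv_right} agree, because $h_R(S(c)\,y)=h_R(y\,S^{-1}(c))$ for all (automatically analytic) elements $c,y$ of a finite-dimensional block of $c(\GGamma)$: this follows from the KMS property of $h_R$ with respect to $\sigma^R$ together with $\sigma^R_t=\tau_{-t}$ (Lemma~\ref{lemma_quotient_antipode}) and $\tau_i=S^{-2}$, so that the functionals $S^{-1}(a_\alpha)h_R$ and $h_RS(a_\alpha)$ coincide on $\Delta(p_\LLambda)$. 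The same remark handles the two forms in \eqref{eq_strong_inv_left}.

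Write $x:=\kappa_\alpha^{-1}\,(h_RS(a_\alpha)\otimes\id)\Delta(p_\LLambda)\in c(\GGamma)$. The goal is $x=a$, and by the injectivity statement of Proposition~\ref{prp_quotient_description} it is enough to show that $x\in p_{[\alpha]}\ell^\infty(\GGamma/\LLambda)$ and that $x_\alpha=a_\alpha$. For the first point: since $S(a_\alpha)\in B(H_{\bar\alpha})$ and $(p_\gamma\otimes p_\beta)\Delta(p_\LLambda)\ne 0$ precisely when some $\lambda\in I(\LLambda)$ satisfies $\lambda\subset\gamma\otimes\beta$, Frobenius reciprocity forces the only surviving components $x_\beta$ to be those with $\alpha\sim\beta$, so $x$ is supported on the coset $[\alpha]$; and one checks, using coassociativity of $\Delta$, the quantum subgroup relation $(\id\otimes p_\LLambda)\Delta(p_\LLambda)=(p_\LLambda\otimes p_\LLambda)\Delta(p_\LLambda)$, and strong invariance of $h_R$ (\cite[Proposition~5.24]{KustermansVaes}), that $(\id\otimes p_\LLambda)\Delta(x)=x\otimes p_\LLambda$; hence $x=p_{[\alpha]}x\in p_{[\alpha]}\ell^\infty(\GGamma/\LLambda)$.

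For the $\alpha$-block, observe that $P:=(p_{\bar\alpha}\otimes p_\alpha)\Delta(p_\LLambda)=\sum_{\lambda\in I(\LLambda)}(p_{\bar\alpha}\otimes p_\alpha)\Delta(p_\lambda)$ is the orthogonal projection of $H_{\bar\alpha}\otimes H_\alpha$ onto the largest $\Corep(\LLambda)$-subobject $(\bar\alpha\otimes\alpha)_\LLambda$, whose quantum dimension is $\kappa_\alpha$ by Definition~\ref{def_kappa}. Therefore $x_\alpha=\kappa_\alpha^{-1}(h_R\otimes\id)\bigl((S(a_\alpha)\otimes\id_\alpha)\,P\bigr)$, with $h_R$ evaluated on the first ($H_{\bar\alpha}$-)leg. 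Inserting the identity $h_R(y)=\qdim(\bar\alpha)\,t_{\bar\alpha}^*(1\otimes y)t_{\bar\alpha}$ for $y\in B(H_{\bar\alpha})$ used in the proof of Lemma~\ref{lem_kappa}, expanding $S(a_\alpha)$ through the conjugate morphisms $t_\alpha,t_{\bar\alpha}$, and exploiting the characterisation of $B(H_\alpha)'_\LLambda$ to pull $a_\alpha$ through the $\LLambda$-intertwiners hidden in $P$, one arrives at $(h_R\otimes\id)\bigl((S(a_\alpha)\otimes\id_\alpha)\,P\bigr)=\qdim(\bar\alpha\otimes\alpha)_\LLambda\cdot a_\alpha=\kappa_\alpha a_\alpha$, i.e. $x_\alpha=a_\alpha$. (For $a_\alpha=p_\alpha$ this is just a variant of the computation in the proof of Lemma~\ref{lem_kappa}, which gives $(h_Rp_{\bar\alpha}\otimes p_\alpha)\Delta(p_\LLambda)=\kappa_\alpha p_\alpha$.) Finally \eqref{eq_strong_inv_left} follows by the analogous argument with the roles of the two legs exchanged, or equivalently by applying the unitary antipode $R$, which carries $c_c(\LLambda\bs\GGamma)$ onto $c_c(\GGamma/\LLambda)$ and fixes $p_\LLambda$ by Lemma~\ref{lemma_quotient_antipode}, and satisfies $\Delta R=\sigma(R\otimes R)\Delta$ and $h_L=h_R R$ (the constant $\kappa_\alpha$ then reappearing as $\kappa_{\bar\beta}$).

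The step I expect to be the genuine obstacle is the explicit evaluation of $(h_R\otimes\id)\bigl((S(a_\alpha)\otimes\id_\alpha)P\bigr)$: it is a diagrammatic computation involving the solutions of the conjugate equations, the projection $P$, and the antipode, in which one must track the scaling and modular automorphisms ($\sigma^R_t=\tau_{-t}$, $\tau_i=S^{-2}$) carefully enough to produce exactly the constant $\kappa_\alpha=\qdim(\bar\alpha\otimes\alpha)_\LLambda$ of Definition~\ref{def_kappa} — and not, for instance, $\kappa_{\bar\alpha}$. The membership-in-$\ell^\infty(\GGamma/\LLambda)$ part is a second, smaller point that likewise genuinely uses the quantum subgroup relations together with strong invariance rather than coassociativity alone.
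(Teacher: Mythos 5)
Your overall architecture is sound and in fact mirrors the paper's: both proofs reduce, via the injectivity statement of Proposition~\ref{prp_quotient_description}, to checking (i) that $x:=\kappa_\alpha^{-1}(S^{-1}(a_\alpha)h_R\otimes\id)\Delta(p_\LLambda)$ lies in $p_{[\alpha]}\ell^\infty(\GGamma/\LLambda)$ and (ii) that $x_\alpha=a_\alpha$; your treatment of (i), of the equality of the two functionals $S^{-1}(a_\alpha)h_R=h_RS(a_\alpha)$, and of the passage from \eqref{eq_strong_inv_right} to \eqref{eq_strong_inv_left} is all fine. (One small correction: for (i) the paper needs only coassociativity and the group-like relation $\Delta(p_\LLambda)(1\otimes p_\LLambda)=p_\LLambda\otimes p_\LLambda$ — the point is that $\Delta(x)(1\otimes p_\LLambda)=x\otimes p_\LLambda$ holds for \emph{any} slice $x=(\varphi\otimes\id)\Delta(p_\LLambda)$ — so strong invariance is not actually used there, contrary to your closing remark.)

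The genuine gap is exactly the step you flag yourself: the identity $(h_RS(a_\alpha)\otimes\id)\bigl((p_{\bar\alpha}\otimes p_\alpha)\Delta(p_\LLambda)\bigr)=\kappa_\alpha a_\alpha$ for a general $a_\alpha\in B(H_\alpha)'_\LLambda$ is asserted with only a sketch (``expanding $S(a_\alpha)$ through the conjugate morphisms \dots one arrives at''), and you verify it only for $a_\alpha=p_\alpha$. That special case is indeed a rerun of Lemma~\ref{lem_kappa}, but the whole content of the theorem is the general case, and a diagrammatic argument pulling $a_\alpha$ through the intertwiners would have to be written out with the correct Frobenius twists and $F$-matrix factors to be credited. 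The paper closes this gap in three lines without any diagram: strong right invariance gives $(a_\alpha h_R\otimes\id)\Delta(p_\lambda)=(h_R\otimes S)\bigl((p_\lambda\otimes\id)\Delta(a_\alpha)\bigr)$, and the left $\LLambda$-invariance of $a$ (applied to $S^{-1}(a_\alpha)\in c(\LLambda\bs\GGamma)$) lets one replace $(p_\lambda\otimes p_\alpha)\Delta(a_\alpha)$ by $(1\otimes a_\alpha)(p_\lambda\otimes p_\alpha)\Delta(p_\alpha)$, so that after summing over $\lambda$ the general case collapses to the $p_\alpha$ case and Lemma~\ref{lem_kappa} produces the constant $\kappa_\alpha$ with $a_\alpha$ sitting harmlessly outside. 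I would recommend replacing your unexecuted diagrammatic computation by this invariance argument: it is precisely the ``strong invariance'' mechanism the theorem is named after, and it is what guarantees you get $\kappa_\alpha$ rather than $\kappa_{\bar\alpha}$.
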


\begin{proof}
  We start with $a \in c_c(\LLambda\bs\GGamma)$, $\alpha \in I(\GGamma)$ and
  $\lambda \in I(\LLambda)$.  By strong right invariance we have
  $(a_\alpha h_R\otimes\id)\Delta(p_\lambda) = (h_R\otimes
  S)((p_\lambda\otimes\id)\Delta(a_\alpha))$ hence
  $(a_\alpha h_R\otimes p_{\bar\alpha})\Delta(p_\lambda) = {(h_R\otimes
  S)}$ $((p_\lambda\otimes p_{\alpha})\Delta(a_\alpha))$.  By left invariance of $a$
  we have
  $(p_\lambda\otimes p_{\alpha})\Delta(a_\alpha) = (1\otimes
  a_\alpha)(p_\lambda\otimes p_{\alpha})\Delta(p_\alpha)$ so that
  $(a_\alpha h_R\otimes p_{\bar\alpha})\Delta(p_\lambda) = (h_R\otimes S)((
  p_\lambda\otimes p_\alpha)\Delta(p_\alpha)) S(a_\alpha)$.  Summing over
  $\lambda\in I(\LLambda)$ and applying Lemma~\ref{lem_kappa} we obtain
  \begin{displaymath}
    (a_\alpha h_R\otimes p_{\bar\alpha})\Delta(p_\LLambda) = (h_R\otimes S)((
    p_\LLambda\otimes p_\alpha)\Delta(p_\alpha))  S(a_\alpha) = \kappa_{\bar\alpha} S(a_\alpha).
  \end{displaymath}

  By Lemma~\ref{lemma_quotient_antipode}, for $a \in c_c(\GGamma/\LLambda)$ we
  can apply this formula to
  $S^{-1}(a_\alpha) \in p_{\bar\alpha}c(\LLambda\bs\GGamma)$, which yields
  $a_\alpha = \kappa_\alpha^{-1} (S^{-1}(a_\alpha)h_R\otimes
  p_\alpha)\Delta(p_\LLambda)$.  Now we observe that the right-hand side also
  lies in $p_\alpha c_c(\GGamma/\LLambda)$. Indeed we have
  $\Delta(p_\LLambda)(1\otimes p_\LLambda) = p_\LLambda\otimes p_\LLambda$ hence
  $\Delta^2(p_\LLambda)(1\otimes 1\otimes p_\LLambda) =
  \Delta(p_\LLambda)\otimes p_\LLambda$, which implies
  $\Delta(x)(1\otimes p_\LLambda) = x\otimes p_\LLambda$ for any
  $x = (\varphi\otimes\id)\Delta(p_\LLambda)$,
  by applying $\varphi\otimes\id\otimes\id$. Consequently we can apply
  Proposition~\ref{prp_quotient_description} which yields
  $p_{[\alpha]}a = \kappa_\alpha^{-1} (S^{-1}(a_\alpha)h_R\otimes p_{[\alpha]})
  \Delta(p_\LLambda)$ and summing over $[\alpha]$ we obtain the first equality
  in~\eqref{eq_strong_inv_right}.

  Equation~\eqref{eq_strong_inv_left} follows by
  applying~\eqref{eq_strong_inv_right} to $a = S^{-1}(b)$: this yields
  $b = \sum_{[\alpha]\in I(\GGamma)/\LLambda} \kappa_\alpha^{-1}$
  ${(h_Rb_{\bar\alpha}\otimes S)}\Delta(p_\LLambda)$.  Since
  $p_\LLambda = S^{-1}(p_\LLambda)$ and
  $\Delta S^{-1} = (S^{-1}\otimes S^{-1})\sigma\Delta$ we also have
  $b = \sum_{[\alpha]\in I(\GGamma)/\LLambda} \kappa_\alpha^{-1}
  (\id\otimes(h_Rb_{\bar\alpha})S^{-1})\Delta(p_\LLambda)$.  Finally we have
  $(h_Rb_{\bar\alpha})S^{-1} = S(b_{\bar\alpha})h_L$ and we obtain the rightmost
  side of~\eqref{eq_strong_inv_left} since $[\beta] = [\bar\alpha]$ runs through
  $\LLambda\bs I(\GGamma)$ when $[\alpha]$ runs through $I(\GGamma)/\LLambda$.

  The missing identities in~\eqref{eq_strong_inv_right}
  and~\eqref{eq_strong_inv_left} follow by replacing $a$, $b$ with $a^*$, $b^*$
  and taking adjoints on both sides. Alternatively one can use the fact that as
  we are in the context of discrete quantum groups we have
  $h_L(ab) = h_L(b S^2(a))$, $h_R(ab) = h_R(b S^{-2}(a))$.
\end{proof}

\begin{corollary} \label{crl_indep} Let $a \in c(\GGamma/\LLambda)$,
  $b\in c(\LLambda\bs\GGamma)$. Then $\kappa_\alpha^{-1}h_L(a_\alpha)$ only
  depends on the class $[\alpha] \in I(\GGamma)/\LLambda$, and
  $\kappa_{\bar\beta}^{-1} h_R(b_\beta)$ only depends on the class
  $[\beta] \in \LLambda\bs I(\GGamma)$.
\end{corollary}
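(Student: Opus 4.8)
The plan is to read off the statement directly from Theorem~\ref{thm_strong_inv}. Recall that for $a \in c(\GGamma/\LLambda)$ and a fixed $[\alpha] \in I(\GGamma)/\LLambda$, the component $a_\alpha \in B(H_\alpha)$ determines the whole block $p_{[\alpha]} a$; in particular, for any $a \in c_c(\GGamma/\LLambda)$ Theorem~\ref{thm_strong_inv} gives
\begin{displaymath}
  p_{[\alpha]} a = \kappa_\alpha^{-1}\,(S^{-1}(a_\alpha)h_R\otimes p_{[\alpha]})\,\Delta(p_\LLambda),
\end{displaymath}
and this must not depend on the representative $\alpha$ chosen inside $[\alpha]$. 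So first I would apply $h_L \otimes \epsilon$, or more simply the co-unit $\epsilon$ in the second leg and $h_L$ (or $h_R$) in the first, to the equality above, exploiting that $\epsilon$ is a left and right co-unit so that $(\id\otimes\epsilon)\Delta(p_\LLambda) = p_\LLambda$ and $\epsilon(p_{[\alpha]}) = \epsilon(p_\alpha) = 1$ for a single summand. That collapses the right-hand side to $\kappa_\alpha^{-1}\,h_L(S^{-1}(a_\alpha)h_R \cdot p_\LLambda)$, while the left-hand side becomes $h_L(p_{[\alpha]}a\, \cdot)$ evaluated suitably — the key point being that the left-hand side manifestly does not involve the representative $\alpha$.

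A cleaner route, which avoids juggling two Haar weights, is the following. Fix $\alpha, \alpha' \in [\alpha]$. Pick an isometry $v \in \Hom(\alpha', \alpha\otimes\lambda)$ with $\lambda \in I(\LLambda)$ — such a $v$ exists precisely because $\alpha' \sim \alpha$ — and use the invariance relation from Proposition~\ref{prp_quotient_description}, namely $(a_\alpha \otimes \id)v = v a_{\alpha'}$ (and its adjoint $v^*(a_\alpha\otimes\id) = a_{\alpha'}v^*$), which identifies $a_{\alpha'} = v^*(a_\alpha \otimes p_\lambda)v$. Then I compute $h_L(a_{\alpha'})$ by the quantum trace formula $h_L(x) = \qdim(\alpha')\, t_{\alpha'}^*(1\otimes x)t_{\alpha'}$ for $x \in B(H_{\alpha'})$, substitute $x = v^*(a_\alpha\otimes\id)v$, and use the compatibility of the solutions $t_{\alpha'}$ and $t_{\alpha}\otimes t_\lambda$ of the conjugate equations under $v$ (standard Frobenius/categorical bookkeeping, exactly the kind of manipulation already appearing in the proof of Lemma~\ref{lem_kappa}). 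This expresses $h_L(a_{\alpha'})$ as $h_L(a_\alpha)$ times $\qdim(\lambda)$, or more precisely as a sum over the $\lambda$'s occurring — and summing over all of $[\alpha]$ reconstitutes exactly the factor $\qdim(\alpha\otimes\alpha)_\LLambda$-type combination that is by definition $\kappa_\alpha / \kappa_{\alpha'}$ once one recalls $\kappa_\alpha = \qdim(\bar\alpha\otimes\alpha)_\LLambda$.

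I expect the arithmetic reconciliation in the last sentence — showing that the ratio of quantum dimensions produced by the Frobenius computation is precisely $\kappa_\alpha^{-1} : \kappa_{\alpha'}^{-1}$ rather than some other normalization — to be the only delicate point, and it is essentially already carried out inside the proof of Lemma~\ref{lem_kappa}; in fact the most economical argument is simply to invoke Theorem~\ref{thm_strong_inv} together with the representative-independence of its left-hand sides and apply a fixed functional (e.g. $h_L \otimes \epsilon$) to extract the scalar identity, leaving essentially nothing to prove. The statement for $b \in c(\LLambda\bs\GGamma)$ follows by the mirror-image argument, or by applying the already-proven left-coset case to $S^{-1}(b) \in c(\GGamma/\LLambda)$ as in the proof of Theorem~\ref{thm_strong_inv}, using $h_R(b_\beta) = h_L(S^{-1}(b)_{\bar\beta})$ up to the modular correction built into the discrete-case identity $h_R(ab) = h_R(bS^{-2}(a))$, and the fact that $\kappa_{\bar\beta}$ for $[\beta] \in \LLambda\bs I(\GGamma)$ matches $\kappa_{\bar\alpha}$ for $[\alpha] = [\bar\beta] \in I(\GGamma)/\LLambda$.
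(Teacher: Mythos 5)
There is a genuine gap: none of the three routes you sketch actually extracts the scalar $\kappa_\alpha^{-1}h_L(a_\alpha)$ from the representative\-/independent identities of Theorem~\ref{thm_strong_inv}. Your proposed functionals fail concretely. Applying $\epsilon$ in the second leg of $p_{[\alpha]}a=\kappa_\alpha^{-1}(S^{-1}(a_\alpha)h_R\otimes \id)\Delta(p_\LLambda)$ gives $h_R(p_\LLambda S^{-1}(a_\alpha))$ on the right and $\epsilon(p_{[\alpha]}a)$ on the left, both of which vanish unless $[\alpha]=[1]$ (the counit is supported on the trivial block), so nothing is extracted; applying the full weight $h_L$ produces $\sum_{\alpha'\in[\alpha]}h_L(a_{\alpha'})$, which is not the single\-/representative quantity you need and may diverge. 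The point you are missing is \emph{which} fixed functional to pair against: the paper tests the $\alpha$-dependent formula for $p_{[\alpha]}a$ against $\kappa_{\bar\beta}^{-1}h_LS^{-1}(p_\beta)$ for a \emph{fixed} $\beta\in\overline{[\alpha]}$, i.e.\ it evaluates $(S^{-1}(a_\alpha)h_R\otimes h_LS^{-1}(p_\beta))\Delta(p_\LLambda)$ in two ways, collapsing one leg via~\eqref{eq_strong_inv_right} and the other via~\eqref{eq_strong_inv_left} (applied to $p_{[\beta]}$). This yields $\kappa_{\bar\beta}^{-1}h_R(S(a)_\beta)=\kappa_\alpha^{-1}h_L(a_\alpha)$ with the left side depending only on $\beta$ and the right only on $\alpha$; varying $\alpha$ with $\beta$ fixed (and vice versa) is what decouples the two and gives both independence statements at once. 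Without playing the two halves of Theorem~\ref{thm_strong_inv} against each other in this way, "representative\-/independence of the left-hand sides" tells you only that the \emph{element} $p_{[\alpha]}a$ is well defined, which is circular for the scalar claim.

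Your "cleaner" categorical route also contains a false step. For a single isometry $v\in\Hom(\alpha',\alpha\otimes\lambda)$ one has $h_L(a_{\alpha'})=\qdim(\alpha')\Tr\bigl((F_\alpha^{-1}a_\alpha\otimes F_\lambda^{-1})vv^*\bigr)$, and the projection $vv^*$ does \emph{not} factorize across $H_\alpha\otimes H_\lambda$, so this is not "$h_L(a_\alpha)$ times $\qdim(\lambda)$" up to normalization. To get a clean identity one must sum over an orthonormal basis of all of $\bigoplus_\lambda\Hom(\alpha',\alpha\otimes\lambda)$, $\lambda\in I(\LLambda)$, and the resulting normalization is exactly the statement $\kappa_{\alpha'}/\kappa_\alpha=\qdim(\alpha')^2/\qdim(\alpha)^2$ on a class, i.e.\ (a form of) Corollary~\ref{crl_kappa_indep} --- precisely the "arithmetic reconciliation" you defer. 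That computation can be carried out (the remark after Corollary~\ref{crl_kappa_indep} sketches a genuinely categorical proof via rescaled Frobenius reciprocity), but it is the entire content of the argument, not bookkeeping already done in Lemma~\ref{lem_kappa}.
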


\begin{proof}
  One can assume that $a \in p_{[\alpha]} c(\GGamma/\LLambda)$ and
  $b = S(a) \in p_{[\beta]} c(\LLambda\bs\GGamma)$ with $\beta = \bar\alpha$.
  We have
  $a = \kappa_\alpha^{-1} (S^{-1}(a_\alpha)h_R \otimes\id)\Delta(p_\LLambda)$
  by~\eqref{eq_strong_inv_right} and
  $p_{[\beta]} = \kappa_{\bar\beta}^{-1} (\id\otimes
  h_LS^{-1}(p_\beta))\Delta(p_\LLambda)$ by ~\eqref{eq_strong_inv_left}. We
  compute then
  \begin{align*}
    \kappa_{\bar\beta}^{-1} h_R(b_\beta) &= 
                                           \kappa_{\bar\beta}^{-1}h_L(S^{-1}(p_\beta)a) =
                                           \kappa_{\bar\beta}^{-1}\kappa_\alpha^{-1} 
                                           (S^{-1}(a_\alpha)h_R \otimes h_LS^{-1}(p_\beta))\Delta(p_\LLambda) \\
                                         &= \kappa_\alpha^{-1} h_R(p_\beta S^{-1}(a_\alpha))
                                           = \kappa_\alpha^{-1} h_L(a_\alpha S(p_\beta)) = \kappa_\alpha^{-1} h_L(a_\alpha).
  \end{align*}
  Since the left-hand (resp. right-hand) side does not depend on the choice of
  $\alpha$ in $[\alpha]$ (resp. $\beta \in [\beta]$), we are done.
\end{proof}

Note in particular that $\kappa_\alpha^{-1}h_L(p_\alpha) = \qdim(\alpha)^2/\kappa_\alpha$ 
depends only on $[\alpha] \in I(\GGamma)/\LLambda$. We have in fact the more general result below. 

\begin{corollary} \label{crl_kappa_indep} The quantity
  $\kappa_{\bar\alpha,\beta} / (\qdim(\bar\alpha)\qdim(\beta))$ depends only on
  the classes $[\alpha]$, $[\beta] \in I(\GGamma)/\LLambda$.
\end{corollary}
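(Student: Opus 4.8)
The plan is to reduce the statement to the single-sided dependence already established in Corollary~\ref{crl_indep}, using symmetry between left and right cosets together with the symmetry of the constants $\kappa$. First I would recall from Definition~\ref{def_kappa} that $\kappa_{\bar\alpha,\beta} = \qdim(\bar\alpha\otimes\beta)_\LLambda$, and that this quantity is visibly symmetric in $\alpha$ and $\beta$ under conjugation, since $(\bar\alpha\otimes\beta)_\LLambda$ and $(\bar\beta\otimes\alpha)_\LLambda$ have the same quantum dimension — indeed $\lambda \subset \bar\alpha\otimes\beta$ with $\lambda \in I(\LLambda)$ is equivalent to $\bar\lambda \subset \bar\beta\otimes\alpha$, and conjugate corepresentations have equal quantum dimension, while $I(\LLambda)$ is stable under conjugation. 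Hence $\kappa_{\bar\alpha,\beta} = \kappa_{\bar\beta,\alpha}$, and it suffices to prove that $\kappa_{\bar\alpha,\beta}/(\qdim(\bar\alpha)\qdim(\beta))$ depends only on $[\alpha]$ for fixed $\beta$; the dependence only on $[\beta]$ then follows by the symmetry just noted.

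Next I would use Lemma~\ref{lem_kappa}, which gives
\begin{displaymath}
  (h_R p_\LLambda \otimes p_\beta)\Delta(p_\alpha) = \frac{\qdim(\alpha)}{\qdim(\beta)}\,\kappa_{\alpha,\bar\beta}\, p_\beta,
\end{displaymath}
or equivalently, replacing $\alpha$ by $\bar\alpha$ and $\beta$ by $\bar\beta$,
\begin{displaymath}
  (h_R p_\LLambda \otimes p_{\bar\beta})\Delta(p_{\bar\alpha}) = \frac{\qdim(\alpha)}{\qdim(\beta)}\,\kappa_{\bar\alpha,\beta}\, p_{\bar\beta}.
\end{displaymath}
The key observation is that the element $x_\beta := (h_R p_\LLambda \otimes \id)\Delta(p_{\bar\beta}) \in c(\GGamma)$ lies in $c(\GGamma/\LLambda)$: this is exactly the stability argument used in the proof of Theorem~\ref{thm_strong_inv}, namely that $(\varphi\otimes\id)\Delta(p_\LLambda)$ satisfies $\Delta(x)(1\otimes p_\LLambda) = x\otimes p_\LLambda$, applied with $\varphi = h_R p_\LLambda(p_{\bar\beta}\,\cdot\,)$ restricted appropriately — more carefully, one writes $x_\beta = \sum_{[\gamma]} \kappa_{\bar\alpha,\beta}^{(\gamma)}(\cdots)$ and checks coassociativity block by block, or simply notes $\Delta(p_\LLambda)(1\otimes p_\LLambda) = p_\LLambda\otimes p_\LLambda$ forces $x_\beta \in c(\GGamma/\LLambda)$. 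Then the $\bar\alpha$-component of $x_\beta$ is $(h_R p_\LLambda \otimes p_{\bar\alpha})\Delta(p_{\bar\beta})$, which by Lemma~\ref{lem_kappa} equals $\frac{\qdim(\beta)}{\qdim(\alpha)}\kappa_{\bar\beta,\alpha}p_{\bar\alpha} = \frac{\qdim(\beta)}{\qdim(\alpha)}\kappa_{\bar\alpha,\beta}p_{\bar\alpha}$.

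Now I would apply Corollary~\ref{crl_indep} to $a = x_\beta \in c(\GGamma/\LLambda)$: the quantity $\kappa_\gamma^{-1} h_L((x_\beta)_\gamma)$ depends only on $[\gamma]$. Taking $\gamma = \bar\alpha$ and using $h_L(p_{\bar\alpha}) = \qdim(\bar\alpha)^2$, we get that
\begin{displaymath}
  \kappa_{\bar\alpha}^{-1}\cdot \frac{\qdim(\beta)}{\qdim(\alpha)}\,\kappa_{\bar\alpha,\beta}\cdot\qdim(\bar\alpha)^2
\end{displaymath}
depends only on $[\bar\alpha] \in I(\GGamma)/\LLambda$ — wait, this is stated for right cosets, so I would rather use the left-coset version, or equivalently run the argument with $c(\LLambda\bs\GGamma)$ and $b$ with its $h_R$-version; the bookkeeping of which side and which bar to use is the main thing to get right. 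After cancelling, since $\qdim(\bar\alpha) = \qdim(\alpha)$, this says $\kappa_{\bar\alpha}^{-1}\kappa_{\bar\alpha,\beta}\qdim(\alpha)\qdim(\beta)$ depends only on the class of $\alpha$; dividing by the constant $\kappa_{\bar\alpha}^{-1}\qdim(\alpha)^2\qdim(\beta)^2$ — which by the remark preceding the corollary ($\kappa_\alpha^{-1}h_L(p_\alpha) = \qdim(\alpha)^2/\kappa_\alpha$ depends only on $[\alpha]$, and likewise with bars) also depends only on $[\alpha]$ — yields that $\kappa_{\bar\alpha,\beta}/(\qdim(\bar\alpha)\qdim(\beta))$ depends only on $[\alpha]$. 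I expect the main obstacle to be purely organisational: keeping the conjugates, the left/right sides, and the two invocations of Corollary~\ref{crl_indep} (or its left analogue) consistent, and verifying cleanly that $x_\beta \in c(\GGamma/\LLambda)$ so that Corollary~\ref{crl_indep} applies; the algebra itself is routine once the right auxiliary element $x_\beta$ is in hand.
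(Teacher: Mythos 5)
Your overall strategy matches the paper's (reduce to one-sided dependence via the symmetry $\kappa_{\bar\alpha,\beta}=\kappa_{\bar\beta,\alpha}$, then exhibit a central element of an invariant subalgebra whose block coefficients are $\kappa_{\bar\beta,\gamma}\qdim(\beta)/\qdim(\gamma)$), but the key technical claim is false as stated: the element $x_\beta=(h_Rp_\LLambda\otimes\id)\Delta(p_{\bar\beta})$ does \emph{not} lie in $c(\GGamma/\LLambda)$. The stability argument you cite from the proof of Theorem~\ref{thm_strong_inv} applies to slices of the form $(\varphi\otimes\id)\Delta(p_\LLambda)$, where the identity $\Delta(p_\LLambda)(1\otimes p_\LLambda)=p_\LLambda\otimes p_\LLambda$ does the work; your $x_\beta$ is instead a slice of $\Delta(p_{\bar\beta})$ by a functional supported on $p_\LLambda$, to which that argument does not apply. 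By strong right invariance one finds $S(x_\beta)=(h_Rp_{\bar\beta}\otimes\id)\Delta(p_\LLambda)\in c(\GGamma/\LLambda)$, hence $x_\beta\in c(\LLambda\bs\GGamma)$ by Lemma~\ref{lemma_quotient_antipode}: its coefficients are constant on \emph{left} cosets of the blocks $\bar\alpha$ (equivalently right cosets of $\alpha$, which is in fact what you want), whereas applying Corollary~\ref{crl_indep} literally as you propose, with $h_L$ and right cosets of $\bar\alpha$, would give constancy on the wrong side. This is exactly the ``bookkeeping of which side and which bar to use'' that you flag but do not resolve, and it is where the argument currently breaks rather than a mere organisational nuisance.

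The repair is short and brings you back to the paper's proof: work with $y_\beta:=(h_Rp_{\bar\beta}\otimes\id)\Delta(p_\LLambda)$, which genuinely is a slice of $\Delta(p_\LLambda)$ and so lies in $c(\GGamma/\LLambda)$; better, equation~\eqref{eq_strong_inv_right} applied to $a=p_{[\beta]}$ identifies $y_\beta=\kappa_\beta\,p_{[\beta]}$ outright. Computing the $\gamma$-block of $y_\beta$ as in Lemma~\ref{lem_kappa} gives $(y_\beta)_\gamma=\frac{\qdim(\beta)}{\qdim(\gamma)}\kappa_{\bar\beta,\gamma}\,p_\gamma$, and comparison with $\kappa_\beta p_\gamma$ for $\gamma\in[\beta]$ yields $\kappa_{\bar\beta,\gamma}/\qdim(\gamma)=\kappa_\beta/\qdim(\beta)$ directly, with no need for Corollary~\ref{crl_indep} or the constancy of $\qdim(\gamma)^2/\kappa_\gamma$. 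Together with the symmetry you already established, this is the corollary; it is precisely the computation in the paper (which evaluates $h_Rp_\beta$ against $p_{[\alpha]}=\kappa_\alpha^{-1}(h_Rp_{\bar\alpha}\otimes\id)\Delta(p_\LLambda)$). So your route is salvageable and, once the slice is taken in the correct leg, coincides with the paper's.
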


\begin{proof}
  We can assume $[\alpha] = [\beta]$ --- otherwise
  $\kappa(\bar\alpha,\beta) = 0$. Since
  $\kappa_{\bar\alpha,\beta} = \kappa_{\bar\beta,\alpha}$ it suffices to prove
  the independence on $\beta$. We apply~\eqref{eq_strong_inv_right} with
  $a = p_{[\alpha]}$: this yields
  $p_{[\alpha]} = \kappa_\alpha^{-1} (h_Rp_{\bar\alpha}\otimes
  \id)\Delta(p_\LLambda)$.  In particular
  $h_R(p_\beta) = \kappa_\alpha^{-1}(h_Rp_{\bar\alpha}\otimes
  h_Rp_\beta)\Delta(p_\LLambda)$.  Computing as in the proof of
  Lemma~\ref{lem_kappa} and denoting
  $P_\LLambda^{\bar\alpha,\beta} = \sum_{\lambda\in\LLambda}
  P_\lambda^{\bar\alpha,\beta}$ this can be written
  \begin{align*}
    (\qdim(\beta))^2 &= \kappa_\alpha^{-1} \qdim(\alpha) \qdim(\beta) ~
                       t_{\bar\alpha\otimes\beta}^* (\id\otimes\id\otimes P_\LLambda^{\bar\alpha,\beta}) 
                       t_{\bar\alpha\otimes\beta} \\
                     &= \kappa_\alpha^{-1} \qdim(\alpha) \qdim(\beta) \qdim(\bar\alpha\otimes\beta)_\LLambda
                       = \kappa_\alpha^{-1} \qdim(\alpha) \qdim(\beta) ~ \kappa_{\bar\alpha,\beta}.
  \end{align*}
  This shows that $\kappa_{\bar\alpha,\beta} / \qdim(\beta)$ does not depend on
  $\beta \in [\alpha]$.
\end{proof}

We remark that Corollary \ref{crl_kappa_indep} also has a purely categorical proof, suggested to us by the referee. 
Fix $\alpha\in I(\GGamma)$ and consider the right $ \Corep(\LLambda) $-module category 
$\Dd \subset \Corep(\GGamma)$ generated by $\alpha$. Consider also the functor 
$\underline\alpha: \Dd \to \Corep(\LLambda)$, $v\mapsto (\bar\alpha\otimes v)_\LLambda$. 
This is the internal $\Hom$ functor for $\mathcal{D}$, in the sense that 
$\Hom_\GGamma(\alpha\otimes\lambda,v) \simeq \Hom_\LLambda(\lambda,\underline\alpha(v))$
for all $v\in\Dd$, $\lambda\in\Corep(\LLambda)$. In \cite[Lemma A.4]{NeshveyevYamashita} 
it is shown that the rescaled Frobenius reciprocity isomorphism
\begin{displaymath}
  \Hom_\GGamma(\beta,\beta'\otimes\lambda) \to \Hom_\GGamma(\beta\otimes\bar\lambda,\beta'), \quad
  T \mapsto \sqrt{\frac{\kappa_{\bar\alpha,\beta'}}{\kappa_{\bar\alpha,\beta}}} ~ \tilde T,
\end{displaymath}
with $\tilde T = (\id_{\beta'}\otimes \bar R_\lambda^*) (T\otimes\id_{\bar\lambda})$, is 
unitary with respect to the hermitian structures given by
$S^*T = \langle S, T\rangle \id_\beta$, $\tilde S\tilde T^* = \langle \tilde T, \tilde S\rangle  \id_{\beta'}$ for $\beta,\beta'\in [\alpha]$. 
Now choose an isometric embedding
$T : \beta\to\beta'\otimes\lambda$ with some $\lambda\in\Corep(\LLambda)$. We have $\|T\|^2 = 1$, 
and using $\qTr(\tilde T\tilde T^*)=\qTr(TT^*)=\qdim(\beta)$we obtain $\|\tilde T\|^2 = \qdim(\beta)/\qdim(\beta ')$. 
Then unitarity of the map above then shows that 
$\kappa_{\bar\alpha,\beta'}/\qdim(\beta') = \kappa_{\bar\alpha,\beta}/\qdim(\beta)$ as required.

\bigskip

Let us finally introduce the quantum analogues $\mu$, $\nu$ of the counting
measures on $\GGamma/\LLambda$ resp.\ $\LLambda\bs\GGamma$. We will see at
Proposition~\ref{prp_invariant_measure} that $\mu$ is the (necessarily unique, up
to scalar multiplication) $\GGamma$-invariant weight on
$c_c(\GGamma/\LLambda)$. Observe that, applying~\eqref{eq_strong_inv_right} to
$a^*$ we obtain
\begin{displaymath}
  a^* = \sum_{[\alpha] \in I(\GGamma)/\LLambda} \kappa_\alpha^{-1} 
  (h_RS(a_\alpha^*)\otimes\id)\Delta(p_\LLambda).
\end{displaymath}
This explains the last identity in the following definition. By
Corollary~\ref{crl_indep} we also see that $\mu$, $\nu$ and $(a\mid b)$ as
defined below do not depend on the choices of $\gamma\in[\gamma]$.

\begin{definition} \label{def_invariant_measure}\label{def_scalar} Define
  $\mu \in c_c(\GGamma/\LLambda)^*$ by
  $\mu(a) = \sum_{[\alpha]\in I(\GGamma)/\LLambda} \kappa_\alpha^{-1}
  h_L(a_\alpha)$ where $a_\alpha = p_\alpha a$.  We endow
  $c_c(\GGamma/\LLambda)$ with the following positive-definite sesquilinear
  form, where $a$, $b \in c_c(\GGamma/\LLambda)$:
  \begin{align}
    (a\mid b) = \mu(a^*b) &= \sum_{[\gamma]\in I(\GGamma)/\LLambda} 
                            \kappa_\gamma^{-1} h_L(a^*p_\gamma b) 
                            \label{eq_scalar_right} \\
                          &= \sum_{[\alpha],[\beta]}\kappa_\alpha^{-1}\kappa_\beta^{-1}
                            (h_R S(a_\alpha^*) \otimes b_\beta h_L)\Delta(p_\LLambda). 
                            \label{eq_scalar_both}
  \end{align}
  We similarly define $\nu\in c_c(\LLambda\bs\GGamma)^*$ by
  $\nu(b) = \sum_{[\beta] \in \LLambda\bs I(\GGamma)} \kappa_{\bar\beta}^{-1}
  h_R(b_\beta)$, $b \in c_c(\LLambda\bs\GGamma)$.  Finally for
  $a\in c_c(\GGamma/\LLambda)$ we write
  $\|a\|_{\GGamma/\LLambda}:=(a\mid a)^{1/2}$.
\end{definition}

\begin{remark}
  This definition agrees via the Fourier transform with the scalar product on
  the ``dual algebra'' $\CC[\GGamma/\LLambda]$ introduced in
  \cite{VergniouxVoigt}. Recall that this algebra is defined as the relative
  tensor product
  $\CC[\GGamma/\LLambda] = \CC[\GGamma]\otimes_{\CC[\LLambda]}\CC$ with respect
  to the canonical embedding $\CC[\LLambda] \subset \CC[\GGamma]$ and to the
  counit $\epsilon : \CC[\LLambda] \to \CC$. The duality between $c_c(\GGamma)$
  and $\CC[\GGamma]$ is described by the multiplicative unitary
  $W = \bigoplus_{\alpha\in I(\GGamma)} u_\alpha \in \Mm (c_c(\GGamma)\otimes
  C[\GGamma])$ --- recall that we have $(\Delta\otimes\id)(W) = W_{13} W_{23}$,
  $(\id\otimes\Delta)(W) = W_{13}W_{12}$,
  $(S\otimes\id)(W) = W^* = (\id\otimes S^{-1})(W)$, and also, applying
  $S^{-1}\otimes S$: $(\id\otimes S)(W) = (S^{-1}\otimes\id)(W)$.  Recall also
  that we have a canonical conditional expectation
  $E : \CC[\GGamma] \to \CC[\LLambda]$, see \cite{Vergnioux_Amalg}.  Following
  \cite{VergniouxVoigt} we consider the linear bijection
  $\Ff_\LLambda^{-1} : \CC[\GGamma/\LLambda] \to c_c(\GGamma/\LLambda)$ given by
  $\Ff_\LLambda^{-1}(x) = {(\id\otimes \epsilon E S(x))}(W)$, which corresponds to
  (the inverse of) the Fourier transform when $\LLambda = \{e\}$. Using
  Theorem~\ref{thm_strong_inv} one can check the following explicit formula for
  the inverse map
  $\Ff_\LLambda : c_c(\GGamma/\LLambda) \to \CC[\GGamma/\LLambda]$:
  \begin{displaymath}
    \Ff_\LLambda(a) =  (a\mu\otimes\id)(W)\otimes_{\CC[\LLambda]} 1.
  \end{displaymath}
  On the other hand the space $\CC[\GGamma/\LLambda]$ has a natural
  prehilbertian structure given by ${(x\mid y)} = \epsilon E(x^*y)$. One can then
  easily check that $\Ff_\LLambda$ is an isometry with respect to this scalar
  product on $\CC[\GGamma/\LLambda]$ and the one of Definition~\ref{def_scalar}
  on $c_c(\GGamma/\LLambda)$.
\end{remark}

\subsubsection{The constants $\kappa$}

In contrast to the classical case, where they trivialize, the constants $\kappa$
of Definition~\ref{def_kappa} play an important role in the theory of quantum
group Hecke algebras (or in the quantum version of the Clifford theory). In this
subsection we give more details about them, which will however not be used in
the rest of the article (except in Proposition~\ref{prp_convol_subalg}).

Let us first note that related constants already appeared in Lemma~3.5 of
\cite{KKSV_Furstenberg}, which states the existence, for every
$\alpha \in I(\GGamma)$, of a constant $\eta_\alpha>0$ such that
\begin{displaymath}
  (\id \otimes \qTr_{\bar{\alpha}}) \Delta (p_\LLambda) = \eta_\alpha p_{[\alpha]},
\end{displaymath}
where $[\alpha] \in \LLambda\bs I(\GGamma)$.  As we have
$h_Lp_{\bar{\alpha}} = \qdim(\alpha) \qTr_{\bar{\alpha}}$, comparing the
formulas shows that we have in fact
$\eta_\alpha = \kappa_{\bar\alpha}/\qdim(\alpha)$.

The next example shows that we do not have $\kappa_{\bar\alpha} = \kappa_\alpha$
in general.

\begin{example} \label{exple_free_kappa} We consider the quantum subgroup
  $\LLambda \subset \GGamma = \ZZ*\LLambda$, where $\LLambda$ is any discrete
  quantum group. Denote by $a$ the generating corepresentation of $\ZZ$ and take
  $v\in I(\LLambda)$. Consider $\alpha = v\otimes a$, which is an irreducible
  corepresentation of $\GGamma$. We have
  $\bar\alpha\otimes \alpha \simeq \bigoplus_{w\subset\bar v\otimes v}
  a^{-1}\otimes w\otimes a$, hence $\kappa_{\alpha} = 1$, and
  $\alpha\otimes\bar\alpha = v\otimes \bar v$ hence
  $\kappa_{\bar\alpha} = \qdim(v)^2$.

  If $\LLambda$ is non-classical one can choose $v$ such that
  $\kappa_{\bar\alpha} = \qdim(v)^2 > 1 = \kappa_\alpha$, and if $\LLambda$ is
  finite, we are in the setting of Hecke pairs. For $\LLambda$ finite and non
  classical we can further consider $\LLambda^\infty = \varinjlim\LLambda^n$
  which is still commensurated inside $\GGamma^\infty$. For
  $w\in\Corep(\GGamma)$ we denote $w^{(k)} \in \Corep(\GGamma^\infty)$ the
  corepresentation corresponding to $w$ and the $k$th copy of $\GGamma$ in
  $\GGamma^\infty$. Then for any $v \in I(\LLambda)$ such that $\qdim(v) > 1$
  and $\alpha = \bigotimes_{k=1}^n(v\otimes a)^{(k)}\in I(\GGamma^\infty)$ we
  have $\kappa_{\bar\alpha}/\kappa_\alpha = \qdim(v)^{2n}$ which is not bounded.

  If one allows non commensurated quantum subgroups, one can take for $\LLambda$ the
  dual of $SU(2)$ and we see again that the ratios
  $\kappa_{\bar\alpha}/\kappa_\alpha$ are not bounded when $\alpha$
  varies. Notice also that $v = \alpha \otimes \bar a$,
  $\kappa_{v} = \qdim(v)^2$ and $\kappa_{\alpha} = 1$. This shows that there is
  no control on the ratios $\kappa_\gamma/\kappa_\alpha$ either when
  $\gamma \subset \alpha\otimes\beta$ with $\beta \in I(\GGamma)$ fixed.
\end{example}

\bigskip

We shall now give another interpretation of the constants
$\kappa_{\alpha,\beta}$ in terms of the $\Corep(\GGamma)$-module category
$\Corep(\GGamma/\LLambda)$. Let us first introduce some more relevant notation. For $\pi \in \Corep(\GGamma)$ and
$\alpha \in I(\GGamma)$ we denote $M_{\alpha,\pi} = \Hom(\alpha,\pi)$, so that
$H_\pi\simeq \bigoplus_{\alpha\in I(\GGamma)} H_\alpha\otimes M_{\alpha,\pi}$
canonically. When $\pi = \beta\otimes\gamma$ we denote
$C_{\alpha}^{\beta,\gamma} = M_{\alpha,\pi}$, so that the dimensions
$\cc_{\alpha}^{\beta,\gamma} = \dim(C_{\alpha}^{\beta,\gamma})$ are the
structure constants of the based Grothendieck ring of $\Corep(\GGamma)$.

We can extend this notation to objects of $\Corep(\GGamma/\LLambda)$. For
$i\in I(\GGamma/\LLambda)$, $\pi\in\Corep(\GGamma)$ we thus denote
$M_{i,\pi} = \Hom_{\GGamma/\LLambda}(i,\pi)$ where $\pi$ is identified to an
object in $\Corep(\GGamma/\LLambda)$ by restriction. Similarly we denote
$C_i^{\beta,j} = \Hom_{\GGamma/\LLambda}(i,\beta\otimes j)$ for $i$,
$j\in I(\GGamma/\LLambda)$ and $\beta\in\Corep(\GGamma)$.

Recall that the modular group of the left Haar weight is implemented by the
(possibly unbounded) modular element $F \in c(\GGamma)$ via the formula
$\sigma_t^L(f) = F^{it}f F^{-it}$. For $\pi\in\Corep(\GGamma)$ we denote
$F_\pi = \pi(F) \in B(H_\pi)$ and we recall the definition of the quantum
dimension $\qdim(\pi) = \Tr(F_\pi) = \Tr(F_\pi^{-1})$. Decomposing
$H_\pi \simeq \bigoplus_\alpha H_\alpha\otimes M_{\alpha,\pi}$ we have
$F_\pi = \sum F_\alpha\otimes \id$.

In general $F$ does not belong to $c_c(\GGamma/\LLambda)$ but we still have an
induced modular structure on this subalgebra. More precisely, take
$\alpha\in I(\GGamma)$ and decompose
$H_\alpha \simeq \bigoplus_{i\in I(\GGamma/\LLambda)}H_i\otimes M_{i,\alpha}$ in
$\Corep(\GGamma/\LLambda)$. Since $\sigma_t^L$ stabilizes
$\ell^\infty(\GGamma/\LLambda)$ by Lemma \ref{lemma_quotient_antipode},
$\Ad(F_\alpha^{it})$ stabilizes
$\alpha(\ell^\infty(\GGamma/\LLambda)) \simeq \bigoplus_{i} B(H_i) \otimes \id
\subset B(H_\alpha)$, hence also
$\alpha(\ell^\infty(\GGamma/\LLambda))' \simeq \bigoplus_i \id\otimes
B(M_{i,\alpha})$. It follows that for $i$ such that $M_{i,\alpha}\neq 0$ there
exist unique positive elements $F_i \in B(H_i)$,
$L_{i,\alpha}\in B(M_{i,\alpha})$ such that $\Tr(F_i) = \Tr(F_i^{-1})$,
$\Tr(L_{i,\alpha}) = \Tr(L_{i,\alpha}^{-1})$ and
$F_\alpha = \diag_i(F_i\otimes L_{i,\alpha})$.

Furthermore, decomposing
$H_\beta\otimes H_j \simeq \bigoplus_i H_i\otimes C_i^{\beta,j}$, we claim the
existence of a corresponding decomposition
$F_\beta\otimes F_j = \diag_i(F_i\otimes D_i^{\beta,j})$. Indeed, consider the
canonical isomorphisms
\begin{displaymath}
  H_\beta\otimes H_\alpha \simeq {\ts\bigoplus_j} H_\beta\otimes H_j\otimes
  M_{j,\alpha} \simeq {\ts\bigoplus_{i,j}}H_i\otimes C_{i}^{\beta,j}\otimes
  M_{j,\alpha}.
\end{displaymath}
Since $\Delta(F) = F\otimes F$ we have
$F_{\beta\otimes\alpha} = F_\beta\otimes F_\alpha = \diag(F_\beta\otimes
F_j\otimes L_{j,\alpha})$. In particular $\Ad(F_{\beta\otimes\alpha}^{it})$
stabilizes
$B(H_\beta)\otimes \alpha(\ell^\infty(\GGamma/\LLambda)) \simeq \bigoplus_j
B(\bigoplus_i H_i\otimes C_i^{\beta,j}) \otimes \id$, and also
$(\beta\otimes\alpha)(\ell^\infty(\GGamma/\LLambda)) \simeq \bigoplus_i
B(H_i)\otimes \id\otimes\id$, hence it stabilizes the relative commutant
$\bigoplus_{i,j} \id\otimes B(C_i^{\beta,j})\otimes \id$ as well. This shows the
existence of a unique positive element $D_i^{\beta,j} \in B(C_i^{\beta,j})$ such
that $\Tr(D_i^{\beta,j}) = \Tr((D_i^{\beta,j})^{-1})$ and
$F_\beta\otimes F_\alpha = \diag_{i,j}(F_i\otimes D_i^{\beta,j}\otimes
L_{j,\alpha})$. We have then also
$F_\beta\otimes F_j = \diag_i(F_i\otimes D_i^{\beta,j})$.

We summarize the conclusions of this discussion in the next proposition.

\begin{proposition}\label{def_modular_mult}
  For $i$, $j\in I(\GGamma/\LLambda)$, $\alpha,\beta \in I(\GGamma)$ we denote
  $F_i \in B(H_i)$, $L_{i,\alpha}\in B(M_{i,\alpha})$,
  $D_i^{\beta,j} \in B(C_i^{\beta,j})$ the unique positive elements, normalized
  by the identity $\Tr(A) = \Tr(A^{-1})$, such that
  $F_\alpha = \diag(F_i\otimes L_{i,\alpha})$ and
  $F_\beta\otimes F_i = \diag(F_j\otimes D_j^{\beta,i})$. The element $F_i$ does
  not depend on the choice of $\alpha\in I(\GGamma)$ such that
  $M_{i,\alpha}\neq 0$. We introduce the quantum dimensions, quantum
  multiplicities and quantum structure coefficients as follows:
  \begin{displaymath}
    \qdim(i) = \Tr(F_i), \quad \qmult(i,\alpha) = \Tr(L_{i,\alpha}), \quad \qc_i^{\beta,j} = \Tr(D_i^{\beta,j}).
  \end{displaymath}
\end{proposition}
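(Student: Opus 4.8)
The plan is that almost everything in the statement is already contained in the discussion preceding it, so the proof will consist of pointing to that discussion and then supplying the one missing verification, namely that $F_i$ is independent of the auxiliary choice of $\alpha$. Concretely, I would first recall that $F_\alpha = \alpha(F)$ is a positive invertible operator, that by Lemma~\ref{lemma_quotient_antipode} the modular group $\sigma^L_t = \Ad(F^{it})$ stabilises $\ell^\infty(\GGamma/\LLambda)$, hence $\Ad(F_\alpha^{it})$ stabilises both the subalgebra $\alpha(\ell^\infty(\GGamma/\LLambda)) \simeq \bigoplus_i B(H_i)\otimes\id$ of $B(H_\alpha)$ and its commutant $\bigoplus_i\id\otimes B(M_{i,\alpha})$; this forces, block by block, a tensor factorisation $p_iF_\alpha = F_i\otimes L_{i,\alpha}$, and the normalisation $\Tr(A)=\Tr(A^{-1})$ removes the scalar ambiguity in such a factorisation and pins down $F_i$ and $L_{i,\alpha}$ uniquely. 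Applying the same reasoning to $\beta\otimes\alpha$, and then stripping off the $L_{j,\alpha}$ factor using $F_{\beta\otimes\alpha} = F_\beta\otimes F_\alpha$, produces the $D_i^{\beta,j}$, again uniquely once normalised.

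The heart of the argument is the independence of $F_i$, and the key observation is that $F_i$ is in fact intrinsic. By Lemma~\ref{lemma_quotient_antipode} together with the Remark after Proposition~\ref{prp_quotient_description}, $\sigma^L$ restricts to a $\sigma$-weakly continuous one-parameter automorphism group of $\ell^\infty(\GGamma/\LLambda) = \prod_{i\in I(\GGamma/\LLambda)}B(H_i)$, which by continuity and the group law fixes each minimal central projection $q_i$ and hence restricts to every factor $B(H_i)$; on $B(H_i)$ it is implemented by a strongly continuous one-parameter unitary group, unique up to a multiplicative character of $\RR$, so of the form $t\mapsto\Phi_i^{it}$ for a unique positive invertible $\Phi_i\in B(H_i)$ with $\Tr(\Phi_i)=\Tr(\Phi_i^{-1})$, and this $\Phi_i$ manifestly depends only on $i$. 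To match $\Phi_i$ with the $F_i$ coming from a chosen $\alpha$ with $M_{i,\alpha}\neq 0$, I would restrict the representation $\alpha$ to $\ell^\infty(\GGamma/\LLambda)$: since $\sigma^L_t(q_i)=q_i$ we get $[p_i,F_\alpha]=0$ for $p_i=\alpha(q_i)$, and on the block $p_iH_\alpha \simeq H_i\otimes M_{i,\alpha}$ the identity $\alpha(\sigma^L_t(x))=F_\alpha^{it}\alpha(x)F_\alpha^{-it}$ becomes $(\Phi_i^{it}x\Phi_i^{-it})\otimes\id=(p_iF_\alpha)^{it}(x\otimes\id)(p_iF_\alpha)^{-it}$ for all $x\in B(H_i)$. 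Hence $(p_iF_\alpha)^{it}(\Phi_i^{-it}\otimes\id)$ commutes with $B(H_i)\otimes\id$, lies in $\id\otimes B(M_{i,\alpha})$, and being a one-parameter unitary group equals $\id\otimes\Psi_i^{it}$, so $p_iF_\alpha=\Phi_i\otimes\Psi_i$. Uniqueness of the tensor factorisation up to a positive scalar, together with $\Tr(F_i)=\Tr(F_i^{-1})=\Tr(\Phi_i)=\Tr(\Phi_i^{-1})$, then forces $F_i=\Phi_i$, which is independent of $\alpha$; the remaining assertions of the proposition merely fix notation.

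I expect the genuinely delicate point to be not deep but rather the careful handling of the normalisations in the blockwise tensor factorisations — checking that the condition $\Tr(A)=\Tr(A^{-1})$ can be imposed simultaneously on both tensor factors, which is precisely what makes $F_i$, $L_{i,\alpha}$, $D_i^{\beta,j}$ well defined; this is the portion of the preceding discussion I would lean on. The conceptual step that makes the independence clean is recognising $F_i$ as the normalised implementer of $\sigma^L$ on the $i$-th matrix block of $\ell^\infty(\GGamma/\LLambda)$, after which comparison with any particular $\alpha$ is purely formal. As an alternative route for the independence one could instead use that the scaling group $\tau=\sigma^L$ acts on the morphism spaces $\Hom_{\GGamma/\LLambda}(\alpha',\alpha)$ and apply it to a partial isometry realising an isomorphism between a copy of $i$ inside $\alpha'$ and one inside $\alpha$.
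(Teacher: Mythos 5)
Your proposal is correct and follows essentially the same route as the paper: the paper's entire proof of the independence of $F_i$ is the one-line observation that $\Ad(F_i^{it})$ is the restriction of $\sigma_t^L$ to the block $\ell^\infty(\GGamma/\LLambda)q_i$, which is exactly the intrinsic characterisation you develop in detail, with the existence and normalised uniqueness of the tensor factorisations already carried by the discussion preceding the statement.
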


\begin{proof}
  The element $F_i$ does not depend on $\alpha$ because $\Ad(F_i^{it})$
  corresponds for every $t \in \mathbb{R}$ to the restriction of $\sigma_t^L$ to
  $\ell^\infty(\GGamma/\LLambda)q_i$, where $q_i$ denotes the relevant minimal
  central projection of $\ell^\infty(\GGamma/\LLambda)$.
\end{proof}

By considering the case of a quantum subgroup
$\LLambda \subset \GGamma = \LLambda\times\LLambda'$ with $\LLambda$,
$\LLambda'$ non unimodular one sees that the elements $F_i$, $L_{i,\alpha}$,
$D_i^{\beta,j}$ can be non trivial, and that their traces are in general
different from the classical dimensions $\dim(i) = \dim(H_i)$, the classical
multiplicity $\mult(i,\alpha) = \dim(\Hom(i,\alpha))$ and the classical
coefficients $\cc_i^{\beta,j} = \dim(\Hom(i,\beta\otimes j))$.

\bigskip

We denote by $\CC[\GGamma]$, $\CC[\GGamma/\LLambda]$ the free vector spaces
generated by $I(\GGamma)$, $I(\GGamma/\LLambda)$, endowed with the scalar
products such that $I(\GGamma)$, $I(\GGamma/\LLambda)$ are orthonormal bases. To
any representation $\pi \in \Corep(\GGamma/\LLambda)$ corresponds naturally an
element
$\pi = \sum_{i\in I(\GGamma/\LLambda)} \mult(i,\pi) i \in
\ZZ[\GGamma/\LLambda]$. When $\pi$ comes from $\Corep(\GGamma)$ one can use
quantum multiplicities and we denote
\begin{displaymath}
  \tilde\pi = \sum_{i\in I(\GGamma/\LLambda)} \qmult(i,\pi) i \in \RR[\GGamma/\LLambda].
\end{displaymath}
Note that we have $\tilde\pi=\pi$ in $\CC[\GGamma]$ if $\GGamma$ is unimodular.

\begin{proposition} \label{prp_kappa_cat} For $\alpha$, $\beta \in I(\GGamma)$
  we have $\kappa_{\bar\alpha,\beta} = (\tilde\alpha \mid \tilde\beta)$, where
  $(\cdot\mid\cdot)$ is the scalar product of $\CC[\GGamma/\LLambda]$. In
  particular $\kappa_\alpha = \|\tilde\alpha\|^2$.
\end{proposition}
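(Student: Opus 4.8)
The plan is to compute the scalar product $(\tilde\alpha\mid\tilde\beta)$ in $\CC[\GGamma/\LLambda]$ directly from the definition of the quantum multiplicities and relate it to $\kappa_{\bar\alpha,\beta}=\qdim(\bar\alpha\otimes\beta)_\LLambda$. Since $I(\GGamma/\LLambda)$ is an orthonormal basis, we have
\begin{displaymath}
  (\tilde\alpha\mid\tilde\beta) = \sum_{i\in I(\GGamma/\LLambda)} \qmult(i,\alpha)\,\qmult(i,\beta),
\end{displaymath}
so the task reduces to showing $\sum_i \qmult(i,\alpha)\qmult(i,\beta) = \qdim(\bar\alpha\otimes\beta)_\LLambda$. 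The right-hand side is $\sum_{\lambda\in I(\LLambda)} \cc_\lambda^{\bar\alpha,\beta}\,\qdim(\lambda)$, as computed inside the proof of Lemma~\ref{lem_kappa}, where $\cc_\lambda^{\bar\alpha,\beta} = \dim\Hom(\lambda,\bar\alpha\otimes\beta) = \dim\Hom(\alpha\otimes\lambda,\beta)$.

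Next I would unwind the left-hand side using the module-category description. By Proposition~\ref{prp_quotient_description} we have $\Hom_{\GGamma/\LLambda}(\alpha,\beta) = B(H_\alpha,H_\beta)_\LLambda$, which by Frobenius reciprocity equals $\Span\{v(1\otimes\eta)\mid \lambda\in I(\LLambda),\ v\in\Hom(\alpha\otimes\lambda,\beta),\ \eta\in H_\lambda\}$; this makes $\dim\Hom_{\GGamma/\LLambda}(\alpha,\beta) = \sum_{\lambda\in I(\LLambda)}\cc_\lambda^{\bar\alpha,\beta}\dim(\lambda)$ — the analogous \emph{classical} identity. For the quantum version I would instead track the modular element: the point is that the trace $\qTr$ on $B(H_\alpha,H_\beta)_\LLambda \subset B(H_\alpha)\oplus B(H_\beta)$ induced by $F_\alpha$, $F_\beta$ decomposes according to the decomposition $H_\alpha\simeq\bigoplus_i H_i\otimes M_{i,\alpha}$, $H_\beta\simeq\bigoplus_i H_i\otimes M_{i,\beta}$, and since morphisms in $\Corep(\GGamma/\LLambda)$ respect this grading, one gets $\qTr_{\Hom_{\GGamma/\LLambda}(\alpha,\beta)}(\id) = \sum_i \qdim(i)^{-1}\cdot(\text{something})$ — more precisely I expect that the natural ``quantum dimension'' of the object $B(H_\alpha,H_\beta)_\LLambda$ as a $\Corep(\LLambda)$-equivariant space, computed with $F$, comes out to $\sum_{\lambda} \cc_\lambda^{\bar\alpha,\beta}\qdim(\lambda)$ on one side and to $\sum_i \qmult(i,\alpha)\qmult(i,\beta)$ on the other.

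Concretely, the cleanest route is probably this: decompose $\alpha = \bigoplus_i H_i\otimes M_{i,\alpha}$ and $\beta = \bigoplus_i H_i\otimes M_{i,\beta}$ in $\Corep(\GGamma/\LLambda)$, and use that $\Hom_{\GGamma/\LLambda}(\alpha,\beta) = \bigoplus_i \Hom(M_{i,\alpha}, M_{i,\beta})$ since the $i$ are mutually non-isomorphic irreducibles of $\Corep(\GGamma/\LLambda)$. Applying the quantum trace coming from $F_\alpha=\diag(F_i\otimes L_{i,\alpha})$ and $F_\beta=\diag(F_i\otimes L_{i,\beta})$ to the identity of this $\Hom$-space, and using $\Tr(F_i)=\qdim(i)$ together with $\Tr(L_{i,\alpha})=\qmult(i,\alpha)$, yields a weighted sum; meanwhile the same quantum trace, computed through the identification with $\bigoplus_\lambda C_\lambda^{\bar\alpha,\beta}\otimes H_\lambda$ and using $\Delta(F)=F\otimes F$ so that $F$ acts as $\qdim(\lambda)$-weighted on the $\lambda$-component, gives $\sum_\lambda \cc_\lambda^{\bar\alpha,\beta}\qdim(\lambda) = \kappa_{\bar\alpha,\beta}$. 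Matching the two evaluations gives the identity, and $\kappa_\alpha=\kappa_{\bar\alpha,\alpha}=(\tilde\alpha\mid\tilde\alpha)=\|\tilde\alpha\|^2$ is the special case $\beta=\alpha$.

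The main obstacle I anticipate is getting the normalizations of the quantum traces exactly right: $F_i$, $L_{i,\alpha}$, $D_i^{\beta,j}$ are each separately normalized by $\Tr(A)=\Tr(A^{-1})$, so the factorization $F_\alpha=\diag(F_i\otimes L_{i,\alpha})$ holds only because this normalization is preserved under tensor products of such elements (a fact used implicitly in Proposition~\ref{def_modular_mult}), and I need to be careful that the quantum trace of $\id$ on $\Hom(M_{i,\alpha},M_{i,\beta})$ with respect to the modular data is genuinely $\Tr(L_{i,\alpha})\Tr(L_{i,\beta})$ and not some twisted variant. Once the bookkeeping of $F$ versus $F^{-1}$ on conjugate objects is settled — recalling $\bar\alpha$ carries $F_{\bar\alpha} = (F_\alpha^{-1})^{T}$ up to the conjugation, which is exactly what makes $\qdim(\bar\alpha)=\qdim(\alpha)$ — the computation should close cleanly.
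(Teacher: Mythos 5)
Your proposal is correct and follows essentially the same route as the paper's proof: under the identification $\Hom_{\GGamma/\LLambda}(\alpha,\beta)\simeq(\bar\alpha\otimes\beta)_\LLambda$ via $T\mapsto(\id\otimes T)t_\alpha$, the quantity $\kappa_{\bar\alpha,\beta}=\qdim(\bar\alpha\otimes\beta)_\LLambda$ becomes the trace of $T\mapsto F_\beta T F_\alpha^{-1}$ on $\bigoplus_i \id_i\otimes B(M_{i,\alpha},M_{i,\beta})$, which evaluates to $\sum_i\Tr(L_{i,\beta})\Tr(L_{i,\alpha}^{-1})=\sum_i\qmult(i,\beta)\qmult(i,\alpha)=(\tilde\alpha\mid\tilde\beta)$. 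The normalization issue you flag at the end is indeed the only delicate point, and it is resolved exactly as you anticipate: the inverse sits on the source factor, and $\Tr(L_{i,\alpha}^{-1})=\Tr(L_{i,\alpha})=\qmult(i,\alpha)$ by the chosen normalization.
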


\begin{proof}
  Recall the identification
  $\Hom_{\GGamma/\LLambda}(\alpha,\beta) = (\bar\alpha\otimes\beta)_\LLambda$
  via the map $T\mapsto (\id\otimes T)t_\alpha$,
  $T \in \Hom_{\GGamma/\LLambda}(\alpha,\beta)$. We have
  $(F_{\bar\alpha}\otimes F_\beta)(\id\otimes T)t_\alpha = (\id\otimes F_\beta T
  F_\alpha^{-1})t_\alpha$. Denote by
  $\Ff \in B(\Hom_{\GGamma/\LLambda}(\alpha,\beta))$ the map given by
  $\Ff(T) =F_\beta T F_\alpha^{-1}$ for
  $T\in \Hom_{\GGamma/\LLambda}(\alpha,\beta)\subset B(H_\alpha,H_\beta)$, so
  that we have $\kappa_{\bar\alpha,\beta} = \Tr(\Ff)$. Decomposing
  $H_\alpha \simeq \bigoplus H_i\otimes M_{i,\alpha}$,
  $H_\beta \simeq \bigoplus H_i\otimes M_{i,\beta}$ we have
  $\Hom_{\GGamma/\LLambda}(\alpha,\beta) = \bigoplus_i \id_i\otimes
  B(M_{i,\alpha}, M_{i,\beta})$. Using the corresponding decomposition of the
  $F$-matrices we have $\Ff(T) = \sum_i L_{i,\beta}T_iL_{i,\alpha}^{-1}$ for
  $T = \diag(\id\otimes T_i)$. Computing $\Tr(\Ff)$ in a basis of matrix units
  this yields
  $\kappa_{\bar\alpha,\beta} = \sum_i\Tr(L_{i,\beta}) \Tr(L_{i,\alpha}^{-1}) =
  \sum_i\qmult(i,\beta)\qmult(i,\alpha) = (\tilde\alpha \mid \tilde\beta)$.
\end{proof}

We have then the following result generalizing (and resulting from)
Corollary~\ref{crl_kappa_indep}:

\begin{proposition} \label{crl_kappa_indep_2} The vector
  $\tilde\alpha ~/\qdim(\alpha) \in \CC[\GGamma/\LLambda]$ only depends on
  $[\alpha]$.
\end{proposition}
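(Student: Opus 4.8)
The plan is to deduce the statement from Corollary~\ref{crl_kappa_indep} and the categorical identity of Proposition~\ref{prp_kappa_cat}, via a short parallelogram-law argument. Throughout, I would write $u_\alpha := \tilde\alpha/\qdim(\alpha)$ for $\alpha\in I(\GGamma)$, an element of the real subspace $\RR[\GGamma/\LLambda]\subset\CC[\GGamma/\LLambda]$: indeed the coefficient of $i\in I(\GGamma/\LLambda)$ in $u_\alpha$ is the nonnegative real number $\qmult(i,\alpha)/\qdim(\alpha)$, and only finitely many of these are nonzero since $H_\alpha$ is finite dimensional. On such real vectors the scalar product of $\CC[\GGamma/\LLambda]$ is a genuine symmetric positive-definite bilinear form, so the usual identity $\|u-v\|^2 = (u\mid u) - 2(u\mid v) + (v\mid v)$ applies.

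First I would combine Proposition~\ref{prp_kappa_cat} with Corollary~\ref{crl_kappa_indep}. Proposition~\ref{prp_kappa_cat} gives
\[
  (u_\alpha\mid u_\beta) \;=\; \frac{(\tilde\alpha\mid\tilde\beta)}{\qdim(\alpha)\,\qdim(\beta)} \;=\; \frac{\kappa_{\bar\alpha,\beta}}{\qdim(\alpha)\,\qdim(\beta)}
\]
for all $\alpha,\beta\in I(\GGamma)$, and since $\qdim(\bar\alpha)=\qdim(\alpha)$, Corollary~\ref{crl_kappa_indep} says the right-hand side depends only on the classes $[\alpha],[\beta]\in I(\GGamma)/\LLambda$.

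Now fix $\alpha,\alpha'\in I(\GGamma)$ with $[\alpha]=[\alpha']$. By the previous paragraph, all three numbers $(u_\alpha\mid u_\alpha)$, $(u_\alpha\mid u_{\alpha'})$, $(u_{\alpha'}\mid u_{\alpha'})$ equal the single quantity attached to the pair of classes $([\alpha],[\alpha])$ --- here I use the statement of Corollary~\ref{crl_kappa_indep} in the strong form that lets me replace the second argument $\alpha'$ by $\alpha$, which is legitimate precisely because $[\alpha']=[\alpha]$. Calling this common value $C$, we get
\[
  \|u_\alpha-u_{\alpha'}\|^2 \;=\; (u_\alpha\mid u_\alpha) - 2(u_\alpha\mid u_{\alpha'}) + (u_{\alpha'}\mid u_{\alpha'}) \;=\; C - 2C + C \;=\; 0,
\]
hence $u_\alpha = u_{\alpha'}$, i.e.\ $\tilde\alpha/\qdim(\alpha)=\tilde{\alpha'}/\qdim(\alpha')$, which is the assertion.

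There is essentially no serious obstacle here beyond having Proposition~\ref{prp_kappa_cat} and Corollary~\ref{crl_kappa_indep} at hand; the only care needed is the (routine) check that the scalar product restricted to the real span is symmetric and positive definite, so that the parallelogram computation is valid. I would also remark that this argument upgrades Corollary~\ref{crl_kappa_indep} to the cleaner vector statement $\kappa_{\bar\alpha,\beta}/(\qdim(\alpha)\,\qdim(\beta)) = (u_\sigma\mid u_\tau)$, where $u_\sigma := u_\alpha$ for any $\alpha$ in the class $\sigma=[\alpha]$; in particular $\kappa_\alpha/\qdim(\alpha)^2 = \|u_{[\alpha]}\|^2$.
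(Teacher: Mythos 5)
Your argument is correct, and it takes a genuinely different route from the paper's. The paper proves the statement coordinate-wise: it applies Corollary~\ref{crl_indep} to the minimal central projections $q_i$ of $c_c(\GGamma/\LLambda)$, computes $\kappa_\alpha^{-1}h_L(q_ip_\alpha) = \qdim(i)\,\bigl(\qdim(\alpha)^2/\kappa_\alpha\bigr)\,\bigl(\qmult(i,\alpha)/\qdim(\alpha)\bigr)$ via the decomposition $H_\alpha\simeq\bigoplus_i H_i\otimes M_{i,\alpha}$, and then divides out the factor $\qdim(\alpha)^2/\kappa_\alpha$, which is class-invariant by Corollary~\ref{crl_kappa_indep}; this identifies each coefficient $\qmult(i,\alpha)/\qdim(\alpha)$ individually as a class invariant. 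You instead work globally: Proposition~\ref{prp_kappa_cat} says the Gram matrix of the vectors $u_\alpha=\tilde\alpha/\qdim(\alpha)$ is $\kappa_{\bar\alpha,\beta}/(\qdim(\alpha)\qdim(\beta))$, Corollary~\ref{crl_kappa_indep} (in its full two-variable form, which its proof does establish via the symmetry $\kappa_{\bar\alpha,\beta}=\kappa_{\bar\beta,\alpha}$) says this Gram matrix is constant on each pair of classes, and positive-definiteness of the scalar product on the real span then forces $\|u_\alpha-u_{\alpha'}\|^2=C-2C+C=0$ within a class. Both proofs consume Corollary~\ref{crl_kappa_indep}; yours additionally needs Proposition~\ref{prp_kappa_cat} but nothing else, while the paper's needs Corollary~\ref{crl_indep} and a small trace computation. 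Your version is slicker and purely linear-algebraic once the Gram identity is in hand; the paper's version has the side benefit of exhibiting the invariant coefficients explicitly as (normalizations of) $h_L(q_ip_\alpha)$, in the same spirit as Corollary~\ref{crl_indep}. Your closing remark that $\kappa_{\bar\alpha,\beta}/(\qdim(\alpha)\qdim(\beta))=(u_{[\alpha]}\mid u_{[\beta]})$ is also consistent with the paper, which notes that the proposition generalizes Corollary~\ref{crl_kappa_indep}.
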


\begin{proof}
  We have to show that for $i\in I(\GGamma/\LLambda)$, $\alpha\in I(\GGamma)$,
  the number $\qmult(i,\alpha)/\qdim(\alpha)$ depends only on $i$ and the class
  $[\alpha] \in I(\GGamma)/\LLambda$. Denote by
  $q_i \in c_c(\GGamma/\LLambda) \subset \ell^\infty(\GGamma)$ the minimal
  central projection of $c_c(\GGamma/\LLambda)$ corresponding to
  $i\in I(\GGamma/\LLambda)$. We have $(q_i)_\alpha = q_i p_\alpha$ and so by
  Corollary~\ref{crl_indep} the number $\kappa^{-1}h_L(q_ip_\alpha)$ only
  depends on $[\alpha]$. On the other hand we can compute it using the canonical
  identification $H_\alpha \simeq \bigoplus_i H_i \otimes M_{i,\alpha}$ as
  follows:
  \begin{align*}
    \kappa_\alpha^{-1}h_L(q_ip_\alpha) &= \kappa_\alpha^{-1}\qdim(\alpha) \Tr(F_\alpha^{-1}p_\alpha q_i)
                                         = \kappa_\alpha^{-1}\qdim(\alpha) (\Tr\otimes\Tr)(F_i^{-1}\otimes L_{i,\alpha}^{-1}) \\
                                       &= \qdim(i) \frac{\qdim(\alpha)^2}{\kappa_\alpha} \frac{\qmult(i,\alpha)}{\qdim(\alpha)}.
  \end{align*}
  The assertion follows since $\qdim(\alpha)^2/\kappa_\alpha$ only depends on
  $[\alpha]$ by Corollary \ref{crl_kappa_indep}.
\end{proof}

\subsection{The Hecke algebra}
\label{sec_hecke_discrete}

\subsubsection{The convolution product}

Now we introduce the Hecke convolution product. Note that, thanks to
Theorem~\ref{thm_strong_inv}, the different expressions for $a*b$ given in
the next definition are indeed equal. By comparing~\eqref{eq_convol_left} and
\eqref{eq_convol_right} one sees that they do not depend on the choices of
$\alpha \in [\alpha]$, $\beta\in [\beta]$.

\begin{definition} \label{def_convol} Let $a \in c_c(\GGamma/\LLambda)$,
  $b\in c_c(\LLambda\bs\GGamma)$. According to Theorem~\ref{thm_strong_inv}
  we can define $a*b \in c(\GGamma)$ as follows:
  \begin{align}
    a*b ~ &= \sum_{[\alpha]\in I(\GGamma)/\LLambda} \kappa_\alpha^{-1} 
            (S^{-1}(a_\alpha)h_R\otimes\id) \Delta(b) 
            ~= \sum_{[\alpha]\in I(\GGamma)/\LLambda} \kappa_\alpha^{-1} 
            (h_RS(a_\alpha)\otimes\id) \Delta(b) 
            \label{eq_convol_left} \\ 
          &= \sum_{[\alpha], [\beta]} \kappa_\alpha^{-1}\kappa_{\bar\beta}^{-1}
            (S^{-1}(a_\alpha)h_R\otimes\id\otimes h_LS^{-1}(b_\beta)) 
            \Delta^2(p_\LLambda) \label{eq_convol_both} \\
          &= \sum_{[\beta]\in \LLambda\bs I(\GGamma)} \kappa_{\bar\beta}^{-1} 
            (\id\otimes h_LS^{-1}(b_\beta)) \Delta(a)
            ~= \sum_{[\beta]\in \LLambda\bs I(\GGamma)} \kappa_{\bar\beta}^{-1} 
            (\id\otimes S(b_\beta)h_L) \Delta(a). 
            \label{eq_convol_right}
  \end{align}
\end{definition}

In the classical case $\GGamma = \Gamma$ we have $\kappa_\alpha = 1$ for all
$\alpha \in \Gamma$, and we recover the classical formulae for the Hecke
convolution product \cite{Shimura_introduction}. Note that one can in fact
define $a*b \in c(\GGamma)$ for $a\in c_c(\GGamma/\LLambda)$,
$b\in c(\LLambda\bs\GGamma)$ (resp.  $a\in c(\GGamma/\LLambda)$,
$b\in c_c(\LLambda\bs\GGamma)$) using~\eqref{eq_convol_left}
(resp.~\eqref{eq_convol_right}). The following results are immediate from the
Definition:

\begin{proposition}\label{prp_convol_coprod}
  For any $a \in c_c(\GGamma/\LLambda)$, $b\in c_c(\LLambda\bs\GGamma)$ we have
  $\Delta(a*b) = (a*\otimes\id)\Delta(b) = (\id\otimes *b)\Delta(a)$. In
  particular we have $a*b \in c(\GGamma/\LLambda)$ if
  $b\in c_c(\LLambda\bs\GGamma)\cap c(\GGamma/\LLambda)$ and
  $a*b \in c(\LLambda\bs\GGamma)$ if
  $a\in c_c(\GGamma/\LLambda)\cap c(\LLambda\bs\GGamma)$.
\end{proposition}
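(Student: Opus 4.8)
The plan is to read both identities directly off the two one-sided formulas for the convolution product in Definition~\ref{def_convol}, using nothing beyond coassociativity of $\Delta$. Recall from the remark following that definition that~\eqref{eq_convol_right} already makes sense for $b \in c_c(\LLambda\bs\GGamma)$ and arbitrary $a \in c(\GGamma/\LLambda)$, and symmetrically that~\eqref{eq_convol_left} makes sense for $a \in c_c(\GGamma/\LLambda)$ and arbitrary $b \in c(\LLambda\bs\GGamma)$; thus the operator $x \mapsto x*b$ is defined on all of $c(\GGamma/\LLambda)$ and $x \mapsto a*x$ on all of $c(\LLambda\bs\GGamma)$, and in particular these can legitimately be slotted into the tensor legs of $\Delta(a)$ resp.\ $\Delta(b)$, which lie in $\Mm(c_c(\GGamma)\otimes c_c(\GGamma/\LLambda))$ resp.\ $\Mm(c_c(\LLambda\bs\GGamma)\otimes c_c(\GGamma))$.

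To prove $\Delta(a*b) = (\id\otimes *b)\Delta(a)$ I would start from~\eqref{eq_convol_right}, namely $a*b = \sum_{[\beta]}\kappa_{\bar\beta}^{-1}(\id\otimes h_LS^{-1}(b_\beta))\Delta(a)$, apply $\Delta$ term by term, and rewrite the result with coassociativity $(\Delta\otimes\id)\Delta = (\id\otimes\Delta)\Delta$, obtaining $\sum_{[\beta]}\kappa_{\bar\beta}^{-1}(\id\otimes\id\otimes h_LS^{-1}(b_\beta))(\id\otimes\Delta)\Delta(a)$. On the other hand, applying $x \mapsto x*b$ via~\eqref{eq_convol_right} to the second leg of $\Delta(a)$ produces literally the same expression, so the two agree. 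The identity $\Delta(a*b) = (a*\otimes\id)\Delta(b)$ is obtained in the mirror-image way, starting instead from~\eqref{eq_convol_left}, $a*b = \sum_{[\alpha]}\kappa_\alpha^{-1}(S^{-1}(a_\alpha)h_R\otimes\id)\Delta(b)$, applying $\Delta$ to the free leg, and using coassociativity in the form $(\id\otimes\Delta)\Delta = (\Delta\otimes\id)\Delta$.

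The ``in particular'' assertions then drop out by substituting the defining invariance conditions of the quotient algebras. If $b \in c_c(\LLambda\bs\GGamma)\cap c(\GGamma/\LLambda)$, then $(1\otimes p_\LLambda)\Delta(b) = b\otimes p_\LLambda$, and since the operator $a*$ acts only on the first tensor leg we get $(1\otimes p_\LLambda)\Delta(a*b) = (a*\otimes\id)\bigl((1\otimes p_\LLambda)\Delta(b)\bigr) = (a*\otimes\id)(b\otimes p_\LLambda) = (a*b)\otimes p_\LLambda$, which is exactly the condition $a*b \in c(\GGamma/\LLambda)$; symmetrically, if $a \in c_c(\GGamma/\LLambda)\cap c(\LLambda\bs\GGamma)$ then $(p_\LLambda\otimes 1)\Delta(a) = p_\LLambda\otimes a$ gives $(p_\LLambda\otimes 1)\Delta(a*b) = (\id\otimes *b)\bigl((p_\LLambda\otimes 1)\Delta(a)\bigr) = p_\LLambda\otimes(a*b)$, i.e.\ $a*b \in c(\LLambda\bs\GGamma)$. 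The only point that is not entirely automatic --- and hence the ``main obstacle'', such as it is --- is checking that applying $\Delta$ to the sums over $[\alpha]$ resp.\ $[\beta]$ term by term is legitimate and that the partial evaluations $S^{-1}(a_\alpha)h_R$, $h_LS^{-1}(b_\beta)$ really land in $c(\GGamma)$ at the multiplier level; this is guaranteed by the local finiteness built into $c_c(\GGamma/\LLambda)$ and $c_c(\LLambda\bs\GGamma)$ together with Theorem~\ref{thm_strong_inv}, after which the statement is indeed immediate.
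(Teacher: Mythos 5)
Your argument is correct and is exactly the verification the paper has in mind — the paper simply declares this proposition ``immediate from the Definition'', and what it means by that is precisely your computation: apply $\Delta$ to the free leg of one of the one-sided expressions in Definition~\ref{def_convol}, invoke coassociativity, and recognize the result as $*b$ (resp.\ $a*$) acting on the appropriate leg of $\Delta(a)$ (resp.\ $\Delta(b)$), the sums being finite because $a,b$ are finitely supported over cosets. The ``in particular'' deductions via $(1\otimes p_\LLambda)\Delta(b)=b\otimes p_\LLambda$ are likewise the intended ones.
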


One can also express the convolution product using the functionals $\mu$, $\nu$
from Definition~\ref{def_scalar}, for instance we have
$a*b = ((\mu a)S\otimes\id)\Delta(b)$ for $a\in c_c(\GGamma/\LLambda)$,
$b\in c(\LLambda\bs\GGamma)$ and $a*b = (\id\otimes S(b)\mu)\Delta(a)$ for
$a\in c(\GGamma/\LLambda)$, $b\in c_c(\LLambda\bs\GGamma)$. From the first
expression and Proposition~\ref{prp_convol_coprod} we then immediately obtain  
the following statement. 
  
\begin{proposition}\label{prp_invariant_measure}
  For any $a\in c_c(\GGamma/\LLambda)$ we have $a*1 = \mu(a)1$, where $\mu$ is
  the functional of Definition~\ref{def_invariant_measure}. Moreover $\mu$ is
  $\GGamma$-invariant: we have $(\id\otimes\mu)\Delta(a) = \mu(a) 1$ for any
  $a\in c_c(\GGamma/\LLambda)$.
\end{proposition}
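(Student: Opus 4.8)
The plan is to deduce both assertions directly from the last expression preceding Proposition~\ref{prp_invariant_measure}, namely $a*b = ((\mu a)S\otimes\id)\Delta(b)$ for $a \in c_c(\GGamma/\LLambda)$, $b \in c(\LLambda\bs\GGamma)$, specialised to $b = 1 = p_\LLambda + (1-p_\LLambda)$; note $1 \in c(\LLambda\bs\GGamma)$ since $\Delta(1) = 1\otimes 1$ trivially satisfies the invariance condition. First I would compute $a*1$: applying the formula gives $a*1 = ((\mu a)S\otimes\id)\Delta(1) = ((\mu a)S\otimes\id)(1\otimes 1) = \mu(S^{-1}\cdot 1)\cdot 1$, and since $S(1)=1$ we get $(\mu a)S(1) = \mu(S(1)a) = \mu(a)$, hence $a*1 = \mu(a)1$. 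One should double-check the convention: $(\mu a)(x) = \mu(ax)$, so $((\mu a)S)(x) = \mu(a\,S(x))$, and evaluating the leg on $1$ indeed yields $\mu(a\,S(1)) = \mu(a)$; this is the only place where a sign/convention slip could occur, so I would verify it against Definition~\ref{def_convol} by tracking one of the explicit formulae, e.g.\ \eqref{eq_convol_left} with $b=1$ giving $\sum_{[\alpha]} \kappa_\alpha^{-1}(S^{-1}(a_\alpha)h_R\otimes\id)\Delta(1) = \sum_{[\alpha]} \kappa_\alpha^{-1}h_R(S^{-1}(a_\alpha))\,1$, which by the identity $h_R(S^{-1}(x)) = h_R(S(x)) = h_L(x)$ in the discrete case (from $h_R S = h_L$, or directly $h_R\circ S^{-1} = h_L$ up to the standard relation) equals $\sum_{[\alpha]}\kappa_\alpha^{-1}h_L(a_\alpha)\cdot 1 = \mu(a)1$ by Definition~\ref{def_invariant_measure}. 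This is the cleaner route and I would present it as the main computation.

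For the $\GGamma$-invariance $(\id\otimes\mu)\Delta(a) = \mu(a)1$, I would use Proposition~\ref{prp_convol_coprod}, which gives $\Delta(a*b) = (\id\otimes *b)\Delta(a)$ for $a \in c_c(\GGamma/\LLambda)$, $b \in c_c(\LLambda\bs\GGamma)$. Taking $b=1$ — but here $1 \notin c_c(\LLambda\bs\GGamma)$ unless $\LLambda$ has finitely many double cosets, so instead I would argue as follows: the map $c_c(\GGamma/\LLambda) \to c(\GGamma)$, $a \mapsto a*1$, equals $a \mapsto \mu(a)1$ by the first part, and Proposition~\ref{prp_convol_coprod} (valid for $b \in c(\LLambda\bs\GGamma)$ paired with $a \in c_c(\GGamma/\LLambda)$ via \eqref{eq_convol_right}, which makes sense as noted after Definition~\ref{def_convol}) gives $\Delta(a*1) = (\id\otimes *1)\Delta(a)$. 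Now $\Delta(a)$ lies in $\Mm(c_c(\GGamma)\otimes c_c(\GGamma/\LLambda))$, and applying the map $x \mapsto x*1 = \mu(x)1$ to the second leg — which is legitimate since the second leg ranges over $c_c(\GGamma/\LLambda)$ — yields $(\id\otimes(*1))\Delta(a) = (\id\otimes\mu)\Delta(a)\otimes 1$ interpreted appropriately, i.e.\ $(\id\otimes\mu)\Delta(a)\cdot 1$. On the other hand $\Delta(a*1) = \Delta(\mu(a)1) = \mu(a)(1\otimes 1)$. Comparing, and noting the first tensor leg is genuinely $c(\GGamma)$-valued while the second has collapsed to a scalar multiple of $1$, gives $(\id\otimes\mu)\Delta(a) = \mu(a)1$.

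The main obstacle I anticipate is purely bookkeeping: making sure the convolution $a*b$, the coproduct identity of Proposition~\ref{prp_convol_coprod}, and the slice map $(\id\otimes\mu)$ are all being applied in ranges where they are defined (multiplier algebras, non-unital algebras, weights defined only on $c_c$), so that "plug $b=1$ and apply $*1$ to a leg" is rigorous rather than formal. The cleanest way to sidestep this is to avoid $b=1$ in Proposition~\ref{prp_convol_coprod} altogether and instead prove $\GGamma$-invariance by a direct slice-map computation: for $a \in p_{[\alpha]}c_c(\GGamma/\LLambda)$ write $a = \kappa_\alpha^{-1}(S^{-1}(a_\alpha)h_R\otimes\id)\Delta(p_\LLambda)$ via \eqref{eq_strong_inv_right}, compute $\Delta(a) = \kappa_\alpha^{-1}(S^{-1}(a_\alpha)h_R\otimes\id\otimes\id)\Delta^{(2)}(p_\LLambda)$ (using coassociativity and that $\Delta(p_\LLambda)$ already has its action specified), then apply $\id\otimes\mu$ to the last two legs; since $\mu$ applied to the third leg via the same strong-invariance identity reconstructs $p_\LLambda$-data, one collapses the expression to $\kappa_\alpha^{-1}h_R(S^{-1}(a_\alpha))(\id\otimes\epsilon)(\text{something})\cdot 1 = \mu(a)1$ after using $(\id\otimes\mu)\Delta(p_\LLambda)$'s explicit value. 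I would keep the short argument in the main text and relegate this verification to the reader, since it is routine given Theorem~\ref{thm_strong_inv}.
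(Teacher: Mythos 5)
Your argument is correct and matches the paper's own (un-displayed) proof, which simply observes that both claims follow from the identity $a*1=((\mu a)S\otimes\id)\Delta(1)=\mu(a)1$ together with Proposition~\ref{prp_convol_coprod} applied with $b=1$. Your extra care about $1\notin c_c(\LLambda\bs\GGamma)$ and the cross-check via \eqref{eq_convol_left} are sound refinements of the same route (only note that the remark after Definition~\ref{def_convol} assigns the case $a\in c_c(\GGamma/\LLambda)$, $b\in c(\LLambda\bs\GGamma)$ to \eqref{eq_convol_left} rather than \eqref{eq_convol_right}, though the proof of Proposition~\ref{prp_convol} shows the latter also converges here).
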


In the next proposition we give a description of the convolution product between
$c_c(\GGamma/\LLambda)$ and $c_c(\LLambda\bs\GGamma) = S(c_c(\GGamma/\LLambda))$
using only the structure of the $\GGamma$-invariant subalgebra
$\ell^\infty(\GGamma/\LLambda) \subset \ell^\infty(\GGamma)$, without explicit
reference to the quantum subgroup $\LLambda$. Note that by
Proposition~\ref{crl_kappa_indep_2} the fractions appearing in the
expression~\eqref{eq_convol_subalg} only depend on $i$ (and not on the choice of
$\beta_i$).

\begin{proposition}\label{prp_convol_subalg}
  Choose for each $i\in I(\GGamma/\LLambda)$ an element $\beta_i\in I(\GGamma)$
  such that $\qmult(i,\beta_i)$ $\neq 0$ and consider the linear forms
  $\varphi_i : \ell^\infty(\GGamma/\LLambda) \to \CC$,
  $\varphi_i(a) = \Tr(F_i^{-1})\Tr(F_i^{-1}a_i)$. Then for all
  $a\in c_c(\GGamma/\LLambda)$, $b\in c_c(\LLambda\bs\GGamma)$ we have
  \begin{equation}\label{eq_convol_subalg}
    a*b = \sum_{i\in I(\GGamma/\LLambda)}
    \frac {\qdim(\beta_i)\qmult(i,\beta_i)}{\kappa_{\beta_i}\qdim(i)}
    (\id\otimes S(b)\varphi_i)\Delta(a).
  \end{equation}
\end{proposition}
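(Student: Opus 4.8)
The plan is to identify $\mu$ block by block as a rescaled quantum trace for the module category $\Corep(\GGamma/\LLambda)$, and then feed this into the expression $a*b=(\id\otimes S(b)\mu)\Delta(a)$ recorded just after Proposition~\ref{prp_invariant_measure}, valid for $a\in c_c(\GGamma/\LLambda)$ and $b\in c_c(\LLambda\bs\GGamma)$. That expression is linear in the functional applied to the second leg of $\Delta(a)$, and $S(b)\varphi_i$ is again a well-defined functional on the finitely supported algebra $c_c(\GGamma/\LLambda)$, so it suffices to prove the identity of linear forms
\[
  \mu \;=\; \sum_{i\in I(\GGamma/\LLambda)}\frac{\qdim(\beta_i)\,\qmult(i,\beta_i)}{\kappa_{\beta_i}\,\qdim(i)}\,\varphi_i
  \qquad\text{on }c_c(\GGamma/\LLambda),
\]
as substituting this back gives~\eqref{eq_convol_subalg}.

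Each $\varphi_j$ is supported on the single matrix block $q_j\ell^\infty(\GGamma/\LLambda)\cong B(H_j)$, and $c_c(\GGamma/\LLambda)$ is the algebraic direct sum of these blocks, so I would check the identity one $i$ at a time. Fix $i$ and let $[\alpha_0]\in I(\GGamma)/\LLambda$ be the unique coset with $q_i\le p_{[\alpha_0]}$. First one checks $\beta_i\in[\alpha_0]$: the hypothesis $\qmult(i,\beta_i)\neq 0$ says $i$ occurs in the $\Corep(\GGamma/\LLambda)$-decomposition of $\beta_i$, i.e. $q_ip_{\beta_i}\neq 0$, and since $q_i\le p_{[\alpha_0]}$ this forces $\beta_i\sim\alpha_0$. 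For $a\in q_i\ell^\infty(\GGamma/\LLambda)$ and any $\gamma\notin[\alpha_0]$ one has $p_\gamma a=p_\gamma q_i a=0$, so in the defining sum $\mu(a)=\sum_{[\gamma]}\kappa_\gamma^{-1}h_L(p_\gamma a)$ only the coset $[\alpha_0]$ contributes; taking $\beta_i$ as its representative, which is legitimate by Corollary~\ref{crl_indep}, yields $\mu(a)=\kappa_{\beta_i}^{-1}h_L(p_{\beta_i}a)$.

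It then remains to do the modular bookkeeping of Section~\ref{sec_mod_cat}. Decomposing $H_{\beta_i}\cong\bigoplus_j H_j\otimes M_{j,\beta_i}$ in $\Corep(\GGamma/\LLambda)$, Proposition~\ref{def_modular_mult} gives $F_{\beta_i}=\diag_j(F_j\otimes L_{j,\beta_i})$, and under the same identification $q_i\ell^\infty(\GGamma/\LLambda)$ is exactly $B(H_i)\otimes\id_{M_{i,\beta_i}}$. Since $h_L$ restricts to $\qdim(\beta_i)\,\Tr(F_{\beta_i}^{-1}\,\cdot\,)$ on $B(H_{\beta_i})$ (as in the proof of Corollary~\ref{crl_kappa_indep_2}), for such $a$ with component $a_i\in B(H_i)$ one computes
\[
  h_L(p_{\beta_i}a)=\qdim(\beta_i)\,\Tr\!\big((F_i^{-1}\otimes L_{i,\beta_i}^{-1})(a_i\otimes\id)\big)=\qdim(\beta_i)\,\qmult(i,\beta_i)\,\Tr(F_i^{-1}a_i),
\]
using $\Tr(L_{i,\beta_i}^{-1})=\qmult(i,\beta_i)$. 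As $\varphi_i(a)=\Tr(F_i^{-1})\Tr(F_i^{-1}a_i)=\qdim(i)\,\Tr(F_i^{-1}a_i)$, this gives $\mu(a)=\frac{\qdim(\beta_i)\qmult(i,\beta_i)}{\kappa_{\beta_i}\qdim(i)}\varphi_i(a)$, the desired block-wise identity. The independence of the coefficient of the choice of $\beta_i$ follows by rewriting it as $\frac{\qdim(\beta_i)^2}{\kappa_{\beta_i}}\cdot\frac{\qmult(i,\beta_i)}{\qdim(\beta_i)}\cdot\frac{1}{\qdim(i)}$ and invoking Corollaries~\ref{crl_kappa_indep} and~\ref{crl_kappa_indep_2}, the coset $[\beta_i]=[\alpha_0]$ being determined by $i$.

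I do not anticipate a real obstacle; the only delicate point is lining up, in the third paragraph, the single $\Corep(\GGamma/\LLambda)$-isotypic decomposition of $H_{\beta_i}$ so that it simultaneously block-diagonalises $F_{\beta_i}$ as in Proposition~\ref{def_modular_mult} and exhibits $q_i\ell^\infty(\GGamma/\LLambda)$ as the left tensor leg $B(H_i)$, together with remembering that $h_L$ on a matrix block is $\qdim\cdot\Tr(F^{-1}\,\cdot\,)$ and not $\qdim\cdot\Tr(F\,\cdot\,)$.
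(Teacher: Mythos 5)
Your proposal is correct and takes essentially the same route as the paper: the paper starts from \eqref{eq_convol_right} and evaluates the functional $\sum_{[\beta]}\kappa_{\bar\beta}^{-1}S(b_\beta)h_L$ (which is exactly $S(b)\mu$) block by block on $p_{\bar\beta}c_c(\GGamma/\LLambda)\simeq\bigoplus_i B(H_i)\otimes\id_{M_{i,\bar\beta}}$ via $F_{\bar\beta}=\diag_i(F_i\otimes L_{i,\bar\beta})$ and $h_L=\qdim(\bar\beta)\Tr(F_{\bar\beta}^{-1}\,\cdot\,)$, then sets $\bar\beta=\beta_i$. Your isolation of the scalar identity $\mu=\sum_i \frac{\qdim(\beta_i)\qmult(i,\beta_i)}{\kappa_{\beta_i}\qdim(i)}\varphi_i$ is merely a reorganization of that same computation.
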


\begin{proof}
  We start from the last expression in equation~\eqref{eq_convol_right}. We
  shall compute the linear form
  $\sum_{[\beta]}\kappa_{\bar\beta}^{-1} S(b_\beta) h_L$ on the matrix blocks of
  $c_c(\GGamma/\LLambda)$ using the canonical identification
  $p_{\bar\beta}c_c(\GGamma/\LLambda) \simeq \bigoplus_i B(H_i)\otimes
  \id_{M(i,\bar\beta)} \subset B(H_{\bar\beta})$. For
  $a\in c_c(\GGamma/\LLambda)$ we have
  \begin{align*}
    \sum_{[\beta]}\kappa_{\bar\beta}^{-1} S(b_\beta) h_L(a) &=
                                                              \sum_{[\beta]}\sum_i \kappa_{\bar\beta}^{-1} \Tr(F_{\bar\beta}^{-1})(\Tr\otimes\Tr)(F_i^{-1}a_i S(b)_i\otimes L_{i,\bar\beta}^{-1}) \\
                                                            &= \sum_{[\beta]}\sum_i \kappa_{\bar\beta}^{-1} \qdim(\bar\beta) \Tr(F_i^{-1}a_i S(b)_i) \qmult(i,\bar\beta). 
  \end{align*}
  Putting $\bar\beta = \beta_i$ for each $i \in I(\GGamma/\LLambda)$ we
  obtain~\eqref{eq_convol_subalg}.
\end{proof}

\begin{proposition} \label{prp_convol} If $a$,
  $b \in c_c(\GGamma/\LLambda)^\LLambda := c_c(\GGamma/\LLambda) \cap
  c(\LLambda\bs\GGamma)$ then $a*b \in c_c(\GGamma/\LLambda)^\LLambda$ as
  well. The convolution production defined in this way on
  $c_c(\GGamma/\LLambda)^\LLambda$ is bilinear, associative, with unit element
  $p_\LLambda$. We have $\sigma^R_t(a*b) = \sigma^R_t(a)*\sigma^R_t(b)$ for any
  ${t\in \mathbb{R}}$. One obtains similarly a convolution product on
  $c_c(\LLambda\bs\GGamma)^\LLambda = c_c(\LLambda\bs\GGamma) \cap
  c(\GGamma/\LLambda)$.  Moreover, the map $a \mapsto a^\sharp := S(a^*)$ is an
  antimultiplicative, antilinear involution exchanging both convolution
  algebras.
\end{proposition}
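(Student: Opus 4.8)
The plan is to take $a*b$ to be the convolution product of Definition~\ref{def_convol}: since $a\in c_c(\GGamma/\LLambda)$ and $b\in c_c(\GGamma/\LLambda)^\LLambda\subset c(\LLambda\bs\GGamma)$, formula~\eqref{eq_convol_left} is available (as noted after Definition~\ref{def_convol}), together with the identities $\Delta(a*b)=(a*\otimes\id)\Delta(b)=(\id\otimes*b)\Delta(a)$ of Proposition~\ref{prp_convol_coprod} and the expression $a*b=((\mu a)S\otimes\id)\Delta(b)=\sum_{(b)}\mu(aS(b_{(1)}))\,b_{(2)}$. All of these continue to hold in the present regime by the routine extension of the proof of Proposition~\ref{prp_convol_coprod} to the cases $a\in c_c(\GGamma/\LLambda)$, $b\in c(\LLambda\bs\GGamma)$ and $a\in c(\GGamma/\LLambda)$, $b\in c_c(\LLambda\bs\GGamma)$, and the rest of the proof is organised around these formulas.

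First I would check closure. Only finitely many classes $[\alpha]$ contribute to~\eqref{eq_convol_left} (those with $p_{[\alpha]}a\neq0$, by Proposition~\ref{prp_quotient_description}), and each term equals $(S^{-1}(a_\alpha)h_R\otimes\id)\big((p_{\bar\alpha}\otimes1)\Delta(b)\big)$, which lies in $c_c(\GGamma/\LLambda)$ because $(p_{\bar\alpha}\otimes1)\Delta(b)\in c_c(\GGamma)\otimes c_c(\GGamma/\LLambda)$; hence $a*b\in c_c(\GGamma/\LLambda)$. Bi-invariance is then immediate: slicing $\Delta(a*b)=(a*\otimes\id)\Delta(b)$ by $1\otimes p_\LLambda$ and using $(1\otimes p_\LLambda)\Delta(b)=b\otimes p_\LLambda$ (valid since $b\in c(\GGamma/\LLambda)$) gives $a*b\in c(\GGamma/\LLambda)$, while slicing $\Delta(a*b)=(\id\otimes*b)\Delta(a)$ by $p_\LLambda\otimes1$ and using $(p_\LLambda\otimes1)\Delta(a)=p_\LLambda\otimes a$ (valid since $a\in c(\LLambda\bs\GGamma)$) gives $a*b\in c(\LLambda\bs\GGamma)$. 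Thus $a*b\in c_c(\GGamma/\LLambda)^\LLambda$, and bilinearity is clear.

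Next, the unit: $p_\LLambda=p_{[1]}$ lies in $c_c(\GGamma/\LLambda)^\LLambda$ because its support is the single double coset $\dc1=I(\LLambda)$. The identity $b*p_\LLambda=b$ is exactly~\eqref{eq_strong_inv_right}, and for $p_\LLambda*b$ one notes that in~\eqref{eq_convol_left} $(p_\LLambda)_\alpha\neq0$ only when $\alpha\in I(\LLambda)=[1]$, so choosing $1$ as representative of the class $[1]$ leaves the single term $\kappa_1^{-1}(p_1h_R\otimes\id)\Delta(b)=\kappa_1^{-1}h_R(p_1)\,b=b$, using $p_1h_R=h_R(p_1)\epsilon$, $(\epsilon\otimes\id)\Delta=\id$, and $\kappa_1=h_R(p_1)$ (case $\alpha=1$ of Lemma~\ref{lem_kappa}). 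Associativity follows by a short computation: from $a*b=\sum_{(b)}\mu(aS(b_{(1)}))\,b_{(2)}$ and $\Delta(b*c)=(\id\otimes*c)\Delta(b)$ one gets $a*(b*c)=\sum_{(b),(c)}\mu(aS(b_{(1)}))\,\mu(b_{(2)}S(c_{(1)}))\,c_{(2)}$, which equals $(a*b)*c=\sum_{(c)}\mu\big((a*b)S(c_{(1)})\big)c_{(2)}$ because $\mu\big((a*b)S(c_{(1)})\big)=\sum_{(b)}\mu(aS(b_{(1)}))\,\mu(b_{(2)}S(c_{(1)}))$. Compatibility with $\sigma^R$ comes from the fact that in the discrete case $\sigma^R_t=\tau_{-t}$ is a comultiplicative $*$-automorphism fixing each $p_\alpha$, commuting with $S$ and preserving $h_R$; applying it termwise to~\eqref{eq_convol_left} and using $\sigma^R_t(a_\alpha)=(\sigma^R_ta)_\alpha$ gives $\sigma^R_t(a*b)=\sigma^R_t(a)*\sigma^R_t(b)$, while $\sigma^R_t$ stabilises $c_c(\GGamma/\LLambda)^\LLambda$ by Lemma~\ref{lemma_quotient_antipode}. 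The mirror construction yields the convolution algebra $c_c(\LLambda\bs\GGamma)^\LLambda=c_c(\LLambda\bs\GGamma)\cap c(\GGamma/\LLambda)$, using~\eqref{eq_convol_right} in place of~\eqref{eq_convol_left}.

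Finally, $a^\sharp=S(a^*)$ is antilinear, and involutive since $S(y)^*=S^{-1}(y^*)$ gives $(a^\sharp)^\sharp=S(S^{-1}(a))=a$; it maps $c_c(\GGamma/\LLambda)^\LLambda$ onto $c_c(\LLambda\bs\GGamma)^\LLambda$, because $S$ exchanges $c(\GGamma/\LLambda)$ and $c(\LLambda\bs\GGamma)$ (Lemma~\ref{lemma_quotient_antipode}) and sends $c_c(\GGamma/\LLambda)$ onto $c_c(\LLambda\bs\GGamma)$ (as $\overline{[\alpha]}=[\bar\alpha]$ interchanges the right- and left-coset structures on $I(\GGamma)$). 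Antimultiplicativity then reduces to two identities proved by a direct manipulation of~\eqref{eq_convol_left}: $(a*b)^*=a^**b^*$, and $S(x*y)=S(y)*S(x)$ (now with $*$ the convolution on $c_c(\LLambda\bs\GGamma)^\LLambda$, computed via $\overline{[\alpha]}=[\bar\alpha]$ from~\eqref{eq_convol_right}), the latter using $h_R\circ S^{-1}=h_L$ and $S((\omega\otimes\id)\Delta=(\id\otimes\omega S^{-1})\Delta\circ S$; together these give $(a*b)^\sharp=S(a^**b^*)=S(b^*)*S(a^*)=b^\sharp*a^\sharp$. I expect the main obstacle to be essentially bookkeeping: tracking which of the several equivalent expressions for $*$ is legitimately available here (note $b\in c_c(\GGamma/\LLambda)^\LLambda$ need \emph{not} lie in $c_c(\LLambda\bs\GGamma)$, so~\eqref{eq_convol_right} is not immediately at hand), handling that $\Delta$ of a compactly supported function is only a multiplier, and pinning down the interplay of $S$ with $h_L$, $h_R$ and the left/right coset structures that is needed for $S(x*y)=S(y)*S(x)$.
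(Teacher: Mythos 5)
Your proof is correct and follows the paper's strategy in all essentials: closure via \eqref{eq_convol_left} together with $(c_c(\GGamma)\otimes 1)\Delta(c_c(\GGamma/\LLambda)) = c_c(\GGamma)\otimes c_c(\GGamma/\LLambda)$ and the bi-invariance statements of Proposition~\ref{prp_convol_coprod}, the unit from Theorem~\ref{thm_strong_inv}, $\sigma^R_t$-covariance by termwise application of \eqref{eq_convol_left}, and the involution via the antipode. The two places where your packaging genuinely differs are associativity and the $\sharp$-antimultiplicativity. For associativity the paper works directly with $\Delta^2$ (a Fubini interchange of the left and right slices), after first checking by an explicit support argument that \eqref{eq_convol_right} makes sense for $c$ lying only in $c(\LLambda\bs\GGamma)$; you instead route everything through $a*b=\sum\mu(\cdot)\,b_{(2)}$ and $\Delta(b*c)=(\id\otimes *c)\Delta(b)$, which reduces associativity to a one-line substitution but pushes the same finiteness issue into the ``routine extension'' of Proposition~\ref{prp_convol_coprod} that you invoke at the outset --- that extension does require essentially the support argument the paper gives in its associativity step, so it should be written out rather than waved at. For the involution, the paper proves $(b*a)^\sharp=a^\sharp*b^\sharp$ in one computation using $\overline{a_\alpha^*h_RS}=S(a_\alpha)h_L$, while your factorization into $(a*b)^*=a^**b^*$ and $S(x*y)=S(y)*S(x)$ is an equally workable (and arguably cleaner) route. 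One bookkeeping caution: with the paper's conventions, $a*b=((\mu a)S\otimes\id)\Delta(b)$ unwinds to $\sum\mu(S(b_{(1)})a)\,b_{(2)}$ rather than $\sum\mu(aS(b_{(1)}))\,b_{(2)}$; since $h_L$ is not tracial these can differ, so fix the order consistently --- the substitution argument for associativity goes through verbatim with either convention, provided a single one is used throughout.
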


\begin{proof}
  Since
  $(c_c(\GGamma)\otimes 1)\Delta(c_c(\GGamma/\LLambda)) = c_c(\GGamma)\otimes
  c_c(\GGamma/\LLambda)$ it follows from~\eqref{eq_convol_left} that
  $a*b \in c_c(\GGamma/\LLambda)$ if $a$,
  $b\in c_c(\GGamma/\LLambda)^{\LLambda}$. It follows
  from~\eqref{eq_convol_left} and Theorem~\ref{thm_strong_inv} that
  $p_\LLambda$ is a unit for the convolution product.
  
  To prove associativity we will use~\eqref{eq_convol_right} to compute $b*c$
  with $b \in c_c(\GGamma/\LLambda)$, $c \in c(\LLambda\bs\GGamma)$. This makes
  sense in the multiplier algebra $c(\GGamma)$. Indeed, fix
  $\delta \in I(\GGamma)$ and note that if
  $(p_\delta\otimes h_LS^{-1}(c_\gamma))\Delta(b)$ is non zero for
  $\gamma \in I(\Gamma)$, there exists $\beta$ such that $b_\beta\neq 0$ and
  $\beta\subset \delta\otimes\bar\gamma$. Then
  $\bar\gamma \subset \bar\delta\otimes\beta$, i.e.
  $[\gamma]\in \LLambda\bs I(\GGamma)$ is conjugate to the class in
  $I(\GGamma)/\LLambda$ of a subobject of $\bar\delta\otimes\beta$. Since there
  is only a finite number of classes $[\beta] \in I(\GGamma)/\LLambda$ such that
  $b_\beta\neq 0$, there is only a finite number of classes
  $[\gamma] \in \LLambda\bs I(\GGamma)$ which yield a non-zero term on the
  right-hand side of \eqref{eq_convol_right}.

  Now for $a$, $b$, $c \in c_c(\GGamma/\LLambda)^\LLambda$ we can write, using
  classes $[\alpha]\in I(\GGamma)/\LLambda$,
  $[\gamma]\in\LLambda\bs I(\GGamma)$:
  \begin{align*}
    a*(b*c) &= \sum_{[\alpha], [\gamma]} \kappa_\alpha^{-1} \kappa_{\bar\gamma}^{-1} 
              (S^{-1}(a_\alpha)h_R\otimes\id)\Delta(
              (\id\otimes h_LS^{-1}(c_\gamma))\Delta(b)) \\
            &= \sum_{[\alpha], [\gamma]} \kappa_\alpha^{-1} \kappa_{\bar\gamma}^{-1}
              (S^{-1}(a_\alpha)h_R\otimes\id\otimes 
              h_LS^{-1}(c_\gamma))\Delta^2(b) \\
            &= \sum_{[\alpha], [\gamma]} \kappa_\alpha^{-1} \kappa_{\bar\gamma}^{-1}
              (\id\otimes  h_LS^{-1}(c_\gamma))\Delta(
              (S^{-1}(a_\alpha)h_R\otimes\id)\Delta(b)) = (a*b)*c.
  \end{align*}
  Since $\Delta\sigma_t^R = (\sigma_t^R\otimes\sigma_t^R)\Delta$, the modular
  automorphisms leave $B(H_\alpha)$ invariant and $S\sigma^R_t = \sigma^R_tS$ we
  have, for $a\in c_c(\GGamma/\LLambda)$ and $b\in c(\LLambda\bs\GGamma)$:
  \begin{align*}
    \sigma_t^R(a)*\sigma_t^R(b) &= \sum_{[\alpha]\in I(\GGamma)/\LLambda} 
                                  \kappa_\alpha^{-1} 
                                  (h_R\otimes\id) [(\sigma_t^R\otimes\sigma_t^R)\Delta(b)
                                  (\sigma_t^RS^{-1}(a_\alpha)\otimes 1)] \\
                                &= \sum_{[\alpha]\in I(\GGamma)/\LLambda} 
                                  \kappa_\alpha^{-1} (h_R\otimes\sigma_t^R) [\Delta(b)
                                  (S^{-1}(a_\alpha)\otimes 1)] = \sigma_t^R(a*b).
  \end{align*}
  We compute similarly for the involution, using first~\eqref{eq_convol_left}
  and then~\eqref{eq_convol_right}:
  \begin{align*}
    a^\sharp*b^\sharp &= \sum_{[\alpha] \in I(\GGamma)/\LLambda} \kappa_\alpha^{-1} 
                        (a_{\bar\alpha}^*h_R\otimes\id)\Delta(S(b^*))
                        = \sum_{[\alpha]  \in I(\GGamma)/\LLambda} \kappa_\alpha^{-1} 
                        (S\otimes a_{\bar\alpha}^*h_RS)\Delta(b^*) \\
                      &= \sum_{[\alpha]  \in \LLambda\bs I(\GGamma)} \kappa_{\bar\alpha}^{-1} 
                        [\id \otimes (\overline{a_{\alpha}^*h_RS} )\Delta(b)]^\sharp
                        = (b*a)^\sharp,
  \end{align*}
  since $\overline{a_{\alpha}^*h_RS} = S(a_\alpha)h_L$.
\end{proof}

We can finally make the following definition. Recall that we have
$c_c(\GGamma/\LLambda) \cap c_c(\LLambda\bs\GGamma) =
c_c(\LLambda\bs\GGamma'/\LLambda)$ where $\GGamma'$ is the commensurator of
$\LLambda$ in $\GGamma$, and $\GGamma' = \GGamma$ if $(\GGamma,\LLambda)$ is a
Hecke pair.

\begin{definition} \label{df_hecke_algebra} Let $\LLambda$ be a quantum subgroup
  of a discrete quantum group $\GGamma$. The {\em Hecke algebra} associated with
  $(\GGamma,\LLambda)$ is
  $\Hh(\GGamma,\LLambda) := c_c(\GGamma/\LLambda) \cap c_c(\LLambda\bs\GGamma)$
  equipped with the convolution product $*$ and the involution
  $\,\cdot\,^\sharp$.
\end{definition}

\begin{remark}\label{rk_struct_hecke}
  Note that $\Hh(\GGamma,\LLambda)$ depends only on the pair
  $(\GGamma',\LLambda)$, where $\GGamma'$ is the commensurator of $\LLambda$ in
  $\GGamma$. The algebra $\Hh(\GGamma,\LLambda)$ is also equipped with more
  structure coming from the ambient space $c(\GGamma)$: the ``pointwise''
  product and involution, the scaling and modular groups
  $(\tau_t)_{t\in \mathbb{R}}$, $(\sigma_t^R)_{t\in \mathbb{R}}$,
  $(\sigma_t^L)_{t\in \mathbb{R}}$, see
  Lemma~\ref{lemma_quotient_antipode}. Moreover all these one-parameter groups
  act by automorphisms of $\Hh(\GGamma,\LLambda)$, which can be shown similarly
  as it was done for $(\sigma_t^R)_{t\in \mathbb{R}}$ in the last proposition.
\end{remark}

\begin{example}\label{exple_normal}
  Let us mention some ``trivial'' examples where $\Hh(\GGamma,\LLambda)$ is
  non-trivial. We will investigate more interesting examples in
  subsection~\ref{sec_HNN}. If $\LLambda$ is finite or of finite index in
  $\GGamma$, but different from $\GGamma$, then obviously $\GGamma = \GGamma'$
  and $\CC p_\LLambda \subsetneq \Hh(\GGamma,\LLambda)$. If $\LLambda$ is finite
  and $\GGamma$ infinite, $\dim(\Hh(\GGamma,\LLambda)) = +\infty$.

  Consider now the case when $\LLambda$ is normal in $\GGamma$, i.e.\
  $\ell^\infty(\GGamma/\LLambda) = \ell^\infty(\LLambda\bs\GGamma)$. Then
  $\Delta(\ell^\infty(\GGamma/\LLambda))$ $\subset \ell^\infty(\GGamma/\LLambda)
  \bar\otimes \ell^\infty(\GGamma/\LLambda)$, so that we have a discrete quantum
  group $\GGamma/\LLambda$ underlying the quantum quotient space. The dual $\HH$
  of $\GGamma/\LLambda$ identifies with a normal subgroup of the compact quantum
  group $\GG$ dual to $\GGamma$, with restriction morphism
  $\rho : C_u(\GG) \to C_u(\HH)$, such that given $\alpha\in\Corep(\GGamma)$ we
  have $\alpha\in\Corep(\LLambda)$ iff $(\id\otimes\rho)(\alpha)$ is
  trivial --- or, equivalently, contains the $1$-dimensional
  corepresentation. It is then easy to check that $\alpha\sim\beta$ iff
  $\one \subset (\id\otimes\rho)(\bar\alpha\otimes\beta)$ iff
  $\one \subset (\id\otimes\rho)(\beta\otimes\bar\alpha)$ iff
  $\alpha\backsim\beta$. In particular
  $I(\GGamma)/\LLambda = \LLambda\bs I(\GGamma)/\LLambda = \LLambda\bs
  I(\GGamma)$ and $L(\dc\alpha) = R(\dc\alpha) = 1$ for all
  $\alpha \in I(\GGamma)$, so that $(\GGamma,\LLambda)$ is a Hecke pair.

  By uniqueness and the invariance result of
  Proposition~\ref{prp_invariant_measure}, the functional $\mu$, $\nu$ of
  Definition~\ref{def_invariant_measure} are the left, resp. right Haar weights
  of $\GGamma/\LLambda$, normalized by the condition
  $\mu(p_\LLambda) = \nu(p_\LLambda) = 1$. Moreover, from the formula
  $a*b = (\id\otimes S(b)\mu)\Delta(a)$ we recognize the usual convolution
  product of the discrete quantum group algebra $\ell^\infty(\GGamma/\LLambda)$
  (or its opposite, depending on conventions), transported from the product of
  $\CC[\GGamma/\LLambda]$ via the Fourier transform.
\end{example}

\subsubsection{Hecke operators} Now we want to identify the Hecke algebra with
an algebra of equivariant endomorphisms of $c_c(\GGamma/\LLambda)$. We turn
$c_c(\GGamma/\LLambda)$ into a right $\CC[\GGamma]$-module via the formula
$a\cdot x=(x\otimes \id)\Delta(a)$, where $a\in c_c(\GGamma/\LLambda)$,
$x \in \CC[\GGamma]$ and we view $\CC[\GGamma]$ as linear subspace of
$c_c(\GGamma)^*$ via evaluation.
By definition, a linear map $F:c_c(\GGamma/\LLambda)\to c_c(\GGamma/\LLambda)$
is $\GGamma$-equivariant iff it is $\CC[\GGamma]$-linear, and we write
$\End_\GGamma(c_c(\GGamma/\LLambda))$ for the space of $\GGamma$-equivariant
linear endomorphisms of $c_c(\GGamma/\LLambda)$.

Restricting the action of $\CC[\GGamma]$ to $\CC[\LLambda]$ we obtain the
associated space of fixed points
$c_c(\GGamma/\LLambda)^\LLambda = \{a \in c_c(\GGamma/\LLambda) \mid
(p_\LLambda\otimes 1)\Delta(a) = p_\LLambda\otimes a\} = \{a \in
c_c(\GGamma/\LLambda) \mid a \cdot x = \hat{\epsilon}(x) a
\}=c_c(\GGamma/\LLambda) \cap c(\LLambda\bs\GGamma)$.

\begin{proposition} \label{prp_endo} Let $\LLambda$ be a quantum subgroup of
  $\GGamma$. The map $\ev_\LLambda : F \mapsto f := F(p_\LLambda)$
  defines an antimultiplicative isomorphism from
  $\End_\GGamma(c_c(\GGamma/\LLambda))$ to $c_c(\GGamma/\LLambda)^\LLambda$,
  with inverse bijection $\Tdisc$ given by $\Tdisc(f)(a) = a*f$,
  $f \in c_c(\GGamma/\LLambda)^\LLambda$, $a \in c_c(\GGamma/\LLambda)$.
\end{proposition}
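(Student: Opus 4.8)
The plan is to check that $\ev_\LLambda$ and $\Tdisc$ are well-defined, mutually inverse, and that $\ev_\LLambda$ is antimultiplicative. I would proceed in the following order.

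\textbf{Step 1: $\Tdisc(f)$ lands in $\End_\GGamma(c_c(\GGamma/\LLambda))$.} For $f \in c_c(\GGamma/\LLambda)^\LLambda = c_c(\GGamma/\LLambda)\cap c(\LLambda\bs\GGamma)$ and $a \in c_c(\GGamma/\LLambda)$, the element $a*f$ makes sense by the remark after Definition~\ref{def_convol}, and by Proposition~\ref{prp_convol_coprod} we have $a*f \in c(\GGamma/\LLambda)$; moreover $a*f$ has finite support since $f \in c_c$, so $a*f \in c_c(\GGamma/\LLambda)$. Thus $\Tdisc(f) : c_c(\GGamma/\LLambda)\to c_c(\GGamma/\LLambda)$ is a well-defined linear map. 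Equivariance is the key computation: from $\Delta(a*f) = (\id\otimes *f)\Delta(a)$ in Proposition~\ref{prp_convol_coprod}, applying $x\otimes\id$ with $x\in\CC[\GGamma]$ gives $(a\cdot x)*f = (a*f)\cdot x$, i.e. $\Tdisc(f)(a\cdot x) = \Tdisc(f)(a)\cdot x$, as required.

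\textbf{Step 2: $\ev_\LLambda$ is well-defined and $\Tdisc$ is a right inverse.} For $F \in \End_\GGamma(c_c(\GGamma/\LLambda))$, equivariance under $\CC[\LLambda]\subset\CC[\GGamma]$ gives, since $p_\LLambda\cdot x = \hat\epsilon(x)p_\LLambda$ for $x\in\CC[\LLambda]$, that $F(p_\LLambda)\cdot x = F(p_\LLambda\cdot x) = \hat\epsilon(x)F(p_\LLambda)$; by the description of fixed points recalled before the Proposition this means $f := F(p_\LLambda) \in c_c(\GGamma/\LLambda)^\LLambda$. Then for any $a\in c_c(\GGamma/\LLambda)$, writing $a = \sum_{[\alpha]}\kappa_\alpha^{-1}(S^{-1}(a_\alpha)h_R\otimes\id)\Delta(p_\LLambda)$ via~\eqref{eq_strong_inv_right}, i.e. $a = \sum_{[\alpha]}\kappa_\alpha^{-1} p_\LLambda\cdot x_{[\alpha]}$ with $x_{[\alpha]} = (S^{-1}(a_\alpha)h_R)\!\mid_{\CC[\GGamma]}$, equivariance of $F$ yields $F(a) = \sum_{[\alpha]}\kappa_\alpha^{-1} f\cdot x_{[\alpha]} = \sum_{[\alpha]}\kappa_\alpha^{-1}(S^{-1}(a_\alpha)h_R\otimes\id)\Delta(f) = a*f$ by~\eqref{eq_convol_left}. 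Hence $\Tdisc(\ev_\LLambda(F)) = F$. Conversely $\ev_\LLambda(\Tdisc(f)) = p_\LLambda*f = f$ since $p_\LLambda$ is the unit for the convolution product (Proposition~\ref{prp_convol}), so $\ev_\LLambda$ and $\Tdisc$ are mutually inverse bijections.

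\textbf{Step 3: antimultiplicativity.} For $F, G \in \End_\GGamma(c_c(\GGamma/\LLambda))$ with $f = F(p_\LLambda)$, $g = G(p_\LLambda)$, one has $(F\circ G)(p_\LLambda) = F(g) = g*f$ by the formula of Step~2 applied to $a = g$ (note $g \in c_c(\GGamma/\LLambda)$). Hence $\ev_\LLambda(F\circ G) = g*f = \ev_\LLambda(G)*\ev_\LLambda(F)$, so $\ev_\LLambda$ is antimultiplicative. The main obstacle is Step~2: one must be careful that the functionals $S^{-1}(a_\alpha)h_R$ really restrict to elements of $\CC[\GGamma] \subset c_c(\GGamma)^*$ so that the module action is literally being applied, and that the rewriting of~\eqref{eq_strong_inv_right} as a combination of $p_\LLambda\cdot x$ is legitimate term by term (finiteness of the sum over $[\alpha]$ with $a_\alpha\neq0$ makes this unproblematic). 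Everything else is a direct consequence of Theorem~\ref{thm_strong_inv}, Proposition~\ref{prp_convol_coprod}, and Proposition~\ref{prp_convol}.
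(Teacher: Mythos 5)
Your proof is correct and follows essentially the same route as the paper: well-definedness and equivariance of $\Tdisc(f)$ from Proposition~\ref{prp_convol_coprod}, $\LLambda$-invariance of $F(p_\LLambda)$ from equivariance under $\CC[\LLambda]$, and the identity $F(a)=a*f$ obtained by combining Theorem~\ref{thm_strong_inv} with equivariance applied to the slices $S^{-1}(a_\alpha)h_R\in\CC[\GGamma]$. The only cosmetic difference is that you phrase the computation via the module action $a\cdot x$ where the paper writes the same identities with slice maps, and you get antimultiplicativity directly from $F(a)=a*f$ rather than from associativity of $*$; both are equivalent.
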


\begin{proof}
  Let $F \in \End_\GGamma(c_c(\GGamma/\LLambda))$.  For all
  $g \in c_c(\GGamma/\LLambda)$ and $\alpha \in I(\GGamma)$ we have
  ${(p_\alpha\otimes\id)}\Delta(g) \in c_c(\GGamma) \otimes
  c_c(\GGamma/\LLambda)$ and equivariance of $F$ means that we have
  $(\id\otimes F)((p_\alpha\otimes\id)\Delta(g)) =
  {(p_\alpha\otimes\id)}\Delta(F(g))$.  Applying this to $g = p_\LLambda$ we
  obtain
  $(\id\otimes F)((p_\alpha\otimes\id)\Delta(p_\LLambda)) =
  {(p_\alpha\otimes\id)}$ $\Delta(f)$.  Since $p_\LLambda$ is also left
  invariant under $\LLambda$, for $\alpha = \lambda \in I(\LLambda)$ we obtain
  $(p_\lambda\otimes\id)\Delta(f) = (\id\otimes F)(p_\lambda\otimes p_\LLambda)
  = p_\lambda\otimes f$ and we have indeed
  $f \in c_c(\GGamma/\LLambda)^\LLambda$.  The same equivariance formula can
  also be written
  $F((a_\alpha h_R\otimes\id) \Delta(p_\LLambda)) = (a_\alpha
  h_R\otimes\id)\Delta(f)$ for any $a\in c(\GGamma)$ and $\alpha\in I(\GGamma)$.

  Moreover, take $a \in p_{[\alpha]}c_c(\GGamma/\LLambda)$.  Using
  Theorem~\ref{thm_strong_inv} and the previous equivariance formula we can
  write
  \begin{align*}
    F(a) &= \kappa_\alpha^{-1} F((S^{-1}(a_\alpha)h_R\otimes\id) 
           \Delta(p_\LLambda)) \\
         &= \kappa_\alpha^{-1} (S^{-1}(a_\alpha)h_R\otimes\id) \Delta(f) 
           = a*f = \Tdisc(f)(a).
  \end{align*}
  This shows that $\Tdisc$ is a left inverse of $\ev_\LLambda$.

  Conversely starting from $f \in c_c(\GGamma/\LLambda)^\LLambda$ we can
  consider $\Tdisc(f) : a \mapsto a*f$. We already noticed at
  Proposition~\ref{prp_convol} that $\Tdisc(f)(a) \in c_c(\GGamma/\LLambda)$, and
  after Definition~\ref{def_convol} that $\Tdisc(f)$ is equivariant with respect
  to the left $\GGamma$-action induced by $\Delta$. Since $p_\LLambda$ is the
  unit of the convolution product, $\Tdisc$ is a right inverse of
  $\ev_\LLambda$. Finally $\Tdisc$ is antimultiplicative by associativity
  of the convolution product.
\end{proof}

We now use the prehilbertian structure on $c_c(\GGamma/\LLambda)$ obtained from
the functional $\mu$ and consider the corresponding subspace of adjointable
operators in $\End_\GGamma(c_c(\GGamma/\LLambda))$. Note that the formula
\eqref{eq_scalar_right} for $(a\mid b)$ given in
Definition~\ref{def_invariant_measure} also makes sense for
$a\in c_c(\GGamma/\LLambda)$, $b\in c(\GGamma/\LLambda)$, or for
$a\in c(\GGamma/\LLambda)$, $b\in c_c(\GGamma/\LLambda)$. We use this in the
following proposition. Note also that $\|p_\LLambda\|_{\GGamma/\LLambda} = 1$.

\begin{proposition} \label{prop_adjoint} For $a$, $b \in c_c(\GGamma/\LLambda)$
  and $c \in c(\GGamma/\LLambda) \cap c(\LLambda\bs\GGamma)$ we have
  $(a \mid b*c) = (a * c^\sharp \mid b)$.
\end{proposition}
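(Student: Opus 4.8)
\emph{Proof plan.} The identity asserts that, for the sesquilinear form $(a\mid b)=\mu(a^*b)$, the operator ``right convolution by $c$'' is the adjoint of ``right convolution by $c^\sharp$''. First I would note that $c(\GGamma/\LLambda)\cap c(\LLambda\bs\GGamma)$ is a $*$-subalgebra of $c(\GGamma)$ stabilised by $S$, which interchanges $c(\GGamma/\LLambda)$ and $c(\LLambda\bs\GGamma)$ (Lemma~\ref{lemma_quotient_antipode}); hence $c^\sharp=S(c^*)$ again lies in $c(\GGamma/\LLambda)\cap c(\LLambda\bs\GGamma)$, both $b*c$ and $a*c^\sharp$ are defined by the extended convolution following Definition~\ref{def_convol}, both lie in $c(\GGamma/\LLambda)$ by Proposition~\ref{prp_convol_coprod}, and both sides of the claimed equality make sense via the extended version of~\eqref{eq_scalar_right}.

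The one conceptual ingredient is that $\mu$ is KMS with modular automorphism $S^2$: one has $\mu(xy)=\mu(yS^2(x))$ whenever $x\in c_c(\GGamma/\LLambda)$, $y\in c(\GGamma/\LLambda)$ (or vice versa), and $\overline{\mu(x)}=\mu(x^*)$. Both follow immediately from $\mu(x)=\sum_{[\alpha]}\kappa_\alpha^{-1}h_L(x_\alpha)$, using that multiplication and $*$ are blockwise, that $S^2(p_\alpha)=p_\alpha$, that $h_L$ is positive and $\kappa_\alpha>0$, and the relation $h_L(xy)=h_L(yS^2(x))$ recorded at the end of the proof of Theorem~\ref{thm_strong_inv}.

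Next I would compute both sides via the formula $a*b=((\mu a)S\otimes\id)\Delta(b)$ (valid for $a\in c_c(\GGamma/\LLambda)$, $b\in c(\LLambda\bs\GGamma)$), writing $\Delta(c)=c_{(1)}\otimes c_{(2)}$ as shorthand for the explicit slices of Definition~\ref{def_convol}. On one side,
\[
  (a\mid b*c)=\mu\bigl(a^*(b*c)\bigr)=\textstyle\sum\mu\bigl(bS(c_{(1)})\bigr)\,\mu\bigl(a^*c_{(2)}\bigr).
\]
On the other side, $\Delta(c^\sharp)=\sum c_{(2)}^\sharp\otimes c_{(1)}^\sharp$ (from $\Delta S=\sigma(S\otimes S)\Delta$ and $\Delta(x^*)=\Delta(x)^*$); combining this with $(Sz)^*=S^{-1}(z^*)$, $(S^2z)^*=S^{-2}(z^*)$ and $\overline{\mu(\,\cdot\,)}=\mu((\,\cdot\,)^*)$ gives
\[
  a*c^\sharp=\textstyle\sum\mu\bigl(aS^2(c_{(2)}^*)\bigr)\,S(c_{(1)}^*),\qquad
  (a*c^\sharp\mid b)=\textstyle\sum\mu\bigl(S^{-2}(c_{(2)})\,a^*\bigr)\,\mu\bigl(S^{-1}(c_{(1)})\,b\bigr).
\]
Applying the KMS relation to each factor, $\mu(S^{-1}(c_{(1)})b)=\mu(bS(c_{(1)}))$ and $\mu(S^{-2}(c_{(2)})a^*)=\mu(a^*c_{(2)})$, whence $(a*c^\sharp\mid b)=\sum\mu(bS(c_{(1)}))\mu(a^*c_{(2)})=(a\mid b*c)$, as wanted.

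The calculation is otherwise routine; the points that need care are (i) the KMS property of $\mu$ --- which, as noted, is essentially free from the quoted $h_L$-identity --- and (ii) justifying the slice-map manipulations although $h_L$, $S$ and $\mu$ are unbounded. For (ii) I would replace the Sweedler shorthand by the genuine finite sums of~\eqref{eq_convol_left}, reducing by sesquilinearity to $a\in p_{[\alpha]}c_c(\GGamma/\LLambda)$, $b\in p_{[\beta]}c_c(\GGamma/\LLambda)$ if convenient, so that at each step one only pairs a finite-rank functional (of the shape $S^{-1}(b_\beta)h_R$) against one leg of a von Neumann algebraic coaction, all sums being finite since $a$ and $b$ are finitely supported.
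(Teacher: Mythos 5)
Your proof is correct and follows essentially the same route as the paper's: both sides are expanded via the convolution formulas of Definition~\ref{def_convol} and matched using the relations $\Delta S^{\pm1}=(S^{\pm1}\otimes S^{\pm1})\sigma\Delta$, $S(x)^*=S^{-1}(x^*)$ and the trace-like identity $h_L(ab)=h_L(bS^2(a))$, which you repackage as the KMS property $\mu(xy)=\mu(yS^2(x))$ while the paper uses it in the equivalent form $\overline{S^{-1}(a_\alpha)h_R}\circ S^{-1}=S^2(a_\alpha^*)h_L=h_La_\alpha^*$. Your point (ii) about replacing the Sweedler shorthand by the finite sums of slice functionals is exactly how the paper's displayed computation is organized, so there is no gap.
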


\begin{proof}
  We compute, using \eqref{eq_scalar_right} and \eqref{eq_convol_left}:
  \begin{align*}
    (a*c^\sharp \mid b) &= \sum_{[\beta]} \kappa_\beta^{-1} (b_\beta h_L)
                          ((a*c^\sharp)^*)
                          = \sum_{[\alpha],[\beta]} \kappa_\alpha^{-1} \kappa_\beta^{-1} 
                          (\overline{S^{-1}(a_\alpha)h_R} \otimes b_\beta h_L)\Delta(c^{\sharp *}) \\
                        &= \sum_{[\alpha],[\beta]} \kappa_\alpha^{-1} \kappa_\beta^{-1} 
                          (b_\beta h_L \otimes \overline{S^{-1}(a_\alpha)h_R})
                          (S^{-1}\otimes S^{-1})\Delta(c) \\
                        &= \sum_{[\alpha],[\beta]} \kappa_\alpha^{-1} \kappa_\beta^{-1} 
                          (h_R S(b_\beta) \otimes S^2(a_\alpha^*)h_L)\Delta(c) 
                          = \sum_{[\alpha]} \kappa_{\alpha}^{-1}  (h_La_\alpha^*)(b*c)
                          = (a \mid b*c).
  \end{align*}
  We also used the identities
  $\overline{S^{-1}(a_\alpha)h_R} \circ S^{-1} = S^2(a_\alpha^*)h_L = h_L
  a_\alpha^*$ which are easy to check.
\end{proof}

In the next theorem, which offers an alternative description of the Hecke
algebra, we write
$\End'(c_c(\GGamma/\LLambda)) \subset \End(c_c(\GGamma/\LLambda))$ for the
subspace of adjointable maps, i.e.\ of maps $T$ for which there exists
$S \in \End(c_c(\GGamma/\LLambda))$ satisfying $(a \mid Tb) = (Sa \mid b)$ for
all $a$, $b\in c_c(\GGamma/\LLambda)$.

\begin{theorem} \label{thm_adjoint} Let $f \in
  c_c(\GGamma/\LLambda)^\LLambda$. Then we have
  $\Tdisc(f) \in \End'(c_c(\GGamma/\LLambda))$ iff
  $f \in c_c(\GGamma/\LLambda) \cap c_c(\LLambda\bs\GGamma)$. As a result,
  $\Tdisc$ implements an antimultiplicative $*$-isomorphism between
  $\Hh(\GGamma,\LLambda)$ and $\End'_\GGamma(c_c(\GGamma/\LLambda))$. If
  $(\GGamma,\LLambda)$ is a Hecke pair, all maps in
  $\End_\GGamma(c_c(\GGamma/\LLambda))$ are adjointable.
\end{theorem}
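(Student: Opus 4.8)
The plan is to deduce the three assertions from Proposition~\ref{prp_endo} and Proposition~\ref{prop_adjoint}, the key auxiliary fact being that the form of Definition~\ref{def_invariant_measure} extends to a \emph{non-degenerate} pairing $c(\GGamma/\LLambda)\times c_c(\GGamma/\LLambda)\to\CC$, $(x,b)\mapsto\mu(x^*b)$. I would check non-degeneracy in the first variable directly: given $x\in c(\GGamma/\LLambda)$ killed by all $b$, I evaluate against $b=p_{[\gamma]}x\in c_c(\GGamma/\LLambda)$ and, choosing the representative of $[\gamma]$ in the formula for $\mu$ to be $\gamma$ itself, obtain $0=(x\mid p_{[\gamma]}x)=\kappa_\gamma^{-1}h_L((p_\gamma x)^*(p_\gamma x))$, whence $p_\gamma x=0$ by faithfulness of $h_L$; letting $\gamma$ vary gives $x=0$.

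For the ``if'' direction, starting from $f\in\Hh(\GGamma,\LLambda)=c_c(\GGamma/\LLambda)\cap c_c(\LLambda\bs\GGamma)$ I would note that this intersection is a $*$-subalgebra of $\ell^\infty(\GGamma)$ on which $S$ exchanges the two factors (Lemma~\ref{lemma_quotient_antipode}, $S(p_\alpha)=p_{\bar\alpha}$), so $f^\sharp=S(f^*)$ again lies in $\Hh(\GGamma,\LLambda)\subset c_c(\GGamma/\LLambda)^\LLambda$ and $\Tdisc(f^\sharp)\in\End_\GGamma(c_c(\GGamma/\LLambda))$ by Proposition~\ref{prp_endo}; in particular $\Hh(\GGamma,\LLambda)$ is $\sharp$-stable. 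Proposition~\ref{prop_adjoint} applied with $c=f$ then yields $(\Tdisc(f^\sharp)(a)\mid b)=(a*f^\sharp\mid b)=(a\mid b*f)=(a\mid\Tdisc(f)(b))$ for all $a,b\in c_c(\GGamma/\LLambda)$, i.e.\ $\Tdisc(f)$ is adjointable with $\Tdisc(f)^*=\Tdisc(f^\sharp)$.

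The ``only if'' direction is where the work lies. Assume $f\in c_c(\GGamma/\LLambda)^\LLambda$ and $\Tdisc(f)$ adjointable, with adjoint $S$. From $f\in c_c(\GGamma/\LLambda)\cap c(\LLambda\bs\GGamma)$ and Lemma~\ref{lemma_quotient_antipode}, $f^\sharp=S(f^*)\in c_c(\LLambda\bs\GGamma)\cap c(\GGamma/\LLambda)$, so $a\mapsto a*f^\sharp$ makes sense on $c_c(\GGamma/\LLambda)$ with values in $c(\GGamma/\LLambda)$ by Definition~\ref{def_convol} and Proposition~\ref{prp_convol_coprod}. Proposition~\ref{prop_adjoint} with $c=f$ gives $(a*f^\sharp\mid b)=(a\mid b*f)=(S(a)\mid b)$ for all $a,b\in c_c(\GGamma/\LLambda)$, and the non-degeneracy fact forces $S(a)=a*f^\sharp$, so that $a*f^\sharp\in c_c(\GGamma/\LLambda)$ for every $a\in c_c(\GGamma/\LLambda)$. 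The punchline is to put $a=p_\LLambda$: comparing \eqref{eq_convol_right} with \eqref{eq_strong_inv_left} gives $p_\LLambda*f^\sharp=f^\sharp$, hence $f^\sharp\in c_c(\GGamma/\LLambda)\cap c_c(\LLambda\bs\GGamma)=\Hh(\GGamma,\LLambda)$ and therefore $f=(f^\sharp)^\sharp\in\Hh(\GGamma,\LLambda)$ by $\sharp$-stability. I expect this upgrade --- from ``$\Tdisc(f)$ has an adjoint on $c_c(\GGamma/\LLambda)$'' to ``$f$ is also compactly supported over $\LLambda\bs I(\GGamma)$'', via the non-degenerate pairing and the convolution unit $p_\LLambda$ --- to be the only real obstacle.

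Finally, to wrap up: Proposition~\ref{prp_endo} already gives that $\Tdisc$ is an antimultiplicative bijection $c_c(\GGamma/\LLambda)^\LLambda\to\End_\GGamma(c_c(\GGamma/\LLambda))$, and the equivalence just established says its restriction to $\Hh(\GGamma,\LLambda)$ is a bijection onto the adjointable equivariant maps $\End'_\GGamma(c_c(\GGamma/\LLambda))$; combined with $\Tdisc(f)^*=\Tdisc(f^\sharp)$ and the identification of $\sharp$ with the involution of $\Hh(\GGamma,\LLambda)$ (Proposition~\ref{prp_convol}), this makes $\Tdisc$ an antimultiplicative $*$-isomorphism. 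For the last clause, if $(\GGamma,\LLambda)$ is a Hecke pair then $\GGamma'=\GGamma$ and the inclusions recalled before Definition~\ref{def_commensurator} collapse to $c_c(\GGamma/\LLambda)^\LLambda=c_c(\GGamma/\LLambda)\cap c(\LLambda\bs\GGamma)=c_c(\LLambda\bs\GGamma/\LLambda)=\Hh(\GGamma,\LLambda)$, so every $\Tdisc(f)$, i.e.\ every element of $\End_\GGamma(c_c(\GGamma/\LLambda))$, is adjointable.
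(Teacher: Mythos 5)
Your proof is correct and follows essentially the same route as the paper: reduce to Proposition~\ref{prp_endo}, use Proposition~\ref{prop_adjoint} to identify the adjoint with $a\mapsto a*f^\sharp$, and evaluate at $a=p_\LLambda$ to force $f^\sharp\in c_c(\GGamma/\LLambda)$. The paper's proof is just a terser version of this; your explicit verification of the non-degeneracy of the extended pairing (needed to pin down the adjoint as $a\mapsto a*f^\sharp$) is a detail the paper leaves implicit.
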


\begin{proof}
  In view of Proposition~\ref{prp_endo} it essentially suffices to understand
  the relevant $*$-struc\-tures.  By Proposition~\ref{prop_adjoint}, if
  $\Tdisc(f)$ is adjointable then its adjoint is $S : a \mapsto
  a*f^\sharp$. Taking $a = p_\LLambda$ we obtain
  $f^\sharp \in c_c(\GGamma/\LLambda)$, hence $f \in c_c(\LLambda\bs\GGamma)$.
  The converse implication and the statement about $\Tdisc$ are then clear. If
  $(\GGamma,\LLambda)$ is a Hecke pair, we have
  $c_c(\GGamma/\LLambda)^\LLambda = c_c(\GGamma/\LLambda) \cap
  c_c(\LLambda\bs\GGamma)$.
\end{proof}

Now we investigate the question whether Hecke operators extend to bounded
operators on $\ell^2(\GGamma/\LLambda)$, the completion of
$c_c(\GGamma/\LLambda)$ with respect to the norm $\|\cdot\|_{\GGamma/\LLambda}$
arising from Definition~\ref{def_scalar}. In the setting of discrete quantum
groups we prove in Theorem~\ref{thm_bounded} that this is equivalent to a
combinatorial property of the constants $\kappa_\alpha$ introduced in
Definition~\ref{def_RT} below. Surprisingly we could not prove directly that
this property always hold for Hecke pairs. However we will show later in
Section~\ref{sec_schlichting}, using the Schlichting completion, that this is indeed the case, by showing that Hecke
operators are always bounded.

\begin{definition}\label{def_RT}
  Given the inclusion $\LLambda\subset \GGamma$ and $\beta \in I(\GGamma)$ we say that $\beta$ satisfies property (RT) if there exists a constant $C_\beta$ such
  that $\kappa_\gamma \leq C_\beta \kappa_\alpha$ for all $\alpha$, $\gamma\in I(\GGamma)$ such that $\gamma\subset\alpha\otimes\beta$. We say that the
  inclusion $\LLambda\subset \GGamma$ satisfies property (RT) if the property (RT) holds for each $\beta\in I(\GGamma')$, where $\GGamma'$ denotes the
  commensurator of $\LLambda$ in $\GGamma$.
\end{definition}

Property (RT) is of course always verified in the classical case since then
$\kappa_\alpha = 1$ for all $\alpha \in I(\GGamma)$. In general it does not hold for all corepresentations, as shown by the (non-commensurated)
Example~\ref{exple_free_kappa}.

\begin{lemma}
  For $[\alpha]\in I(\GGamma)/\LLambda$, $[\beta] \in \LLambda\bs I(\GGamma)$ we
  have
  \begin{equation}\label{eq_coeff}
    p_{[\alpha]} * p_{[\beta]} 
    = \frac{\qdim(\beta)}{\kappa_{\bar\beta}}
    {\sum_{\delta\in I(\GGamma)}}
    \frac{\qdim(\delta\otimes\bar\beta)_{[\alpha]}}
    {\qdim(\delta)}~  p_\delta,
  \end{equation}  
  where $(\delta\otimes\bar\beta)_{[\alpha]}$ denotes the span of the
  subobjects of $\delta\otimes\bar\beta$ isomorphic to elements of $[\alpha]$.
\end{lemma}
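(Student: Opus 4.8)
The plan is to compute $p_{[\alpha]}*p_{[\beta]}$ directly from one of the formulas in Definition~\ref{def_convol} and then decompose the result into central projections $p_\delta$. Concretely, I would start from the first expression in~\eqref{eq_convol_right}, namely $a*b = \sum_{[\beta']\in \LLambda\bs I(\GGamma)} \kappa_{\bar\beta'}^{-1}(\id\otimes h_LS^{-1}(b_{\beta'}))\Delta(a)$, applied to $a = p_{[\alpha]}$ and $b = p_{[\beta]}$. Since $b_{\beta'} = p_{\beta'}p_{[\beta]}$ is nonzero only when $\beta' \in [\beta]$, and since the answer does not depend on the choice of representative $\beta' \in [\beta]$, the sum collapses to the single term $\kappa_{\bar\beta}^{-1}(\id\otimes h_LS^{-1}(p_\beta))\Delta(p_{[\alpha]})$. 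It then remains to evaluate $(\id\otimes h_LS^{-1}(p_\beta))\Delta(p_{[\alpha]})$ on each block $p_\delta$.

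For the second step I would mimic the intertwiner-and-scalar argument used in the proof of Lemma~\ref{lem_kappa}. Fix $\delta \in I(\GGamma)$; then $(p_\delta\otimes h_LS^{-1}(p_\beta))\Delta(p_{[\alpha]}) = \sum_{\gamma\in[\alpha]}(p_\delta\otimes h_LS^{-1}(p_\beta))\Delta(p_\gamma)$, and each summand is an intertwiner in $\Hom(\delta,\delta)$, hence a scalar multiple of $p_\delta$. Using $h_LS^{-1} = h_L \circ S^{-1}$ together with $h_R = h_L S^{-2}$ in the discrete case — or, more cleanly, rewriting $h_LS^{-1}(p_\beta)$ in terms of $h_R p_{\bar\beta}$ via $S^{-1}(p_\beta) = p_{\bar\beta}$ up to the modular adjustment $(h_L S^{-1})(x) = (h_R)(x)$ on $p_\beta$-components (since $S^{-1}(p_\beta)=p_{\bar\beta}$ and $h_L p_{\bar\beta} = h_R p_{\bar\beta}$ as $p_{\bar\beta}$ is central) — I would bring the computation into exactly the form $\qdim(\delta)^{-1}\qdim(\beta)$ times a projection onto an isotypic component, as in Lemma~\ref{lem_kappa} and Corollary~\ref{crl_kappa_indep}. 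The same bookkeeping with conjugate equations that produced the factor $\qdim(\alpha\otimes\bar\beta)_\LLambda$ there will here produce $\qdim(\delta\otimes\bar\beta)_{[\alpha]}$, the quantum dimension of the span of subobjects of $\delta\otimes\bar\beta$ lying in the class $[\alpha]$ — this is the natural ``relative'' analogue, and it appears because $p_{[\alpha]}$ is the sum of $p_\gamma$ over $\gamma \in [\alpha]$ rather than a single $p_\LLambda$. Collecting the scalar as $\kappa_{\bar\beta}^{-1}\,\qdim(\beta)\,\qdim(\delta)^{-1}\,\qdim(\delta\otimes\bar\beta)_{[\alpha]}$ and summing over $\delta$ gives~\eqref{eq_coeff}.

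The main obstacle I anticipate is the careful handling of the modular operators: replacing $h_L S^{-1}(p_\beta)$ by something amenable to the Frobenius-reciprocity computation requires knowing precisely how $S$, $h_L$, $h_R$ interact on the central projection $p_\beta$, and keeping track of $\qdim$ versus $\Tr$ and of which copy ($\beta$ or $\bar\beta$) carries the dimension factor. In Lemma~\ref{lem_kappa} this was handled by writing $h_R(a) = \qdim(\alpha)t_\alpha^*(1\otimes a)t_\alpha$; I would use the analogous formula for $h_L$ (or convert $h_L S^{-1}$ to $h_R$ using that $h_L$ and $h_R$ agree on the central element $p_\beta$ and $S^{-1}(p_\beta)=p_{\bar\beta}$ is again central) and then run the projection-onto-isotypic-component computation verbatim. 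Everything else — the collapse of the sum to one term, the independence of representatives via Corollary~\ref{crl_kappa_indep_2} or by direct inspection, and the recognition of the $[\alpha]$-isotypic piece — is routine once the normalization is pinned down.
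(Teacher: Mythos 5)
Your proposal is correct in its main line, and it takes a mildly different route from the paper. The paper starts from the symmetric expression \eqref{eq_convol_both}, slicing $\Delta^2(p_\LLambda)$ with $h_Rp_{\bar\alpha}$ on the left and $h_Lp_{\bar\beta}$ on the right; this produces the coefficient $\qdim(\bar\alpha\otimes\delta\otimes\bar\beta)_\LLambda$, which must then be converted into $\qdim(\delta\otimes\bar\beta)_{[\alpha]}$ by decomposing $\delta\otimes\bar\beta$ into irreducibles and invoking Corollary~\ref{crl_kappa_indep}, the factor $\qdim(\alpha)/\kappa_\alpha$ from the left slice cancelling in that conversion. Your route --- starting from \eqref{eq_convol_right}, collapsing the sum to the single term $\kappa_{\bar\beta}^{-1}(\id\otimes h_Lp_{\bar\beta})\Delta(p_{[\alpha]})$ and evaluating $(p_\delta\otimes h_Lp_{\bar\beta})\Delta(p_\gamma) = \qdim(\beta)\qdim(\gamma)\qdim(\delta)^{-1}c_\gamma^{\delta,\bar\beta}\,p_\delta$ term by term over $\gamma\in[\alpha]$ --- produces $\qdim(\delta\otimes\bar\beta)_{[\alpha]} = \sum_{\gamma\in[\alpha]}c_\gamma^{\delta,\bar\beta}\qdim(\gamma)$ directly and never needs Corollary~\ref{crl_kappa_indep}; the only point to note is that the sum over the possibly infinite class $[\alpha]$ has finitely many nonzero terms for each fixed $\delta$. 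Both routes rest on the same computational identity established in the proof of Lemma~\ref{lem_kappa} (and its antipode-flipped version for $h_L$ acting on the second leg), which the paper records explicitly at the start of its own proof.

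One caveat: your parenthetical shortcut ``$h_Lp_{\bar\beta} = h_Rp_{\bar\beta}$ since $p_{\bar\beta}$ is central'' is false in general. Centrality gives equality of the numbers $h_L(p_{\bar\beta}) = h_R(p_{\bar\beta}) = \qdim(\beta)^2$, but the functionals $h_L(p_{\bar\beta}\,\cdot\,)$ and $h_R(p_{\bar\beta}\,\cdot\,)$ on the block $B(H_{\bar\beta})$ are $\qdim(\beta)\Tr(F_{\bar\beta}^{-1}\,\cdot\,)$ and $\qdim(\beta)\Tr(F_{\bar\beta}\,\cdot\,)$, which differ unless $F_{\bar\beta}$ is trivial; substituting one for the other would corrupt the constant in the non-unimodular case. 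Fortunately this aside is not load-bearing: your primary plan --- using the $h_L$-analogue of $h_R(a)=\qdim(\alpha)\,t_\alpha^*(1\otimes a)t_\alpha$, or equivalently applying the antipode to the identity $(h_Rp_\gamma\otimes p_\delta)\Delta(p_\mu) = \qdim(\gamma)\qdim(\mu)\qdim(\delta)^{-1}c_\mu^{\gamma,\delta}p_\delta$, which is exactly what the paper does --- yields the correct scalar, and the Frobenius identification $c_{\bar\gamma}^{\beta,\bar\delta} = c_\gamma^{\delta,\bar\beta}$ then delivers \eqref{eq_coeff}.
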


\begin{proof}
  Note first that in the proof of Lemma~\ref{lem_kappa} we showed that for each
  $\gamma, \delta, \mu \in I(\GGamma)$ we have
  \begin{displaymath}
    (h_Rp_\gamma \otimes p_\delta) \Delta(p_\mu) = \qdim(\gamma) \qdim(\mu)
    \qdim(\delta)^{-1} c_\mu^{\gamma,\delta} p_\delta,
  \end{displaymath}
  where $c_\mu^{\gamma,\delta} = \dim(\Hom(\mu, \gamma \otimes \delta))$.
  Applying the antipode to this formula yields
  \begin{displaymath}
    ( p_\delta \otimes h_Lp_\gamma) \Delta(p_\mu) = \qdim(\gamma) \qdim(\mu)
    \qdim(\delta)^{-1} c_{\bar\mu}^{\bar\gamma,\bar\delta} p_\delta.
  \end{displaymath}
  Thus for each $[\alpha]\in I(\GGamma)/\LLambda$,
  $[\beta] \in \LLambda\bs I(\GGamma)$ we obtain
  \begin{align*}
    p_{[\alpha]} * p_{[\beta]} &=
    \kappa_\alpha^{-1}\kappa_{\bar\beta}^{-1}
    (h_R p_{\bar\alpha}\otimes\id\otimes h_L p_{\bar\beta})\Delta^2(p_\LLambda) \\
    &= \kappa_\alpha^{-1}\kappa_{\bar\beta}^{-1}\ts \sum_{\delta\in I(\GGamma), \lambda\in I(\LLambda)}
      (h_R p_{\bar\alpha}\otimes p_{\delta}\otimes h_L p_{\bar\beta})\Delta^2(p_\lambda) \\
    &= \kappa_\alpha^{-1}\kappa_{\bar\beta}^{-1}
      \ts\sum_{\delta\in I(\GGamma), \lambda\in I(\LLambda), \gamma\in I(\GGamma)}
      (h_R p_{\bar{\alpha}} \otimes p_\delta)\circ \Delta \circ
      (p_\gamma \otimes h_L{p_{\bar{\beta}}})(\Delta(p_\lambda))	\\
    &= \kappa_\alpha^{-1}\kappa_{\bar\beta}^{-1}
      \ts\sum_{\delta, \lambda,\gamma}
      \qdim(\bar{\alpha})\qdim(\delta)^{-1} c_\gamma^{\bar\alpha,\delta}
    \qdim(\bar\beta) \qdim(\lambda) 
    c_{\bar\lambda}^{\beta,\bar\gamma} p_\delta \\&=
    \frac{\qdim(\alpha)}{\kappa_\alpha} \frac{\qdim(\beta)}{\kappa_{\bar\beta}}
    \sum_{\delta\in I(\GGamma)}
    \frac{\qdim(\bar\alpha\otimes\delta\otimes\bar\beta)_\LLambda}
    {\qdim(\delta)}~ p_\delta. 
  \end{align*}
  We used the identity
  $\sum_{\gamma\in I(\GGamma)}
  c_\gamma^{\bar\alpha,\delta}c_{\bar\lambda}^{\beta,\bar\gamma} =
  \sum_{\gamma\in I(\GGamma)}
  c_\gamma^{\bar\alpha,\delta}c_{\lambda}^{\gamma,\bar\beta} =
  \dim\Hom(\lambda,\bar\alpha\otimes\delta\otimes\bar\beta)$, so that adding the
  quantum dimensions $\qdim(\lambda)$ over $\lambda\in I(\LLambda)$ with these multiplicities yields
  $\qdim(\bar\alpha\otimes\delta\otimes\bar\beta)_\LLambda$.
  
  Notice that the term corresponding to $\delta$ vanishes unless
  $\delta \subset \alpha\otimes\lambda\otimes\beta$ for some
  $\lambda \in I(\LLambda)$. Moreover, decomposing into irreducibles
  $\delta\otimes\bar\beta = \bigoplus \cc_{\alpha'}^{\delta,\bar\beta} \alpha'$
  we can write, using Corollary~\ref{crl_kappa_indep}:
  \begin{align*}
    \qdim(\bar\alpha\otimes\delta\otimes\bar\beta)_\LLambda 
    &= {\ts\sum_{\alpha'}} \cc_{\alpha'}^{\delta,\bar\beta}
      \qdim(\bar\alpha\otimes\alpha')_\LLambda \\
    &= {\ts\sum_{\alpha' \in [\alpha]}} \cc_{\alpha'}^{\delta,\bar\beta} \kappa_\alpha
      \frac{\qdim(\alpha')}{\qdim(\alpha)}
      = \frac{\kappa_\alpha}{\qdim(\alpha)} \qdim(\delta\otimes\bar\beta)_{[\alpha]}
  \end{align*}
  and the formula follows.
\end{proof}

\begin{theorem} \label{thm_bounded} Let $\LLambda \subset \GGamma$ be an inclusion of discrete quantum groups. The operator $\Tdisc(b)$ is bounded with respect
  to $\|\cdot\|_{\GGamma/\LLambda}$ for every $b \in \Hh(\GGamma,\LLambda)$ iff the inclusion $\LLambda\subset \GGamma$ satisfies property (RT).
\end{theorem}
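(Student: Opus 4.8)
The plan is to prove both implications directly from the explicit formula~\eqref{eq_coeff}, the convolution formulas of Definition~\ref{def_convol}, and the coset‑invariance statements of Corollaries~\ref{crl_indep} and~\ref{crl_kappa_indep}: for ``only if'' I test $\Tdisc(p_{\dc\beta})$ on the vectors $p_{[\alpha]}$, and for ``if'' I run a Schur‑type estimate on the orthogonal decomposition $\ell^2(\GGamma/\LLambda)=\bigoplus_{\sigma\in I(\GGamma)/\LLambda}\overline{p_\sigma c_c(\GGamma/\LLambda)}$, with property (RT) controlling the ``matrix blocks''.

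\emph{Necessity.} Fix $\beta\in I(\GGamma')$; then $p_{\dc\beta}\in\Hh(\GGamma,\LLambda)$. Writing $p_{\dc\beta}=\sum_{[\beta']}p_{[\beta']}$ over the finitely many left cosets $[\beta']\subseteq\dc\beta$ and applying~\eqref{eq_coeff}, the coefficient of $p_\gamma$ in $\Tdisc(p_{\dc\beta})(p_{[\alpha]})=p_{[\alpha]}*p_{\dc\beta}$ is a sum of nonnegative terms which, whenever $\gamma\subset\alpha\otimes\beta$ (equivalently $\alpha\subset\gamma\otimes\bar\beta$), is at least $\qdim(\beta)\qdim(\alpha)/(\kappa_{\bar\beta}\qdim(\gamma))$, by keeping the $\beta'=\beta$ summand and using $\qdim(\gamma\otimes\bar\beta)_{[\alpha]}\geq\qdim(\alpha)$. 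Since $p_{[\alpha]}*p_{\dc\beta}\in c_c(\GGamma/\LLambda)$ is a nonnegative combination of the $p_\delta$, taking $\gamma$ as the representative of $[\gamma]$ in the defining sum for $\mu$ gives $\|p_{[\alpha]}*p_{\dc\beta}\|_{\GGamma/\LLambda}^2\geq \kappa_\gamma^{-1}\qdim(\gamma)^2\bigl(\tfrac{\qdim(\beta)\qdim(\alpha)}{\kappa_{\bar\beta}\qdim(\gamma)}\bigr)^2=\tfrac{\qdim(\beta)^2\qdim(\alpha)^2}{\kappa_{\bar\beta}^2\kappa_\gamma}$, while $\|p_{[\alpha]}\|_{\GGamma/\LLambda}^2=\kappa_\alpha^{-1}h_L(p_\alpha)=\qdim(\alpha)^2/\kappa_\alpha$. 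Boundedness of $\Tdisc(p_{\dc\beta})$ thus forces $\kappa_\alpha\leq C_\beta\kappa_\gamma$ with $C_\beta=\kappa_{\bar\beta}^2\|\Tdisc(p_{\dc\beta})\|^2/\qdim(\beta)^2$. Applying this with $\bar\beta$ in place of $\beta$ (using $\|\Tdisc(p_{\dc{\bar\beta}})\|=\|\Tdisc(p_{\dc\beta})^*\|=\|\Tdisc(p_{\dc\beta})\|$ by Proposition~\ref{prop_adjoint}) and then exchanging $\alpha$ and $\gamma$ via $\alpha\subset\gamma\otimes\bar\beta$ yields the reverse inequality $\kappa_\gamma\leq C_{\bar\beta}\kappa_\alpha$. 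Hence (RT) holds for every $\beta\in I(\GGamma')$.

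\emph{Sufficiency.} Assume (RT). As $\Tdisc$ is linear and $\Hh(\GGamma,\LLambda)=\bigoplus_\tau p_\tau\Hh(\GGamma,\LLambda)$, it suffices to bound $\|\Tdisc(b)\|$ for $b$ supported on a single double coset $\tau\subseteq\LLambda\bs I(\GGamma')/\LLambda$; I treat $b=p_\tau$ and deduce the general case from finite‑dimensionality of $p_\tau\Hh(\GGamma,\LLambda)$. Put $V_\sigma:=\overline{p_\sigma c_c(\GGamma/\LLambda)}$, finite‑dimensional by Proposition~\ref{prp_quotient_description} and the remark after it, identified via $a\mapsto a_\alpha$ with $B(H_\alpha)'_\LLambda$ carrying the form $\kappa_\alpha^{-1}h_L(x^*y)$; let $T=\Tdisc(p_\tau)$ and $T_{\sigma',\sigma}$ its block $V_\sigma\to V_{\sigma'}$. \emph{(i) Uniform block norms:} expanding $T_{\sigma',\sigma}(x)=\kappa_\alpha^{-1}(S^{-1}(x)h_R\otimes p_{\alpha'})\Delta(p_\tau)$ via $(p_{\bar\alpha}\otimes p_{\alpha'})\Delta(p_\gamma)=\sum_i v_iv_i^*$ and the conjugate equations, $\|T_{\sigma',\sigma}\|$ is bounded by a combinatorial factor (controlled by Lemma~\ref{lem_subobjects}) times the ratios $\kappa_{\alpha'}/\kappa_\alpha$ and $\qdim(\alpha')/\qdim(\alpha)$; using Corollaries~\ref{crl_indep} and~\ref{crl_kappa_indep} one absorbs the $I(\LLambda)$‑factors in the relation $\alpha'\subset\alpha''\otimes\gamma$, $\gamma\in\tau$, into the cosets, reducing to $\alpha'''\subset\alpha''\otimes\beta_0$ for a fixed $\beta_0\in\tau$, and (RT) for $\beta_0$ (in both directions) bounds the resulting $\kappa$‑ratio while $\qdim(\alpha''')/\qdim(\alpha'')\leq\qdim(\beta_0)$, giving $\|T_{\sigma',\sigma}\|\leq M_\tau$ uniformly. \emph{(ii) Uniform sparsity:} the number of nonzero blocks in any row or column is at most some $K_\tau<\infty$; this follows from~\eqref{eq_coeff} and the estimates of (i) together with $\mu(p_\sigma*p_\tau)=\mu(p_\sigma)\mu(p_\tau)$ ($\GGamma$‑invariance of $\mu$, Proposition~\ref{prp_invariant_measure}), a uniform lower bound $\|P_{V_{\sigma'}}(p_\sigma*p_\tau)\|^2\geq c_\tau\|p_\sigma\|^2$ on each nonzero block (from~\eqref{eq_coeff} plus (RT) for $\beta_0$), and $\|P_{V_{\sigma'}}(p_\sigma*p_\tau)\|^2\leq\nu(p_\tau)\,\mu(P_{V_{\sigma'}}(p_\sigma*p_\tau))$, which together give $K_\tau\leq\nu(p_\tau)\mu(p_\tau)/c_\tau$ (the column count by applying this to $T^*=\Tdisc(p_\tau^\sharp)$). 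The Schur test then gives $\|\Tdisc(p_\tau)\|\leq K_\tau M_\tau$.

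The crux is ingredient~(i): testing a block $T_{\sigma',\sigma}$ on the single vector $p_\sigma$ yields only a lower bound for its norm, and already for $\GGamma$ the dual of $SU(2)$ with $\LLambda=\{e\}$ the blocks fail to be sparse along a fixed ``direction'', so one genuinely needs operator‑norm control on the finite‑dimensional spaces $V_\sigma$, which forces one to unwind the intertwiner combinatorics through the conjugate equations and the trace identities for $h_L$, $h_R$. Property (RT) enters precisely to keep the $\kappa$‑ratios uniformly bounded, and the coset‑invariance of Corollary~\ref{crl_kappa_indep} is what upgrades ``(RT) for a single $\beta_0$'' to an estimate valid uniformly over the whole double coset $\tau$.
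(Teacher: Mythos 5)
Your necessity argument is correct and is essentially the paper's: both rest on the explicit formula~\eqref{eq_coeff}, the positivity of its coefficients, and the observation that a single surviving term already forces $\kappa_{\alpha}\leq \kappa_{\bar\beta}^{2}\,\|\Tdisc(p_{\dc\beta})\|^{2}\,\qdim(\beta)^{-2}\,\kappa_\gamma$ whenever $\alpha\subset\gamma\otimes\bar\beta$; replacing $\beta$ by $\bar\beta$ (which is again in $I(\GGamma')$) then gives (RT). Your variant, which lower-bounds $\|p_{[\alpha]}*p_{\dc\beta}\|_{\GGamma/\LLambda}$ directly instead of pairing against a unit vector $\xi_{[\gamma]}$, is fine.

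The sufficiency direction has the right architecture (uniform block bounds, bounded row/column sparsity, then Cauchy--Schwarz/Schur --- which is also the paper's architecture), but the step you yourself flag as the crux, ingredient~(i), is asserted rather than proved, and the route you sketch does not obviously close. Two concrete problems. First, if you expand $\Delta(p_\tau)=\sum_{\gamma\in\tau}\Delta(p_\gamma)$ and work with the intertwiner projections $(p_{\bar\alpha}\otimes p_{\alpha'})\Delta(p_\gamma)=\sum_i v_iv_i^*$, the number of $\gamma\in\tau$ with $\gamma\subset\bar\alpha\otimes\alpha'$ is \emph{not} uniformly bounded over the blocks (e.g.\ for $\GGamma$ the dual of $SU(2)$, $\LLambda$ the dual of $SO(3)$ and $\tau=\dc{1/2}$, the number of half-integer spins inside $\bar\alpha\otimes\alpha'$ grows with $\alpha$), so Lemma~\ref{lem_subobjects} does not control your ``combinatorial factor'' at this stage; one must first pass to the left-coset form $x*p_\tau=\sum_{[\beta]\subset\tau}\kappa_{\bar\beta}^{-1}(\id\otimes h_Lp_{\bar\beta})\Delta(x)$ of~\eqref{eq_convol_right}, with one term per left coset. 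Second, even after that reduction, the uniform bound on the slice maps $x\mapsto(\id\otimes h_Lp_{\bar\beta})\Delta(x)$ relative to the $h_L$-norms is the genuine analytic input, and neither Corollary~\ref{crl_indep} nor Corollary~\ref{crl_kappa_indep} (statements about scalars, not operator norms) supplies it. The paper obtains it by recognizing $(h_Lc_\gamma^*\otimes S(b_\beta)h_L)\Delta(a_{\alpha'})=(c_\gamma\mid\Tdisc_\GGamma(b_\beta)(a_{\alpha'}))_\GGamma$ as a matrix coefficient of the right regular representation of $\GGamma$, i.e.\ by importing boundedness from the known case $\LLambda=\{e\}$; an alternative would be to note that $(\id\otimes\omega)\Delta$ for the state $\omega=\qdim(\beta)^{-2}h_Lp_{\bar\beta}$ is unital, $h_L$-preserving and completely positive, hence an $L^2(h_L)$-contraction by Kadison--Schwarz. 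Without one of these inputs the block estimate, and hence the proof, is incomplete. (Your sparsity step~(ii) can be made to work, but it is more roundabout than the purely combinatorial count via Lemma~\ref{lem_subobjects} used in the paper; note also that it requires checking that a nonzero block is already detected by the single vector $p_\sigma$, which does hold by Lemma~\ref{lem_convol_support} and Proposition~\ref{prp_quotient_description}.)
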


\begin{proof}
  Assume first that $\Tdisc(b)$ is bounded with respect to $\|\cdot\|_{\GGamma/\LLambda}$ for all $b \in \Hh(\GGamma,\LLambda)$. In particular $\Tdisc(p_\tau)$
  is bounded for every $\tau \in \LLambda\bs I(\GGamma')/\LLambda$.  Consider $\xi_{[\alpha]} = \kappa_\alpha^{1/2} p_{[\alpha]} / \qdim(\alpha)$, which has
  norm $1$ with respect to $\|\cdot\|_{\GGamma/\LLambda}$. By definition of the scalar product, and writing $\tau$ as a disjoint union of finitely many left
  classes $[\beta]$ we have
  \begin{align*}
    (\xi_{[\gamma]} \mid \xi_{[\alpha]} * p_\tau) &=
    \frac{\kappa_\alpha^{1/2}\kappa_\gamma^{1/2}}{\qdim(\alpha) \qdim(\gamma)} (p_{[\gamma]}|p_{[\alpha]} * p_\tau) 
    = \frac{\kappa_\alpha^{1/2}\kappa_\gamma^{1/2}}{\qdim(\alpha) \qdim(\gamma)} \kappa_\gamma^{-1} h_L(p_\gamma (p_{[\alpha]}*p_\tau)) \\
    &= \frac{\kappa_\alpha^{1/2}\kappa_\gamma^{1/2}}{\qdim(\alpha) \qdim(\gamma)} \kappa_\gamma^{-1} \sum_{[\beta] \subset\tau} h_L(p_\gamma (p_{[\alpha]}*p_{[\beta]})).
  \end{align*}
  Using~\eqref{eq_coeff} in the preceding lemma we can further compute:
  \begin{align*}
  (\xi_{[\gamma]} \mid \xi_{[\alpha]} * p_\tau) &= \sum_{[\beta]\subset\tau} \frac{\kappa_\alpha^{1/2}\kappa_\gamma^{1/2}}{\qdim(\alpha) \qdim(\gamma)} \kappa_\gamma^{-1} 
	\frac{\qdim(\beta)}{\kappa_{\bar\beta}}
      \frac{\qdim(\gamma\otimes\bar\beta)_{[\alpha]}}{\qdim(\gamma)} h_L(p_\gamma) \\
    &= \sum_{[\beta]\subset \tau} \frac{\qdim(\beta)}{\kappa_{\bar\beta}}
       \frac{\kappa_\alpha^{1/2}}{\qdim(\alpha)}
      \frac{\qdim(\gamma\otimes\bar\beta)_{[\alpha]}}{\kappa_\gamma^{1/2}}.
  \end{align*}
  Finally we decompose $(\gamma\otimes\bar\beta)$ into irreducible subobjects
  $\alpha'$ and select the ones in $[\alpha]$. Using 
  Corollary~\ref{crl_kappa_indep} this yields
  \begin{align*}
  (\xi_{[\gamma]} \mid \xi_{[\alpha]} * p_\tau)
                                                  &= \sum_{[\beta]\subset \tau} \frac{\qdim(\beta)}{\kappa_{\bar\beta}}
                                                    \sum_{\alpha'\in[\alpha]} \cc_{\alpha'}^{\gamma,\bar\beta}
                                                    \frac{\kappa_\alpha^{1/2}}{\qdim(\alpha)}
                                                    \frac{\qdim(\alpha')}{\kappa_\gamma^{1/2}} 
                                                   = \sum_{[\beta]\subset \tau} \frac{\qdim(\beta)}{\kappa_{\bar\beta}}
                                                    \sum_{\alpha'\in[\alpha]} \cc_{\alpha'}^{\gamma,\bar\beta}
                                                    \frac{\kappa_{\alpha'}^{1/2}}{\kappa_\gamma^{1/2}}.
  \end{align*}
  As a result we have, for any $\alpha'$, $\gamma \in I(\GGamma)$ and $\beta\in I(\GGamma')$ such
  that $\alpha' \subset \gamma\otimes\bar\beta$:
  \begin{displaymath}
    \frac{\kappa_{\alpha'}}{\kappa_\gamma} \leq 
    \frac {\kappa_{\bar\beta}^2 ~ \|\Tdisc(p_{\dc\beta})\|^2}{\qdim(\beta)^2} .
  \end{displaymath}
  This shows the existence of the constants $C_\beta$ and the direct
  implication.

  Conversely, assume that (RT) holds. Taking
  $b \in c_c(\LLambda\bs\GGamma'/\LLambda)$, we can assume that
  $b \in p_\tau c_c(\LLambda\bs\GGamma'/\LLambda)$ with
  $\tau \in \LLambda\bs I(\GGamma')/\LLambda$, and we then have a decomposition
  $b = \sum_{[\beta]\subset\tau} p_{[\beta]} b$ with $L(\tau)$ terms, where
  $[\beta] \in \LLambda\bs I(\GGamma)$. We first fix classes $[\alpha]$,
  $[\gamma] \in I(\GGamma)/\LLambda$. For
  $a \in p_{[\alpha]}c_c(\GGamma/\LLambda)$,
  $c \in p_{[\gamma]}c_c(\GGamma/\LLambda)$ we have
  \begin{align*}
    (c \mid a*b) &= \sum_{[\beta]\subset\tau} \kappa_{\bar\beta}^{-1} 
                   (c \mid (\id\otimes S(b_{\beta})h_L)\Delta(a)) \\
                 &= \sum_{[\beta]\subset\tau,\alpha'\in[\alpha]} \kappa_{\bar\beta}^{-1} \kappa_\gamma^{-1}
                   (h_Lc_\gamma^*\otimes S(b_\beta)h_L)\Delta(a_{\alpha'}).
  \end{align*}
  The sum is in fact finite since its terms vanish unless
  $\alpha'\subset \gamma\otimes\bar\beta$.

  Now we use the case when $\LLambda$ is the trivial subgroup, denoting
  $\|\cdot\|_\GGamma$ the hermitian norm on $c_c(\GGamma)$ and $\Tdisc_\GGamma$
  the homomorphism from $c_c(\GGamma)$ to $\End(c_c(\GGamma))$. We know that
  $\Tdisc_\GGamma(d)$ extends to a bounded operator on $\ell^2(\GGamma)$ for any
  $d\in c_c(\GGamma)$, because it is an operator from the right regular
  representation of $\GGamma$. Fix a choice of representatives $\beta$ for the
  classes $[\beta]$ and put
  $M = \max_{[\beta]\subset \tau} \|\Tdisc_\GGamma(b_\beta)\|$ (so that $M$
  might depend on the choice made). We then have
  \begin{displaymath}
    |(h_Lc_\gamma^*\otimes S(b_\beta)h_L)\Delta(a_{\alpha'})| =
    |(c_\gamma \mid \Tdisc_\GGamma(b_\beta)(a_{\alpha'}))_\GGamma| \leq
    M \|c_\gamma\|_\GGamma \|a_{\alpha'}\|_\GGamma.
  \end{displaymath}
  We have moreover
  $\|c_\gamma\|_\GGamma = \sqrt\kappa_\gamma\|c\|_{\GGamma/\LLambda}$,
  $\|a_{\alpha'}\|_\GGamma = \sqrt\kappa_{\alpha'}\|a\|_{\GGamma/\LLambda}$ by
  definition.

  As a result we can write:
  \begin{align*}
    |(c\mid a*b)| &\leq M \sum_{[\beta]\subset\tau}
                    \sum_{\alpha'\in[\alpha],\alpha'\subset\gamma\otimes\bar\beta}
                    \kappa_{\bar\beta}^{-1} 
                    \kappa_\gamma^{-1/2} \kappa_{\alpha'}^{1/2}
                    \|c\|_{\GGamma/\LLambda} \|a\|_{\GGamma/\LLambda} 
                    \leq K \|c\|_{\GGamma/\LLambda} \|a\|_{\GGamma/\LLambda},
  \end{align*}
  where
  $K = M\sum_{[\beta]\subset\tau} \kappa_{\bar\beta}^{-1} C_{\bar\beta}^{1/2}
  \dim(\beta)^2$ only depends on $b$ (and not on $[\alpha]$, $[\gamma]$, $a$,
  $c$). Here we used property~(RT) and the fact from
  Lemma~\ref{lem_subobjects} that $\gamma\otimes\bar\beta$ has at most
  $\dim(\beta)^2$ irreducible subobjects.

  Let use write $[\gamma] \# [\alpha]$ if there exist
  $c\in p_{[\gamma]}c_c(\GGamma/\LLambda)$,
  $a \in p_{[\alpha]}c_c(\GGamma/\LLambda)$ such that $(c\mid a*b)\neq 0$.  The
  arguments above also show that $[\gamma]\#[\alpha]$ implies
  $\gamma \subset \alpha\otimes\lambda\otimes\beta$ for some
  $\lambda \in I(\LLambda)$ and $\beta \in\tau$. Writing $\tau$ as a disjoint
  union of $R(\tau)$ classes $[\beta_i] \in I(\GGamma)/\LLambda$, this inclusion
  implies $\gamma \subset \alpha\otimes\beta_i\otimes\mu$ for some $i$ and
  $\mu\in I(\LLambda)$. Since $\alpha\otimes\beta_i$ contains at most
  $\dim(\beta_i)^2$ irreducibles, this shows that, once $[\alpha]$ is fixed, we
  can have $[\gamma]\#[\alpha]$ for at most $\sum \dim(\beta_i)^2$ classes
  $[\gamma]$. Similarly, since $(c\mid a*b) = (c*b^\sharp\mid a)$, if $[\gamma]$
  is fixed, we can have $[\gamma]\#[\alpha]$ for at most $\sum \dim(\beta'_j)^2$
  classes $[\alpha]$, where we have now written $\tau$ as a disjoint union of
  $L(\tau)$ classes $[\beta'_j] \in \LLambda\bs I(\GGamma)$. Let us denote by $N$
  the maximum of these two sums, which depends only on $\tau$.

  Finally we can use Cauchy-Schwarz to write, for any $a$,
  $b \in c_c(\GGamma/\LLambda)$:
  \begin{align*}
    |(c \mid a*b)| &\leq \sum_{[\gamma]\#[\alpha]} 
                     |(p_{[\gamma]}c \mid (p_{[\alpha]}a) *b)|
                     \leq K \sum_{[\gamma]\#[\alpha]} \|p_{[\gamma]}c\|_{\GGamma/\LLambda} 
                     \|p_{[\alpha]}a\|_{\GGamma/\LLambda} \\
                   &\leq K \Big(\sum_{[\gamma]\#[\alpha]} 
                     \|p_{[\gamma]}c\|_{\GGamma/\LLambda}^2\Big)^{1/2}
                     \Big(\sum_{[\gamma]\#[\alpha]} \|p_{[\alpha]}a\|_{\GGamma/\LLambda}^2\Big)^{1/2}
                     \leq KN \|c\|_{\GGamma/\LLambda} \|a\|_{\GGamma/\LLambda}.
  \end{align*}
  This shows that $\Tdisc(b)$ is bounded with $\|\Tdisc(b)\| \leq KN$.
\end{proof}

\subsubsection{Modular structure}

\begin{definition}
  The {\em canonical state} on $\Hh(\GGamma,\LLambda)$ is given by the formula
  \[\omega(f) := \epsilon(f) = (p_\LLambda \mid \Tdisc(f) p_\LLambda), \;\;\; f
    \in \Hh(\GGamma,\LLambda).\]
\end{definition}

Since the sesquilinear form $(\,\cdot\, \mid \,\cdot\,)$ is positive-definite,
$\omega$ is faithful, i.e.\ $\omega(f^\sharp*f) = 0$ implies $f=0$.  To
investigate the modular properties of $\omega$ we first construct a quantum
analogue of the classical modular function. We consider the restrictions of the
functionals $\mu$, $\nu$ introduced in Definition~\ref{def_invariant_measure} to
$c_c(\LLambda\bs\GGamma'/\LLambda)$. These forms are faithful by
Proposition~\ref{prp_quotient_description}, and so we can make the following definition.
\begin{definition} \label{def_modular_element} The {\em modular element}
  associated with $(\GGamma,\LLambda)$ is the unique element
  $\nabla \in c(\LLambda\bs\GGamma'/\LLambda)$ such that
  $\nu(a) = \mu(\nabla a)$ for all $a \in c_c(\LLambda\bs\GGamma'/\LLambda)$.
\end{definition}
More concretely, fix $\alpha\in I(\GGamma')$. Since $(a\mapsto p_\alpha a)$ is
faithful on $p_{\dc\alpha}c(\LLambda\bs\GGamma/\LLambda)$ we can consider the
positive linear forms $\mu_\alpha$, $\nu_\alpha$ defined on the
finite dimensional \Cst algebra $p_\alpha c(\LLambda\bs\GGamma/\LLambda)$ by the
following identities, for every
$a \in p_{\dc\alpha}c(\LLambda\bs\GGamma/\LLambda)$:
\begin{align*}
  \mu_\alpha(a_\alpha) &= \sum \{\kappa_\delta^{-1} h_L(a_\delta) \mid 
                         [\delta]\subset \dc\alpha, 
                         [\delta] \in I(\GGamma)/\LLambda \}, \\
  \nu_\alpha(a_\alpha) &= \sum \{\kappa_{\bar\delta}^{-1} h_R(a_\delta) \mid
                         [\delta]\subset\dc\alpha,
                         [\delta] \in \LLambda\bs I(\GGamma) \}.
\end{align*}
Then $\nabla$ is characterized by the identities
$\nu_\alpha(a_\alpha) = \mu_\alpha(\nabla_\alpha a_\alpha)$,
$a \in p_{\dc\alpha}c(\LLambda\bs\GGamma/\LLambda)$,
$\dc\alpha \in \LLambda\bs I(\GGamma')/\LLambda$. Note that, according to the
next lemma, we also have $\nu(a) = \mu(a\nabla)$. Naturally $\nabla = \nabla^*$.

\begin{lemma} \label{lem_modular_easy} We have $p_\LLambda\nabla = p_\LLambda$,
  $\sigma_t^R(\nabla) = \nabla$ for any $t \in \mathbb{R}$, $S(\nabla) = \nabla^{-1} = S^{-1}(\nabla)$.
\end{lemma}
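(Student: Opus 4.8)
The plan is to handle the three assertions separately; the common tools are the defining relation $\nu(a)=\mu(\nabla a)$ for $a\in c_c(\LLambda\bs\GGamma'/\LLambda)$, the faithfulness of $\mu$ and $\nu$ on each finite-dimensional block $p_\tau c(\LLambda\bs\GGamma'/\LLambda)$ (so that $\mu(Xa)=0$ for all such $a$ forces $X=0$, by testing $a=X^*$ block by block, and likewise for $\nu$), and the invariance properties of the Haar weights --- finite-dimensionality of the blocks also makes all the analytic continuations below ($\tau_z$, $\sigma^R_{-i}$, $S^{\pm2}$) harmless. For $p_\LLambda\nabla=p_\LLambda$: Proposition~\ref{prp_quotient_description} applied with $\alpha=1$ gives $p_\LLambda c(\GGamma/\LLambda)=\CC p_\LLambda$, so $p_\LLambda\nabla=c\,p_\LLambda$ for a scalar $c$; since $\dc 1=[1]$ is a single coset with $\kappa_1=1$ and $h_L(p_1)=h_R(p_1)=1$, we get $\mu(p_\LLambda)=\nu(p_\LLambda)=1$, whence $c=\mu(\nabla p_\LLambda)=\nu(p_\LLambda)=1$. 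For $\sigma^R_t(\nabla)=\nabla$: I would first observe that $\mu$ and $\nu$ are $\sigma^R_t$-invariant --- $\sigma^R_t$ fixes each $p_\alpha$, so it respects the block decomposition, and on every $B(H_\alpha)$ the three groups $\sigma^L_t,\sigma^R_t,\tau_t$ act by conjugation by a positive power of $F_\alpha$, which preserves the quantum-trace functionals $h_L|_{B(H_\alpha)}$ and $h_R|_{B(H_\alpha)}$. Since $\sigma^R_t$ stabilizes $c_c(\LLambda\bs\GGamma'/\LLambda)$ by Lemma~\ref{lemma_quotient_antipode}, substituting $\sigma^R_t(a)$ for $a$ in $\nu(a)=\mu(\nabla a)$ and using these invariances gives $\nu(a)=\mu(\nabla\sigma^R_t(a))=\mu(\sigma^R_{-t}(\nabla)a)$; comparison with $\nu(a)=\mu(\nabla a)$ and faithfulness of $\mu$ on each block yields $\sigma^R_t(\nabla)=\nabla$. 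In particular $\tau_t(\nabla)=\sigma^R_{-t}(\nabla)=\nabla$, hence $S^2(\nabla)=\tau_{-i}(\nabla)=\nabla$, which will reduce the $S^{-1}$-statement to the $S$-statement.

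For the antipode identities I would record the ``mirror'' relations $\nu\circ S=\nu\circ S^{-1}=\mu$ on $c_c(\LLambda\bs\GGamma'/\LLambda)$, which follow from $h_R\circ S=h_R\circ S^{-1}=h_L$ (using $S^{\pm1}=R\tau_{\mp i/2}$, $h_R=h_L\circ R$, and the $\tau$-invariance of the Haar weights in the discrete case), together with $(S(a))_\beta=S(a_{\bar\beta})$, $S(p_\alpha)=p_{\bar\alpha}$, and the substitution $\beta=\bar\alpha$, which matches the $\kappa$-coefficients in the formulas for $\mu$ and $\nu$ and relabels $I(\GGamma')/\LLambda$ bijectively onto $\LLambda\bs I(\GGamma')$. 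Granting the identity $\nu(a)=\mu(a\nabla)$ established in the next paragraph, the third assertion then follows formally: for $a\in c_c(\LLambda\bs\GGamma'/\LLambda)$,
\[
  \nu\bigl(S(\nabla)a\bigr)=\nu\bigl(S(S^{-1}(a)\nabla)\bigr)=\mu\bigl(S^{-1}(a)\nabla\bigr)=\nu\bigl(S^{-1}(a)\bigr)=\mu(a),
\]
by anti-multiplicativity of $S$, then $\nu\circ S=\mu$, then $\nu(\,\cdot\,)=\mu(\,\cdot\,\nabla)$, then $\nu\circ S^{-1}=\mu$. Since applying $a\mapsto\nabla^{-1}a$ to the defining relation shows $\mu(a)=\nu(\nabla^{-1}a)$ for all such $a$, and $\nu$ is faithful on each block, the displayed line forces $S(\nabla)=\nabla^{-1}$; and then $S^{-1}(\nabla)=S(\nabla)=\nabla^{-1}$, using $S^2(\nabla)=\nabla$ from the previous paragraph.

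The substantial point --- and the main obstacle --- is the identity $\nu(a)=\mu(a\nabla)$, equivalently the commutation $\mu(a\nabla)=\mu(\nabla a)$. I would derive it from the fact that the modular automorphism group of $\mu$ on $c_c(\GGamma/\LLambda)$ is $(\sigma^R_t)_t$ (equivalently $(\sigma^L_{-t})_t$): by the decomposition $F_\alpha=\diag_i(F_i\otimes L_{i,\alpha})$ established before Proposition~\ref{def_modular_mult}, the restriction of $\sigma^R_t$ to each minimal block $q_i$ of $\ell^\infty(\GGamma/\LLambda)$ is conjugation by a power of $F_i$, while $\mu$ restricted to $q_i\cong B(H_i)$ is a positive multiple of the quantum trace associated with $F_i$; hence $(\sigma^R_t)_t$ satisfies the KMS condition for $\mu$ there, and --- being $\sigma^R$-stable --- on the subalgebra $c_c(\LLambda\bs\GGamma'/\LLambda)$ as well. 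Therefore $\mu(a\nabla)=\mu(\nabla\sigma^R_{-i}(a))$, and since $\nu$ is $(\sigma^R_t)_t$-invariant and $a$ is $\sigma^R$-analytic this equals $\nu(\sigma^R_{-i}(a))=\nu(a)=\mu(\nabla a)$. (As a by-product this gives $\nabla=\nabla^*$: conjugating $\nu(a)=\mu(\nabla a)$ and using that $\mu,\nu$ are $*$-preserving identifies $\nabla^*$ as the unique element implementing $\nu(\,\cdot\,)=\mu(\,\cdot\,\nabla^*)$, which we have just shown is $\nabla$.) Once this identification of the modular group of $\mu$ is in place, everything else is bookkeeping with the antipode and the Haar weights.
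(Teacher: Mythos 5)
Your proof is correct and follows essentially the same route as the paper: $\sigma^R_t$-invariance of $\mu$ and $\nu$ plus uniqueness gives $\sigma^R_t(\nabla)=\nabla$, the resulting commutation $\mu(\nabla a)=\mu(a\nabla)$ combined with $\nu S=\mu=\nu S^{-1}$ yields $S^{\pm1}(\nabla)=\nabla^{-1}$. You merely fill in some steps the paper leaves implicit (the block computation for $p_\LLambda\nabla=p_\LLambda$, the verification of $\nu S=\mu=\nu S^{-1}$, and the use of $S^2(\nabla)=\nabla$ in place of "proceed in the same way with $S^{-1}$"), which is fine.
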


\begin{proof}
  Since $h_R$ and $h_L$ are both $\sigma_t^R = \sigma_{-t}^L$ invariant, this is
  also the case of $\mu_\alpha$ and $\nu_\alpha$, hence we have
  $\nu_\alpha(a) = \nu_\alpha(\sigma^R_{-t}(a)) = \mu_\alpha(\nabla_\alpha
  \sigma^R_{-t}(a)) = \mu_\alpha(\sigma_t^R(\nabla_\alpha) a)$ and we conclude
  by uniqueness that $\sigma_t^R(\nabla) = \nabla$. This, together with the definition of $\mu$ and the fact that $(\sigma_t^R)_{t \in \mathbb{R}}$ is the modular automorphism group for $h_R$, also implies that
  $\mu(a\nabla) = \mu(\nabla a)$ for all
  $a \in c_c(\LLambda\bs\GGamma'/\LLambda)$. Similarly, noting that
  $\nu S = \mu = \nu S^{-1}$ we can write
  $\nu(S(\nabla)a) = \mu(S^{-1}(a)\nabla) = \mu(\nabla S^{-1}(a)) =
  \nu(S^{-1}(a)) = \mu(a)$ and we conclude that $S(\nabla) = \nabla^{-1}$. We
  proceed in the same way with $S^{-1}$.
\end{proof}

The next result shows that the operator $\nabla$ indeed plays the role of the modular operator for the canonical state on the Hecke algebra (or rather its relevant von Neumann algebraic completion).

\begin{theorem} \label{thm_modularautomorphism}
  The maps
  $\theta_t : a \mapsto \sigma_t^R(\nabla^{it}a) = \nabla^{it}\sigma_t^R(a)$, $t \in \mathbb{R}$, $a \in \Hh(\GGamma,\LLambda)$
  define a $1$-parameter group of $*$-automorphisms of $\Hh(\GGamma,\LLambda)$
  and $\omega$ is a $\theta$-KMS$_1$ state.
\end{theorem}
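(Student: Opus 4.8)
The plan is to verify the two assertions in turn: first that $\theta$ is a well-defined one-parameter group of $*$-automorphisms of $\Hh(\GGamma,\LLambda)$, and then that $\omega$ satisfies the KMS$_1$ condition with respect to $\theta$. For the first part, I would start from the two descriptions $\theta_t(a) = \sigma^R_t(\nabla^{it}a) = \nabla^{it}\sigma^R_t(a)$; these agree because $\sigma^R_t(\nabla) = \nabla$ by Lemma~\ref{lem_modular_easy}, so $\sigma^R_t(\nabla^{it}a) = \sigma^R_t(\nabla^{it})\sigma^R_t(a) = \nabla^{it}\sigma^R_t(a)$ (using also that $\sigma^R_t$ is multiplicative for the pointwise product of $c(\GGamma)$). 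To see that $\theta_t$ maps $\Hh(\GGamma,\LLambda)$ into itself, note that $\sigma^R_t$ stabilises $c_c(\GGamma/\LLambda)\cap c_c(\LLambda\bs\GGamma)$ by Remark~\ref{rk_struct_hecke} and that $\nabla^{it}\in c(\LLambda\bs\GGamma'/\LLambda)$ is a multiplier of $\Hh(\GGamma,\LLambda)$ for the pointwise product, so the pointwise product $\nabla^{it}\sigma^R_t(a)$ again lies in the Hecke algebra. The group law $\theta_s\theta_t = \theta_{s+t}$ follows from $\nabla^{is}\sigma^R_s(\nabla^{it}) = \nabla^{is}\nabla^{it} = \nabla^{i(s+t)}$ together with $\sigma^R_s\sigma^R_t = \sigma^R_{s+t}$. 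The $*$-property, i.e. $\theta_t(a^\sharp) = \theta_t(a)^\sharp$ with $a^\sharp = S(a^*)$, uses $S(\nabla) = \nabla^{-1}$ from Lemma~\ref{lem_modular_easy}, the commutation $S\sigma^R_t = \sigma^R_t S$, and the fact that $\sigma^R_t$ commutes with the pointwise $*$-operation. Multiplicativity of $\theta_t$ for the convolution product $*$ is the genuinely algebraic point: $\sigma^R_t(a*b) = \sigma^R_t(a)*\sigma^R_t(b)$ is already established in Proposition~\ref{prp_convol}, so it remains to check that $a \mapsto \nabla^{it}a$ (pointwise multiplication by the central element $\nabla^{it}$) is a homomorphism of the convolution algebra; this should follow from $\Delta(\nabla^{it}) = \nabla^{it}\otimes\nabla^{it}$ — which I would derive from $\nu = \mu(\nabla\,\cdot\,)$, the invariance of $\mu$ from Proposition~\ref{prp_invariant_measure}, and the corresponding right-invariance of $\nu$ — applied to formula \eqref{eq_convol_left} or \eqref{eq_convol_right}.

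For the KMS condition, I would unwind the definition: $\omega$ is $\theta$-KMS$_1$ means that for all $a,b\in\Hh(\GGamma,\LLambda)$ there is a function holomorphic on the strip $0<\Im z<1$, bounded and continuous on its closure, with boundary values $\omega(a*\theta_t(b))$ on $\RR$ and $\omega(\theta_t(b)*a)$ on $\RR + i$; equivalently, since $\theta$ is analytic here, $\omega(a*b) = \omega(b*\theta_i(a)) = \omega(b*\nabla^{-1}\sigma^R_i(a))$ after analytic continuation. The strategy is to reduce this to the known KMS property of the Haar weights $h_L$, $h_R$ of $\GGamma$ and the defining relation of $\nabla$. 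Concretely, I would express $\omega(a*b) = \epsilon(a*b) = (p_\LLambda \mid \Tdisc(b)p_\LLambda) = (a^\sharp \mid b)$ for suitable normalisation (using $\omega(f) = (p_\LLambda\mid\Tdisc(f)p_\LLambda)$ and Proposition~\ref{prop_adjoint}), so that the KMS identity becomes an identity among the sesquilinear form $(\,\cdot\mid\cdot)$, the convolution, and the analytic generator of $\theta$. Then I would plug in the explicit formula \eqref{eq_scalar_both} for the scalar product and \eqref{eq_convol_both} for the convolution in terms of $h_L$, $h_R$, $S$, and $\Delta^2(p_\LLambda)$, and use the twisting relations $h_L(xy) = h_L(yS^2(x))$, $h_R(xy) = h_R(yS^{-2}(x))$ (recorded at the end of the proof of Theorem~\ref{thm_strong_inv}) to move the analytic generator from one side to the other. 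The point where $\nabla$ enters is precisely the asymmetry between left and right Haar weights: $h_L$ and $h_R$ differ by the modular element of $\GGamma$, and on the Hecke algebra this gets packaged, together with the constants $\kappa$, into the single element $\nabla$ via Definition~\ref{def_modular_element}.

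A cleaner route, which I would actually pursue, is to reduce to the discrete-quantum-group case $\LLambda = \{e\}$ by transporting through the Schlichting completion or, more directly, by realising $\Hh(\GGamma,\LLambda)$ inside the multiplier algebra $c(\GGamma')$ and using that the canonical state is the restriction of (a rescaling of) a Haar weight. More precisely: the representation $\Tdisc$ realises $\Hh(\GGamma,\LLambda)$ as adjointable operators on $\ell^2(\GGamma/\LLambda)$ (Theorem~\ref{thm_adjoint}), $\omega$ is the vector state at $p_\LLambda$, and I would show that the GNS construction of $(\Hh(\GGamma,\LLambda),\omega)$ is exactly $\ell^2(\GGamma/\LLambda)$ with cyclic vector $p_\LLambda$; then the modular operator of $\omega$ is computed as the operator implementing $a\mapsto\nabla^{it}\sigma^R_t(a)$ via the explicit action on $c_c(\GGamma/\LLambda)$ given by $a\mapsto\sigma^R_t(\nabla^{it}\,\cdot)$ — where the $\sigma^R$-part comes from the modular automorphism group of $h_R$ restricted to $c_c(\GGamma/\LLambda)$ (legitimate by Lemma~\ref{lemma_quotient_antipode}) and the $\nabla$-part is forced by the discrepancy between the weight $\mu$ appearing in the inner product and its right-handed counterpart $\nu$.

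The main obstacle, as I see it, is bookkeeping rather than conceptual: matching the two twisting conventions — the modular automorphism $\sigma^R$ of $\GGamma$ acting pointwise, and the "Hecke modular operator" $\nabla$ acting pointwise by a central multiplier — so that their product \emph{exactly} implements the modular automorphism of the convolution-algebra state $\omega$, with the right sign of $t$ and the correct placement of $S^{\pm 1}$. In particular one must be careful that $\nabla$ is generally unbounded (it lies in $c(\LLambda\bs\GGamma'/\LLambda)$, a product of matrix algebras, not in $c_c$), so $\nabla^{it}$ is the genuine object to work with and analytic continuation to $\nabla^{-1}$ needs the block-diagonal structure of $c(\LLambda\bs\GGamma'/\LLambda)$ to be justified blockwise. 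I expect everything to go through once one checks the KMS identity blockwise in $\alpha\in I(\GGamma')$, where all operators are finite-dimensional and the computation reduces, via Theorem~\ref{thm_strong_inv} and Lemma~\ref{lem_kappa}, to a finite matrix identity encoding the relation $\nu_\alpha = \mu_\alpha(\nabla_\alpha\,\cdot\,)$.
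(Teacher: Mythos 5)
Your treatment of the KMS condition is essentially the paper's: you reduce it to the identity $\omega(a*b)=\omega(b*\theta_i(a))=\omega(b*\sigma_i^R(\nabla^{-1}a))$, and you propose to verify this by writing $\omega(a*b)$ in the two forms $\sum_{[\alpha]}\kappa_\alpha^{-1}h_R(S(a_\alpha)b)=\sum_{[\beta]}\kappa_{\bar\beta}^{-1}h_L(aS(b_\beta))$ and trading $h_L$ for $h_R$ blockwise through the defining relation $\nu_\alpha=\mu_\alpha(\nabla_\alpha\,\cdot\,)$ together with the KMS property of the Haar weights. That is exactly the computation in the paper, and your remark that $\nabla$ packages the $h_L$/$h_R$ asymmetry (together with the $\kappa$'s) is the right way to think about it. Your checks of the group law, of stability of $\Hh(\GGamma,\LLambda)$ under $\theta_t$, and of compatibility with $\sharp$ via $S(\nabla)=\nabla^{-1}$ and $S\sigma_t^R=\sigma_t^RS$ also match the paper.

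The one genuine gap is in the convolution-multiplicativity of $\theta_t$. You reduce it, correctly, to the group-likeness $\Delta(\nabla^{it})=\nabla^{it}\otimes\nabla^{it}$, but you then assert that this ``should follow from'' $\nu=\mu(\nabla\,\cdot\,)$ and the invariance of $\mu$ and $\nu$. That implication is not formal: it is the analogue of the statement that the modular element of an algebraic quantum group is group-like, which requires a genuine argument (already classically, multiplicativity of $g\mapsto L(\dc g)/R(\dc g)$ is not a one-line consequence of invariance). Worse, in the paper the identity $\Delta(\nabla)=\nabla\otimes\nabla$ is a \emph{corollary} of this theorem, deduced from the multiplicativity $(\nabla a)*(\nabla b)=\nabla(a*b)$, so your route risks circularity unless you supply an independent proof. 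The paper's way around this is the step you are missing: first establish both one-sided KMS identities $\omega(a*b)=\omega(b*\sigma_i^R(\nabla^{-1}a))$ and $\omega(a*b)=\omega((\nabla\sigma_{-i}^R(b))*a)$, then apply them to $\omega(a*b*c)$ in two different orders and use faithfulness of $\omega$ (positive-definiteness of $(\,\cdot\mid\cdot\,)$) to conclude $\sigma_i^R(\nabla^{-1}a)*\sigma_i^R(\nabla^{-1}b)=\sigma_i^R(\nabla^{-1}(a*b))$, hence $(\nabla^z a)*(\nabla^z b)=\nabla^z(a*b)$ for all $z$ by the usual blockwise analytic continuation. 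Your alternative ``cleaner route'' via the GNS space $\ell^2(\GGamma/\LLambda)$ and the Schlichting completion is consistent with what the paper proves later (the modular group of $\omega$ is identified with that of $\hat\varphi$ on $\Hh(\GG,\HH)$), but at this point in the development it would put the cart before the horse, since the Schlichting completion is built afterwards using the present section.
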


\begin{proof}
  Using the fact that $\epsilon$ is the counit and equalities~\eqref{eq_convol_left},
  \eqref{eq_convol_right} we obtain the following expressions:
  \begin{displaymath}
    \omega(a*b) = \sum_{[\alpha]\in I(\GGamma)/\LLambda} 
    \kappa_\alpha^{-1} h_R(S(a_\alpha) b) 
    = \sum_{[\beta]\in\LLambda\bs I(\GGamma)} 
    \kappa_{\bar\beta}^{-1} h_L(a S(b_\beta)).
  \end{displaymath}
  We can then compute, for $a$, $b\in c_c(\LLambda\bs\GGamma'/\LLambda)$:
  \begin{align*}
    \omega(a*b) &=  
                  \sum_{[\alpha]\in I(\GGamma)/\LLambda} \kappa_\alpha^{-1} 
                  h_L(a_\alpha S(b_{\bar\alpha})) 
                  = \sum_{\dc \alpha \in \LLambda\bs\GGamma/\LLambda} 
                  \mu_\alpha(a_\alpha S(b_{\bar \alpha})) \\
                &= \sum_{\dc \alpha \in \LLambda\bs\GGamma/\LLambda} 
                  \nu_\alpha(\nabla_\alpha^{-1} a_\alpha S(b_{\bar \alpha})) 
                  = \sum_{[\alpha] \in \LLambda\bs  I(\GGamma)} \kappa_{\bar\alpha}^{-1}
                  h_R(\nabla_\alpha^{-1} a_\alpha S(b_{\bar \alpha})) \\
                &= \sum_{[\alpha] \in I(\GGamma)/\LLambda} \kappa_\alpha^{-1}
                  h_R( S(b_\alpha)\sigma_i^R(\nabla^{-1} a))
                  = \omega(b * \sigma_i^R(\nabla^{-1} a)).
  \end{align*}
  We also have the following variant of the last step of the computation: since
  $h_R$ is $\sigma^R$-invariant and $\sigma^R$ commutes with $S$, it is easy to check that $h_R( S(b_\alpha)\sigma_i^R(\nabla^{-1} a)) = h_R(S(\nabla\sigma_{-i}^R(b_\alpha))a)$, and this yields $\omega(a*b) = \omega((\nabla \sigma_{-i}^R(b))* a)$.

  From these properties we first deduce:
  \begin{align*}
    \omega(a*b*c) &= \omega(b*c*\sigma_i^R(\nabla^{-1}a)) 
                    = \omega(c*\sigma_i^R(\nabla^{-1}a)*\sigma_i^R(\nabla^{-1}b)) \\
                  &= \omega (\nabla\sigma_{-i}^R(\sigma_i^R(\nabla^{-1}a)*
                    \sigma_i^R(\nabla^{-1}b))*c).
  \end{align*}
  By faithfulness of $\omega$ this yields
  $\sigma_i^R(\nabla^{-1}a)* \sigma_i^R(\nabla^{-1}b) =
  \sigma_i^R(\nabla^{-1}(a*b))$, hence by Proposition~\ref{prp_convol} we have
  $(\nabla^{-1} a)* (\nabla^{-1} b) = \nabla^{-1}(a*b)$. This implies
  $(\nabla^ka)*(\nabla^kb) = \nabla^k(a*b)$ for all $k \in \ZZ$ and, by the
  usual argument, $(\nabla^za)*(\nabla^zb) = \nabla^z(a*b)$ for all
  $z\in\CC$. It follows that the maps $\theta_t$ are multiplicative for the
  convolution product. They are also compatible with the involution since
  $\sigma_t^R S = S \sigma_t^R$ and
  $(\nabla^{it}a)^\sharp = \nabla^{it\sharp} a^\sharp = \nabla^{it} a^\sharp$
  for real $t$, using the property $S(\nabla^{-1}) = \nabla$.
\end{proof}

\begin{corollary}
  We have $\Delta(\nabla) = \nabla \otimes\nabla$.
\end{corollary}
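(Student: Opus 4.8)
The plan is to read the corollary off the modular-automorphism description of Theorem~\ref{thm_modularautomorphism}: at its heart, that theorem says $\nabla$ is ``grouplike'' for the convolution product, and $\Delta(\nabla)=\nabla\otimes\nabla$ is the version of this statement for the pointwise coproduct. The precise input I would use is the twisted multiplicativity established in the proof of Theorem~\ref{thm_modularautomorphism}: for all $a,b\in\Hh(\GGamma,\LLambda)$ one has $(\nabla a)*(\nabla b)=\nabla(a*b)$ (and more generally $(\nabla^{z}a)*(\nabla^{z}b)=\nabla^{z}(a*b)$ for $z\in\CC$).

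The first step is to apply $\Delta$ to this identity, using Proposition~\ref{prp_convol_coprod}, i.e.\ $\Delta(a*b)=(a{*}\otimes\id)\Delta(b)=(\id\otimes{*b})\Delta(a)$, together with the fact that $\Delta$ is multiplicative for the pointwise product, so $\Delta(\nabla a)=\Delta(\nabla)\Delta(a)$. This produces the coproduct avatar of the twisted multiplicativity, an identity of the shape
\[
  \Delta(\nabla)\,(a{*}\otimes\id)\Delta(b)\;=\;\bigl((\nabla a){*}\otimes\id\bigr)\bigl(\Delta(\nabla)\Delta(b)\bigr),\qquad a,b\in\Hh(\GGamma,\LLambda).
\]
The second step is to extract $\Delta(\nabla)=\nabla\otimes\nabla$ from this. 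For that I would feed in enough elements $b$ so that the slices of $\Delta$ fill up the relevant tensor products — using the density statement recorded in Section~\ref{sec_quotient_spaces}, namely $(c_c(\GGamma)\otimes 1)\Delta(c_c(\GGamma/\LLambda))=c_c(\GGamma)\otimes c_c(\GGamma/\LLambda)$ and its right-handed analogue — and then cancel the common data using faithfulness of $\mu$ (positive-definiteness of $(\cdot\mid\cdot)$, Definition~\ref{def_invariant_measure}); what survives pins down $\Delta(\nabla)$ on enough of $\Mm(c_c(\LLambda\bs\GGamma)\otimes c_c(\GGamma/\LLambda))$ to conclude $\Delta(\nabla)=\nabla\otimes\nabla$ by strict continuity. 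An alternative route for this step is to invoke uniqueness (up to scalar) of the $\GGamma$-invariant weight of Proposition~\ref{prp_invariant_measure}: one checks that $\nabla\otimes\nabla$ satisfies the same intertwining property between the coproduct images of $\nu$ and $\mu$ that characterizes $\Delta(\nabla)$, whence equality.

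The main obstacle is precisely this cancellation/uniqueness step. Since $\nabla$ is an unbounded multiplier and $\Delta(\nabla)$ carries, a priori, no diagonal structure, the naive substitutions ($a$ or $b$ equal to the convolution unit $p_\LLambda$) merely reproduce the twisted multiplicativity and yield nothing new; ruling out off-diagonal contributions of $\Delta(\nabla)$ genuinely requires the spanning-plus-faithfulness argument (or the weight-uniqueness argument) rather than a purely formal manipulation, and one has to be careful throughout about where the various expressions live in the multiplier algebras.
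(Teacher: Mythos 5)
You have correctly isolated the two inputs the paper's proof actually uses --- the twisted multiplicativity $(\nabla a)*(\nabla b)=\nabla(a*b)$ established in the proof of Theorem~\ref{thm_modularautomorphism}, and the relation $S(\nabla)=\nabla^{-1}$ from Lemma~\ref{lem_modular_easy} --- and you have correctly diagnosed that the whole difficulty sits in the final extraction of $\Delta(\nabla)=\nabla\otimes\nabla$. But that is precisely the step your proposal does not carry out, and the identity you derive in your first step makes it harder than it needs to be: applying $\Delta$ to the twisted multiplicativity gives
\[
\Delta(\nabla)\,(a*\otimes\id)\Delta(b)=\bigl((\nabla a)*\otimes\id\bigr)\bigl(\Delta(\nabla)\Delta(b)\bigr),
\]
in which $\Delta(\nabla)$ on the right-hand side sits \emph{inside} the slot acted on by the convolution operator $(\nabla a)*(\,\cdot\,)$ in the first leg. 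Because of this entanglement, varying $a$ and $b$ and invoking density plus faithfulness does not directly separate $\Delta(\nabla)$ from the convolution data, and neither of your two suggested closing moves (spanning plus faithfulness of $\mu$, or uniqueness of the invariant weight) is developed to the point where one can see it succeed.

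The paper avoids this by not applying $\Delta$ at all: it expands $\nabla a*\nabla b$ directly via formula~\eqref{eq_convol_left}, so the coproduct enters only through $\Delta(\nabla b)=\Delta(\nabla)\Delta(b)$ in the second argument. Antimultiplicativity of $S$ and $S(\nabla)=\nabla^{-1}$ turn the functional $h_RS(\nabla a_\alpha)$ into $h_RS(a_\alpha)$ precomposed with multiplication by $\nabla^{-1}$, and after multiplying the whole identity by $\nabla^{-1}$ one obtains
\[
\sum\kappa_\alpha^{-1}(h_RS(a_\alpha)\otimes\id)\bigl[(\nabla^{-1}\otimes\nabla^{-1})\Delta(\nabla)\Delta(b)\bigr]
=\sum\kappa_\alpha^{-1}(h_RS(a_\alpha)\otimes\id)\bigl[\Delta(b)\bigr].
\]
Here $(\nabla^{-1}\otimes\nabla^{-1})\Delta(\nabla)$ stands alone as a multiplier of $\Delta(b)$; the functionals $h_Rp_{\bar\alpha}$ are faithful on $p_{[\bar\alpha]}c_c(\LLambda\bs\GGamma)$ by Proposition~\ref{prp_quotient_description}, and $\Delta(b)$ lives in $\Mm(c_c(\LLambda\bs\GGamma)\otimes c_c(\GGamma/\LLambda))$. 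This is exactly the separating family your sketch was missing, and it yields $(\nabla^{-1}\otimes\nabla^{-1})\Delta(\nabla)=1\otimes 1$. So your plan has the right ingredients but needs to be reorganized around the explicit convolution formula before the cancellation step can be made to work.
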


\begin{proof}
  We use the property $(\nabla a)*(\nabla b) = \nabla (a*b)$  for $a$,
  $b \in c_c(\LLambda\bs\GGamma'/\LLambda)$, established in the
  proof of the previous proposition. Since $S(\nabla) = \nabla^{-1}$ we
  can write
  \begin{align*}
    \nabla^{-1}(\nabla a * \nabla b) &=
                                       \sum \kappa_\alpha^{-1} (h_R S(\nabla a_\alpha)\otimes \nabla^{-1})
                                       \Delta(\nabla b) \\
                                     &= \sum \kappa_\alpha^{-1} (h_R S(a_\alpha)\otimes\id) 
                                       [(\nabla^{-1}\otimes \nabla^{-1})\Delta(\nabla)\Delta(b)] \\
                                     &= (a*b) = \sum \kappa_\alpha^{-1} (h_R S(a_\alpha)\otimes\id)[\Delta(b)].
  \end{align*}
  Since
  $\Delta(c(\LLambda\bs\GGamma/\LLambda)) \subset
  \Mm(c_c(\LLambda\bs\GGamma)\otimes c_c(\GGamma/\LLambda))$ and
  $h_Rp_{\bar\alpha}$ is faithful on $p_{[\bar\alpha]}c_c(\LLambda\bs\GGamma)$
  by Proposition~\ref{prp_quotient_description}, we can conclude that
  $(\nabla^{-1}\otimes \nabla^{-1})\Delta(\nabla) = 1 \otimes 1$.
\end{proof}

We shall now give an explicit formula for the modular function $\nabla$ in terms
of the structure of the inclusion $\LLambda\subset \GGamma$. This will involve
quantum analogues $\tilde L_\alpha$, $\tilde R_\alpha$ of the counting functions
$L$, $R$ which arise from the interplay between the modular structure of the
Haar weight $h_R$ and the structure of the quantum quotient space
$\LLambda\bs\GGamma/\LLambda$.

For every $\alpha\in I(\GGamma)$, we have a unique $h_L$-preserving
(resp. $h_R$-preserving) conditional expectation from
$p_\alpha c(\GGamma) = B(H_\alpha)$ onto the subalgebra
$p_\alpha c(\LLambda\bs\GGamma/\LLambda)$. We consider the following related
maps:

\begin{definition}
  We denote by $E^L_\alpha$ (resp. $E^R_\alpha$) the unique map
  $c(\GGamma) \to p_{\dc\alpha} c(\LLambda\bs\GGamma/\LLambda)$ such that
  $h_L(p_\alpha E^L_\alpha(a) b) = h_L(p_\alpha a b)$ (resp.
  $h_R(p_\alpha E^R_\alpha(a) b) = h_R(p_\alpha a b)$) for all
  $b\in c(\LLambda\bs\GGamma/\LLambda)$, $a \in c(\GGamma)$.
\end{definition}

In the classical case, $E^L_\alpha(f)$ is the constant function on $\dc\alpha$,
equal to the value $f(\alpha)$. Let us record the following property of these
maps in connection with Woronowicz' modular element.

\begin{lemma}
  We have $E^R_\alpha(F^{-2}) = E^L_\alpha(F^2)^{-1}$.
\end{lemma}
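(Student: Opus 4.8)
The plan is to localise the statement at a fixed $\alpha\in I(\GGamma)$, reduce it to an identity about conditional expectations on the finite-dimensional algebra $B(H_\alpha)$, and then finish with a short computation using the Radon--Nikodym relation between the two Haar weights.

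\smallskip

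First I would fix $\alpha$ and set $B:=p_\alpha c(\GGamma)=B(H_\alpha)$ and $C:=p_\alpha c(\LLambda\bs\GGamma/\LLambda)$, a unital $*$-subalgebra of $B$ with unit $p_\alpha=p_\alpha p_{\dc\alpha}$. Recall from the discussion preceding Definition~\ref{def_modular_element} that $a\mapsto p_\alpha a$ is a unital $*$-isomorphism of $p_{\dc\alpha}c(\LLambda\bs\GGamma/\LLambda)$ onto $C$, so in particular it preserves inverses. Put $\phi:=h_L|_B$ and $\psi:=h_R|_B$, faithful positive functionals on $B$; by Lemma~\ref{lemma_quotient_antipode} the subalgebra $C$ is invariant under the modular groups $\sigma^L$ and $\sigma^R$, hence the $\phi$- and $\psi$-preserving conditional expectations $\mathcal{E}^\phi,\mathcal{E}^\psi\colon B\to C$ exist. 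Unwinding the defining identity of $E^L_\alpha$ — restrict $h_L(p_\alpha E^L_\alpha(a)b)=h_L(p_\alpha ab)$ to $b\in c(\LLambda\bs\GGamma/\LLambda)$, observe that $p_\alpha b$ then ranges over all of $C$ and that $h_L(p_\alpha xb)=\phi\big((p_\alpha x)(p_\alpha b)\big)$ — gives $p_\alpha E^L_\alpha(a)=\mathcal{E}^\phi(p_\alpha a)$, and likewise $p_\alpha E^R_\alpha(a)=\mathcal{E}^\psi(p_\alpha a)$. Since $p_\alpha F^{\pm2}=F_\alpha^{\pm2}$, the assertion $E^R_\alpha(F^{-2})=E^L_\alpha(F^2)^{-1}$ is therefore equivalent to $\mathcal{E}^\psi(F_\alpha^{-2})=\mathcal{E}^\phi(F_\alpha^{2})^{-1}$ in $C$.

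\smallskip

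Next I would use that on the block $B(H_\alpha)$ one has $h_L=\qdim(\alpha)\Tr(F_\alpha^{-1}\,\cdot\,)$ and $h_R=\qdim(\alpha)\Tr(F_\alpha\,\cdot\,)$ (these are exactly the formulas appearing in the proofs of Lemma~\ref{lem_kappa} and Corollary~\ref{crl_kappa_indep_2}), so that $\psi=\phi(G\,\cdot\,)$ with $G:=F_\alpha^{2}$, a positive invertible element of $B$. Then the computation is two lines: set $e:=\mathcal{E}^\phi(G)\in C$, which is positive and invertible in $C$ (as $G\ge\varepsilon 1$ for some $\varepsilon>0$ and $\mathcal{E}^\phi$ is positive and unital); for every $c\in C$, using $e^{-1}c\in C$, the relation $\psi=\phi(G\,\cdot\,)$, and the module identity $\phi(\mathcal{E}^\phi(G)y)=\phi(Gy)$ for $y\in C$, one gets
\[
\psi(e^{-1}c)=\phi(Ge^{-1}c)=\phi\big(\mathcal{E}^\phi(G)e^{-1}c\big)=\phi(c)=\phi(GG^{-1}c)=\psi(G^{-1}c).
\]
Thus $e^{-1}$ and $\mathcal{E}^\psi(G^{-1})$ are two elements of $C$ that pair identically with $\psi|_C$, and since $\psi|_C$ is faithful they coincide; that is $\mathcal{E}^\psi(G^{-1})=e^{-1}=\mathcal{E}^\phi(G)^{-1}$, which is the desired identity.

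\smallskip

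I do not expect a serious obstacle here: the genuine content is the short computation above together with the standard existence of a $\phi$-preserving conditional expectation onto a $\sigma^\phi$-invariant subalgebra. The only points requiring care are bookkeeping ones — identifying $p_\alpha E^L_\alpha(a)$ with $\mathcal{E}^\phi(p_\alpha a)$ from the rather hands-on definition of $E^L_\alpha$, and getting the direction of the twist right so that it is $G=F_\alpha^2$ (and not $F_\alpha^{-2}$) that intervenes, which is precisely what matches the $F^{-2}$ on the left-hand side with the $F^{2}$ on the right.
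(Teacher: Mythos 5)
Your argument is correct and is essentially the paper's proof in slightly different packaging: both rest on the Radon--Nikodym relation $h_R = h_L(F^2\,\cdot\,)$, the defining module identities of $E^L_\alpha$ and $E^R_\alpha$ against the bi-invariant block, and faithfulness of the Haar weight on $p_\alpha c(\LLambda\bs\GGamma/\LLambda)$. The only cosmetic difference is that the paper concludes directly from $h_R(a_\alpha)=h_R(E^R_\alpha(F^{-2})E^L_\alpha(F^2)a_\alpha)$ that $E^R_\alpha(F^{-2})E^L_\alpha(F^2)=p_\alpha$ (getting invertibility for free in finite dimensions), whereas you first establish invertibility of $\mathcal{E}^\phi(G)$ via positivity of the conditional expectation; both are fine.
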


\begin{proof}
  Recall that $h_R(a) = h_L(F^2a)$ for all $a\in c_c(\GGamma)$. In particular we
  have, for $a\in c_c(\LLambda\bs\GGamma/\LLambda)$ and $\alpha \in I(\GGamma)$:
  \begin{displaymath}
    h_R(a_\alpha) = h_L(F^2 a_\alpha) = h_L(E^L_\alpha(F^2)a_\alpha) = h_R(F^{-2}E^L_\alpha(F^2)a_\alpha) = h_R(E^R_\alpha(F^{-2})E^L_\alpha(F^2)a_\alpha),
  \end{displaymath}
  since $E^L_\alpha(F^2)a_\alpha \in p_\alpha
  c_c(\LLambda\bs\GGamma/\LLambda)$. As $h_R$ is faithful on
  $p_\alpha c_c(\LLambda\bs\GGamma/\LLambda)$, we can infer that
  $p_\alpha E^R_\alpha(F^{-2}) = (p_\alpha E^L_\alpha(F^2))^{-1}$ and we
  conclude by Proposition~\ref{prp_quotient_description}.
\end{proof}

\begin{definition}
  Fix $\alpha\in I(\GGamma')$ and choose elements $\delta_i \sim \alpha$
  (resp. $\epsilon_j\backsim\alpha$) such that $\dc\alpha$ is the disjoint union
  of the classes $[\delta_i]\in \LLambda\bs I(\GGamma)$ (resp.
  $[\epsilon_j]\in I(\GGamma)/\LLambda$). We define the following elements of
  $p_{\dc\alpha}c_c(\LLambda\bs\GGamma/\LLambda)$:
  \begin{displaymath}
    \tilde L_\alpha = 
    \sum_i \frac{\qdim(\delta_i)^2}{\kappa_{\bar\delta_i}} E_{\delta_i}^L(F^2), \quad
    \tilde R_\alpha = 
    \sum_j \frac{\qdim(\epsilon_j)^2}{\kappa_{\epsilon_j}}E_{\epsilon_j}^R(F^{-2}).
  \end{displaymath}
\end{definition}

\begin{remark}\label{rk_Ltilda}
  Note that we have $F\in c(\LLambda\bs\GGamma/\LLambda)$ iff $\LLambda$
  is unimodular (i.e.\ $F|_{\LLambda} = I$), since $\Delta(F) = F\otimes F$. In this case we have
  $E^L_\alpha(F^t) = E^R_\alpha(F^t) = p_{\dc\alpha}F^t$ for all $t\in\RR$, and hence,
  writing $F_{\dc\alpha} = p_{\dc\alpha}F$:
  \begin{displaymath}
    \tilde L_\alpha =  \left({\ts \sum_i \frac{\qdim(\delta_i)^2}{\kappa_{\bar\delta_i}}}\right) F_{\dc\alpha}^2, \quad
    \tilde R_\alpha =  \left({\ts \sum_j \frac{\qdim(\epsilon_j)^2}{\kappa_{\epsilon_j}}}\right) F_{\dc\alpha}^{-2}.
  \end{displaymath}
  Since $\qdim(\delta_i)^2/\kappa_{\bar\delta_i}$ only depends on the class
  $[\delta_i] \in \LLambda\bs I(\GGamma)$ we can drop the constraint
  $\delta_i\sim\alpha$ in the definition of $\tilde L_\alpha$ and we see in
  particular that $\tilde L_\alpha$, $\tilde R_\alpha$ only depend on
  $\dc\alpha$ in this case. If we have moreover
  $\kappa_\delta = \kappa_{\bar\delta}$ for all $\delta\in I(\GGamma)$, the terms
  $\qdim(\delta_i)^2/\kappa_{\bar\delta_i}$ only depend on
  $\dc{\delta_i} = \dc\alpha$ and hence we have
  \begin{displaymath}
    \tilde L_\alpha = L(\dc\alpha)~ \kappa_{\alpha}^{-1}\qdim(\alpha)^2 F_{\dc\alpha}^2, \quad
    \tilde R_\alpha = R(\dc\alpha)~ \kappa_{\alpha}^{-1}\qdim(\alpha)^2 F_{\dc\alpha}^{-2}.
  \end{displaymath}

  On the other hand let us consider the case when
  $p_{\dc\alpha} c_c(\LLambda\bs\GGamma/\LLambda) = \CC p_{\dc\alpha}$. Then we
  have $E^L_\alpha(F^2) = p_{\dc\alpha} = E^R_\alpha(F^{-2})$ (as for example $h_L(F_\alpha^2) = h_L(p_\alpha)$), so that
  \begin{displaymath}
    \tilde L_\alpha =  \left({\ts \sum_i \frac{\qdim(\delta_i)^2}{\kappa_{\bar\delta_i}}}\right) p_{\dc\alpha}, \quad
    \tilde R_\alpha =  \left({\ts \sum_j \frac{\qdim(\epsilon_j)^2}{\kappa_{\epsilon_j}}}\right) p_{\dc\alpha},
  \end{displaymath}
  with the same simplification as above if $\kappa_\delta = \kappa_{\bar\delta}$
  for all $\delta$.
\end{remark}

\begin{proposition} \label{prop_compute_nabla} We have
  $\nabla_\alpha = p_\alpha \tilde R_{\alpha}^{-1}E^L_\alpha(F^2)^{-1}\tilde
  L_{\alpha}$.  In particular, if $\LLambda$ is unimodular and
  $\kappa_\delta = \kappa_{\bar\delta}$ for all $\delta\in I(\GGamma)$, we
  obtain the ``semi-classical'' formula
  $\nabla_\alpha = \frac{L(\dc\alpha)}{R(\dc\alpha)} F_\alpha^2$.
\end{proposition}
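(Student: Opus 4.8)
The plan is to fix $\alpha\in I(\GGamma')$, write $A_\alpha:=p_\alpha c(\LLambda\bs\GGamma/\LLambda)$ for the finite-dimensional $C^*$-algebra on which $\mu_\alpha,\nu_\alpha$ live, and verify directly that $\Theta_\alpha := p_\alpha\tilde R_\alpha^{-1}E^L_\alpha(F^2)^{-1}\tilde L_\alpha$ satisfies $\nu_\alpha(x)=\mu_\alpha(\Theta_\alpha x)$ for all $x\in A_\alpha$; since $\mu_\alpha$ is faithful by Proposition~\ref{prp_quotient_description} and $\nabla_\alpha$ is the unique element of $A_\alpha$ with $\nu_\alpha=\mu_\alpha(\nabla_\alpha\,\cdot\,)$, this forces $\nabla_\alpha=\Theta_\alpha$.

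The first and main step is to rewrite $\mu_\alpha$ and $\nu_\alpha$ as honest functionals on $A_\alpha$, namely
\begin{equation*}
  \mu_\alpha(x) = \qdim(\alpha)^{-2}\, h_R(p_\alpha\tilde R_\alpha\, x), \qquad
  \nu_\alpha(x) = \qdim(\alpha)^{-2}\, h_L(p_\alpha\tilde L_\alpha\, x) \qquad (x\in A_\alpha).
\end{equation*}
For $\mu_\alpha$ I would start from $\mu_\alpha(a_\alpha)=\sum_j\kappa_{\epsilon_j}^{-1}h_L(a_{\epsilon_j})$, summing over the right cosets $[\epsilon_j]\subset\dc\alpha$ with the representatives $\epsilon_j\backsim\alpha$ from the definition of $\tilde R_\alpha$, and transport each term to the $\alpha$-block: using $h_L(\,\cdot\,)=h_R(F^{-2}\,\cdot\,)$, the defining property of $E^R_{\epsilon_j}$, and then the left version of Corollary~\ref{crl_indep} applied to $E^R_{\epsilon_j}(F^{-2})a\in c(\LLambda\bs\GGamma)$, one obtains $h_L(a_{\epsilon_j}) = (\kappa_{\bar{\epsilon_j}}/\kappa_{\bar\alpha})\,h_R\big(E^R_{\epsilon_j}(F^{-2})_\alpha\, a_\alpha\big)$. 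Summing over $j$, the scalars recombine because $\kappa_{\bar{\epsilon_j}}/\qdim(\epsilon_j)^2$ is independent of $j$ by Corollary~\ref{crl_kappa_indep} (the right classes $[\bar{\epsilon_j}]$ all coincide since $\epsilon_j\backsim\alpha$), so that $\sum_j(\kappa_{\bar{\epsilon_j}}/\kappa_{\epsilon_j})E^R_{\epsilon_j}(F^{-2}) = (\kappa_{\bar\alpha}/\qdim(\alpha)^2)\tilde R_\alpha$ and the displayed formula for $\mu_\alpha$ drops out. The formula for $\nu_\alpha$ is the mirror computation: start from $\nu_\alpha(a_\alpha)=\sum_i\kappa_{\bar\delta_i}^{-1}h_R(a_{\delta_i})$ over the left cosets $[\delta_i]\subset\dc\alpha$ with $\delta_i\sim\alpha$, use $h_R=h_L(F^2\,\cdot\,)$, the defining property of $E^L_{\delta_i}$, and the right version of Corollary~\ref{crl_indep} applied to $E^L_{\delta_i}(F^2)a\in c(\GGamma/\LLambda)$, recombining via $\kappa_{\delta_i}/\qdim(\delta_i)^2$ being $i$-independent.

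Granting these two identities, the proof finishes by a short computation inside $A_\alpha$: since $p_\alpha$ is central in $c(\GGamma)$ and $\tilde R_\alpha\tilde R_\alpha^{-1}=p_{\dc\alpha}$, we have $p_\alpha\tilde R_\alpha\Theta_\alpha = p_\alpha E^L_\alpha(F^2)^{-1}\tilde L_\alpha$, hence $\mu_\alpha(\Theta_\alpha x) = \qdim(\alpha)^{-2}h_R\big(p_\alpha E^L_\alpha(F^2)^{-1}\tilde L_\alpha x\big) = \qdim(\alpha)^{-2}h_L\big(p_\alpha F^2 E^L_\alpha(F^2)^{-1}\tilde L_\alpha x\big)$; applying the defining property of $E^L_\alpha$ to $b=E^L_\alpha(F^2)^{-1}\tilde L_\alpha x\in c(\LLambda\bs\GGamma/\LLambda)$ replaces $F^2$ by $E^L_\alpha(F^2)$, which cancels $E^L_\alpha(F^2)^{-1}$ and leaves $\qdim(\alpha)^{-2}h_L(p_\alpha\tilde L_\alpha x)=\nu_\alpha(x)$, as wanted. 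The last assertion then follows by substituting the simplifications of Remark~\ref{rk_Ltilda}: when $\LLambda$ is unimodular $E^L_\alpha(F^2)=F^2_{\dc\alpha}$, and if in addition $\kappa_\delta=\kappa_{\bar\delta}$ for all $\delta$ then $\tilde L_\alpha = L(\dc\alpha)\,\kappa_\alpha^{-1}\qdim(\alpha)^2 F^2_{\dc\alpha}$ and $\tilde R_\alpha = R(\dc\alpha)\,\kappa_\alpha^{-1}\qdim(\alpha)^2 F^{-2}_{\dc\alpha}$; everything being now a scalar multiple of a power of $F_{\dc\alpha}$, the product collapses to $\nabla_\alpha=\frac{L(\dc\alpha)}{R(\dc\alpha)}F^2_\alpha$.

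I expect the only real obstacle to be the bookkeeping in the first step — keeping straight which coset (left versus right) indexes each sum, which version of Corollary~\ref{crl_indep} applies in which ambient space, and tracking the $F^{\pm2}$-twists that distinguish $h_L$ from $h_R$ together with the normalising factors $\qdim(\delta)^2/\kappa_{\bar\delta}$ and $\qdim(\delta)^2/\kappa_\delta$; once the functionals are put in the form $\qdim(\alpha)^{-2}h_{R/L}(p_\alpha(\cdot)x)$ the remainder is routine algebra in the finite-dimensional block $A_\alpha$.
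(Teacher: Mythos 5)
Your proposal is correct and follows essentially the same route as the paper: both proofs establish the key identities $\mu_\alpha(x)=\qdim(\alpha)^{-2}h_R(\tilde R_\alpha x)$ and $\nu_\alpha(x)=\qdim(\alpha)^{-2}h_L(\tilde L_\alpha x)$ by transporting each coset term to the $\alpha$-block via Corollary~\ref{crl_indep}, the $F^{\pm2}$-relation between $h_L$ and $h_R$, the defining property of $E^{L/R}$, and the $[\,\cdot\,]$-independence of $\kappa_\delta/\qdim(\delta)^2$ from Corollary~\ref{crl_kappa_indep}. The only cosmetic difference is that the paper rewrites $\mu_\alpha$ once more as $\qdim(\alpha)^{-2}h_L(E^L_\alpha(F^2)\tilde R_\alpha\,\cdot\,)$ and compares with $\nu_\alpha$ directly, whereas you verify $\mu_\alpha(\Theta_\alpha x)=\nu_\alpha(x)$ by one extra application of the defining property of $E^L_\alpha$ --- the same computation organized slightly differently.
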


\begin{proof}
  Take $\alpha \in I(\GGamma)$ and
  $a \in p_{\dc\alpha} c(\LLambda\bs\GGamma/\LLambda)$. We choose elements
  $\epsilon_j \in \dc\alpha$ such that $\dc\alpha$ is the disjoint union of the
  classes $[\epsilon_j] \in I(\GGamma)/\LLambda$ and $\epsilon_j\backsim\alpha$
  for all $i$.

  By Corollary~\ref{crl_indep} we have
  \begin{align*}
    \kappa_{\bar\epsilon_j}^{-1} h_L(a_{\epsilon_j}) = \kappa_{\bar\epsilon_j}^{-1} h_R(p_{\epsilon_j}F^{-2}a) = \kappa_{\bar\epsilon_j}^{-1}
    h_R(p_{\epsilon_j}E^R_{\epsilon_j}(F^{-2})a) = \kappa_{\bar\alpha}^{-1} h_R(p_\alpha E^R_{\epsilon_j}(F^{-2})a) .
  \end{align*}
  Recall moreover that we have
  $\kappa_{\bar\epsilon}/\kappa_{\bar\alpha} = (\qdim(\epsilon)/\qdim(\alpha))^2$ when
  $\epsilon\backsim\alpha$. Hence we can write
  \begin{align*}
    \mu_\alpha(a_\alpha) &= {\ts\sum_j} \kappa_{\epsilon_j}^{-1} h_L(a_{\epsilon_j})
                           = {\ts\sum_j} \frac{\kappa_{\bar\epsilon_j}} {\kappa_{\epsilon_j}}
                           \kappa_{\bar\epsilon_j}^{-1} h_L(a_{\epsilon_j}) 
                           =    {\ts\sum_j} \frac{\kappa_{\bar\epsilon_j}} {\kappa_{\epsilon_j}}
                           \kappa_{\bar\alpha}^{-1} h_R(E^R_{\epsilon_j}(F^{-2})a_{\alpha}) \\
                         &= \qdim(\alpha)^{-2} h_R(\tilde R_{\alpha} a_\alpha) = \qdim(\alpha)^{-2} h_L(F^2\tilde R_{\alpha} a_\alpha)
                           = \qdim(\alpha)^{-2} h_L(E^L_\alpha(F^2)\tilde R_{\alpha} a_\alpha).
  \end{align*}
  We proceed similarly on the other side with classes
  $[\delta_i]\in \LLambda\bs I(\GGamma)$ and $\delta_i\sim\alpha$ :
  \begin{align*}
    \nu_\alpha(a_\alpha) 
    &= {\ts\sum_i} \frac{\kappa_{\delta_i}} {\kappa_{\bar\delta_i}}
      \kappa_{\delta_i}^{-1} h_R(a_{\delta_i}) 
      = {\ts\sum_i} \frac{\kappa_{\delta_i}} {\kappa_{\bar\delta_i}}
      \kappa_{\delta_i}^{-1} h_L(F^2a_{\delta_i}) 
      = {\ts\sum_i} \frac{\kappa_{\delta_i}} {\kappa_{\bar\delta_i}}
      \kappa_{\delta_i}^{-1} h_L(E_{\delta_i}^L(F^2)a_{\delta_i}) \\
    &= {\ts\sum_i} \frac{\kappa_{\delta_i}} {\kappa_{\bar\delta_i}}
      \kappa_{\alpha}^{-1} h_L(E_{\delta_i}^L(F^2)a_{\alpha}) 
      = \qdim(\alpha)^{-2} h_L(\tilde L_{\alpha} a_{\alpha}).
  \end{align*}
  This yields the result by definition of $\nabla$.
\end{proof}

\subsection{Examples: HNN extensions}\label{sec_HNN}

Let $\GGamma_0$ be a discrete quantum group with two quantum subgroups
$\LLambda_\epsilon \subset \GGamma_0$ ($\epsilon=\pm 1$).  Following
\cite{Fima_HNN}, we start with an isomorphism between the two quantum subgroups,
described via a Hopf $*$-algebra isomorphism
$\theta : \CC[\LLambda_1] \to \CC[\LLambda_{-1}]$ and we form
$\GGamma = HNN(\GGamma_0,\theta)$. Recall that $\CC[\GGamma]$ is generated by
$\CC[\GGamma_0]$ and a group-like unitary $w$ such that
$w^\epsilon b w^{-\epsilon} = \theta^\epsilon(b)$ for
$b \in \CC[\LLambda_\epsilon]$. We denote by
$E_\epsilon : \CC[\GGamma_0] \to \CC[\LLambda_\epsilon]$ the canonical
conditional expectations. The algebra $\CC[\GGamma]$ is the direct sum of the
subspaces
\begin{displaymath}
  \CC[\GGamma]_n = \{x_0 w^{\epsilon_1}x_1 \cdots w^{\epsilon_n}x_n \mid x_i \in \CC[\GGamma_0], \epsilon_i = \pm 1, E_{\epsilon_i}(x_i) = 0 ~\text{whenever}~
  \epsilon_{i+1}\neq \epsilon_i\}. 
\end{displaymath}
The subspaces $\CC[\GGamma]_n$, $n\geq 1$ span the kernel of the canonical
conditional expectation $E_0 : \CC[\GGamma] \to \CC[\GGamma_0] = \CC[\GGamma]_0$
and the Haar state of $\GGamma$ is $h = h_0 \circ E_0$ --- see \cite{Fima_HNN}.
It follows in particular that $I(\GGamma)$ is partitioned into the subsets
\begin{align*}
  I(\GGamma)_n &= \{\alpha \subset \alpha_0\otimes w^{\epsilon_1}\otimes\alpha_1\otimes \cdots \otimes w^{\epsilon_n}\otimes\alpha_n \mid \\
               & \hspace{3cm} \alpha_i \in I(\GGamma_0), \epsilon_i = \pm 1, \alpha_i \notin I(\LLambda_{\epsilon_i}) ~\text{whenever}~ \epsilon_{i+1}\neq \epsilon_i\}. 
\end{align*}

\begin{proposition}\label{prop_HNN_Hecke}
  Assume that the quantum subgroups $\LLambda_\epsilon$ have finite index in
  $\GGamma_0$ and at least one of them is distinct from $\GGamma_0$. Then
  $\GGamma_0$ is commensurated in $\GGamma$, not normal, and of infinite index.
\end{proposition}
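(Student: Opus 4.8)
The plan is to derive all three assertions from a single computation: the structure of the double coset $\dc w$ of the generator $w$ together with the left and right $\GGamma_0$-cosets it contains. For commensuration I would first reduce to this. By the Proposition preceding Definition~\ref{def_commensurator} the commensurator $\GGamma'$ of $\GGamma_0$ in $\GGamma$ is an intermediate quantum subgroup, so $I(\GGamma')$ is closed under tensor products, conjugates and subobjects. Since $\CC[\GGamma]$ is generated as a $*$-algebra by $\CC[\GGamma_0]$ and the group-like unitary $w$, the set $I(\GGamma_0)\cup\{w\}$ generates $\Corep(\GGamma)$; as moreover $I(\GGamma_0)\subset I(\GGamma')$ (because $\dc 1 = I(\GGamma_0)$), it suffices to show $w\in I(\GGamma')$, i.e. $R(\dc w)<+\infty$ and $L(\dc w)<+\infty$. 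Then $\GGamma'=\GGamma$, which by Definition~\ref{def_commensurator} means $(\GGamma,\GGamma_0)$ is a Hecke pair.

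The heart of the argument is the identification of the cosets inside $\dc w$. By the description of $\approx$ recalled before Definition~\ref{def_commensurator}, $\dc w$ consists of the irreducible subobjects of the corepresentations $\delta\otimes w\otimes\gamma$ with $\delta,\gamma\in I(\GGamma_0)$; since $w$ is invertible and one-dimensional, $\delta\otimes w$ is irreducible, so every right $\GGamma_0$-coset in $\dc w$ equals $[\delta\otimes w]$ for some $\delta\in I(\GGamma_0)$. I claim that $[\delta\otimes w]=[\delta'\otimes w]$ if and only if $\delta\sim\delta'$ for the subgroup $\LLambda_{-1}\subset\GGamma_0$, so that $R(\dc w)=\#(I(\GGamma_0)/\LLambda_{-1})$, and symmetrically $L(\dc w)=\#(I(\GGamma_0)/\LLambda_{1})$ (e.g. via $L(\dc w)=R(\dc{\bar w})$ with $\bar w=w^{-1}$); both are finite since $\LLambda_{\pm1}$ have finite index in $\GGamma_0$. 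For one implication, if $\delta\subset\delta'\otimes\mu$ with $\mu\in\Corep(\LLambda_{-1})$, the HNN relation gives $\mu\otimes w\simeq w\otimes\theta^{-1}(\mu)$ with $\theta^{-1}(\mu)\in\Corep(\LLambda_{1})\subset\Corep(\GGamma_0)$, whence $\delta\otimes w\subset\delta'\otimes w\otimes\theta^{-1}(\mu)$. For the converse I rewrite $\delta\otimes w\subset\delta'\otimes w\otimes\gamma$, by tensoring on the right with $w^{-1}$, as $\delta\subset\delta'\otimes(w\otimes\gamma\otimes w^{-1})$, and then use the grading $\CC[\GGamma]=\bigoplus_{n\geq0}\CC[\GGamma]_n$ of \cite{Fima_HNN}: for $\gamma\in I(\LLambda_1)$ one has $w\otimes\gamma\otimes w^{-1}\simeq\theta(\gamma)\in\Corep(\LLambda_{-1})$, whereas for $\gamma\notin I(\LLambda_1)$ the expectation $E_1$ annihilates the matrix coefficients of $\gamma$, so all components of $w\otimes\gamma\otimes w^{-1}$ lie at level $2$; since tensoring with an object of $\Corep(\GGamma_0)$ preserves levels and $\delta,\delta'$ are at level $0$, only the first case contributes, giving $\delta\sim\delta'$ for $\LLambda_{-1}$. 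I expect the main obstacle to be precisely this bookkeeping with the grading --- that it is stable under left and right multiplication by $\CC[\GGamma_0]$, that $w\gamma w^{-1}$ acquires a nonzero level-$0$ component exactly when $E_1(\gamma)\neq0$ (namely $\theta(E_1(\gamma))$), and that the level of a corepresentation is detected by its matrix coefficients.

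Finally I would deduce non-normality and infinite index. By hypothesis at least one $\LLambda_\epsilon$ is a proper quantum subgroup of $\GGamma_0$, so the corresponding central projection satisfies $p_{\LLambda_\epsilon}\neq1$ and there is $\gamma\in I(\GGamma_0)\setminus I(\LLambda_\epsilon)$ with $[\gamma]\neq[1]$ in $I(\GGamma_0)/\LLambda_\epsilon$; hence $\max(R(\dc w),L(\dc w))\geq2$, and comparing with the discussion of the normal case in Example~\ref{exple_normal} --- where every double coset is a single left and a single right coset --- shows that $\GGamma_0$ is not normal in $\GGamma$. For the infinite index, the same grading argument shows that for $m\geq1$ every component of $w^{m}\otimes\gamma$ ($\gamma\in I(\GGamma_0)$) lies at level $m$ while $w^{n}$ lies at level $n$, so $w^{n}\sim w^{m}$ forces $n=m$ and then $w^{n}\subset w^{n}\otimes\gamma$ forces $\gamma=1$; thus the cosets $[w^{n}]$, $n\in\NN$, are pairwise distinct and $I(\GGamma)/\GGamma_0$ is infinite.
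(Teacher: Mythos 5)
Your proof is correct, but the route to commensuration is genuinely different from the paper's. The paper works globally: it normalizes an arbitrary reduced word, showing that every $\alpha\in I(\GGamma)_n$ embeds into $\gamma_{-\epsilon_1,k_1}\otimes w^{\epsilon_1}\otimes\cdots\otimes w^{\epsilon_n}\otimes\alpha_n$ with the $\gamma$'s drawn from fixed finite sets of coset representatives, so that each level $I(\GGamma)_n$ --- which is saturated under $\sim$ and $\backsim$ --- meets only finitely many one-sided cosets. You instead localize at the single generator $w$: you invoke the structural fact that the commensurator $\GGamma'$ is an intermediate quantum subgroup (so $I(\GGamma')$ contains $I(\GGamma_0)=\dc 1$ and is closed under subobjects of tensor products and conjugates), reduce the whole statement to $w\in I(\GGamma')$, and then compute $R(\dc w)=\#(I(\GGamma_0)/\LLambda_{-1})$ and $L(\dc w)=\#(I(\GGamma_0)/\LLambda_{1})$ exactly. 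This is shorter and yields as a by-product the precise coset count that the paper only establishes later, in the computation of $\nabla_w$; the paper's induction, on the other hand, gives finiteness of $I(\GGamma)_n/\GGamma_0$ for every $n$, which is more information than strictly needed here. Your grading bookkeeping (stability of $\CC[\GGamma]_n$ under left and right multiplication by $\CC[\GGamma_0]$, and detection of level-$0$ subobjects via $h=h_0\circ E_0$) is exactly what the paper uses implicitly when it asserts that $I(\GGamma)$ is partitioned by the sets $I(\GGamma)_n$ and when it places subobjects of $w\otimes\alpha\otimes w^*$ in $I(\GGamma)_2$. For non-normality you argue by counting ($\max(R(\dc w),L(\dc w))\geq 2$, against $L=R=1$ in the normal case of Example~\ref{exple_normal}), whereas the paper directly exhibits that $w\otimes\alpha$ cannot be equivalent to any $\beta\otimes w$; both are valid. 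The infinite-index arguments (pairwise distinctness of the cosets $[w^n]$ versus nonemptiness of the levels $I(\GGamma)_n$ for $n\geq 1$) are essentially the same observation.
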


\begin{proof}
  Write
  $I(\GGamma_0)/\LLambda_\epsilon = \{[\gamma_{\epsilon,0}], \ldots,
  [\gamma_{\epsilon,p}]\}$ with $\gamma_{\epsilon,0} = 1$.  Then any
  $\alpha \in I(\GGamma)_n$ is contained in a representation
  $\gamma_{-\epsilon_1,k_1} \otimes w^{\epsilon_1}\otimes
  \gamma_{-\epsilon_2,k_2}\otimes w^{\epsilon_2}\otimes\cdots \otimes
  w^{\epsilon_n}\otimes\alpha_n$ with $k_i \neq 0$ if
  $\epsilon_i \neq \epsilon_{i+1}$. Indeed, starting from
  $\alpha \subset \alpha_0\otimes w^{\epsilon_1}\otimes\alpha_1\otimes \cdots
  \otimes w^{\epsilon_n}\otimes\alpha_n$ as previously, write
  $\alpha_0 \subset \gamma_{-\epsilon_1,k_1} \otimes \lambda$ with
  $\lambda \in I(\LLambda_{-\epsilon_1})$. Observe moreover that
  $\lambda\otimes w^{\epsilon_1} \simeq w^{\epsilon_1} \otimes
  \theta^{-\epsilon_1}(\lambda)$ and decompose
  $\theta^{-\epsilon_1}(\lambda) \otimes \alpha_1$ into irreducible subobjects
  $\alpha'_1$. Since $\alpha$ is irreducible it appears as a subobject of one of
  the corresponding corepresentations
  $\gamma_{-\epsilon_1,k_1}\otimes w^{\epsilon_1}\otimes\alpha'_1\otimes
  w^{\epsilon_2} \cdots \otimes w^{\epsilon_n}\otimes\alpha_n$. Moreover since
  $\theta^{-\epsilon_1}(\lambda) \in I(\LLambda_{\epsilon_1})$ we have
  $\alpha_1\notin I(\LLambda_{\epsilon_1})$ $\Rightarrow$
  $\alpha'_1\notin I(\LLambda_{\epsilon_1})$. Iterating the procedure we see
  that $\alpha'_1$ can also be chosen among the representatives
  $\gamma_{-\epsilon_2,k}$ etc.

  In particular it follows that $I(\GGamma)_n/\GGamma_0$ --- and similarly
  $\GGamma_0\bs I(\GGamma)_n$ --- is finite. Since $I(\GGamma)_n$ is clearly
  saturated with respect to the equivalence relations $\sim$, $\backsim$
  relative to $\GGamma_0$, this shows that $\GGamma_0$ is commensurated in
  $\GGamma$. It is never of finite index in $\GGamma$ since the subsets
  $I(\GGamma)_n$, $n\geq 1$, are non empty. Finally, assuming e.g.
  $\LLambda_1 \neq \GGamma_0$ and taking
  $\alpha \in I(\GGamma_0) \setminus I(\LLambda_1)$, then we get that
  $w\otimes\alpha\otimes w^*$ belongs to $I(\GGamma)_2$. If we had
  $c(\GGamma/\GGamma_0) = c(\GGamma_0\bs\GGamma)$, then $w\otimes\alpha$ would
  be equivalent to a corepresentation of the form $\beta\otimes w$ with
  $\beta\in I(\GGamma_0)$, but then $w\otimes\alpha\otimes w^* \simeq \beta$
  would belong to $I(\GGamma)_0$. Hence $\GGamma_0$ is not normal in $\GGamma$.
\end{proof}

Denote by $\Coker\GGamma\LLambda$ the weak closure of
$\{(\id\otimes\varphi)\Delta(a) \mid a\in c_0(\GGamma/\LLambda), \varphi\in
c_0(\GGamma/\LLambda)^*\}$ in $\ell^\infty(\GGamma)$. Recall from
\cite{KKSV_Furstenberg} that the action of $\GGamma$ on $\GGamma/\LLambda$ is
called faithful if we have $\Coker\GGamma\LLambda = \ell^\infty(\GGamma)$.

\begin{lemma}\label{lemma_fusion_faithful}
  Let $\LLambda \subset \GGamma$ be a quantum subgroup. Assume that for every
  non-trivial $\alpha\in I(\GGamma)$ we can find $\gamma_\alpha \in I(\GGamma)$
  such that no subobjects of $\alpha\otimes\gamma_\alpha$ belong to
  $[\gamma_\alpha] \in I(\GGamma)/\LLambda$. Then the action of $\GGamma$ on
  $\GGamma/\LLambda$ is faithful.
\end{lemma}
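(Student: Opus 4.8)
The plan is to identify the cokernel $N:=\Coker\GGamma\LLambda$ with the homogeneous space of the ``normal core'' of $\LLambda$, and then to use the fusion hypothesis to see that this core is trivial. First I would record that $\ell^\infty(\GGamma/\LLambda)\subseteq N$: the counit $\epsilon$ of $\GGamma$ is a normal state of $\ell^\infty(\GGamma)$, hence restricts to an element of $c_0(\GGamma/\LLambda)^*$, and since $(\id\otimes\epsilon)\Delta(a)=a$ for all $a\in c_0(\GGamma/\LLambda)$, the algebra $c_0(\GGamma/\LLambda)$ lies in the generating set of $N$; passing to weak closures gives the inclusion.

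The second step is to invoke the structural fact (which holds for cokernels of actions on homogeneous spaces; cf.\ \cite{KKSV_Furstenberg}, and compare \cite{KasprzakSoltan_Projection}, \cite{DCKSS_DiscreteActions}) that $N$ is a Baaj--Vaes subalgebra of $\ell^\infty(\GGamma)$, so that $N=\ell^\infty(\GGamma/\LLambda_0)$ for a (uniquely determined) normal quantum subgroup $\LLambda_0\subseteq\GGamma$. From the first step one deduces $\LLambda_0\subseteq\LLambda$: since $p_\LLambda\in c(\GGamma/\LLambda)\subseteq N=\ell^\infty(\GGamma/\LLambda_0)$ we have $(1\otimes p_{\LLambda_0})\Delta(p_\LLambda)=p_\LLambda\otimes p_{\LLambda_0}$, and applying $\epsilon\otimes\id$, together with $\epsilon(p_\LLambda)=1$, gives $p_{\LLambda_0}p_\LLambda=p_{\LLambda_0}$, i.e.\ $p_{\LLambda_0}\le p_\LLambda$. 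In particular $I(\LLambda_0)\subseteq I(\LLambda)$.

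Finally I would rule out a non-trivial $\LLambda_0$. Recall that normality of $\LLambda_0$ means exactly that the equivalence relations $\sim_{\LLambda_0}$ and $\backsim_{\LLambda_0}$ on $I(\GGamma)$ coincide (as in Example~\ref{exple_normal}). Suppose some non-trivial $\alpha\in I(\LLambda_0)$, and let $\gamma_\alpha$ be as in the hypothesis. For any irreducible subobject $\delta\subset\alpha\otimes\gamma_\alpha$ we have $\delta\backsim_{\LLambda_0}\gamma_\alpha$ (witnessed by $\alpha\in I(\LLambda_0)$), hence $\delta\sim_{\LLambda_0}\gamma_\alpha$ by normality, hence $\delta\sim_\LLambda\gamma_\alpha$ because $I(\LLambda_0)\subseteq I(\LLambda)$; that is, $\delta\in[\gamma_\alpha]\in I(\GGamma)/\LLambda$, contradicting the choice of $\gamma_\alpha$. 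Hence $I(\LLambda_0)=\{1\}$, so $\LLambda_0$ is trivial; since then $\ell^\infty(\GGamma/\LLambda_0)=\ell^\infty(\GGamma)$ (as $(\id\otimes p_1)\Delta(a)=a\otimes p_1$ for every $a\in\ell^\infty(\GGamma)$, $p_1$ being the support of the counit), we conclude $N=\ell^\infty(\GGamma)$, which is precisely the faithfulness of $\GGamma\curvearrowright\GGamma/\LLambda$.

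The only non-elementary ingredient, and hence the point requiring the most care, is the Baaj--Vaes description of $N$ in the second step: it is classical for ordinary groups, where $N=\ell^\infty(\GGamma/\LLambda_0)$ with $\LLambda_0$ the normal core of $\LLambda$, and the quantum counterpart has to be taken from (or adapted from) the cokernel formalism of \cite{KKSV_Furstenberg}. Everything else in the argument is routine bookkeeping with the fusion relations $\sim$ and $\backsim$ together with the counit trick for the two inclusions of projections.
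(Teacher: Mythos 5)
The crux of your argument is the second step: the claim that the cokernel $N=\Coker\GGamma\LLambda$, being a Baaj--Vaes subalgebra of $\ell^\infty(\GGamma)$, must equal $\ell^\infty(\GGamma/\LLambda_0)$ for a \emph{normal} quantum subgroup $\LLambda_0\subseteq\GGamma$ (a ``normal core''). This is a genuine gap, and it is exactly the point you flag as requiring care. For an ordinary discrete group every Baaj--Vaes subalgebra of $\ell^\infty(\Gamma)$ is indeed $\ell^\infty(\Gamma/N)$ with $N$ normal, but this fails for discrete quantum groups. Concretely, take $\GGamma=\widehat{SU(2)}$: the weight-space decompositions coming from the maximal torus $T\subset SU(2)$ embed $\ell^\infty(\ZZ)=\ell^\infty(\hat T)$ into $\ell^\infty(\widehat{SU(2)})=\prod_n B(H_n)$ as a von Neumann subalgebra invariant under $\Delta$, $R$ and the (trivial) scaling group, i.e.\ a Baaj--Vaes subalgebra; yet the only quantum subgroups of $\widehat{SU(2)}$ are $\{e\}$, $\widehat{SO(3)}$ and $\widehat{SU(2)}$, whose quotient algebras are $\ell^\infty(\widehat{SU(2)})$, $\CC^2$ and $\CC$ --- none of them $\ell^\infty(\ZZ)$. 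So the normal-core description does not follow from the Baaj--Vaes property, and no such result for cokernels of the actions $\GGamma\curvearrowright\GGamma/\LLambda$ is available in \cite{KKSV_Furstenberg} or in this paper; indeed Paragraph~\ref{sec_reduction} deliberately describes $\Coker\GGamma\LLambda$ only as $\ell^\infty(\tilde\GGamma)$ for a quotient discrete quantum group $\tilde\GGamma$ carrying a subgroup $\tilde\LLambda$, precisely because the quotient need not come from a normal subgroup of $\GGamma$ sitting inside $\LLambda$. Your remaining steps (the inclusion $\ell^\infty(\GGamma/\LLambda)\subseteq N$ via the counit, the deduction $p_{\LLambda_0}\leq p_\LLambda$, and the fusion argument ruling out a non-trivial normal $\LLambda_0\subseteq\LLambda$) are all correct, but they rest on this unavailable structure theorem.

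The paper's proof sidesteps the issue with a direct argument: for each non-trivial $\alpha$ it forms the contraction $x_\alpha=\qdim(\gamma_\alpha)^{-2}(\id\otimes h_Lp_{\gamma_\alpha})\Delta(p_{[\gamma_\alpha]})$, which lies in the cokernel, satisfies $p_\alpha x_\alpha=0$ by the hypothesis on $\gamma_\alpha$ and $p_0x_\alpha=p_0$ by the counit; the finite products $y_F=\prod_{\alpha\in F}x_\alpha$ then converge weakly to the support $p_0$ of the counit, so $p_0\in\Coker\GGamma\LLambda$ and \cite[Prop.~2.10]{KKSV_Furstenberg} concludes. To salvage your route you would either have to prove that cokernels of quasi-regular actions do arise from normal quantum subgroups (which would be a new result going beyond what the paper establishes), or rerun the fusion argument at the level of $\tilde\GGamma$, where it is no longer clear how to produce the non-trivial irreducible $\alpha$ of $\GGamma$ on which the hypothesis is to be applied.
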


\begin{proof}
  We denote $p_0$ the minimal central projection $p_\alpha$ corresponding to the
  trivial corepresentation $\alpha = 1$. It is also the central support of the
  counit $\epsilon$ of $c_c(\GGamma)$. The condition on $\alpha$,
  $\gamma_\alpha$ can also be written
  $(p_\alpha\otimes p_{\gamma_\alpha})\Delta(p_{[\gamma_\alpha]}) = 0$. On the
  other hand we have
  $(p_0\otimes p_{\gamma_\alpha})\Delta(p_{[\gamma_\alpha]}) =
  (p_0\epsilon\otimes p_{\gamma_\alpha})\Delta(p_{[\gamma_\alpha]}) = p_0\otimes
  p_{\gamma_\alpha} p_{[\gamma_\alpha]} = p_0\otimes p_{\gamma_\alpha}$. For
  $\alpha\neq 1$ we denote
  $x_\alpha = \qdim(\gamma_\alpha)^{-2} (\id\otimes
  h_Lp_{\gamma_\alpha})\Delta(p_{[\gamma_\alpha]})$. We have
  $p_\alpha x_\alpha = 0$, $p_0 x_\alpha = p_0$ and $\|x_\alpha\|\leq 1$ since
  $\qdim(\gamma)^{-2} h_Lp_\gamma$ is a state. Introduce a total order on
  $I(\GGamma)$ and put $y_F = \prod_{\alpha \in F} x_\alpha$ for
  $F\subset I(\GGamma)$ finite, $1\notin F$. Then the net $(y_F)_F$ converges to
  $p_0$ in the weak topology. Indeed the elements of the net are uniformly
  bounded in norm, and for $F\subset I(\GGamma)$ finite, $1\notin F$, we have
  $p_0 y_F = p_0$ and $p_\beta y_F = 0$ as soon as $\beta\in F$. Since
  $x_\alpha \in \Coker\GGamma\LLambda$ for all $\alpha\in I(\GGamma)$,
  $\alpha\neq 1$, it follows that $p_0 \in \Coker\GGamma\LLambda$, which implies
  $\Coker\GGamma\LLambda = \ell^\infty(\GGamma)$ by
  \cite[Prop.~2.10]{KKSV_Furstenberg}.
\end{proof}

We still denote by $\theta : I(\LLambda_1) \to I(\LLambda_{-1})$ the map induced
by $\theta$ on irreducible representations.  We define by induction
$\Dom \theta^k\subset I(\GGamma_0)$, for $k\in\ZZ$, by putting
$\Dom\theta^0 = I(\GGamma_0)$ and
$\Dom \theta^{(n+1)\epsilon} = \{\alpha\in\Dom\theta^{n\epsilon} \mid
\theta^{n\epsilon}(\alpha)\in I(\LLambda_\epsilon)\}$ for $n\in\NN$,
$\epsilon = \pm 1$.

\begin{proposition}
  Assume that $\bigcap_{k\in\ZZ}\Dom \theta^k = \{1\}$. Then the action of
  $\GGamma$ on $\GGamma/\GGamma_0$ is faithful.
\end{proposition}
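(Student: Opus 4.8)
The plan is to verify the sufficient condition for faithfulness supplied by Lemma~\ref{lemma_fusion_faithful}: one must exhibit, for every non-trivial $\alpha\in I(\GGamma)$, an object $\gamma_\alpha\in I(\GGamma)$ such that $\alpha\otimes\gamma_\alpha$ has no subobject lying in the class $[\gamma_\alpha]\in I(\GGamma)/\GGamma_0$. I would take $\gamma_\alpha=w^{-k}$ for a suitable integer $k$, regarding $w$ as the $1$-dimensional group-like object of $\Corep(\GGamma)$ attached to the group-like unitary $w\in\CC[\GGamma]$. Since $w$ is invertible in the tensor category (with inverse $w^{-1}$), tensoring on either side by $w^{\pm1}$ is an autoequivalence of $\Corep(\GGamma)$; in particular $\alpha\otimes w^{-k}$ and $w^{-k}\otimes\lambda$ (for $\lambda\in I(\GGamma_0)$) remain irreducible. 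Hence $\alpha\otimes w^{-k}$ is its own unique subobject, and $\alpha\otimes w^{-k}$ lies in $[w^{-k}]$ (for the relation $\sim$ defined by $\GGamma_0$) if and only if $\alpha\otimes w^{-k}\cong w^{-k}\otimes\lambda$ for some $\lambda\in I(\GGamma_0)$, i.e.\ if and only if $w^{k}\otimes\alpha\otimes w^{-k}\in I(\GGamma_0)$.

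The technical core is the identity: for $\alpha\in I(\GGamma_0)$ and $k\in\ZZ$, one has $w^{k}\otimes\alpha\otimes w^{-k}\in I(\GGamma_0)$ if and only if $\alpha\in\Dom\theta^k$. I would prove this by induction on $|k|$, writing $k=\epsilon m$ with $\epsilon=\pm1$, $m\geq0$, and peeling off the outermost pair $w^{\epsilon}\otimes(\,\cdot\,)\otimes w^{-\epsilon}$. If $\alpha\in I(\LLambda_\epsilon)$ then, by the HNN relation $w^{\epsilon}bw^{-\epsilon}=\theta^{\epsilon}(b)$ on $\CC[\LLambda_\epsilon]$ (equivalently $w^{\epsilon}\otimes\alpha\otimes w^{-\epsilon}\cong\theta^{\epsilon}(\alpha)\in I(\LLambda_{-\epsilon})\subset I(\GGamma_0)$, cf.\ \cite{Fima_HNN} and the proof of Proposition~\ref{prop_HNN_Hecke}), one gets $w^{k}\otimes\alpha\otimes w^{-k}\cong w^{\epsilon(m-1)}\otimes\theta^{\epsilon}(\alpha)\otimes w^{-\epsilon(m-1)}$, and the induction hypothesis reduces the question to whether $\theta^{\epsilon}(\alpha)\in\Dom\theta^{\epsilon(m-1)}$, i.e.\ to $\alpha\in\Dom\theta^{k}$. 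If instead $\alpha\notin I(\LLambda_\epsilon)$, then $\alpha\notin\Dom\theta^{\epsilon}\supseteq\Dom\theta^{k}$ (here $m\geq1$, the case $m=0$ being trivial), while on the other side $w^{k}\otimes\alpha\otimes w^{-k}$ is isomorphic to the word $1\otimes w^{\epsilon}\otimes1\otimes\cdots\otimes w^{\epsilon}\otimes\alpha\otimes w^{-\epsilon}\otimes\cdots\otimes w^{-\epsilon}\otimes1$ of length $2m$: the sign of the exponents changes only at the central letter $\alpha$, and the non-degeneracy condition there is precisely $\alpha\notin I(\LLambda_\epsilon)$, which holds; hence this irreducible object lies in $I(\GGamma)_{2m}$ and, since $2m\geq2$, outside $I(\GGamma)_0=I(\GGamma_0)$.

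With the identity in hand I conclude as follows. Let $\alpha\in I(\GGamma)$ be non-trivial. If $\alpha\notin I(\GGamma_0)=\Dom\theta^0$, take $k=0$, so $\gamma_\alpha=1$ and $\alpha\otimes\gamma_\alpha=\alpha\notin[1]=I(\GGamma_0)$. If $\alpha\in I(\GGamma_0)$, then $\alpha\neq1$ and the hypothesis $\bigcap_{k\in\ZZ}\Dom\theta^k=\{1\}$ provides some $k\neq0$ with $\alpha\notin\Dom\theta^k$; take $\gamma_\alpha=w^{-k}$. In either case $\alpha\notin\Dom\theta^k$, so by the identity $w^{k}\otimes\alpha\otimes w^{-k}\notin I(\GGamma_0)$, and by the first paragraph the irreducible corepresentation $\alpha\otimes\gamma_\alpha$ does not lie in $[\gamma_\alpha]$ and therefore has no subobject in $[\gamma_\alpha]$. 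Lemma~\ref{lemma_fusion_faithful} then yields faithfulness of the action of $\GGamma$ on $\GGamma/\GGamma_0$.

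The step I expect to demand the most care is the inductive identity: checking that $1\otimes w^{\epsilon}\otimes\cdots\otimes\alpha\otimes\cdots\otimes w^{-\epsilon}\otimes1$ is genuinely reduced of length $2m$ in the sense of the partition $I(\GGamma)=\bigsqcup_n I(\GGamma)_n$ (this is exactly where $\alpha\notin I(\LLambda_\epsilon)$ is used, via the condition on $\alpha_i$ at the unique index with $\epsilon_{i+1}\neq\epsilon_i$), and correctly matching the collapse $w^{\epsilon}\otimes\mu\otimes w^{-\epsilon}\cong\theta^{\epsilon}(\mu)$ with the nested domains $\Dom\theta^{n\epsilon}$.
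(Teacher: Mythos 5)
Your proof is correct and follows essentially the same route as the paper's: both apply Lemma~\ref{lemma_fusion_faithful} with $\gamma_\alpha$ a power of $w$, conjugate by that power using the relation $w^{\epsilon}\otimes\lambda\otimes w^{-\epsilon}\cong\theta^{\epsilon}(\lambda)$ for $\lambda\in I(\LLambda_\epsilon)$, and detect failure of membership in $I(\GGamma_0)$ via the length grading $I(\GGamma)=\bigsqcup_n I(\GGamma)_n$. The only difference is organisational: the paper picks the minimal $n$ with $\alpha\notin\Dom\theta^{n\epsilon}$ so that the conjugate $w^{n\epsilon}\otimes\alpha\otimes w^{-n\epsilon}$ is a reduced word of length exactly $2$, whereas you establish the full equivalence $w^{k}\otimes\alpha\otimes w^{-k}\in I(\GGamma_0)\Leftrightarrow\alpha\in\Dom\theta^{k}$ by induction on $|k|$ and land in $I(\GGamma)_{2m}$.
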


\begin{proof}
  We apply Lemma~\ref{lemma_fusion_faithful}. Take $\alpha\in I(\GGamma)$
  non-trivial. If $\alpha \notin I(\GGamma_0)$, we can just take
  $\gamma=1$. Hence we can assume $\alpha \in \GGamma_0$, $\alpha\neq 1$. By
  assumption there exists $\epsilon \in \{\pm 1\}$, $n\in\NN^*$ such that
  $\alpha\in\Dom \theta^{(n-1)\epsilon}$ but
  $\alpha\notin\Dom \theta^{n\epsilon}$. We take
  $\gamma_\alpha = w^{-n\epsilon}$. We have then
  $\alpha\otimes w^{-n\epsilon} = w^{-(n-1)\epsilon} \otimes
  \theta^{(n-1)\epsilon}(\alpha)\otimes w^{-\epsilon}$, which is irreducible and
  not equivalent to $w^{-n\epsilon}\otimes\beta$ with $\beta\in I(\GGamma_0)$ ---
  indeed $\beta$ is in $I(\GGamma)_0$ but
  $w^\epsilon\otimes \theta^{(n-1)\epsilon}(\alpha)\otimes w^{-\epsilon}$ is in
  $I(\GGamma)_2$ since
  $\theta^{(n-1)\epsilon}(\alpha)\notin I(\LLambda_\epsilon)$.
\end{proof}

Note that, in the classical case, if $K$ is a central subgroup of $\Gamma_0$
contained in $\bigcap_{k\in\ZZ}\Dom\theta^{k}$, then it acts trivially on
$\Gamma/\Gamma_0$.

Now we compute the modular function $\nabla$ on generators, using
Proposition~\ref{prop_compute_nabla}. Clearly $\nabla_\alpha = p_\alpha$ for
$\alpha\in I(\GGamma_0) \subset I(\GGamma)$. The value at $w$ is given as
follows in terms of $\GGamma_0$ and $\LLambda_\epsilon$:

\begin{proposition}
  Assume that the quantum subgroups $\LLambda_\epsilon$ have finite index in
  $\GGamma_0$ and denote
  $I(\GGamma_0)/\LLambda_{-1} = \{[\epsilon_0], \ldots, [\epsilon_p]\}$,
  $ \LLambda_1\bs I(\GGamma) = \{[\delta_0], \ldots, [\delta_q]\}$. Then we have
  $\nabla_w = p_w \tilde R_w^{-1}\tilde L_w$ with
  \begin{displaymath}
    \tilde L_w =
    \sum_{i=1}^q \frac{\qdim(\delta_i\otimes\bar\delta_i)}
    {\qdim(\delta_i\otimes\bar\delta_i)_{\LLambda_1}} p_{\dc w} \quad\text{and}\quad
    \tilde R_w =
    \sum_{j=1}^p \frac{\qdim(\bar\epsilon_j\otimes\epsilon_j)}
    {\qdim(\bar\epsilon_j\otimes\epsilon_j)_{\LLambda_{-1}}} p_{\dc w}.
  \end{displaymath}
\end{proposition}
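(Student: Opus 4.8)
The plan is to apply Proposition~\ref{prop_compute_nabla} with $\alpha=w$, after making the structure of the double coset $\dc w$ of $w$ relative to $\GGamma_0$ completely explicit. Since $w$ is group-like, $H_w$ is one-dimensional and $F_w=\id$, so $\qdim(w)=1$ and tensoring by $w$ on either side is an auto-equivalence of $\Corep(\GGamma)$. Using this, the commutation rules $w\otimes\lambda\simeq\theta(\lambda)\otimes w$ for $\lambda\in\Corep(\LLambda_1)$ and $\mu\otimes w\simeq w\otimes\theta^{-1}(\mu)$ for $\mu\in\Corep(\LLambda_{-1})$, and the partition of $I(\GGamma)$ into the $I(\GGamma)_n$ recalled above, I would first establish: every $w\otimes\gamma$ with $\gamma\in I(\GGamma_0)$ is irreducible and these are pairwise non-isomorphic (and likewise for the $\gamma\otimes w$); the right coset $[w]\in I(\GGamma)/\GGamma_0$ is exactly $\{w\otimes\gamma\}$ and the left coset $[w]\in\GGamma_0\bs I(\GGamma)$ is exactly $\{\gamma\otimes w\}$; and, by the description of $\approx$ from before Definition~\ref{def_commensurator}, $\dc w$ is the set of irreducible subobjects of the words $\gamma_0\otimes w\otimes\gamma_1$ with $\gamma_i\in I(\GGamma_0)$, any such subobject being $\backsim w\otimes\gamma_1$ and $\sim\gamma_0\otimes w$. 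Tracking through the commutation rules (and the reduced-word structure of $I(\GGamma)_{\bullet}$) one gets $w\otimes\gamma\backsim w\otimes\gamma'$ iff $\gamma,\gamma'$ lie in the same class of $\LLambda_1\bs I(\GGamma_0)$, and dually $\epsilon\otimes w\sim\epsilon'\otimes w$ iff $\epsilon,\epsilon'$ lie in the same class of $I(\GGamma_0)/\LLambda_{-1}$. Thus the left (resp. right) $\GGamma_0$-cosets inside $\dc w$ are indexed by $\LLambda_1\bs I(\GGamma_0)$ (resp. $I(\GGamma_0)/\LLambda_{-1}$), which are finite by the finite index hypothesis.

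Next I would pin down the conditional expectations. For $\delta\in I(\GGamma_0)$ one has $\Hom(w\otimes\delta,w\otimes\delta\otimes\lambda)\cong\Hom(\delta,\delta\otimes\lambda)$, so by Proposition~\ref{prp_quotient_description} the block $p_{w\otimes\delta}\,c(\LLambda\bs\GGamma/\LLambda)$ sits inside $p_{w\otimes\delta}\,c(\GGamma/\GGamma_0)\cong B(H_\delta)'_{\GGamma_0}=\CC$ and is one-dimensional. Together with faithfulness of $a\mapsto a_\alpha$ on $p_{\dc w}c(\LLambda\bs\GGamma/\LLambda)$ (used to define $\nabla$, see Definition~\ref{def_modular_element} and Proposition~\ref{prp_quotient_description}) and the identities $h_L(F_\alpha^2)=h_L(p_\alpha)$, $h_R(F_\alpha^{-2})=h_R(p_\alpha)$ from Remark~\ref{rk_Ltilda}, this forces $E^L_{w\otimes\delta}(F^2)=p_{\dc w}$ and $E^R_{\epsilon\otimes w}(F^{-2})=p_{\dc w}$ for all $\delta,\epsilon\in I(\GGamma_0)$. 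In particular $E^L_w(F^2)=p_{\dc w}$, so Proposition~\ref{prop_compute_nabla} yields $\nabla_w=p_w\tilde R_w^{-1}\tilde L_w$. Taking the $w\otimes\delta_i$ and $\epsilon_j\otimes w$ as coset representatives in the definitions of $\tilde L_w$ and $\tilde R_w$, the conditional expectations collapse to $p_{\dc w}$, leaving
\[
  \tilde L_w=\Big(\sum_i \frac{\qdim(w\otimes\delta_i)^2}{\kappa_{\overline{w\otimes\delta_i}}}\Big)p_{\dc w},\qquad
  \tilde R_w=\Big(\sum_j \frac{\qdim(\epsilon_j\otimes w)^2}{\kappa_{\epsilon_j\otimes w}}\Big)p_{\dc w},
\]
the sums running over $\LLambda_1\bs I(\GGamma_0)$ and $I(\GGamma_0)/\LLambda_{-1}$ respectively.

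Finally I would evaluate the scalars. Since $\qdim(w)=1$ we have $\qdim(w\otimes\delta_i)^2=\qdim(\delta_i\otimes\bar\delta_i)$ and $\qdim(\epsilon_j\otimes w)^2=\qdim(\bar\epsilon_j\otimes\epsilon_j)$. For the $\kappa$'s, Definition~\ref{def_kappa} gives $\kappa_{\overline{w\otimes\delta_i}}=\qdim\bigl((w\otimes\delta_i\otimes\bar\delta_i\otimes\bar w)_{\GGamma_0}\bigr)$; decomposing $\delta_i\otimes\bar\delta_i$ into irreducibles $\lambda\in I(\GGamma_0)$ and noting that $w\otimes\lambda\otimes\bar w$ lies in $\Corep(\GGamma_0)$ exactly when $\lambda\in\Corep(\LLambda_1)$ — in which case it equals $\theta(\lambda)$, of the same quantum dimension — yields $\kappa_{\overline{w\otimes\delta_i}}=\qdim\bigl((\delta_i\otimes\bar\delta_i)_{\LLambda_1}\bigr)$. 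The symmetric computation, moving $\bar w,w$ past a subobject of $\bar\epsilon_j\otimes\epsilon_j$ and using $w^{-1}\lambda w=\theta^{-1}(\lambda)$ for $\lambda\in\Corep(\LLambda_{-1})$, gives $\kappa_{\epsilon_j\otimes w}=\qdim\bigl((\bar\epsilon_j\otimes\epsilon_j)_{\LLambda_{-1}}\bigr)$. Substituting into the two displays above produces the asserted formulas for $\tilde L_w$ and $\tilde R_w$, and with $\nabla_w=p_w\tilde R_w^{-1}\tilde L_w$ the proposition follows. I expect the combinatorial first step — the explicit description of $\dc w$ and of its one-sided $\GGamma_0$-cosets, which rests on the reduced-word structure of the HNN extension — to be the main obstacle; the collapse of the conditional expectations in the second step, which is the other place where group-likeness of $w$ is genuinely used, is comparatively routine once the relevant blocks are known to be one-dimensional.
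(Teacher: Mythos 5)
Your proposal is correct and follows essentially the same route as the paper's proof: identify the one-sided $\GGamma_0$-cosets in $\dc w$ via the irreducible representatives $\epsilon_j\otimes w$ and $w\otimes\delta_i$, observe that $p_{\dc w}c(\GGamma_0\bs\GGamma/\GGamma_0)=\CC p_{\dc w}$ (because $w$ is group-like) so that the conditional expectations in $\tilde L_w$, $\tilde R_w$ collapse to $p_{\dc w}$ as in Remark~\ref{rk_Ltilda}, and compute $\kappa_{\epsilon_j\otimes w}=\qdim(\bar\epsilon_j\otimes\epsilon_j)_{\LLambda_{-1}}$ (and its mirror) by conjugating $\LLambda_{\pm1}$-subobjects past $w$. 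The only cosmetic difference is that the paper carries out the computation for $\tilde R_w$ and obtains $\tilde L_w$ ``by symmetry,'' whereas you treat both sides explicitly.
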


\begin{proof}
  We have $\dc w = \bigcup_j [\epsilon_j\otimes w]$ and
  $\epsilon_j\otimes w\backsim w$, see the proof of
  Proposition~\ref{prop_HNN_Hecke}. Note that $\epsilon_j\otimes w$ is
  irreducible (whereas $\epsilon_j\otimes w\otimes\alpha$ needs not to be):
  indeed
  $\epsilon_j\otimes w\otimes w^*\otimes \bar\epsilon_j = \epsilon_j\otimes
  \bar\epsilon_j$ contains the trivial representation only once.  Moreover we
  claim that if $k\neq l$ then the classes $[\epsilon_k\otimes w]$,
  $[\epsilon_l\otimes w]$ are distinct, i.e.\
  $w^*\otimes \bar\epsilon_l\otimes\epsilon_k\otimes w$ has no subobject in
  $I(\GGamma_0)$. Indeed by definition $\bar\epsilon_l\otimes\epsilon_k$ has no
  subobject in $\LLambda_{-1}$, hence the irreducible subobjects of
  $w^*\otimes \bar\epsilon_l\otimes\epsilon_k\otimes w$ belong to
  $I(\GGamma)_2$.

  Then we can apply the definition of $\tilde L_w$, $\tilde R_w$ and
  Proposition~\ref{prop_compute_nabla}. Note that we are in the case when
  $p_{\dc w} c(\GGamma_0\bs\GGamma/\GGamma_0) = \CC p_{\dc w}$ since
  $\dim(w) = 1$, see Remark~\ref{rk_Ltilda}. For $\alpha = \epsilon\otimes w$,
  $\epsilon\in I(\GGamma_0)$ as above, we have
  $\kappa_\alpha = \qdim(w^*\otimes\bar\epsilon\otimes\epsilon\otimes
  w)_{\GGamma_0} = \qdim(\bar\epsilon\otimes\epsilon)_{\LLambda_{-1}}$ ---
  whereas
  $\kappa_{\bar \alpha} = \qdim(\epsilon\otimes\bar\epsilon) =
  \qdim(\bar\epsilon\otimes\epsilon)$. This gives the formula for $\tilde R_w$,
  and the one for $\tilde L_w$ follows by symmetry.
\end{proof}

\begin{example} \label{exple_HNN_profinite} One can construct quantum examples
  as follows. Take two finite quantum groups $\Sigma_{\pm 1}$, for instance
  duals of classical finite groups. Form the restricted product
  $\GGamma_0 = \prod'_{k\in\ZZ^*} \Sigma_{\sgn(k)}$, which is the dual of a
  profinite group if $\Sigma_{\pm 1}$ is the dual of a finite classical
  group. If one group $\Sigma_\epsilon$ is not classical, $\GGamma_0$ is a
  unimodular non-classical discrete quantum group. Consider the finite index
  subgroups
  $\LLambda_{\epsilon} = \prod'_{k\in\ZZ^*,k\neq\epsilon}\Sigma_{\sgn(k)}$. We
  have evident isomorphisms $\LLambda_\epsilon \simeq \GGamma$ obtained by
  shifting the copies of $\Sigma_\epsilon$ towards $k = 0$ in the restricted
  product. We denote by $\theta : \CC[\LLambda_1]\ \to \CC[\LLambda_{-1}]$ the
  corresponding isomorphism.

  Denoting $I_\epsilon = \Corep(\Sigma_\epsilon)$, we have natural
  identifications $I(\GGamma_0) \simeq \prod'_k I_{\sgn(k)}$,
  $\LLambda_\epsilon\bs I(\GGamma_0) = I(\GGamma_0)/\LLambda_\epsilon \simeq
  I_\epsilon$. For $\gamma\in I_{-1}$ we have
  $(\bar\gamma\otimes\gamma)_{\LLambda_{-1}} = \{1\}$ hence
  $\tilde R_w = \sum_{\gamma\in I_{-1}} \dim(\gamma)^2 = \#\Sigma_{-1}$ and
  similarly $\tilde L_w = \#\Sigma_1$, where we denote
  $\#\Sigma = \dim(c(\Sigma))$. As a result the modular function $\nabla$ of the
  Hecke pair $(\GGamma,\GGamma_0)$ is non trivial as soon as $\Sigma_1$,
  $\Sigma_{-1}$ have different dimensions/cardinals. If one of $\Sigma_{\pm 1}$
  is non classical (e.g. the dual of a non abelian finite group), the HNN
  extension $\GGamma$ is neither classical, nor co-classical (but it is
  unimodular).

  An element $\alpha = (\alpha_k)_{k\in\ZZ^*}$ of $I(\GGamma_0)$ is in
  $\Dom\theta^{n\epsilon}$, $n\in\NN^*$, iff we have
  $\alpha_\epsilon = \cdots = \alpha_{n\epsilon} = 1$. Hence
  $\bigcap_{k\in\ZZ}\Dom\theta^k = \{1\}$ and the action of $\GGamma$ on
  $\GGamma/\GGamma_0$ is faithful. Observe also that $\GGamma$ is finitely
  generated although $\GGamma_0$ is not: indeed, denoting $\Sigma^{(k)}$ the
  copy of $\Sigma_{\sgn(k)}$ in $\GGamma_0$ we have
  $\theta^{-\epsilon}(\Sigma^{(n\epsilon)}) = \Sigma^{((n+1)\epsilon)}$, so that
  $\GGamma$ is generated by $\Sigma^{(1)}$, $\Sigma^{(-1)}$ and $w$.
\end{example}

\begin{example} \label{exple_HNN_lie} One can also construct quantum examples by
  taking for $\GGamma_0$ the dual of a compact group $G$, and using quantum subgroups
  $\LLambda_\epsilon$ associated with quotients $H_\epsilon = G/K_\epsilon$. The
  index of $\LLambda_\epsilon$ in $\GGamma$ is finite iff $K_\epsilon$ is
  finite. If $G$ is connected, the subgroups $K_\epsilon$ must then be central,
  and we have $\# I(\GGamma_0)/\LLambda_\epsilon = \# K_\epsilon$.

  Assume that $G$ is a connected compact Lie group. Then the fundamental group
  of $H_\epsilon$ remembers the cardinality of the kernel $K_\epsilon$,
  and since we assume $H_1$ and $H_{-1}$ to be isomorphic we will always have
  $L(\dc w) = \# K_1 = \# K_{-1} = R(\dc w)$ in this case.  Similarly, subobjects of
  $\bar\gamma\otimes\gamma$ factor through the center $Z(G)$ for any
  $\gamma \in I(\GGamma_0) \subset \Rep(G)$, hence always belong to
  $I(\LLambda_\epsilon)$ so that we have
  $\qdim(\bar\gamma\otimes\gamma)_{\LLambda_\epsilon} =
  \qdim(\bar\gamma\otimes\gamma)$ and
  $\tilde L_w = \tilde R_w = (\# K_1) p_w$. Moreover, in most
  simple Lie groups the center is cyclic so that $\# K_\epsilon$ determines
  $K_\epsilon$ and $\LLambda_1 \simeq \LLambda_{-1}$ implies in fact
  $\LLambda_1 = \LLambda_{-1}$. This does not mean that the Hecke algebra will
  be completely trivial. One can also take for $\theta$ a non-inner automorphism
  of $H$ to make the construction more interesting, so that the resulting
  $\GGamma$ looks like a variant of the partial crossed-product construction.

  A typical case is given by
  $G = SU(n) \twoheadrightarrow H_1 = H_{-1} = PSU(n)$, to be compared with the
  ``classical case'' of the Baumslag-Solitar group
  $BS(n,n) = HNN(\ZZ, \id : n\ZZ \to n\ZZ)$.  On the other hand
  $Z(\mathrm{Spin}(4k)) = (\ZZ/2\ZZ)^2$, so that the dual of $SO(4k)$ can be
  realized in two different ways as a subgroup of the dual of
  $\mathrm{Spin}(4k)$. Of course one can also look at $SO(3)\times SU(2)$ which
  is a quotient of $SU(2)\times SU(2)$ in two different ways.  In all these
  cases we have $\nabla = 1$ because $\tilde L_w = \tilde R_w$.

  Note that since $K_\epsilon$ is contained in any maximal torus of $G$, the
  above construction is compatible with $q$-deformations -- $K_\epsilon$ remains
  a quantum subgroup of the compact quantum group $\GG_q$ corresponding to $G$.
  However, we still get $\nabla=1$, by essentially the same reasoning since the
  fusion ring of $\GG_q$ is the same as the one of $G$.
\end{example}

\section{Compact open quantum Hecke pairs}
\label{sec_compact_open_hecke}

In this section we introduce Hecke algebras in the setting of locally compact
(algebraic) quantum groups with compact open quantum subgroups. We then describe
a generalized Schlichting completion, which allows us to subsume the Hecke
algebras from section \ref{sec_discrete_hecke} in this setting, and deduce some
analytic consequences. Finally, we describe how to pass from arbitrary Hecke
pairs to their reduced versions, and exhibit some new examples of algebraic
quantum groups.

\subsection{Compact open Hecke algebras}

Let us fix an algebraic quantum group $\GG$ together with an algebraic quantum
subgroup $\HH\subset \GG$.  Recall that this is determined by a non-zero central
projection $p_\HH\in \Polc(\GG)$ such that
$\Delta(p_\HH)(1\otimes p_\HH) = p_\HH\otimes p_\HH$. Throughout this section we
normalize the Haar functionals of $\Polc(\GG)$ in such a way that
$\varphi(p_\HH) = \psi(p_\HH) = 1$.

The definition of compactly supported functions on the quantum homogeneous space
$\GG/\HH$ is easier than in the discrete case, since the relevant invariant
functions on $\GG$ are also compactly supported.  Namely, we define
$\Pold(\GG/\HH) = \Polc(\GG)^\HH = \{f\in\Polc(\GG) \mid \Delta(f) {(1\otimes
  p_\HH)} = f\otimes p_\HH \}$.  It is shown in
\cite{LandstadVanDaele_AlgSubgroups1} that this algebra is a direct sum of
matrix algebras, i.e. it corresponds to a ``discrete'' quantum space. We denote
by $c_0(\GG/\HH)$ the closure of $\Pold(\GG/\HH)$ in $C_0(\GG)$ and
$\ell^2(\GG/\HH)$ its closure in $L^2(\GG)$. It can be shown that $c_0(\GG/\HH)$
is a quantum homogeneous space in the sense of \cite{Vaes_Imprimitivity}, cf
\cite[Proposition 6.2, Theorem 6.4]{KalantarKasprzakSkalski_Open}.  One can
define $\Pold(\HH\bs\GG)$, $\Pold(\HH\bs\GG/\HH)$ exactly in the same way, as
well as the corresponding $c_0$ and $\ell^2$ spaces.

\bigskip

To define the Hecke algebra $\Hh(\GG,\HH)$ it suffices to restrict the natural
convolution product of $\Polc(\GG)$ to the space of $\HH$-biinvariant functions.
This product is transported from the dual multiplier Hopf algebra $\Dd(\GG)$ via
the Fourier transform $ \Ff: \Polc(\GG) \rightarrow \Dd(\GG)$ determined by
$(\Ff(f), h) = \varphi(hf)$. Explicitly we have, for $f$, $g\in\Polc(\GG)$:
\begin{displaymath}
  f * g = (f\varphi\otimes\id)( S^{-1}\otimes\id)\Delta(g) 
 = (\id\otimes \varphi S^{-1}(g))\Delta(f).
\end{displaymath} 
Together with the $*$-structure $f^\sharp:=\Ff^{-1}(\Ff(f)^*) $, not to be
confused with the given $*$-structure on $\Polc(\GG)$, this turns $\Polc(\GG)$
into a $*$-algebra. Explicitly we have $f^\sharp = S(f)^*\delta$, where
$\delta\in \Mm(\Polc(\GG))$ is the modular element of $\GG$.

The left regular representation of the dual algebra
$ \lambda: \Dd(\GG) \rightarrow B(L^2(\GG))$ is then given by
$\lambda(\Ff(f))(\Lambda(g)) = \Lambda(f * g)$, see
\cite[Section~4.2.2]{VoigtYuncken_Book}.  We also have the right regular
representation $\rho: \Dd(\GG) \rightarrow B(L^2(\GG))$ given by
$\rho(\Ff(f)) = \hat{J} \lambda(\Ff(f))^* \hat{J}$. Here $\hat{J}$ is the
modular conjugation operator for $\hat{\varphi}$, the dual left Haar weight also
given by the formula $\hat{\varphi}(\Ff(f)) = \epsilon(f)$.

Explicitly we have
$\rho(\Ff(f))(\Lambda(g)) = \Lambda(g * \hat{\sigma}_{-i/2}(f))$ for all
$f,g \in \Polc(\GG)$.  Here, by slight abuse of notation, we write
$ \hat{\sigma}_{-i/2}(f) $ instead of $ \Ff^{-1}(\hat{\sigma}_{-i/2}(\Ff(f)) $,
where $ (\hat{\sigma}_t)_{t \in \mathbb{R}} $ is the modular group of
$ \hat{\varphi} $. Note that the map $f \mapsto \hat{\sigma}_{-i/2}(f)$ is an
algebra isomorphism from $(\Polc(\GG), *)$ to $\Dd(\GG)$. We have
$\delta p_\HH = p_\HH$ because $\HH$ is compact and
$\hat{\sigma}_t(p_\HH) = p_\HH$ for all $t\in \mathbb{R}$ because the
restriction of $\hat{\varphi}: \Dd(\GG) \rightarrow \CC$ to $\Dd(\HH)$ is the
left Haar weight of $\hat{\HH}$.

\begin{lemma}
  We have $\Pold(\HH \bs \GG/\HH) = p_\HH * \Polc(\GG) * p_\HH$. In particular
  $\Pold(\HH \bs \GG/\HH)$ is closed under the convolution product $*$ and the
  involution $\sharp$.
\end{lemma}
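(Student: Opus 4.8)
The plan is to show the two inclusions $p_\HH * \Polc(\GG) * p_\HH \subset \Pold(\HH\bs\GG/\HH)$ and $\Pold(\HH\bs\GG/\HH) \subset p_\HH * \Polc(\GG) * p_\HH$ separately. The key technical point is to translate the two-sided $\HH$-invariance condition defining $\Pold(\HH\bs\GG/\HH)$ into the statement that $f$ is fixed by left and right convolution by $p_\HH$. First I would record that since $p_\HH$ is a group-like projection — that is, $\Delta(p_\HH)(1\otimes p_\HH) = p_\HH\otimes p_\HH$, and dually $\Delta(p_\HH)(p_\HH\otimes 1) = p_\HH\otimes p_\HH$ by applying the unitary antipode (using $S(p_\HH)=p_\HH$ since $\HH$ is a quantum subgroup) — the element $p_\HH$ is idempotent for the convolution product: $p_\HH * p_\HH = (p_\HH\varphi\otimes\id)(S^{-1}\otimes\id)\Delta(p_\HH)$, and using $\varphi(p_\HH)=1$ together with the group-like identities this reduces to $p_\HH$. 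Hence $p_\HH * \Polc(\GG) * p_\HH$ is indeed closed under $*$ with local unit $p_\HH$, giving the ``in particular'' statement once the first identity is proved; closure under $\sharp$ then follows because $p_\HH^\sharp = S(p_\HH)^*\delta = p_\HH\delta = p_\HH$ (using $\delta p_\HH = p_\HH$ noted in the text) and $\sharp$ is an anti-homomorphism, so $(p_\HH * f * p_\HH)^\sharp = p_\HH * f^\sharp * p_\HH$.

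\textbf{The right-invariance half.} For $f\in\Polc(\GG)$ I would compute $f * p_\HH$ using the formula $f*g = (\id\otimes\varphi S^{-1}(g))\Delta(f)$, so that $f * p_\HH = (\id\otimes \varphi S^{-1}(p_\HH))\Delta(f) = (\id\otimes \varphi p_\HH)\Delta(f)$, since $S^{-1}(p_\HH)=p_\HH$. Now I claim that an element $h\in\Polc(\GG)$ satisfies $\Delta(h)(1\otimes p_\HH) = h\otimes p_\HH$ if and only if $(\id\otimes\varphi p_\HH)\Delta(h) = h$ (using $\varphi(p_\HH)=1$). The ``only if'' direction is immediate by applying $\id\otimes\varphi$. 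For the ``if'' direction one uses the standard argument already invoked in the proof of Theorem~\ref{thm_strong_inv}: from $\Delta(p_\HH)(1\otimes p_\HH) = p_\HH\otimes p_\HH$ one derives $\Delta^2(p_\HH)(1\otimes 1\otimes p_\HH) = \Delta(p_\HH)\otimes p_\HH$, and applying $\varphi\otimes\id\otimes\id$ to a suitable translate shows that any $x = (\chi\otimes\id)\Delta(p_\HH)$ satisfies $\Delta(x)(1\otimes p_\HH) = x\otimes p_\HH$; since $(\id\otimes\varphi p_\HH)\Delta(h)$ is exactly of this form (applied after writing $h$ via the preceding, and in fact it coincides with $h$ under our hypothesis), the conclusion follows. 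This shows $\Polc(\GG) * p_\HH = \Pold(\GG/\HH)$, and symmetrically $p_\HH * \Polc(\GG) = \Pold(\HH\bs\GG)$; combining the two, $p_\HH * \Polc(\GG) * p_\HH = \Pold(\GG/\HH)\cap\Pold(\HH\bs\GG) = \Pold(\HH\bs\GG/\HH)$.

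\textbf{The reverse inclusion and the main obstacle.} For $\Pold(\HH\bs\GG/\HH)\subset p_\HH*\Polc(\GG)*p_\HH$, given such an $f$, idempotency of $p_\HH$ and the two invariance identities $f*p_\HH = f = p_\HH*f$ — which are precisely the ``only if'' directions above — give $f = p_\HH * f * p_\HH$, so $f$ visibly lies in the stated subspace. The main obstacle is the careful handling of the ``if'' direction, i.e.\ showing that the \emph{weak} invariance $(\id\otimes\varphi p_\HH)\Delta(h)=h$ forces the full coideal condition $\Delta(h)(1\otimes p_\HH)=h\otimes p_\HH$; this is where one must be attentive to the group-like property of $p_\HH$ and to the fact that $\varphi$ restricted to $p_\HH\Polc(\GG)$ is a faithful trace-like functional, mirroring the argument in the discrete setting. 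Everything else is routine bookkeeping with the Fourier transform formulas and the identities $S(p_\HH)=p_\HH$, $\delta p_\HH = p_\HH = \delta^{-1}p_\HH$, $\hat\sigma_t(p_\HH)=p_\HH$.
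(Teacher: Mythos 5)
Your proof is correct and follows essentially the same route as the paper: both characterize $\Pold(\GG/\HH)$ and $\Pold(\HH\bs\GG)$ as the fixed points of right, resp.\ left, convolution by the convolution-idempotent $p_\HH$ and then combine the two. The only cosmetic difference is that you justify the ``if'' direction via the group-like slicing argument (writing $f*p_\HH$ as $(\chi\otimes\id)\Delta(p_\HH)$), whereas the paper identifies $p_\HH*f$ with the conditional expectation $(h\pi_\HH\otimes\id)\Delta(f)$ and appeals to invariance of $h$ --- both are standard and equivalent.
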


\begin{proof} 
  Let $f\in \Polc(\GG)$. Using the definition of the convolution product we
  calculate
  $p_\HH *f=(p_\HH\varphi\otimes\id)(S^{-1}\otimes\id)\Delta(f)
  = (h\otimes\id)(\pi_\HH S^{-1}\otimes\id)\Delta(f) = (h\pi_\HH\otimes\id)\Delta(f)$, where $h$ is the Haar
  functional of $\Pol(\HH)$ and $\pi_\HH: \Polc(\GG) \rightarrow \Pol(\HH)$ the
  restriction map. It follows that $p_\HH * f = f$ iff $f \in
  \Pold(\HH\bs\GG)$. Similarly one checks $f\in \Pold(\GG/\HH)$ iff
  $f * p_\HH = f$.  Since $p_\HH$ is a projection in the convolution algebra
  this yields the claim.
\end{proof}

This allows us to give the following definition.

\begin{definition} 
  The Hecke algebra of $(\GG,\HH)$ is the $*$-algebra
  $\Hh(\GG,\HH) = \Pold(\HH \bs \GG/\HH)$ with the convolution product and
  $*$-structure $\sharp$ as above.
\end{definition}

We obtain a nondegenerate $*$-representation of $\Hh(\GG,\HH)$ on
$\ell^2(\GG/\HH)$ by considering the restriction of the right regular
representation.  Here we use that for $f \in \Pold(\HH \bs \GG/\HH)$ and
$g \in \Pold(\GG/\HH)$ the element $g * f$ is again contained in
$\Pold(\GG/\HH)$, so that $\rho(\Ff(f))$ indeed maps $\ell^2(\GG/\HH)$ to
itself. We also observe that $\rho\circ\Ff : \Hh(\GG, \HH) \rightarrow B(\ell^2(\GG/\HH))$
is anti-multiplicative for the convolution product, that is,
$\rho(\Ff(f * g)) = \rho(\Ff(g)) \rho(\Ff(f))$ for all $f, g \in \Hh(\GG,\HH)$.

In the same way as in the discrete case we view $\Pold(\GG/\HH)$ as a
$\Dd(\GG)$-module, see the discussion before Proposition \ref{prp_endo}, and
obtain the space $\End_\GG(\Pold(\GG/\HH))$ of $\Dd(\GG)$-module maps.

\begin{proposition} \label{prp_endotd} We have (mutually inverse)
  anti-multiplicative algebra isomorphisms
  \begin{align*}
    &T : \Hh(\GG,\HH) \rightarrow \End_\GG(\Pold(\GG/\HH)), \quad T(f)(h) = h * f \qquad \text{and} \\
    &T^{-1} : \End_\GG(\Pold(\GG/\HH)) \rightarrow \Hh(\GG, \HH) \cong \Pold(\GG/\HH)^\HH, \quad T^{-1}(F) = F(p_\HH).
  \end{align*}
\end{proposition}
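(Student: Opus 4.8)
The plan is to mirror closely the strategy used in the discrete case (Proposition~\ref{prp_endo} and Theorem~\ref{thm_adjoint}), exploiting the fact that here all the relevant objects are compactly supported inside $\Polc(\GG)$, which removes the technical complications involving multiplier algebras. First I would check that $T$ is well-defined, i.e.\ that for $f\in\Hh(\GG,\HH)=\Pold(\HH\bs\GG/\HH)$ the map $T(f)\colon h\mapsto h*f$ sends $\Pold(\GG/\HH)$ to itself and is $\Dd(\GG)$-linear. The target is correct because $h*f\in\Polc(\GG)*p_\HH=\Pold(\GG/\HH)$ using $f=f*p_\HH$ and associativity of the convolution product, exactly as in the lemma preceding the definition of $\Hh(\GG,\HH)$. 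For $\Dd(\GG)$-linearity one uses the formula $f*g=(\id\otimes\varphi S^{-1}(g))\Delta(f)$ together with coassociativity: the right $\Dd(\GG)$-action on $\Pold(\GG/\HH)$ is $h\cdot x=(x\otimes\id)\Delta(h)$ for $x\in\Dd(\GG)$ (via $\Dd(\GG)\subset\Polc(\GG)^*$), and $(h\cdot x)*f = ((x\otimes\id)\Delta(h))*f=(x\otimes\id\otimes\varphi S^{-1}(f))(\Delta\otimes\id)\Delta(h)=(x\otimes\id)\Delta(h*f)=(h*f)\cdot x$. Anti-multiplicativity of $T$ is then immediate from associativity: $T(f*g)(h)=h*(f*g)=(h*f)*g=T(g)(T(f)(h))=\bigl(T(g)\circ T(f)\bigr)(h)$.

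Next I would analyse $T^{-1}$. Given $F\in\End_\GG(\Pold(\GG/\HH))$, set $f:=F(p_\HH)$. To see $f\in\Pold(\GG/\HH)^\HH=\Pold(\HH\bs\GG/\HH)$: the element $p_\HH\in\Pold(\GG/\HH)$ satisfies the left $\HH$-invariance $(p_\HH\otimes\id)$-type condition coming from $\Delta(p_\HH)(1\otimes p_\HH)=p_\HH\otimes p_\HH$, i.e.\ $p_\HH$ is fixed by the residual left $\Dd(\HH)$-action; since $F$ commutes with this action, $f$ is fixed as well, giving $f\in\Pold(\HH\bs\GG/\HH)$ (here I use the characterisation $\Pold(\HH\bs\GG/\HH)=p_\HH*\Polc(\GG)*p_\HH$ from the lemma, together with the fact that $p_\HH$ is the convolution unit on $\Pold(\GG/\HH)$, so $f=p_\HH*f*p_\HH$). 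The crucial step is then to show $F=T(f)$, i.e.\ $F(h)=h*f$ for all $h\in\Pold(\GG/\HH)$. This is where one needs the analogue of the "strong invariance / reconstruction" used in the discrete argument: every $h\in\Pold(\GG/\HH)$ can be written as $h=h*p_\HH=(h\varphi\otimes\id)(S^{-1}\otimes\id)\Delta(p_\HH)$, which expresses $h$ as applying a functional of the form $(y\otimes\id)$ to $\Delta(p_\HH)$ with $y\in\Dd(\GG)$; applying $F$ and using $\Dd(\GG)$-linearity of $F$ transfers the functional through to $\Delta(f)=\Delta(F(p_\HH))$, yielding $F(h)=(h\varphi\otimes\id)(S^{-1}\otimes\id)\Delta(f)=h*f=T(f)(h)$. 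The only thing to verify carefully is that $(h\varphi\otimes\id)(S^{-1}\otimes\id)$ really is implemented by an element of $\Dd(\GG)$ acting by the module structure, i.e.\ that $h*x$ for $h\in\Pold(\GG/\HH)$ depends only on $x\in\Dd(\GG)$ and equals $h\cdot\hat\sigma_{?}(x)$ up to the appropriate modular twist — but since we only need it applied to $p_\HH$ and $f$, both of which are $\HH$-biinvariant, the modular element $\delta p_\HH=p_\HH$ trivialises all such twists, as already noted in the text.

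Finally, $T$ and $T^{-1}$ are mutually inverse: $T^{-1}(T(f))=T(f)(p_\HH)=p_\HH*f=f$ since $p_\HH$ is the convolution unit on $\Pold(\GG/\HH)$ and $f\in\Pold(\GG/\HH)$; conversely $T(T^{-1}(F))=T(F(p_\HH))=F$ by the reconstruction identity of the previous paragraph. Both are algebra maps (anti-multiplicative for $*$), and I would remark that, just as in Theorem~\ref{thm_adjoint}, one can also record that $\rho\circ\Ff$ factors as the composition of $T$ with the obvious action, giving the $*$-representation statement; but strictly for this proposition only the bijection and anti-multiplicativity are needed. \emph{The main obstacle} I anticipate is making the reconstruction step $h=(h\varphi\otimes\id)(S^{-1}\otimes\id)\Delta(p_\HH)$ fully rigorous at the algebraic-quantum-group level — i.e.\ verifying that the functional $h\varphi$ restricted through $\Delta(p_\HH)$ genuinely lands in $\Dd(\GG)$ and that $F$'s module-linearity can be applied to it — but this is exactly the algebraic content of "$p_\HH$ is the convolution unit of $\Pold(\GG/\HH)$" combined with the standard manipulations of the Fourier transform $\Ff$, so it should go through cleanly.
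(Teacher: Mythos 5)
Your proposal is correct and takes essentially the same route as the paper's proof: both hinge on the identity $h = h * p_\HH$ for $h \in \Pold(\GG/\HH)$, associativity of the convolution product, and $\Dd(\GG)$-equivariance of $F$ (commutation with left convolution) to obtain $h * F(p_\HH) = F(h * p_\HH) = F(h)$. The extra care you take over the functional $(h\varphi\otimes\id)(S^{-1}\otimes\id)$ landing in $\Dd(\GG)$ is fine but not needed beyond what the Fourier transform already provides.
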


\begin{proof} 
  Since $T(f)$ for $f \in \Hh(\GG, \HH)$ commutes with the left convolution
  action of $\Dd(\GG)$ it is clear that $T$ is well-defined, and it is obvious
  from the definition that $T$ is anti-multiplicative.

  Let us verify that $T$ is an isomorphism by verifying that the above formula
  for $T^{-1}$ yields indeed its inverse. Well-definedness is again easy to
  check, and using left $\HH$-invariance of
  $f \in \Hh(\GG, \HH) = \Pold(\HH \bs \GG/\HH)$ we get
  \begin{displaymath}
    (T^{-1} T)(f) = T(f)(p_\HH) = p_\HH * f = f.  
  \end{displaymath}
  Conversely, given $F \in \Hom_\GG(\Pold(\GG/\HH), \Pold(\GG/\HH))$ we compute
  \begin{align*}
    (T T^{-1})(F)(h) &= (T T^{-1})(F)(h * p_\HH) = h * (T T^{-1})(F)(p_\HH) \\
                                                &= h * p_\HH * T^{-1}(F) = h * F(p_\HH) = F(h * p_\HH) = F(h) 
  \end{align*}
  for all $h \in \Pold(\GG/\HH)$, using that $\Pold(\GG/\HH) \subset \Polc(\GG)$
  because $\HH \subset \GG$ is compact open.
\end{proof}

\subsection{Example: quantum doubles}

Let us consider the situation where $\HH$ is a compact quantum group and
$\GG = \HH \bowtie \hat{\HH}$ the quantum double of $\HH$. Recall that $\GG$ is
the locally compact quantum group given by the von Neumann algebra
$L^\infty(\GG) = L^\infty(\HH) \bar\otimes \Ll(\HH)$, equipped with the
coproduct
\begin{displaymath}
  \Delta_{\GG} = (\id \otimes \sigma \otimes \id)(\id \otimes \ad(W) \otimes \id)
  (\Delta \otimes \hat{\Delta}),
\end{displaymath}
where $\ad(W)$ is conjugation with the multiplicative unitary
$W \in L^\infty(\HH) \bar\otimes \Ll(\HH)$. This is a special case of the
generalized quantum doubles studied in \cite{BaajVaes}.

In fact, the quantum double of a compact quantum group is naturally an algebraic
quantum group in the sense of Van Daele, which allows us to give algebraic
descriptions of almost all the data involved \cite{DvD_DrinfeldDouble},
\cite{VoigtYuncken_Book}. More precisely, if we write $\Pol(\HH)$ for the
polynomial function algebra of $\HH$ as before and $\Dd(\HH)$ for the algebraic
convolution algebra, then $W\in \Mm(\Pol(\HH) \odot \Dd(\HH))$, and
$ \Polc(\GG)=\Pol(\HH) \odot\Dd(\HH)$, equipped with the comultiplication given
by the formula above, defines an algebraic quantum group.  It contains $\HH$
naturally as an algebraic compact open quantum subgroup, and the corresponding
group-like projection is $p_\HH=1 \otimes p_0$, where $p_0 \in \Dd(\HH)$ is the
central support of the counit. One checks that
\begin{align*}
  \Pold(\HH \bs\GG)=1\otimes\Dd(\HH) \qquad \Pold(\GG/\HH) &= W^*(1 \otimes \Dd(\HH)) W   
\end{align*}
inside $\Polc(\GG)$, and the space of $\HH$-biinvariant functions on $\GG$ is
\begin{displaymath}
  \Pold(\HH \bs\GG/\HH)\cong\{x\in \Dd(\HH) \mid W^*(1 \otimes x) W=1 \otimes x\}=Z(\Dd(\HH)), 
\end{displaymath}
i.e. $\Pold(\HH \bs\GG/\HH)$ identifies with the center of the algebraic
convolution algebra of $\HH$ with respect to its ordinary product.

By definition, the Hecke algebra $\Hh(\GG,\HH)$ is $\Pold(\HH\bs\GG/\HH)$
equipped with the restriction of the convolution product on $\Polc(\GG)$. In the
present situation, it is more convenient to describe the $*$-subalgebra
$\Ff(\Pold(\HH\bs\GG/\HH)) \subset \Dd(\GG)$ obtained via the Fourier transform
$\Ff: \Polc(\GG) \rightarrow \Dd(\GG), \Ff(f)(h)=\varphi(hf)$. As discussed in
\cite[Chapter 4]{VoigtYuncken_Book}, we can identify
$\Dd(\GG) = \Dd(\HH) \bowtie \Pol(\HH)$, which is the algebraic tensor product
$\Dd(\HH) \otimes \Pol(\HH)$ equipped with the twisted multiplication
\begin{displaymath}
  (x \bowtie f)(y \bowtie g) := x  y_{(2)} (y_{(1)}, f_{(1)}) \bowtie
  (\hat{S}(y_{(3)}), f_{(3)}) f_{(2)} g, 
\end{displaymath}
for $x,y \in \Dd(\HH)$, $f,g \in \Pol(\HH)$. The $*$-structure on $\Dd(\GG)$ is
defined in such a way that both $\Dd(\HH) \bowtie 1$ and $1 \bowtie \Pol(\HH)$
are $*$-subalgebras of $\Mm(\Dd(\GG))$, and the natural skew-pairing between
$\Dd(\GG)$ and $\Polc(\GG)$ is
\begin{displaymath}
  (y \bowtie g, f \otimes x) = (y,f) (g,x), \quad x,y \in\Dd(\HH), f,g \in \Pol(\HH),
\end{displaymath}
again following the conventions in \cite{VoigtYuncken_Book}.  The left and right
invariant Haar functional on $\Polc(\GG)$ is given by
$\varphi= \hat{h} \otimes h_R $, where $\hat{h}$ is the Haar state of
$\Pol(\HH)$ and $h_R$ the right Haar functional on $\Dd(\HH)$, compare
\cite[Proposition 4.19]{VoigtYuncken_Book}. As is well-known, using the Fourier transform $\Ff$ we
can identify $ Z(\Dd(\HH)) \subset (\Polc(\GG), *)$ with the
$*$-subalgebra
${(p_0 \bowtie 1)}{(1 \bowtie \Pol(\HH))}$ ${(p_0 \bowtie 1)}$ $= p_0
\bowtie \Pol(\HH)^{\ad} \subset \Dd(\GG)$,
where
\begin{align*}
  \Pol(\HH)^{\ad} &= \{f \in \Pol(\HH)\mid f_{(2)} \otimes \hat{h}(f_{(1)} S^{-1}(f_{(3)}))
                    = f \otimes 1\} \\ 
  &= \{f \in \Pol(\HH)\mid \Delta^{\mathrm{cop}}(f) = \Delta(f)\}, 
\end{align*}
with the product and $*$-structure induced from $\Pol(\HH)$.
This is precisely the algebra of characters inside $\Pol(\HH)$.

We note that, with suitable adjustments, similar computations go through for
generalized quantum doubles built out of compact and discrete quantum groups.

\subsection{The Schlichting completion}
\label{sec_schlichting}

Let $\GGamma$ be a discrete quantum group and $\LLambda \subset \GGamma$ be a
quantum subgroup. If $\LLambda \subset \GGamma$ is almost normal
(see Definition~\ref{def_commensurator}) we shall construct a pair $(\GG,\HH)$
consisting of an algebraic quantum group $\GG$ and a compact open quantum
subgroup $\HH \subset \GG$, playing the role of the Schlichting completion of
the Hecke pair $(\GGamma,\LLambda)$ \cite{Schlichting}.

More precisely, our strategy is as follows. We first define the algebra
$\Polc(\GG)$ as a subalgebra of $\ell^\infty(\GGamma)$ using the ``discrete''
Hecke convolution product, see Definition~\ref{def_schlichting}. The key point
of the construction consists then in proving that this algebra is a multiplier
Hopf $*$-algebra, and more specifically, that the coproduct takes its values in
the appropriate subspace of $\Mm(\Polc(\GG)\odot\Polc(\GG))$, see
Proposition~\ref{prop_coprod_schlichting}.

It is then easy to see that the
projection $p_\LLambda$ corresponds to a CQG algebra
$\Polc(\HH)\subset\Polc(\GG)$, and that $\Pold(\GG/\HH) = c_c(\GGamma/\LLambda)$
as subspaces of $\ell^\infty(\GGamma)$, see Propositions~\ref{prop_CQG}
and~\ref{prop_schlichting_homogeneous}. Using the Haar functional of $\Polc(\HH)$
and the $\GGamma$-invariant functional $\mu$ on $c_c(\GGamma/\LLambda)$ one can
then construct the integrals of $\Polc(\GG)$, so that $\GG$ is in fact a locally
compact quantum group by~\cite{KustermansVanDaele_CstarAlgebraic}.
We end the section by making the connection between the ``discrete'' and
``compact open'' Hecke algebras $\Hh(\GGamma,\LLambda)$ and $\Hh(\GG,\HH)$.

\begin{lemma}\label{lem_convol_support}
  We say that $\gamma \in I(\GGamma)$ is in the support of
  $x\in\ell^\infty(\GGamma)$ if $p_\gamma x\neq 0$. Then given
  $\gamma \in I(\GGamma)$, $a\in c_c(\GGamma/\LLambda)$ and
  $b\in c_c(\LLambda\bs\GGamma)$ such that $\gamma \in \Supp(a*b)$, there exist
  $\alpha\in\Supp(a)$, $\beta\in\Supp(b)$ and $\lambda\in I(\LLambda)$ such that
  $\gamma \subset \alpha\otimes\lambda\otimes\beta$.
\end{lemma}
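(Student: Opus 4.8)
The plan is to work from the explicit formula for the convolution product and track supports through the coproduct. Since $a\in c_c(\GGamma/\LLambda)$ is a finite sum of components $a_\alpha = p_\alpha a$ over classes $[\alpha]\in I(\GGamma)/\LLambda$, and similarly $b = \sum b_\beta$, by linearity it suffices to prove the statement when $a = a_\alpha$ is supported on a single class $[\alpha]$ and $b = b_\beta$ is supported on a single class $[\beta]$. I would then use the mixed formula~\eqref{eq_convol_both}, namely
\begin{displaymath}
  a*b = \kappa_\alpha^{-1}\kappa_{\bar\beta}^{-1}(S^{-1}(a_\alpha)h_R\otimes\id\otimes h_LS^{-1}(b_\beta))\Delta^2(p_\LLambda),
\end{displaymath}
together with the decomposition $p_\LLambda = \sum_{\lambda\in I(\LLambda)}p_\lambda$, so that $a*b$ is a sum over $\lambda\in I(\LLambda)$ of the terms $\kappa_\alpha^{-1}\kappa_{\bar\beta}^{-1}(S^{-1}(a_\alpha)h_R\otimes p_\gamma\otimes h_LS^{-1}(b_\beta))\Delta^2(p_\lambda)$.

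The key computational point is that $p_\gamma(a*b)$ can only be non-zero if, for some $\lambda\in I(\LLambda)$, the component $(p_{\alpha'}\otimes p_\gamma\otimes p_{\beta'})\Delta^2(p_\lambda)$ is non-zero for some $\alpha'$ in the support of $S^{-1}(a_\alpha)$ and some $\beta'$ in the support of $h_LS^{-1}(b_\beta)$. Now $S^{-1}(a_\alpha)$ is supported on $\bar\alpha$ and $S^{-1}(b_\beta)$ on $\bar\beta$, but since $h_R$, $h_L$ are evaluated these are functionals and the correct statement is that $(p_{\bar\alpha}\otimes p_\gamma\otimes p_{\bar\beta})\Delta^2(p_\lambda)\neq 0$ for some $\lambda\in I(\LLambda)$ — which by the definition of the coproduct on $c_c(\GGamma)$ means precisely $\Hom(\lambda,\bar\alpha\otimes\gamma\otimes\bar\beta)\neq 0$, i.e.\ $\lambda\subset\bar\alpha\otimes\gamma\otimes\bar\beta$. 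By Frobenius reciprocity this is equivalent to $\gamma\subset\alpha\otimes\lambda\otimes\beta$, which is exactly the desired conclusion (with $\alpha\in\Supp(a)$, $\beta\in\Supp(b)$ the chosen representatives, and one can then adjust $\lambda$ to accommodate any representative, using that $I(\LLambda)$ is closed under the relevant tensor operations). I should double-check the bookkeeping about which of $a_\alpha$, $b_\beta$ carries $\alpha$ versus $\bar\alpha$ after applying $S^{-1}$; the cleanest route is to observe directly from~\eqref{eq_convol_right}, $a*b = \sum_{[\beta]}\kappa_{\bar\beta}^{-1}(\id\otimes h_LS^{-1}(b_\beta))\Delta(a)$, that $p_\gamma(a*b)\neq 0$ forces $(p_\gamma\otimes p_{\bar\beta})\Delta(p_\alpha)\neq 0$ for some $\alpha\in\Supp(a)$, $\beta\in\Supp(b)$, i.e.\ $\Hom(\alpha,\gamma\otimes\bar\beta)\neq 0$, hence $\gamma\subset\alpha\otimes\beta'$ for some $\beta'\subset\beta$; but that only gives $\lambda = 1$, so I actually need the finer version keeping track of the class, which brings back $\Delta(p_\LLambda)$ and the $\lambda\in I(\LLambda)$ above.

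The main obstacle is purely notational: making sure the supports of $S^{\pm 1}(a_\alpha)$, $S^{\pm 1}(b_\beta)$ are correctly identified (conjugation sends a block $B(H_\alpha)$ to $B(H_{\bar\alpha})$) and that the coproduct-support rule $(p_\mu\otimes p_\nu)\Delta(p_\rho)\neq 0 \iff \Hom(\rho,\mu\otimes\nu)\neq 0$, iterated once via $\Delta^2$, is applied without index errors. Once that is set up, the conclusion is immediate from Frobenius reciprocity: $\lambda\subset\bar\alpha\otimes\gamma\otimes\bar\beta \iff \gamma\subset\alpha\otimes\lambda\otimes\beta$. No deep input is needed beyond Theorem~\ref{thm_strong_inv} (which guarantees the formulas in Definition~\ref{def_convol} make sense and agree) and the elementary description of $\Delta$ on $c_c(\GGamma)$ recalled in Section~\ref{sec_quantum_groups}.
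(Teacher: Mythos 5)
Your proposal is correct and follows essentially the same route as the paper: use formula~\eqref{eq_convol_both}, observe that the functionals $S^{-1}(a_\alpha)h_R$ and $h_LS^{-1}(b_\beta)$ are supported on $\bar\alpha$ and $\bar\beta$ respectively, deduce $\lambda\subset\bar\alpha\otimes\gamma\otimes\bar\beta$ for some $\lambda\in I(\LLambda)$, and conclude by Frobenius reciprocity. The side worry about ``adjusting $\lambda$'' is unnecessary, since the term indexed by $[\alpha]$ in \eqref{eq_convol_both} vanishes unless the chosen representative satisfies $a_\alpha\neq 0$, i.e.\ $\alpha\in\Supp(a)$.
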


\begin{proof}
  If $\gamma \in \Supp a*b$, at least one term of the sum \eqref{eq_convol_both}
  has $\gamma$ in its support. Hence there exist $\alpha\in\Supp(a)$,
  $\beta\in\Supp(b)$ such the corepresentation
  $\bar\alpha\otimes\gamma\otimes\bar\beta$ contains an element
  $\lambda \in I(\LLambda)$. By Frobenius reciprocity a non-zero morphism
  $\lambda \to \bar\alpha\otimes\gamma\otimes\bar\beta$ induces a non-zero
  morphism $\alpha\otimes\lambda\otimes\beta \to \gamma$ and we are done.
\end{proof}

Similarly one can define the support $\Supp(\varphi)$ of a linear functional
$\varphi\in c_c(\GGamma)^*$ as the set of elements $\gamma\in I(\GGamma)$ such
that $p_\gamma\varphi \neq 0$. If $\varphi$ has finite support, it extends
uniquely to a normal functional $\varphi \in \ell^\infty(\GGamma)_*$.

\begin{definition}\label{def_schlichting}
  Given a Hecke pair $(\GGamma, \LLambda)$ we denote by $\Polc(\GG)$
  (resp. $C_0(\GG)$) the subalgebra (resp. the \Cst subalgebra) of
  $\ell^\infty(\GGamma)$ generated by the elements $a*b$ with
  $a\in c_c(\GGamma/\LLambda)$, $b\in c_c(\LLambda\bs\GGamma)$.
\end{definition}

In the following lemmas we will always assume that $\Polc(\GG)$ and $C_0(\GG)$
arise in the above way from a Hecke pair. Note that by definition of the
convolution product, e.g.\ \eqref{eq_convol_left}, $a*b$ is a finite sum of
elements of $\ell^\infty(\GGamma)$ so that it is indeed in
$\ell^\infty(\GGamma)$. It is easy to check, using both expressions in
\eqref{eq_convol_left}, that $a^* * b^* = (a*b)^*$, so that $\Polc(\GG)$ is in
fact a $*$-subalgebra of $\ell^\infty(\GGamma)$ and $C_0(\GG)$ is its norm
closure.

In view of the definition of $a*b$ one can also say that the algebra
$\Polc(\GG)$ is generated by elements of the form
$(\id\otimes \varphi)\Delta(a)$ where $a\in c(\GGamma/\LLambda)$,
$\varphi \in c_c(\GGamma/\LLambda)^*$ have both finite support over
$I(\GGamma)/\LLambda$, i.e.\ $p_\tau a = 0$, $p_\tau\varphi = 0$ for all but a
finite number of right classes $\tau$. We denote $c_c(\GGamma/\LLambda)^\vee$
the space of these finitely supported functionals. It follows from this
description that the weak closure of $\Polc(\GG)$ in $\ell^\infty(\GGamma)$ is
the so-called cokernel of the $\GGamma$-action on $\GGamma/\LLambda$, which is
known to be a Baaj-Vaes subalgebra \cite[Definition~2.8,
Proposition~2.9]{KKSV_Furstenberg}. Below we prove a more precise,
$C^*$-algebraic version of this property, specific to the case of Hecke pairs.

For $x \in C_0(\GG)$ we can consider $\Delta(x)$ which is a priori an element of
$\ell^\infty(\GGamma) \bar\otimes \ell^\infty(\GGamma)$.

\begin{proposition}\label{prop_coprod_schlichting}
  We have $\Delta(\Polc(\GG)) (1\otimes \Polc(\GG))$,
  $\Delta(\Polc(\GG)) (\Polc(\GG)\otimes 1)$ $\subset$
  ${\Polc(\GG)\odot \Polc(\GG)}$ and $\Delta(C_0(\GG))(1\otimes C_0(\GG))$,
  $\Delta(C_0(\GG))(C_0(\GG)\otimes 1)$ $\subset C_0(\GG)\otimes C_0(\GG)$.
\end{proposition}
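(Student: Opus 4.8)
The plan is to prove the statement first for $\Polc(\GG)$ at the algebraic level, and then deduce the $C^*$-version by a density and boundedness argument. By definition $\Polc(\GG)$ is generated as a $*$-algebra by elements $a*b$ with $a\in c_c(\GGamma/\LLambda)$, $b\in c_c(\LLambda\bs\GGamma)$, so by multiplicativity of $\Delta$ it suffices to control $\Delta(a*b)(1\otimes x)$ and $\Delta(a*b)(x\otimes 1)$ for such generators $x$, and then extend to products of generators using that $\Delta$ is a homomorphism (writing $\Delta(x_1x_2)(1\otimes y) = \Delta(x_1)[\,\Delta(x_2)(1\otimes y)\,]$ and iterating, the key point being that once $\Delta(x_2)(1\otimes y)\in\Polc(\GG)\odot\Polc(\GG)$ we can slide the first factor of $\Delta(x_1)$ past and invoke the generator case again). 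So the heart of the matter is the generator computation.

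The essential tool is Proposition~\ref{prp_convol_coprod}, which gives $\Delta(a*b) = (a*\otimes\id)\Delta(b) = (\id\otimes*b)\Delta(a)$. First I would write $\Delta(a*b)(1\otimes (a'*b')) = [(a*\otimes\id)\Delta(b)]\,(1\otimes(a'*b'))$. Using that $\Delta$ restricts to a right coaction $c_c(\LLambda\bs\GGamma)\to \Mm(c_c(\LLambda\bs\GGamma)\otimes c_c(\GGamma))$ and the fact recorded in the excerpt that $(c_c(\LLambda\bs\GGamma)\otimes 1)\Delta(\cdots)$ lands in an algebraic tensor product, one sees $\Delta(b)(1\otimes b'')$ is a finite sum of elementary tensors $c\otimes d$ with $c\in c_c(\LLambda\bs\GGamma)$, $d\in c_c(\GGamma)$; here I should first absorb an appropriate cutoff so that the second leg of $\Delta(b)$ is finitely supported (this uses the Hecke condition together with Lemma~\ref{lem_convol_support} to guarantee that only finitely many $\gamma$-components contribute, so the partial coproduct is genuinely algebraic). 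Then $\Delta(a*b)(1\otimes(a'*b')) = \sum (a*c)\otimes d(a'*b')$, and each $a*c\in\Polc(\GG)$ (it is a convolution of the required type, up to decomposing $c$ into $c_c(\LLambda\bs\GGamma)$-pieces), while $d(a'*b')\in\Polc(\GG)$ since $\Polc(\GG)$ is an algebra and $d\in c_c(\GGamma)\subset\Mm(\Polc(\GG))$ — more carefully, one arranges $d$ to be a finitely supported function, hence to lie in $\Polc(\GG)$ after multiplying by the appropriate $a'*b'$. The symmetric statement with factors on the left uses the other formula $\Delta(a*b) = (\id\otimes*b)\Delta(a)$ together with the left coaction property of $\Delta$ on $c_c(\GGamma/\LLambda)$, run through the same way. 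The $*$-invariance $a^**b^* = (a*b)^*$ noted before the proposition handles the passage from generators to the full $*$-algebra.

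For the $C^*$-statement, I would argue by continuity. The maps $y\mapsto \Delta(x)(1\otimes y)$ and $y\mapsto\Delta(x)(y\otimes 1)$ are norm-bounded (by $\|x\|$, say, on suitable slices), and $\Polc(\GG)$ is norm-dense in $C_0(\GG)$; the image lands in the closed subspace $C_0(\GG)\otimes C_0(\GG)$ of $\ell^\infty(\GGamma)\bar\otimes\ell^\infty(\GGamma)$. Concretely, for $x,y\in\Polc(\GG)$ we have just shown $\Delta(x)(1\otimes y)\in\Polc(\GG)\odot\Polc(\GG)\subset C_0(\GG)\otimes C_0(\GG)$; approximating arbitrary elements of $C_0(\GG)$ in norm and using that $\Delta$ is norm-contractive (it is the restriction of the comultiplication of $\ell^\infty(\GGamma)$, implemented by the multiplicative unitary) and that multiplication is jointly continuous on bounded sets, the containment passes to the closure. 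The main obstacle I anticipate is the bookkeeping in the algebraic step: one must verify that the partial coproducts $\Delta(a)(1\otimes z)$, $\Delta(b)(z\otimes 1)$ really are \emph{finite} sums of elementary tensors with both legs in the compactly supported spaces — this is exactly where the Hecke hypothesis enters (through Lemma~\ref{lem_convol_support} and the finiteness of double cosets in Definition~\ref{def_commensurator}), and getting the supports to match up cleanly so that all intermediate convolutions stay within $\Polc(\GG)$ is the delicate part; the rest is formal.
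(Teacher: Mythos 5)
Your overall outline (reduce to generators $a*b$ by multiplicativity, exploit $\Delta(a*b)=(a*\otimes\id)\Delta(b)=(\id\otimes *b)\Delta(a)$ from Proposition~\ref{prp_convol_coprod}, then pass to $C_0(\GG)$ by density and continuity) is the same as the paper's, and the reduction to generators and the $C^*$-level argument are fine. The gap is in the core computation. First, the finite decomposition you invoke is the wrong one: $\Delta(b)(1\otimes(a'*b'))$ is \emph{not} a finite sum of elementary tensors $c\otimes d$ with $d\in c_c(\GGamma)$, because $a'*b'$ is not compactly supported on $\GGamma$ --- its support is a finite union of cosets, each of which is typically an infinite set of irreducibles, so infinitely many components of the second leg contribute and no ``cutoff'' makes that leg finitely supported. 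The finiteness that actually makes the sum finite is of a different nature: the paper cuts the leg of the coproduct that lives in the \emph{quotient} algebra and shows (via Lemma~\ref{lem_convol_support}, Frobenius reciprocity and the Hecke condition) that it is confined to finitely many classes $\tau$; since each $p_\tau\ell^\infty(\GGamma/\LLambda)$ is finite dimensional by Proposition~\ref{prp_quotient_description}, that leg lands in a finite-dimensional space. Nothing forces the other leg to be compactly supported.

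Second, and more seriously, your justification that the remaining legs lie in $\Polc(\GG)$ --- ``$d(a'*b')\in\Polc(\GG)$ since $\Polc(\GG)$ is an algebra and $d\in c_c(\GGamma)\subset\Mm(\Polc(\GG))$'' --- fails: pointwise multiplication by elements of $c_c(\GGamma)$ does not preserve $\Polc(\GG)$. (Classically, multiplying an element of $\Polc(G)$ by the indicator of a single point of $\Gamma$ produces a point mass, which lies in $\Polc(G)$ only when the Schlichting completion is discrete.) Showing that the unknown legs of the decomposition belong to $\Polc(\GG)$ is precisely the hard part of the proposition. The paper handles it with a separate first step: for any finitely supported functional $\varphi$ one has $(\id\otimes\varphi)\Delta(a*b)=a*\bigl((\id\otimes\varphi)\Delta(b)\bigr)$ with $(\id\otimes\varphi)\Delta(b)\in c_c(\LLambda\bs\GGamma)$, so the slice is again a convolution product and lies in $\Polc(\GG)$; it then recovers each unknown leg as such a slice, multiplied by $y$, using finitely supported functionals dual to a basis of the finite-dimensional span of the known legs. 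Your proposal contains no substitute for this mechanism, so the containment in $\Polc(\GG)\odot\Polc(\GG)$ is not established.
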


\begin{proof}
  The assertions about $C_0(\GG)$ follow by density from the ones for
  $\Polc(\GG)$, since $\Delta$ is continuous. Let us prove that
  $\Delta(\Polc(\GG)) (\Polc(\GG)\otimes 1) \subset {\Polc(\GG)\odot
    \Polc(\GG)}$. By multiplicativity it suffices to consider elements of the
  form $\Delta(x)(y\otimes 1)$ with $x = a*b$, $y = c*d$,
  $a,c \in c_c(\GGamma/\LLambda)$, $b,d \in c_c(\LLambda\bs\GGamma)$, and by
  linearity we can assume $a = p_{[\alpha]}a$, $b = p_{[\beta]}b$,
  $c = p_{[\gamma]}c$, $d = p_{[\delta]}d$ using left or right classes as
  appropriate.

  As a first step, consider a linear functional $\varphi\in c_c(\GGamma)^*$ with
  finite support and compute, using~\eqref{eq_convol_left} and coassociativity:
  \begin{displaymath}
    (\id\otimes\varphi)\Delta(x) = \kappa_\alpha^{-1}(h_RS(a_\alpha)\otimes\id\otimes \varphi)\Delta^2(b) = a*((\id\otimes\varphi)\Delta(b)).
  \end{displaymath}
  Since
  $\Delta(\ell^\infty(\LLambda\bs\GGamma))\subset
  \ell^\infty(\LLambda\bs\GGamma)\bar\otimes \ell^\infty(\GGamma)$ we have
  $(\id\otimes\varphi)\Delta(b) \subset \ell^\infty(\LLambda\bs\GGamma)$. Take
  moreover $\tau\in \LLambda\bs I(\GGamma)$ such that
  $(p_\tau\otimes\varphi)\Delta(b)\neq 0$. Then there exist $\nu\in\tau$,
  $\mu\in\Supp(\varphi)$ and $\lambda\in I(\LLambda)$ such that
  $\lambda\otimes\beta$ and $\nu\otimes\mu$ have a common irreducible
  subobject. By Frobenius reciprocity this implies
  $\nu\subset \lambda\otimes\beta\otimes\bar\mu$. Since $\Supp(\varphi)$ is
  finite, this shows that $\tau$ belongs to a finite subset of
  $\LLambda\bs I(\GGamma)$ (depending on $\Supp(\varphi)$ and $\beta$). Hence we
  have $(\id\otimes\varphi)\Delta(b) \in c_c(\LLambda\bs\GGamma)$ and
  $(\id\otimes\varphi)\Delta(x) \in \Polc(\GG)$.

  Second step. Observe that we have by~\eqref{eq_convol_right} and
  coassociativity:
  \begin{displaymath}
    \Delta(a*b)(y\otimes 1) = \kappa_{\bar\beta}^{-1}(\id\otimes\id\otimes h_LS^{-1}(b_\beta))(\id\otimes\Delta)[\Delta(a)(y\otimes 1)].    
  \end{displaymath}
  Recall that we have
  $\Delta(a)\in \ell^\infty(\GGamma)\bar\otimes
  \ell^\infty(\GGamma/\LLambda)$. Moreover, take $\tau \in I(\GGamma)/\LLambda$
  such that $\Delta(a)(y\otimes p_{\tau}) \neq 0$. Then there exist
  $\mu\in\Supp(y)$, $\nu\in\tau$, $\lambda\in I(\LLambda)$ such that
  $\alpha\otimes\lambda$ and $\mu\otimes\nu$ have a common irreducible
  subobject. By Frobenius reciprocity this implies
  $\nu\subset \bar\mu\otimes\alpha\otimes\lambda$. According to
  Lemma~\ref{lem_convol_support} there exists $\lambda'\in I(\LLambda)$ such
  that $\mu\subset \gamma\otimes\lambda'\otimes\delta$. Since double classes in
  $\LLambda\bs I(\GGamma)/\LLambda$ are finite unions of right classes it
  follows that $\tau$ belongs to a finite subset $P\subset I(\GGamma)/\LLambda$.

  Note that
  $\ell^\infty(\PP) := \sum_{\tau\in P}p_\tau\ell^\infty(\GGamma/\LLambda)$ is a
  finite dimensional subspace of $c_c(\GGamma/\LLambda)$. It follows that there
  is a finite family of vectors $t_i\in \ell^\infty(\PP)$ and elements
  $s_i \in \ell^\infty(\GGamma)$, such that
  $\Delta(a)(y\otimes 1) = \sum s_i\otimes t_i$. We have then, according to the
  above equation:
  $\Delta(a*b)(y\otimes 1) = \sum s_i\otimes (t_i*b) \in
  \ell^\infty(\GGamma)\odot (\ell^\infty(\PP)*b)$.

  Choose now a finite, linearly independent family of vectors $x_k = a_k*b$ in
  the finite dimensional subspace $\ell^\infty(\PP)*b \subset \Polc(\GG)$, and
  elements $z_k \in \ell^\infty(\GGamma)$ such that
  $\Delta(x)(y\otimes 1) = \sum z_k\otimes x_k$. Choose a corresponding family
  of linear forms with finite support $\varphi_k \in c_c(\GGamma)^*$ such that
  $\varphi_l(x_k) = \delta_{k,l}$ for all $k,l$; we have then
  $z_k = ((\id\otimes\varphi_k)\Delta(x))y$.
  Applying the first step to $\varphi_k$ we get
  $(\id\otimes\varphi_k)\Delta(x) \in \Polc(\GG)$, hence $z_k\in \Polc(\GG)$.
\end{proof}

Recall that the antipode of $\GGamma$ is well-defined as a map
$S : c(\GGamma) \to c(\GGamma)$ or $c_c(\GGamma) \to c_c(\GGamma)$, but not in
general from $\ell^\infty(\GGamma)$ to itself. It exchanges the subspaces
$c_c(\GGamma/\LLambda)$ and $c_c(\LLambda\bs\GGamma)$ of $c(\GGamma)$, which are
also subspaces of $\ell^\infty(\GGamma)$.

\begin{proposition} 
  We have $S(\Polc(\GG)) \subset \Polc(\GG)$. Equipped with the restriction of
  $\Delta$, $\Polc(\GG)$ (resp. $C_0(\GG)$) is a multiplier Hopf-$*$-algebra
  (resp. a bicancellative Hopf-$C^*$-algebra).
\end{proposition}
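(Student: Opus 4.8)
The crux is to show that the antipode preserves $\Polc(\GG)$; once that is in place the remaining assertions follow by assembling the multiplier Hopf $*$-algebra axioms from facts already established, chiefly Proposition~\ref{prop_coprod_schlichting}.

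\emph{The antipode.} Since $S$ is an anti-automorphism of $c(\GGamma)$ and $\Polc(\GG)$ is the $*$-algebra generated by the elements $a*b$ with $a\in c_c(\GGamma/\LLambda)$, $b\in c_c(\LLambda\bs\GGamma)$, it suffices to check that $S(a*b)$ lies again in $\Polc(\GG)$ for such $a$, $b$. Using the explicit formulae \eqref{eq_convol_left}--\eqref{eq_convol_right}, the identities $h_L\circ S = h_R$ and $S\circ\sigma^R_t = \sigma^R_t\circ S$ valid for a discrete quantum group, the equality $\mu\circ S = \nu$ (a consequence of Corollary~\ref{crl_indep}), and the fact that the modular automorphism group of the weight $\mu$ is the restriction of $\tau = \sigma^L$ to $c_c(\GGamma/\LLambda)$, one computes
\[ S(a*b) = S(b)*S(a), \]
where now $S(b)\in c_c(\GGamma/\LLambda)$ and $S(a)\in c_c(\LLambda\bs\GGamma)$, so that $S(a*b)$ is again of the form appearing in Definition~\ref{def_schlichting}. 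Hence $S(\Polc(\GG))\subseteq\Polc(\GG)$; running the same computation with $S^{-1}$ in place of $S$ gives $S^{-1}(\Polc(\GG))\subseteq\Polc(\GG)$, so $S$ restricts to a bijective anti-automorphism of $\Polc(\GG)$. This identity, and in particular controlling the interplay of $S$ with $h_L,h_R$ and the constants $\kappa$, is the only non-routine step; a further purely technical point is that $S$ is a priori only defined on $c(\GGamma)$ and not on all of $\ell^\infty(\GGamma)$, so one must verify that the relevant expressions stay bounded — which is automatic once they are recognized as convolution products.

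\emph{The multiplier Hopf $*$-algebra structure.} As a $*$-subalgebra of the C*-algebra $\ell^\infty(\GGamma)$ the algebra $\Polc(\GG)$ has non-degenerate product. By Proposition~\ref{prop_coprod_schlichting} the comultiplication $\Delta$ restricts to a coassociative $*$-homomorphism $\Polc(\GG)\to\Mm(\Polc(\GG)\odot\Polc(\GG))$ satisfying the range conditions $\Delta(\Polc(\GG))(1\otimes\Polc(\GG))$, $\Delta(\Polc(\GG))(\Polc(\GG)\otimes 1)\subseteq\Polc(\GG)\odot\Polc(\GG)$. The counit $\epsilon$ of $\GGamma$, given on $c(\GGamma)=\prod_\alpha B(H_\alpha)$ by $x\mapsto x_1$, restricts to a non-zero character of $\Polc(\GG)$ since $\epsilon(p_\LLambda)=1$, and together with the antipode $S$ from the previous paragraph it satisfies, for $a,b\in\Polc(\GG)$, the counit and antipode identities $(\epsilon\otimes\id)(\Delta(a)(1\otimes b))=ab$, $m(S\otimes\id)(\Delta(a)(1\otimes b))=\epsilon(a)b$ and their right-handed analogues — these follow from the corresponding identities in $c(\GGamma)$ together with the finiteness of $\Delta(a)(1\otimes b)$ as an element of $\Polc(\GG)\odot\Polc(\GG)$ and a truncation argument (multiply on the right by the finite central projections $\sum_{\alpha\in F}p_\alpha$ and pass to the limit). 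By the characterization of (regular) multiplier Hopf algebras in \cite{VanDaele_Algebraic}, $(\Polc(\GG),\Delta,\epsilon,S)$ is a multiplier Hopf $*$-algebra.

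\emph{The C*-algebra.} The algebra $C_0(\GG)$ is the norm closure of $\Polc(\GG)$ in $\ell^\infty(\GGamma)$, and by Proposition~\ref{prop_coprod_schlichting} the comultiplication restricts to a coassociative $*$-homomorphism $\Delta:C_0(\GG)\to\Mm(C_0(\GG)\otimes C_0(\GG))$ with $\Delta(C_0(\GG))(1\otimes C_0(\GG))$, $\Delta(C_0(\GG))(C_0(\GG)\otimes 1)\subseteq C_0(\GG)\otimes C_0(\GG)$. Since $(\Polc(\GG),\Delta)$ is a multiplier Hopf algebra, the algebraic cancellation laws give $\Delta(\Polc(\GG))(1\otimes\Polc(\GG))=\Polc(\GG)\odot\Polc(\GG)$ and likewise on the other side; taking norm closures yields $\overline{\Delta(C_0(\GG))(1\otimes C_0(\GG))}=C_0(\GG)\otimes C_0(\GG)=\overline{\Delta(C_0(\GG))(C_0(\GG)\otimes 1)}$, so $C_0(\GG)$ is a bicancellative Hopf-C*-algebra.
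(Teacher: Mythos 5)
Your proposal is correct and follows essentially the same route as the paper: the key step in both is the identity $S(a*b)=S(b)*S(a)$ (which the paper extracts from the $\sharp$-computation at the end of the proof of Proposition~\ref{prp_convol}, noting bi-invariance is not used there), after which the multiplier Hopf $*$-algebra axioms follow from Proposition~\ref{prop_coprod_schlichting} and the restricted counit, and the $C^*$-statement by taking closures. Your extra detail on the truncation argument and on $S$ being defined only on $c(\GGamma)$ is a fine elaboration of what the paper dismisses as "one then checks easily".
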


\begin{proof}
  For any $a\in c_c(\GGamma)/\LLambda$, $b\in c_c(\LLambda\bs\GGamma)$ we have
  $S(b^*)*S(a^*) = S((a*b)^*) = S(a^* * b^*)$, see the last part of the proof of
  Proposition~\ref{prp_convol} where bi-invariance of $a$, $b$ is not
  used. Replacing $a$, $b$ by their adjoints and using the fact that $S$
  exchanges $c_c(\GGamma/\LLambda)$ and $c_c(\LLambda\bs\GGamma)$ we see that
  $S$ stabilizes the canonical generating subspace of the algebra
  $\Polc(\GG)$. Since $S$ is antimultiplicative, it stabilizes $\Polc(\GG)$.

  We also obtain a character $ \epsilon: \Polc(\GG) \rightarrow \CC $ by
  restricting the counit of $ \ell^\infty(\GGamma)$. From the fact that
  $ \GGamma $ is a discrete quantum group we know that
  $ (\epsilon \otimes \id) \Delta = \id = (\id \otimes \epsilon)\Delta $ on the
  level of $\ell^\infty(\GGamma)$,
  which implies that $ \epsilon $ is a counit for $ \Polc(\GG)$. Using that
  $ S $ is an algebra anti-automorphism of $ \Polc(\GG) $ one then checks easily
  that $\Polc(\GG)$ is a multiplier Hopf $*$-algebra.
  Upon taking completions it follows that $ C_0(\GG) $ is a bicancellative Hopf
  $ C^*$-algebra.
\end{proof}

Note that the central projection $p_\LLambda\in\ell^\infty(\GGamma)$ belongs to
$\Polc(\GG)$ since $p_\LLambda * p_\LLambda = p_\LLambda$. It moreover satisfies
the property
$\Delta(p_\LLambda)(1\otimes p_\LLambda) = p_\LLambda\otimes p_\LLambda =
\Delta(p_\LLambda)(p_\LLambda\otimes 1)$, in $\ell^\infty(\GGamma)$ hence also
in $\Polc(\GG)$.

\begin{definition}
  We denote $p_\HH = p_\LLambda \in \Polc(\GG)$ and
  $\Pol(\HH) = p_\HH \Polc(\GG)$, and let $C(\HH)$ be the norm closure of
  $\Pol(\HH)$ in $\ell^\infty(\GGamma)$.
\end{definition}

We now describe the connection of the construction developed above to the
classical Schlichting completion, as described for example in
\cite{KLQ_Schlichting}.

\begin{proposition}
  Let $(\GGamma,\LLambda) = (\Gamma,\Lambda)$ be a classical discrete Hecke
  pair, with Schlichting completion $(G,H)$. The canonical map $\Gamma \to G$
  with dense image induces an embedding $C_0(G)\subset
  \ell^\infty(\Gamma)$. Under this identification we have $C_0(G) = C_0(\GG)$
  and $\Polc(G) = \Polc(\GG)$, and similarly $C(H)=C(\HH)$ and
  $ \Pol(H)=\Pol(\HH)$.
\end{proposition}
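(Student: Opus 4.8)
The plan is to identify $C_0(G)$, viewed inside $\ell^\infty(\Gamma)$, with the algebra of compactly supported locally constant functions on the totally disconnected group $G$, and to recognise the Hecke convolution products $a*b$ as indicators of basic compact open subsets of $G$. Recall the classical construction: $G$ is the closure of the image of the permutation representation $\rho : \Gamma \to \mathrm{Sym}(\Gamma/\Lambda)$, $\rho(\gamma)(s\Lambda) = \gamma s\Lambda$, in the topology of pointwise convergence, and $H = \mathrm{Stab}_G(e\Lambda)$, which is compact because $(\Gamma,\Lambda)$ is a Hecke pair, and open. Since $\rho(\Gamma)$ is dense in $G$, pullback along $\rho$ defines an injective $*$-homomorphism $\iota : C_0(G) \to \ell^\infty(\Gamma)$, $f \mapsto f\circ\rho$, compatible with the comultiplications, and the orbit of $e\Lambda$ gives a $\Gamma$-equivariant bijection $\Gamma/\Lambda = G/H$. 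From now on I identify $C_0(G)$, $\Polc(G)$, $C(H)$, $\Pol(H)$ with their images under $\iota$. We have $\iota(p_H) = \iota(\one_H) = \one_{\rho^{-1}(H)} = \one_{\{\gamma:\gamma\Lambda=\Lambda\}} = \one_\Lambda = p_\HH$, so, since $p_\HH$ is central, once $C_0(G)=C_0(\GG)$ and $\Polc(G)=\Polc(\GG)$ are proved, the identities $C(\HH) = p_\HH C_0(\GG) = p_H C_0(G) = C(H)$ and $\Pol(\HH) = \Pol(H)$ follow.

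Next I compute the Hecke convolution classically. Here $I(\GGamma) = \Gamma$, all quantum dimensions equal $1$, $h_L$ is the counting measure and $\kappa_\alpha = 1$ for all $\alpha$; substituting this into~\eqref{eq_convol_right} gives, for $a \in c_c(\Gamma/\Lambda)$ and $b \in c_c(\Lambda\bs\Gamma)$,
\[
  (a*b)(\gamma) = \sum_{[\beta]\in\Lambda\bs\Gamma} b(\beta)\, a(\gamma\beta^{-1}),
\]
a finite sum independent of the choices of coset representatives. Applying this to the spanning elements $a = \one_{t\Lambda}$, $b = \one_{\Lambda s^{-1}}$ yields $\one_{t\Lambda} * \one_{\Lambda s^{-1}} = \one_{t\Lambda s^{-1}}$. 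These elements are self-adjoint and $*$ is bilinear, so $\Polc(\GG)$ is the unital $*$-subalgebra of $\ell^\infty(\Gamma)$ generated, under the pointwise product, by $\{\one_{t\Lambda s^{-1}} : s,t\in\Gamma\}$; as a product of such indicators is the indicator of the intersection, $\Polc(\GG) = \Span\{\one_V : V\text{ a finite intersection of sets } t\Lambda s^{-1}\}$.

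Now I translate to $G$. Under $\iota$ the function $\one_{t\Lambda s^{-1}}$ is the indicator of $\{g\in G : g\cdot s\Lambda = t\Lambda\}$, a coset (when non-empty) of $\mathrm{Stab}_G(s\Lambda)$; the latter is a conjugate of $H$ by an element of $G$, hence compact open. Finite intersections $\bigcap_i\{g : g\cdot s_i\Lambda = t_i\Lambda\}$ are again compact open, and by definition of the pointwise-convergence topology they form a basis of $G$; since every compact open subset lies in the Boolean algebra they generate, $\Polc(\GG)$ is the span of the indicators of all compact open subsets of $G$, i.e.\ the algebra of compactly supported locally constant functions. As is well known, and follows from the independence of $\Polc(G) = \Span(G\cdot\Pol(H))$ of the choice of compact open subgroup \cite{LandstadVanDaele_CompactOpen}, this is exactly $\Polc(G)$. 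Taking norm closures, $C_0(\GG) = \overline{\Polc(\GG)} = \overline{\Polc(G)} = C_0(G)$, the step functions being dense in $C_0(G)$. Together with the first paragraph this completes the proof.

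The step requiring most care is the identification $\Polc(\GG) = \Polc(G)$: one must pin down $\Polc(G)$ concretely as the compactly supported locally constant functions on $G$ (for this one uses the $H$-independence to see that $\one_{wKw^{-1}}\cdot w = \one_{wK}$ lies in $\Span(G\cdot\Pol(wKw^{-1}))=\Polc(G)$ for every compact open subgroup $K$), and then verify, via the inclusion–exclusion over the basis of basic clopen sets, that the algebra generated by the Hecke elements $\one_{t\Lambda s^{-1}}$ exhausts exactly these step functions. Minor points along the way are the well-definedness of the convolution formula above independently of coset representatives, the compactness of the stabilisers $\mathrm{Stab}_G(x)$, and the stability of the class of basic clopen sets under left and right translation.
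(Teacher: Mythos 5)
Your proof is correct and follows essentially the same route as the paper's: both identify the generating Hecke products $a*b$ with (spans of) indicators of cosets of finite intersections of conjugates of $\Lambda$ — i.e.\ of basic compact open subsets of $G$ — and then match these against the known description of $\Polc(G)$ as the compactly supported locally constant functions, the paper doing so by factoring the indicator of the basepoint of $\Gamma/\Gamma_F$ as a product of Hecke elements while you run the equivalent Boolean-algebra argument over the basis of clopen sets. One tiny slip: $\Polc(\GG)$ is the \emph{non-unital} subalgebra generated by the elements $a*b$ (declaring it unital would wrongly place $\one_\Gamma$ inside it when $G$ is non-compact), but since your subsequent identification uses only nonempty finite intersections the argument is unaffected.
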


\begin{proof}
  Observe that $a*b$ for $a\in c_c(\Gamma/\Lambda)$,
  $b\in c_c(\Lambda\bs\Gamma)$ is right invariant under the action of the
  intersection of the point stabilizers in $ \Gamma $ of all points in the
  finite set $ \Supp(b) \subset \Lambda\bs\Gamma$. Since this group contains the
  intersection of finitely many point stabilizers of the action of $\Gamma$ on
  $\Gamma/\Lambda$ we see that $a*b\in \Pol_c(G)$, compare the description of
  the latter in \cite{KLQ_Schlichting}.  Hence we get
  $\Polc(\GG) \subset \Polc(G)$. Conversely, an element $g$ of $\Pol(G)$ can be
  written as finite sum of characteristic functions on $\Gamma/\Gamma_F$ for
  finite sets $F\subset \Gamma/\Lambda$.  Modulo left translation by $\Gamma$ we
  can assume that $g$ is the characteristic function of the point $\Gamma_F $ in
  $\Gamma/\Gamma_F$. We can write this as product of all $a_\gamma*f_\gamma$,
  where
  $ a_\gamma =\delta_{\gamma\Lambda} \in
  c_c(\GGamma/\LLambda),f_\gamma=\ev_{\gamma\Lambda} \in
  c_c(\GGamma/\LLambda)^*$ for $\gamma \in F$.  This yields the equality
  $\Polc(G) = \Polc(\GG)$.
	
  From the fact that both $C_0(G)$ and $C_0(\GG)$ are completions of
  $\Polc(G) = \Polc(\GG)$ inside $B(\ell^2(\Gamma))$ we get
  $C_0(G)=C_0(\GG)$. Finally, the claim about the canonical subgroups follows
  from $\Pol(H)=p_\Lambda \Polc(G)=p_\Lambda \Polc(\GG)=\Pol(\HH)$.
\end{proof}

We now return to the general setup of quantum Hecke pairs.

\begin{proposition} \label{prop_CQG}
  Equipped with the restriction $\Delta_\HH$ of $(p_\HH\otimes p_\HH)\Delta$,
  $\Pol(\HH)$ (resp. $C(\HH)$) is a CQG algebra (resp. a Woronowicz \Cst
  algebra).
\end{proposition}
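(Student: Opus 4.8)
The plan is to show that $p_\HH = p_\LLambda$, which we already know to be a central group-like projection of the multiplier Hopf $*$-algebra $\Polc(\GG)$, cuts out a \emph{compact} quantum group. The Hopf $*$-algebra structure on $\Pol(\HH)$ will be a purely formal transfer along a surjection, and compactness will then be extracted from Woronowicz's characterisation of compact quantum groups.

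First I would note that, $p_\HH$ being a central projection of $\ell^\infty(\GGamma)$, it is central in $\Polc(\GG)$, so $\Pol(\HH) = p_\HH\Polc(\GG) = \Polc(\GG)p_\HH$ is a unital $*$-subalgebra with unit $p_\HH$, and $\pi_\HH\colon\Polc(\GG)\to\Pol(\HH)$, $z\mapsto p_\HH z$, is a surjective $*$-homomorphism (quotient by the two-sided $*$-ideal $\{z\mid p_\HH z=0\}$). From $\Delta(p_\HH)(1\otimes p_\HH) = p_\HH\otimes p_\HH = \Delta(p_\HH)(p_\HH\otimes 1)$ and their adjoints one gets $(p_\HH\otimes p_\HH)\Delta(p_\HH) = p_\HH\otimes p_\HH$, whence $\Delta_\HH\circ\pi_\HH = (\pi_\HH\otimes\pi_\HH)\circ\Delta$; moreover $\epsilon(p_\HH) = 1$ (because $1\in I(\LLambda)$) and $S(p_\HH) = p_\HH$ (because $I(\LLambda)$ is conjugation-closed and $S(\Polc(\GG))\subset\Polc(\GG)$), so $\epsilon$ and $S$ descend along $\pi_\HH$ to a counit $\epsilon_\HH$ and an antipode $S_\HH$ on $\Pol(\HH)$. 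Since $\pi_\HH$ is a surjective algebra $*$-map intertwining all the structure maps of $\Polc(\GG)$ with $\Delta_\HH,\epsilon_\HH,S_\HH$, every Hopf $*$-algebra identity on $\Pol(\HH)$ is the image under $\pi_\HH$ of the corresponding one on $\Polc(\GG)$; in particular $\Delta_\HH$ is coassociative and multiplicative, and Proposition~\ref{prop_coprod_schlichting} together with centrality of $p_\HH$ gives $\Delta_\HH(\Pol(\HH))\subset\Pol(\HH)\odot\Pol(\HH)$ and $\Delta_\HH(C(\HH))\subset C(\HH)\otimes C(\HH)$. So $\Pol(\HH)$ is a unital Hopf $*$-algebra, and $C(\HH) = p_\HH C_0(\GG)$ is a unital Hopf $C^*$-algebra (coassociativity extends to the completion by continuity).

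Next I would upgrade $C(\HH)$ to a Woronowicz $C^*$-algebra. Using again centrality of $p_\HH$ and $(p_\HH\otimes p_\HH)\Delta(p_\HH) = p_\HH\otimes p_\HH$, one checks that $\Delta_\HH(C(\HH))(1\otimes C(\HH)) = (p_\HH\otimes p_\HH)\bigl(\Delta(C_0(\GG))(1\otimes C_0(\GG))\bigr)$, and likewise on the other side; since $C_0(\GG)$ is bicancellative, taking closures yields $\overline{\Delta_\HH(C(\HH))(1\otimes C(\HH))} = (p_\HH\otimes p_\HH)(C_0(\GG)\otimes C_0(\GG)) = C(\HH)\otimes C(\HH)$, and similarly with $C(\HH)\otimes 1$. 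Thus $C(\HH)$ is a unital Hopf $C^*$-algebra satisfying the cancellation properties, hence a compact quantum group in the sense of Woronowicz, so it carries a Haar state and a dense CQG algebra $\Aa\subset C(\HH)$. Finally, $\Pol(\HH)$ is a Hopf $*$-algebra whose coproduct is algebraic, so by the fundamental theorem of coalgebras it is spanned by matrix coefficients of its finite-dimensional corepresentations; viewed inside the compact quantum group $C(\HH)$ these corepresentations are unitarisable, so $\Pol(\HH)$ is spanned by matrix coefficients of finite-dimensional \emph{unitary} corepresentations, i.e.\ it is a CQG algebra (and, being a dense Hopf $*$-subalgebra, it coincides with $\Aa$, which is what is used afterwards to build the Haar functionals of $\GG$). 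Hence $C(\HH)$ is a Woronowicz $C^*$-algebra and $\Pol(\HH)$ a CQG algebra.

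The formal transfer along $\pi_\HH$ is routine (one only has to be a little careful with multiplier subtleties, the true unit of $\Pol(\HH)$ being $p_\HH$ rather than the multiplier $1$). The substantive point is the bicancellativity of $C(\HH)$, i.e.\ that the cancellation identities survive compression by $p_\HH$: here it is essential that $p_\HH$ is central in all of $\ell^\infty(\GGamma)$, not merely a group-like idempotent, and that bicancellativity of $C_0(\GG)$ has already been established. If one prefers to avoid Woronowicz's existence theorem for the Haar state, the alternative is to produce the Haar functional of $\Pol(\HH)$ directly: $(\Pol(\HH),\Delta_\HH)$ is an honest coalgebra, hence by the fundamental theorem of coalgebras a directed union of finite-dimensional $*$-invariant subcoalgebras, and the faithful finite-dimensional bounded $*$-representations $x\mapsto x_\lambda$ ($\lambda\in I(\LLambda)$) coming from $\Pol(\HH)\subset\ell^\infty(\LLambda)$ force each of these to be cosemisimple, exhibiting $\HH$ as a profinite quantum group (the inverse limit of the finite quantum groups dual to these subcoalgebras), in parallel with the classical Schlichting picture.
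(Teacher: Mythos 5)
Your proof is correct and follows essentially the same route as the paper's: transfer the Hopf $*$-algebra structure to $\Pol(\HH)=p_\HH\Polc(\GG)$, verify the density/cancellation conditions for $C(\HH)$, invoke Woronowicz's theorem to get the Haar state, and restrict it to conclude that $\Pol(\HH)$ is a CQG algebra. The only substantive variation is where the cancellation conditions come from — the paper deduces them from the fact that $\Pol(\HH)$ is a unital Hopf $*$-algebra (for which $\Delta_\HH(\Pol(\HH))(1\otimes\Pol(\HH))=\Pol(\HH)\odot\Pol(\HH)$ holds algebraically via the antipode), whereas you compress the already-established bicancellativity of $C_0(\GG)$ by $p_\HH\otimes p_\HH$; both work, and the paper additionally makes explicit, via $\Delta(f)=W^*(1\otimes f)W$, why $\Delta_\HH$ extends continuously to $C(\HH)$, a point you pass over quickly but which is covered by your appeal to Proposition~\ref{prop_coprod_schlichting}.
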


\begin{proof}
  Recall that the comultiplication on $\Polc(\GG)$ is implemented on the Hilbert
  space level by conjugation with the multiplicative unitary for $\GGamma$,
  i.e.\ for $f \in \ell^\infty(\GGamma)$ we have
  $ \Delta(f)=W^*(1 \otimes f) W $ in
  $B(\ell^2(\GGamma) \otimes \ell^2(\GGamma))$. In particular, the
  comultiplication of $\Pol(\HH)$ extends continuously to a unital
  $*$-homomorphism $\Delta_\HH : C(\HH) \rightarrow C(\HH) \otimes C(\HH)$.
  Since we already know that $\Pol(\HH)$ is a Hopf $*$-algebra the cancellation
  conditions for $ C(\HH) $ are satisfied. Hence $C(\HH)$ is a Woronowicz \Cst
  algebra.
	
  This implies that there is a Haar functional on $\Pol(\HH)$, obtained by
  restricting from the Haar state of $C(\HH)$. We conclude that $\Pol(\HH)$ is a
  CQG algebra, compare \cite{KlimykSchmuedgen_Book}.
\end{proof}

So the corresponding compact quantum group $\HH$ is an ``algebraic''
compact open quantum subgroup of $\GG$, with restriction map induced by the
projection $p_\HH = p_\LLambda$. We denote by $h$ its Haar functional. We can
identify the corresponding homogeneous space
$\Pold(\GG/\HH)= \{a\in \Polc(\GG) \mid (1 \otimes p_\HH)(\Delta(a)) = a \otimes
p_\HH\}$ as follows.

\begin{proposition}\label{prop_schlichting_homogeneous}
  We have $\Pold(\GG/\HH) = c_c(\GGamma/\LLambda)$ as subspaces of
  $\ell^\infty(\GGamma)$. The coproduct restricts to an algebraic action
  $\Pold(\GG/\HH) \to \Mm(\Polc(\GG)\odot \Pold(\GG/\HH))$, in particular we
  have $\Delta(\Pold(\GG/\HH))$
  $(\Polc(\GG)\otimes 1) \subset \Polc(\GG)\odot \Pold(\GG/\HH)$.

  If $\mu$ is a $\GGamma$-invariant functional on $c_c(\GGamma/\LLambda)$, i.e.\
  ${(\id\otimes\mu)}$ ${((p_\alpha\otimes 1)\Delta(x))} = \mu(x)p_\alpha$ for
  all $x\in c_c(\GGamma/\LLambda)$ and $\alpha \in I(\GGamma)$, then $\mu$ is at
  the same time a $\GG$-invariant functional on $\Pold(\GG/\HH)$, i.e.\
  ${(\id\otimes\mu)} {((y\otimes 1)\Delta(x))} = \mu(x)y$ for all
  $x\in \Pold(\GG/\HH)$, $y\in \Polc(\GG)$.
\end{proposition}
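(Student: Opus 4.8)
The plan is to establish the three assertions in turn, using the explicit formulas from Definition~\ref{def_convol} and the earlier structural results, above all Theorem~\ref{thm_strong_inv} and Proposition~\ref{prp_quotient_description}.

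First I would prove the set-theoretic equality $\Pold(\GG/\HH) = c_c(\GGamma/\LLambda)$. For the inclusion $\supseteq$: given $a \in c_c(\GGamma/\LLambda)$, write $a = a*p_\LLambda$ by Theorem~\ref{thm_strong_inv} (the convolution unit property), so $a \in \Polc(\GG)$; and $(1\otimes p_\HH)\Delta(a) = (1\otimes p_\LLambda)\Delta(a) = a\otimes p_\LLambda = a\otimes p_\HH$ is exactly the defining condition of $c_c(\GGamma/\LLambda)$ inside $\ell^\infty(\GGamma)$, recorded in Section~\ref{sec_quotient_spaces}. For the reverse inclusion $\subseteq$: take $x\in\Polc(\GG)$ with $(1\otimes p_\HH)\Delta(x) = x\otimes p_\HH$. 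Since $\Polc(\GG)\subset\ell^\infty(\GGamma)$, the condition $(1\otimes p_\LLambda)\Delta(x) = x\otimes p_\LLambda$ says precisely $x\in\ell^\infty(\GGamma/\LLambda)$; it remains to see $x\in c_c(\GGamma/\LLambda)$, i.e.\ that $x$ is supported on finitely many right cosets $\sigma\in I(\GGamma)/\LLambda$. This follows because $x$ is a finite linear combination of generators $a*b$, and by Lemma~\ref{lem_convol_support} each $a*b$ has support contained in the finite union of double cosets $\dc\alpha$ over $\alpha\in\Supp(a)$ — hence in finitely many right cosets, since $(\GGamma,\LLambda)$ is a Hecke pair so every $\dc\alpha$ contains only finitely many right cosets $R(\dc\alpha)<\infty$. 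Thus $x \in c_c(\GGamma/\LLambda)$.

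Next, for the coaction property: by Proposition~\ref{prop_coprod_schlichting} we already know $\Delta(\Polc(\GG))(\Polc(\GG)\otimes 1)\subset\Polc(\GG)\odot\Polc(\GG)$, and from Section~\ref{sec_quotient_spaces} the coproduct restricts to an algebraic right (here, with the coefficient convention, the second-leg) coaction of $\GGamma$ on $c_c(\GGamma/\LLambda)$, namely $(c_c(\GGamma)\otimes 1)\Delta(c_c(\GGamma/\LLambda)) = c_c(\GGamma)\otimes c_c(\GGamma/\LLambda)$. Combining these: for $x\in\Pold(\GG/\HH)=c_c(\GGamma/\LLambda)$ and $y\in\Polc(\GG)$, the element $(y\otimes 1)\Delta(x)$ lies in $\Polc(\GG)\odot\ell^\infty(\GGamma)$ by Proposition~\ref{prop_coprod_schlichting}, while lying in $c_c(\GGamma)\odot c_c(\GGamma/\LLambda)$ by the displayed $\GGamma$-coaction identity (after enlarging $y$ to a suitable element of $c_c(\GGamma)$ dominating its support); intersecting the second tensor leg we get $(y\otimes 1)\Delta(x) \in \Polc(\GG)\odot c_c(\GGamma/\LLambda) = \Polc(\GG)\odot\Pold(\GG/\HH)$. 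Applying $\Delta$ to $x = x*p_\LLambda$ and using $\Delta(a*b)=(\id\otimes *b)\Delta(a)$ from Proposition~\ref{prp_convol_coprod} gives $\Delta(\Pold(\GG/\HH))\subset\Mm(\Polc(\GG)\odot\Pold(\GG/\HH))$ directly, with the density/nondegeneracy coming from the analogous density statement for the $\GGamma$-coaction.

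Finally, for the invariance of $\mu$: the hypothesis is $(\id\otimes\mu)((p_\alpha\otimes 1)\Delta(x)) = \mu(x)p_\alpha$ for all $x\in c_c(\GGamma/\LLambda)$ and $\alpha\in I(\GGamma)$. Since $c_c(\GGamma) = \bigoplus_\alpha B(H_\alpha) = \Span\{p_\alpha\}$-module-spanned, summing over $\alpha$ (finitely many contribute once $x$ and $y$ are fixed, by the support bound just used) yields $(\id\otimes\mu)((y\otimes 1)\Delta(x)) = \mu(x)y$ for all $y\in c_c(\GGamma)$. To upgrade $y$ from $c_c(\GGamma)$ to $\Polc(\GG)$: write $y = c*d$ with $c\in c_c(\GGamma/\LLambda)$, $d\in c_c(\LLambda\bs\GGamma)$, and note that $(y\otimes 1)\Delta(x)$ already lies in $\Polc(\GG)\odot c_c(\GGamma/\LLambda)$ by the coaction property just proved, so applying $\id\otimes\mu$ makes sense and lands in $\Polc(\GG)$; the identity $(\id\otimes\mu)((y\otimes 1)\Delta(x))=\mu(x)y$ then follows by approximating $y$ in the relevant finite-dimensional block structure, or more cleanly by the cokernel characterization — every $y\in\Polc(\GG)$ is a limit (indeed in this finite-support setting an exact finite combination after multiplying by suitable $p_\alpha$'s) of elements $(\id\otimes\psi)\Delta(a)$ with $a\in c(\GGamma/\LLambda)$, $\psi\in c_c(\GGamma/\LLambda)^\vee$, for which $\GGamma$-invariance of $\mu$ propagates through coassociativity. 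I expect the main obstacle to be precisely this last upgrade — checking that the finitely-supported generators of $\Polc(\GG)$ interact correctly with $\mu$ — but Lemma~\ref{lem_convol_support} keeps every sum finite, so it is a bookkeeping matter rather than an analytic one.
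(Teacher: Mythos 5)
Your overall architecture matches the paper's: the unit property $a = a*p_\LLambda$ for the inclusion $c_c(\GGamma/\LLambda)\subseteq\Pold(\GG/\HH)$, Lemma~\ref{lem_convol_support} plus the Hecke condition for the finite-support direction, Proposition~\ref{prop_coprod_schlichting} for the coaction, and a block-wise argument for the invariance of $\mu$. But two steps as written are wrong. The more serious one is the claim that $\Supp(a*b)\subseteq\bigcup_{\alpha\in\Supp(a)}\dc\alpha$. Lemma~\ref{lem_convol_support} only gives $\gamma\subset\alpha\otimes\lambda\otimes\beta$ where $\beta\in\Supp(b)$ is an arbitrary irreducible, not an element of $I(\LLambda)$, so $\gamma$ need not be double-coset equivalent to $\alpha$: take $a=p_\LLambda$ and $b=p_{[\beta]}$ with $\beta\notin I(\LLambda)$, so that $a*b=b$ is supported on $[\beta]$, which is disjoint from $\dc 1=I(\LLambda)$. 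Consequently invoking $R(\dc\alpha)<\infty$ for $\alpha\in\Supp(a)$ proves nothing. The correct argument (the paper's) works on the other factor: the irreducible subobjects $\mu$ of $\lambda\otimes\beta$ lie in $[\beta]\subset\dc\beta$; by the Hecke condition $\dc\beta$ is a finite union of right cosets $[\gamma_i]$, hence $\gamma\subset\alpha\otimes\gamma_i\otimes\lambda'$ for some $i$ and $\lambda'\in I(\LLambda)$, and $\gamma$ therefore lies in one of the finitely many right cosets determined by the irreducible subobjects of $\alpha\otimes\gamma_i$.

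The second problem is the parenthetical ``after enlarging $y$ to a suitable element of $c_c(\GGamma)$ dominating its support'' in the coaction step. Elements of $\Polc(\GG)$ are supported on finitely many right cosets, but these cosets are typically infinite, so $\Polc(\GG)\not\subseteq c_c(\GGamma)$ and no finitely supported element dominates a general $y\in\Polc(\GG)$; the identity $(c_c(\GGamma)\otimes 1)\Delta(c_c(\GGamma/\LLambda)) = c_c(\GGamma)\otimes c_c(\GGamma/\LLambda)$ cannot be applied this way. Your leg-intersection idea can be repaired: since $(y\otimes 1)\Delta(x)$ lies in $\Polc(\GG)\odot\Polc(\GG)$ by Proposition~\ref{prop_coprod_schlichting} and in $\ell^\infty(\GGamma)\bar\otimes\ell^\infty(\GGamma/\LLambda)$ because $x$ is right $\LLambda$-invariant, slicing the first leg by finitely supported functionals places the second leg in $\Polc(\GG)\cap\ell^\infty(\GGamma/\LLambda)=\Pold(\GG/\HH)$; the paper instead verifies directly that $z=(y\varphi\otimes\id)\Delta(x)$ satisfies $(1\otimes p_\LLambda)\Delta(z)=z\otimes p_\LLambda$. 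The remaining parts are fine: the first inclusion is exactly the paper's, and for the invariance of $\mu$ your ``check on finite-dimensional blocks'' is in substance the paper's one-line argument of multiplying by an arbitrary central projection $p_\alpha$ and using $(yp_\alpha\otimes 1)\Delta(x)=(y\otimes 1)(p_\alpha\otimes 1)\Delta(x)$.
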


\begin{proof}
  Take $a\in c_c(\GGamma/\LLambda)$. Then we have $a = a * p_\LLambda$ hence
  $a\in \Polc(\GG)$. By definition of $c(\GGamma/\LLambda)$ we have
  $(1\otimes p_\LLambda)\Delta(a) = a\otimes p_\LLambda$ hence
  $a\in \Pold(\GG/\HH)$. For the converse inclusion, take $x\in
  \Pold(\GG/\HH)$. In particular we have
  $(1\otimes p_\LLambda)\Delta(x) = x\otimes p_\LLambda$ so
  $x\in c(\GGamma/\LLambda)$. It remains to prove that $x$ has finite support in
  this algebra, i.e.\ $p_\tau x = 0$ for all but a finite number of classes
  $\tau\in I(\GGamma)/\LLambda$. It is clearly sufficient to prove this for
  $x = a*b$ with $a\in c_c(\GGamma/\LLambda)$, $b\in c_c(\LLambda\bs\GGamma)$,
  since taking products of such elements reduces the support. But this results
  from Lemma~\ref{lem_convol_support} and the Hecke condition: if
  $\gamma\in\Supp(a*b)$ then $\gamma \subset \alpha \otimes \mu$ with
  $\alpha\in\Supp(a)$ and $\mu\in \dc\beta$, $\beta\in\Supp(b)$; writing
  $\dc\beta$ as a finite union of right classes $[\gamma]$ and decomposing
  $\alpha\otimes\gamma$ into a finite number of irreducibles $\delta$ we see
  that $\Supp(a*b)$ in included in the union of the finite number of right
  classes $[\delta]$.

  For $x\in \Pold(\GG/\HH)$, $y\in \Polc(\GG)$ we have
  $\Delta(x)(y\otimes 1) \subset \Polc(\GG)\odot \Polc(\GG)$ by
  Proposition~\ref{prop_coprod_schlichting}, since $x\in \Polc(\GG)$. It remains
  to show that $z = (y\varphi\otimes\id)\Delta(x) \in \Pold(\GG/\HH)$ for any
  $\varphi \in c_c(\GGamma)^*$ with finite support. But we can write
  \begin{displaymath}
    (1\otimes p_\LLambda)\Delta(z) = (\varphi\otimes\id\otimes \id)((y\otimes
    1)\Delta\otimes\id)[(1\otimes p_\LLambda)\Delta(x)] 
  \end{displaymath}
  where all terms belong to the corresponding algebraic tensor products, and
  since ${(1\otimes p_\LLambda)}$ $\Delta(x) = x\otimes p_\LLambda$ we recognize
  $(1\otimes p_\LLambda)\Delta(z) = z\otimes p_\LLambda$. The last assertion is
  trivial because we can check the equality
  ${(\id\otimes\mu)} {((y\otimes 1)\Delta(x))} = \mu(x)y$ by multiplying by an
  arbitrary central projection $p_\alpha$.
\end{proof}

Note that $(\id\otimes h p_\HH)\Delta(x)$ is well-defined in $\Polc(\GG)$ for
any $x\in \Polc(\GG)$ since
$(\id\otimes p_\HH)\Delta(x) \in \Polc(\GG)\odot p_\HH\Polc(\GG) =
\Polc(\GG)\odot \Pol(\HH)$.

\begin{proposition}\label{prop_conditional_expect}
  For any $x\in \Polc(\GG)$ we have
  $(\id\otimes h p_\HH)\Delta(x) \in \Pold(\GG/\HH)$. If $\mu$ is a
  $\GG$-invariant functional on $\Pold(\GG/\HH)$ then
  $\varphi : x\mapsto \mu[(\id\otimes hp_\HH)\Delta(x)]$ defines a left
  invariant functional on $\Polc(\GG)$.
\end{proposition}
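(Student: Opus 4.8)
The plan is to introduce the map $E := (\id\otimes hp_\HH)\Delta\colon\Polc(\GG)\to\Polc(\GG)$, which is well defined into $\Polc(\GG)$ by the remark preceding the statement, to show that it is a projection onto $\Pold(\GG/\HH)$ that intertwines $\Delta$ in the appropriate sense, and then to transport the $\GG$-invariance of $\mu$ through $E$ using coassociativity. The one point that needs care — and the closest thing to an obstacle — is to keep every occurrence of $h$ and of the invariance of $\mu$ on the purely algebraic level; this is ensured by the fact recalled just before the statement that $(\id\otimes p_\HH)\Delta(x)\in\Polc(\GG)\odot\Pol(\HH)$ is a genuine finite sum, together with the compatibility $(p_\HH\otimes p_\HH)\Delta(p_\HH\,\cdot\,)=\Delta_\HH(p_\HH\,\cdot\,)$ between the coproduct of $\GGamma$ cut down by $p_\HH$ and the coproduct $\Delta_\HH$ of the CQG algebra $\Pol(\HH)$, which itself follows from $\Delta(p_\HH)(1\otimes p_\HH)=p_\HH\otimes p_\HH=\Delta(p_\HH)(p_\HH\otimes 1)$ (together with its adjoint).

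First I would prove $E(x)\in\Pold(\GG/\HH)$ for $x\in\Polc(\GG)$. Write $(\id\otimes p_\HH)\Delta(x)=\sum_i a_i\otimes b_i$ with $a_i\in\Polc(\GG)$, $b_i\in\Pol(\HH)$, so that $E(x)=\sum_i h(b_i)a_i$. Coassociativity gives
\begin{displaymath}
  (1\otimes p_\HH)\Delta(E(x))=(\id\otimes\id\otimes h)\big[(1\otimes p_\HH\otimes p_\HH)(\id\otimes\Delta)\Delta(x)\big],
\end{displaymath}
and the compatibility above identifies the bracket with $\sum_i a_i\otimes\Delta_\HH(b_i)$. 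Applying $(\id\otimes h)$ to the last two legs and using right invariance of the Haar state of $\HH$, namely $(\id\otimes h)\Delta_\HH(b)=h(b)p_\HH$, the right-hand side becomes $\sum_i h(b_i)a_i\otimes p_\HH=E(x)\otimes p_\HH$. Since $E(x)\in\Polc(\GG)$ this shows $E(x)\in\Pold(\GG/\HH)$.

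Then I would deduce the second assertion. Again by coassociativity (since $E$ is just $\Delta$ followed by a functional on the last leg) one has $\Delta(E(x))=(\id\otimes E)\Delta(x)$ for all $x\in\Polc(\GG)$; by the first part $E(x)\in\Pold(\GG/\HH)$, so Proposition~\ref{prop_schlichting_homogeneous} applies and gives $(y\otimes 1)\Delta(E(x))\in\Polc(\GG)\odot\Pold(\GG/\HH)$ for every $y\in\Polc(\GG)$. Writing $\Delta(x)=\sum x_{(1)}\otimes x_{(2)}$ and using the $\GG$-invariance of $\mu$ on $\Pold(\GG/\HH)$ from Proposition~\ref{prop_schlichting_homogeneous}, one computes, for all $x,y\in\Polc(\GG)$,
\begin{displaymath}
  (\id\otimes\varphi)\big((y\otimes 1)\Delta(x)\big)=\sum y x_{(1)}\,\mu(E(x_{(2)}))=(\id\otimes\mu)\big((y\otimes 1)\Delta(E(x))\big)=\mu(E(x))\,y=\varphi(x)\,y,
\end{displaymath}
i.e. $(\id\otimes\varphi)\Delta(x)=\varphi(x)1$ in $\Mm(\Polc(\GG))$, which is left invariance of $\varphi$.

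Apart from the algebraic-level bookkeeping flagged above, the argument is routine: the two ingredients doing all the work are coassociativity of $\Delta$ and the invariance of the Haar state $h$ of $\HH$ (for the first statement) and of $\mu$ (for the second), and the only care needed is to check at each step that one stays inside algebraic tensor products, which is why the finiteness of $(\id\otimes p_\HH)\Delta(x)$ and the identity $(p_\HH\otimes p_\HH)\Delta(p_\HH\,\cdot\,)=\Delta_\HH(p_\HH\,\cdot\,)$ are invoked explicitly.
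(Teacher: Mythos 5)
Your proof is correct and follows essentially the same route as the paper's: the same map $E=T=(\id\otimes hp_\HH)\Delta$, the same coassociativity identity cut down by $p_\HH$ in the last two legs combined with invariance of the Haar state of $\HH$ for the first claim, and the same equivariance $(y\otimes 1)\Delta(E(x))=(\id\otimes E)((y\otimes 1)\Delta(x))$ together with $\GG$-invariance of $\mu$ for the second. The algebraic-tensor-product bookkeeping you flag is exactly the point the paper also makes explicit, so there is nothing to add.
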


\begin{proof}
  Define a map $T : \Polc(\GG) \to \Polc(\GG)$ by setting
  $T(x):= (\id\otimes hp_\HH)\Delta(x)$, $x\in \Polc(\GG)$. Let us check that
  $T(x) \in \Pold(\GG/\HH)$. We have
  $(\Delta\otimes\id)\Delta(x) = (\id\otimes\Delta)\Delta(x)$ in
  $\ell^\infty(\GGamma)\bar\otimes \ell^\infty(\GGamma)\bar\otimes
  \ell^\infty(\GGamma)$.  Multiplying on the left by
  $1\otimes p_\HH\otimes p_\HH$ we obtain, since
  ${(p_\HH\otimes p_\HH)}$ ${\Delta(1-p_\HH)} = 0$:
  \begin{displaymath}
    (1\otimes p_\HH\otimes 1)(\Delta\otimes\id)((1\otimes p_\HH)\Delta(x)) =
    (\id\otimes\Delta_\HH)((1\otimes p_\HH)\Delta(x)).
  \end{displaymath}
  Note that both sides of the identity now lie in
  $\Polc(\GG)\odot \Pol(\HH)\odot\Pol(\HH)$. Applying $\id\otimes\id\otimes h$
  we obtain
  $(1\otimes p_\HH)\Delta(T(x)) = (\id\otimes p_\HH h)((1\otimes
  p_\HH)\Delta(x)) = T(x)\otimes p_\HH$, by invariance of $h$.

  On the other hand, starting again from the coassociativity relation and
  multiplying by $y\otimes 1\otimes p_\HH$ we get the following identity in
  $\Polc(\GG)\odot \Polc(\GG)\odot \Polc(\HH)$:
  \begin{displaymath}
    ((y\otimes 1)\Delta\otimes\id)((1\otimes p_\HH)\Delta(x)) =
    (\id\otimes(1\otimes p_\HH)\Delta)((y\otimes 1)\Delta(x)).
  \end{displaymath}
  Applying $\id\otimes\id\otimes h$ we obtain
  $(y\otimes 1)\Delta(T(x)) = (\id\otimes T)((y\otimes 1)\Delta(x))$. In other
  words, $T : \Polc(\GG)\to \Pold(\GG/\HH)$ is equivariant with respect to the
  left $\GG$-actions.
  
  So we can indeed define $\varphi = \mu\circ T$, and it is left invariant if
  $\mu$ is invariant on $\Pold(\GG/\HH)$:
  \begin{displaymath}
    y\mu T(x) = (\id\otimes \mu)((y\otimes 1)\Delta(T(x))) =
    (\id\otimes \mu T)((y\otimes 1)\Delta(x)),
  \end{displaymath}
  using the previous equivariance identity for $T$.
\end{proof}

\begin{theorem}
  Suppose that $(\GGamma, \LLambda)$ is a quantum Hecke pair. Then $\GG$
  introduced in Definition \ref{def_schlichting} is an algebraic quantum
  group. We call the pair $(\GG,\HH)$ the Schlichting completion of
  $(\GGamma, \LLambda)$.
\end{theorem}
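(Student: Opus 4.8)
The plan is to promote $\Polc(\GG)$ to an algebraic quantum group by exhibiting on it the required invariant integrals, all of the structural work having already been done: the preceding proposition shows that $\Polc(\GG)$, with the restriction of $\Delta$, the inherited $*$-structure, the counit $\epsilon$ and the antipode $S$, is a multiplier Hopf $*$-algebra (hence regular, $S^{-1}$ being $x\mapsto S(x^*)^*$), and Proposition~\ref{prop_CQG} identifies $\Pol(\HH) = p_\HH\Polc(\GG)$ as a CQG algebra with faithful Haar state $h$. So it remains to produce a positive faithful left invariant functional and, symmetrically, a positive faithful right invariant one; Van Daele's theory \cite{VanDaele_Algebraic} then yields that $\GG$ is an algebraic quantum group, and \cite{KustermansVanDaele_CstarAlgebraic} furnishes the associated locally compact quantum group.

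For the left integral I would take the functional $\mu$ of Definition~\ref{def_invariant_measure}. By Proposition~\ref{prp_invariant_measure} it is $\GGamma$-invariant on $c_c(\GGamma/\LLambda)$, hence by Proposition~\ref{prop_schlichting_homogeneous} it is a $\GG$-invariant functional on $\Pold(\GG/\HH) = c_c(\GGamma/\LLambda)$, and it is positive and faithful because $(a\mid b) = \mu(a^*b)$ is positive-definite. Writing $T(x) = (\id\otimes h p_\HH)\Delta(x)$, Proposition~\ref{prop_conditional_expect} gives directly that $T$ maps $\Polc(\GG)$ into $\Pold(\GG/\HH)$ and that $\varphi := \mu\circ T$ is left invariant on $\Polc(\GG)$; it is non-zero since $T(p_\HH) = (\id\otimes h)(\Delta(p_\HH)(1\otimes p_\HH)) = (\id\otimes h)(p_\HH\otimes p_\HH) = p_\HH$, so $\varphi(p_\HH) = \mu(p_\LLambda) = 1$. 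Positivity of $\varphi$ is automatic: $T = (\id\otimes h p_\HH)\circ\Delta$ is the composition of the $*$-homomorphism $\Delta$ with the positive slice map $\id\otimes h p_\HH$, hence positive, and $\mu$ is positive. Running the mirror construction with $c_c(\LLambda\bs\GGamma) = \Pold(\HH\bs\GG)$, the functional $\nu$ of Definition~\ref{def_invariant_measure} (which is likewise positive and faithful, e.g.\ since $\nu = \mu\circ R$ for the unitary antipode $R$, using Lemma~\ref{lemma_quotient_antipode}), and $T'(x) = (h p_\HH\otimes\id)\Delta(x)$, the left–right symmetric version of Proposition~\ref{prop_conditional_expect} produces a positive right invariant functional $\psi := \nu\circ T'$ on $\Polc(\GG)$.

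The step I expect to require the most care is faithfulness of $\varphi$ (that of $\psi$ being symmetric). Here I would argue as follows. Suppose $\varphi(x^*x) = 0$. Since $p_\HH = p_\LLambda$ is a central projection of $\ell^\infty(\GGamma)$ it commutes with $\Delta(x)$, so that the element $w := (1\otimes p_\HH)\Delta(x)$ lies in $\Polc(\GG)\odot\Pol(\HH)$ and $T(x^*x) = (\id\otimes h)(w^*w) \geq 0$; faithfulness of $\mu$ then forces $T(x^*x) = 0$. Expanding $w = \sum_i a_i\otimes b_i$ with the $b_i\in\Pol(\HH)$ linearly independent and using faithfulness of the Haar state $h$, the Gram matrix $(h(b_i^*b_j))_{i,j}$ is positive definite, so after a Gram--Schmidt change of basis $0 = (\id\otimes h)(w^*w)$ becomes $\sum_i (a'_i)^*a'_i = 0$ inside the operator $*$-algebra $\Polc(\GG)\subset\ell^\infty(\GGamma)$, whence each $a'_i$, and therefore $w = (1\otimes p_\HH)\Delta(x)$, vanishes. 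Finally, the counit $\epsilon$ of $\GGamma$ is normal on $\ell^\infty(\GGamma)$ with $\epsilon(p_\LLambda) = 1$, so applying $\id\otimes\epsilon$ and using $(\id\otimes\epsilon)\Delta = \id$ gives $x = (\id\otimes\epsilon)((1\otimes p_\HH)\Delta(x)) = 0$. The main obstacle is thus a bookkeeping one: ensuring throughout this faithfulness argument that the manipulations genuinely take place in honest algebraic tensor products (of $\Polc(\GG)$ with $\Pol(\HH)$), so that faithfulness of the algebraic Haar state $h$ may legitimately be invoked; once that is secured, everything else is assembled from the propositions established above.
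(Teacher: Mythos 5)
Your proof is correct and follows essentially the same route as the paper: take the invariant functional $\mu$ on $\Pold(\GG/\HH)=c_c(\GGamma/\LLambda)$, set $\varphi=\mu\circ T$ with $T=(\id\otimes hp_\HH)\Delta$ via Proposition~\ref{prop_conditional_expect}, and check that $\varphi(p_\HH)=1$ and that $\varphi$ is positive. The additional details you supply --- the explicit faithfulness argument for $\varphi$ and the explicit positive right invariant functional $\nu\circ T'$ --- are sound but are left implicit in the paper, which relies on Van Daele's general theory that a multiplier Hopf $*$-algebra with a non-zero positive left invariant functional is an algebraic quantum group.
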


\begin{proof}
  We apply Proposition~\ref{prop_conditional_expect} to the functional $\mu$ on
  $c_c(\GGamma/\LLambda)$ from Definition~\ref{def_scalar}, which is invariant
  by Proposition~\ref{prp_invariant_measure}, and defines at the same time an
  invariant functional on $\Pold(\GG/\HH)$ by
  Proposition~\ref{prop_schlichting_homogeneous}. We have
  $\varphi(p_\HH) = \mu(p_\LLambda) = 1$, hence $\varphi$ does not
  vanish. Finally it is positive because $\mu\otimes h$ is positive; indeed
  $\mu$ and $h$ have both \Cst algebraic realizations --- recall that $\mu$ is a
  sum of positive forms on each matrix factor of
  $\Pold(\GG/\HH) = c_c(\GGamma/\LLambda)$.
\end{proof}

\bigskip

Now we compare the Hecke algebras of a Hecke pair and of its Schlichting
completion. This will provide an analytic proof that the ``discrete'' Hecke
operators on $\ell^2(\GGamma/\LLambda)$ are bounded.

\begin{proposition} \label{prp_heckeident} Let $(\GG,\HH)$ be the Schlichting
  completion of a quantum Hecke pair $(\GGamma, \LLambda)$. Then we have canonical
  identifications
  \begin{displaymath}
    \Hh(\GG,\HH) \simeq \End_\GG(\Pold(\GG/\HH)) = 
    \End_\GGamma(c_c(\GGamma/\LLambda)) \simeq \Hh(\GGamma, \LLambda), 
  \end{displaymath}
  compatible with the multiplications. This identification is compatible with
  the $*$-structures if we identify a bi-invariant function
  $ f \in \Pold(\HH \bs \GG/\HH)$, viewed as element of $\Hh(\GG,\HH)$, with
  $\hat{\sigma}_{-i/2}(f) \in c_c(\LLambda \bs\GGamma/\LLambda)$, viewed as
  element of $\Hh(\GGamma, \LLambda)$.
\end{proposition}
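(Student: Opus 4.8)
The plan is to assemble the identification from three pieces, two of which are already available and one of which requires matching the two a priori different convolution products and $*$-structures. First I would invoke Proposition~\ref{prp_endotd} applied to the Schlichting completion $(\GG,\HH)$: this gives an anti-multiplicative algebra isomorphism $\Hh(\GG,\HH)\simeq\End_\GG(\Pold(\GG/\HH))$, with $\Pold(\GG/\HH)^\HH=\Pold(\HH\bs\GG/\HH)$. Next, using Proposition~\ref{prop_schlichting_homogeneous}, which asserts $\Pold(\GG/\HH)=c_c(\GGamma/\LLambda)$ as subspaces of $\ell^\infty(\GGamma)$, together with the fact that the $\GG$-module structure is induced from the coproduct of $\GGamma$ (the relevant $\GG$-action on $\Pold(\GG/\HH)$ is the restriction of $\Delta$, which is the same map that defines the $\CC[\GGamma]$-module structure on $c_c(\GGamma/\LLambda)$ used before Proposition~\ref{prp_endo}), I would identify $\End_\GG(\Pold(\GG/\HH))=\End_\GGamma(c_c(\GGamma/\LLambda))$. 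The point here is that equivariance for the $\Dd(\GG)$-action and equivariance for the $\CC[\GGamma]$-action are literally the same condition, since $\Delta$ is the structure map in both cases; I would spell this out briefly. Finally, Proposition~\ref{prp_endo} and Theorem~\ref{thm_adjoint} give $\End_\GGamma(c_c(\GGamma/\LLambda))\simeq c_c(\GGamma/\LLambda)^\LLambda$, and (for a Hecke pair) $c_c(\GGamma/\LLambda)^\LLambda=c_c(\GGamma/\LLambda)\cap c_c(\LLambda\bs\GGamma)=\Hh(\GGamma,\LLambda)$, with the evaluation-at-$p_\LLambda$ isomorphism.

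The key step, and the main obstacle, is to check that the two convolution products on $\Hh(\GGamma,\LLambda)\cong c_c(\GGamma/\LLambda)^\LLambda$ agree. On the $\Hh(\GG,\HH)$ side the product is the convolution product of $\Polc(\GG)$ restricted to $p_\HH*\Polc(\GG)*p_\HH$, i.e.\ $f*g=(\id\otimes\varphi S^{-1}(g))\Delta(f)$ with $\varphi$ the left Haar functional of $\GG$. On the $\Hh(\GGamma,\LLambda)$ side the product is the discrete Hecke convolution of Definition~\ref{def_convol}, e.g.\ $a*b=\sum_{[\beta]}\kappa_{\bar\beta}^{-1}(\id\otimes S(b_\beta)h_L)\Delta(a)$. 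These must be matched through the identification $\Pold(\GG/\HH)=c_c(\GGamma/\LLambda)$: the crucial input is that the left Haar functional $\varphi$ of $\GG$ constructed in the proof of the preceding theorem is exactly $x\mapsto\mu[(\id\otimes h_\HH p_\HH)\Delta(x)]$ with $\mu$ the functional of Definition~\ref{def_scalar}. Using $\varphi(hx)=\mu$ against the appropriate coproduct component, and Corollary~\ref{crl_indep} to see that $\kappa_\alpha^{-1}h_L$ restricted to the matrix block $p_\alpha$ recovers $\mu$ on the class $[\alpha]$, one recognizes the discrete Hecke product. Since both maps $\Tdisc$ (Proposition~\ref{prp_endo}) and $T$ (Proposition~\ref{prp_endotd}) are given by the same formula $f\mapsto(a\mapsto a*f)$ — once one knows the two products coincide — the square of identifications commutes and the composite is an anti-multiplicative isomorphism; composing $\Hh(\GG,\HH)\to\End_\GG\to\End_\GGamma\to\Hh(\GGamma,\LLambda)$ (each anti-multiplicative) the net effect on the products is multiplicative.

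For the $*$-structures: on $\Hh(\GGamma,\LLambda)$ the involution is $a\mapsto a^\sharp=S(a^*)$ (Proposition~\ref{prp_convol}), while on $\Hh(\GG,\HH)$ it is $f\mapsto f^{\sharp}=S(f)^*\delta$ with $\delta$ the modular element of $\GG$, equivalently $f\mapsto\Ff^{-1}(\Ff(f)^*)$. These do not match on the nose; the remedy, as stated, is to precompose with the algebra isomorphism $f\mapsto\hat\sigma_{-i/2}(f)$ from $(\Polc(\GG),*)$ to $\Dd(\GG)$ described earlier in the section. I would verify that $\hat\sigma_{-i/2}$ maps the center-type subalgebra $\Pold(\HH\bs\GG/\HH)$ into $c_c(\LLambda\bs\GGamma/\LLambda)$ compatibly (using $\hat\sigma_t(p_\HH)=p_\HH$ and $\delta p_\HH=p_\HH$ noted in the text), and that under this twist the $\sharp$-operation $S(\cdot)^*\delta$ on $\Hh(\GG,\HH)$ becomes precisely $S(\cdot^*)$ on the discrete side — this is the standard computation relating the two natural $*$-structures on an algebraic quantum group's Fourier dual, localized by the group-like projection $p_\HH$. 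Once this compatibility is checked, the composite isomorphism is a $*$-isomorphism, which is the assertion of the proposition.
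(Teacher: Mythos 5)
Your proposal is correct and follows essentially the same route as the paper, which simply assembles the three identifications from Proposition~\ref{prp_endotd}, Proposition~\ref{prop_schlichting_homogeneous} (equality of the homogeneous spaces and their module structures), and Proposition~\ref{prp_endo}/Theorem~\ref{thm_adjoint}. The direct verification that the two convolution products agree, which you flag as the key obstacle, is actually automatic: since $T$ and $\Tdisc$ are both anti-multiplicative and both invert to evaluation at $p_\HH=p_\LLambda$, the composite $f\mapsto \ev_\LLambda(T(f))=p_\HH*f=f$ is forced to be multiplicative, so the products coincide without any Haar-functional computation.
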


\begin{proof}
  The first and last identifications are given by Propositions~\ref{prp_endotd}
  and~\ref{prop_adjoint} respectively. The identity in the middle is given by
  Proposition~\ref{prop_schlichting_homogeneous}.
\end{proof}

\begin{proposition}
  We have a canonical unitary isomorphism
  $\ell^2(\GGamma/\LLambda) \simeq \ell^2(\GG/\HH)$ induced by the equality
  $c_c(\GGamma/\LLambda) = \Pold(\GG/\HH)$ in $\ell^\infty(\GGamma)$. The Hecke
  algebra $\Hh(\GGamma,\LLambda)$ acts by bounded operators on
  $\ell^2(\GGamma/\LLambda)$.
\end{proposition}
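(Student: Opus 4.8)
The plan is to identify $\ell^2(\GGamma/\LLambda)$ with $\ell^2(\GG/\HH)$ using the equality $c_c(\GGamma/\LLambda) = \Pold(\GG/\HH)$ inside $\ell^\infty(\GGamma)$ (Proposition~\ref{prop_schlichting_homogeneous}), and then to transport to the $\GGamma$-side the boundedness that is already available on the $\GG$-side, since $\Hh(\GG,\HH)$ acts on $\ell^2(\GG/\HH)$ through the (bounded) right regular representation of $\GG$.

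First I would establish the unitary. The norm on $\ell^2(\GGamma/\LLambda)$ is $\|a\|_{\GGamma/\LLambda}^2 = \mu(a^*a)$ (Definition~\ref{def_scalar}), while the norm on $\ell^2(\GG/\HH)$ is the restriction of the GNS norm $\|a\|^2 = \varphi(a^*a)$ of the left Haar functional $\varphi$ of $\GG$. By the construction of $\varphi$ we have $\varphi(x) = \mu[(\id\otimes hp_\HH)\Delta(x)]$, with $h$ the Haar state of $\HH$ normalised so that $h(p_\HH) = 1$. For $x\in\Pold(\GG/\HH)$ the defining relation $(1\otimes p_\HH)\Delta(x) = x\otimes p_\HH$ yields $(\id\otimes hp_\HH)\Delta(x) = h(p_\HH)\,x = x$, hence $\varphi(x) = \mu(x)$; applying this to $x = a^*a$ (which again lies in $\Pold(\GG/\HH) = c_c(\GGamma/\LLambda)$, a $*$-subalgebra of $\ell^\infty(\GGamma)$) shows that the two norms agree on the common dense subspace $c_c(\GGamma/\LLambda) = \Pold(\GG/\HH)$. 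As both norms come from inner products, polarisation shows that the inner products agree as well, so the identity map extends to the asserted unitary $U : \ell^2(\GGamma/\LLambda) \to \ell^2(\GG/\HH)$.

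Next I would use the $\GG$-side. By the discussion in Section~\ref{sec_compact_open_hecke}, the right regular representation of $\GG$ restricts to a $*$-representation of $\Hh(\GG,\HH)$ on $\ell^2(\GG/\HH)$ by bounded operators, with $\rho(\Ff(f))\Lambda(g) = \Lambda(g*\hat\sigma_{-i/2}(f))$; in the notation of Proposition~\ref{prp_endotd} this operator is the realisation on $\ell^2(\GG/\HH)$ of $T(\hat\sigma_{-i/2}(f))$. Since $\hat\sigma_{-i/2}$ is invertible and maps $\Hh(\GG,\HH)$ onto itself (Proposition~\ref{prp_heckeident}, which realises both Hecke algebras on the single space $c_c(\LLambda\bs\GGamma/\LLambda)$), it follows that $T(f')$ extends to a bounded operator on $\ell^2(\GG/\HH)$ for every $f'\in\Hh(\GG,\HH)$. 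Now by Propositions~\ref{prp_endotd} and~\ref{prp_heckeident} the four algebras of operators on the space $c_c(\GGamma/\LLambda) = \Pold(\GG/\HH)$, namely $\{T(f') : f'\in\Hh(\GG,\HH)\}$, $\End_\GG(\Pold(\GG/\HH))$, $\End_\GGamma(c_c(\GGamma/\LLambda))$ and $\{\Tdisc(b) : b\in\Hh(\GGamma,\LLambda)\}$, all coincide. Hence each Hecke operator $\Tdisc(b)$, $b\in\Hh(\GGamma,\LLambda)$, equals some $T(f')$ and therefore extends to a bounded operator on $\ell^2(\GG/\HH)$; conjugating by the unitary $U$ (which restricts to the identity on $c_c(\GGamma/\LLambda)$) we conclude that $\Tdisc(b)$ is bounded on $\ell^2(\GGamma/\LLambda)$. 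Combined with Theorem~\ref{thm_bounded}, this also shows that the inclusion $\LLambda\subset\GGamma$ satisfies property~(RT) of Definition~\ref{def_RT}.

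I do not expect a serious obstacle here: the substance is in the earlier propositions rather than in this final argument. The one point that requires care is bookkeeping --- keeping track of which convolution product, and hence which twisted modular map, appears on each side, so that a $\GGamma$-equivariant Hecke operator really is identified with a $\GG$-equivariant one and the $*$-structures match through $\hat\sigma_{-i/2}$ --- but this is exactly what Propositions~\ref{prop_schlichting_homogeneous} and~\ref{prp_heckeident} provide, so no new computation is needed.
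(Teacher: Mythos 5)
Your proposal is correct and follows essentially the same route as the paper: the unitary comes from the identity $\varphi|_{\Pold(\GG/\HH)}=\mu$ (which you verify by the same computation $(\id\otimes hp_\HH)\Delta(x)=x$ that underlies the paper's ``by construction''), and boundedness is transported from the right regular representation of $\Dd(\GG)$ via Propositions~\ref{prp_endotd} and~\ref{prp_heckeident}, exactly as in the paper. The only difference is cosmetic bookkeeping of the $\hat\sigma_{\pm i/2}$ twist, which is immaterial since it is a bijection of the Hecke algebra.
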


\begin{proof} 
  The identification $c_c(\GGamma/\LLambda) = \Pold(\GG/\HH)$ in
  Proposition~\ref{prop_schlichting_homogeneous} is isometric since by
  construction the restriction of $\varphi$ to
  $\Pold(\GG/\HH) = c_c(\GGamma/\LLambda)$ coincides with $\mu$. Hence it
  induces a unitary isomorphism $\ell^2(\GGamma/\LLambda)\simeq
  \ell^2(\GG/\HH)$. Now the action of $f\in\Hh(\GGamma,\LLambda)$ on
  $c_c(\GGamma/\LLambda) = \Pold(\GG/\HH)$ agrees with the convolution on the right by
   $\hat{\sigma}_{i/2}(f) \in \Pold(\HH \bs \GG/\HH)$ by Proposition
  \ref{prp_heckeident}.  Hence it suffices to observe that the latter is
  obtained by restriction of the right regular representation of $\Dd(\GG)$ on
  $L^2(\GG)$, which acts by bounded operators.
\end{proof}

This, together with the Theorem \ref{thm_bounded}, yields an
analytical proof of Property (RT) from Definition~\ref{def_RT} for Hecke
pairs. It should be possible to give a categorical proof as well.

\begin{corollary}
  Property (RT) is satisfied by any Hecke pair $(\GGamma,\LLambda)$.
\end{corollary}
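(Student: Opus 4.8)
The plan is to combine the boundedness statement just obtained from the Schlichting completion with the combinatorial characterization of boundedness in Theorem~\ref{thm_bounded}. Concretely, since $(\GGamma,\LLambda)$ is a Hecke pair, its commensurator satisfies $\GGamma' = \GGamma$, so property (RT) for the inclusion $\LLambda \subset \GGamma$ amounts to property (RT) holding for every $\beta \in I(\GGamma)$. Thus it suffices to produce the constants $C_\beta$ of Definition~\ref{def_RT} for arbitrary $\beta$, and the idea is to extract them from the operator norms of Hecke operators, exactly as in the first half of the proof of Theorem~\ref{thm_bounded}.

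First I would invoke the preceding proposition: the Hecke algebra $\Hh(\GGamma,\LLambda)$ acts by bounded operators on $\ell^2(\GGamma/\LLambda)$, because under the unitary identification $\ell^2(\GGamma/\LLambda) \simeq \ell^2(\GG/\HH)$ coming from $c_c(\GGamma/\LLambda) = \Pold(\GG/\HH)$, the action of $f \in \Hh(\GGamma,\LLambda)$ corresponds (after applying the algebra isomorphism $\hat\sigma_{i/2}$ of Proposition~\ref{prp_heckeident}) to the restriction of the right regular representation of $\Dd(\GG)$ on $L^2(\GG)$, which is manifestly by bounded operators. In particular $\Tdisc(b)$ extends to a bounded operator with respect to $\|\cdot\|_{\GGamma/\LLambda}$ for every $b \in \Hh(\GGamma,\LLambda)$.

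Then I would apply Theorem~\ref{thm_bounded}, which asserts precisely that boundedness of $\Tdisc(b)$ with respect to $\|\cdot\|_{\GGamma/\LLambda}$ for all $b \in \Hh(\GGamma,\LLambda)$ is equivalent to the inclusion $\LLambda \subset \GGamma$ satisfying property (RT). Applying the ``direct implication'' of that theorem to the biinvariant projections $b = p_{\dc\beta}$ directly yields the inequality $\kappa_{\alpha'} / \kappa_\gamma \le \kappa_{\bar\beta}^2 \|\Tdisc(p_{\dc\beta})\|^2 / \qdim(\beta)^2$ for all $\alpha' \subset \gamma \otimes \bar\beta$, i.e. the required constant $C_\beta$. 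This gives the claim.

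There is no real obstacle remaining: the substantive work — constructing the Schlichting completion $(\GG,\HH)$, verifying that $\GG$ is an algebraic (hence locally compact) quantum group, and identifying $c_c(\GGamma/\LLambda) = \Pold(\GG/\HH)$ \emph{isometrically} so that the compact-open-side Hecke operators transport to the discrete-side ones — was carried out in the preceding propositions, and Theorem~\ref{thm_bounded} supplies the exact equivalence we need. The only point worth double-checking is that the intertwining by $\hat\sigma_{i/2}$ in Proposition~\ref{prp_heckeident} does not interfere: since $\hat\sigma_{i/2}$ is an algebra isomorphism of $\Hh(\GGamma,\LLambda)$ onto $\Pold(\HH\bs\GG/\HH)$ with the convolution product, it does not affect which Hecke operators act boundedly on $\ell^2(\GGamma/\LLambda)$, so the conclusion is insensitive to this bookkeeping.
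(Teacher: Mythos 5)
Your proposal is correct and follows exactly the paper's own argument: the boundedness of all Hecke operators on $\ell^2(\GGamma/\LLambda)$, obtained from the Schlichting completion via the isometric identification $c_c(\GGamma/\LLambda)=\Pold(\GG/\HH)$ and the right regular representation, combined with the equivalence in Theorem~\ref{thm_bounded}, yields property (RT). The remark about $\hat\sigma_{i/2}$ being harmless bookkeeping is also accurate.
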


Finally we record the connection between the modular group of a discrete Hecke
pair $(\GGamma,\LLambda)$, see Theorem~\ref{thm_modularautomorphism}, and the
modular group of its Schlichting completion $\GG$.

\begin{proposition}
  The modular group of the canonical state $\omega$ on $\Hh(\GGamma,\LLambda)$
  agrees with the restriction of the modular group of $\hat{\varphi}$ to
  $\Ff(\Pold(\HH \bs \GG /\HH)) \subset \Dd(\GG)$ via the Fourier transform.
\end{proposition}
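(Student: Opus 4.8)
The plan is to compare two $1$-parameter groups of automorphisms on the common object $\Hh(\GGamma,\LLambda)\simeq\Hh(\GG,\HH)$ and invoke uniqueness of the modular group. By Theorem~\ref{thm_modularautomorphism} the modular group of $\omega$ on $\Hh(\GGamma,\LLambda)$ is $\theta_t(a)=\nabla^{it}\sigma_t^R(a)$. On the other side, the canonical state on $\Hh(\GG,\HH)$ corresponds under the Fourier transform to the restriction of $\hat\varphi$ to $\Ff(\Pold(\HH\bs\GG/\HH))\subset\Dd(\GG)$ (this is essentially the definition of $\omega$ as $\epsilon(f)=\hat\varphi(\Ff(f))$), and its modular group is the restriction of $(\hat\sigma_t)_{t\in\RR}$, provided this restriction indeed preserves $\Ff(\Pold(\HH\bs\GG/\HH))$. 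So the statement amounts to the identity
\begin{displaymath}
  \hat\sigma_t|_{\Ff(\Pold(\HH\bs\GG/\HH))} = \Ff\circ\theta_t\circ\Ff^{-1}
\end{displaymath}
for all $t\in\RR$, together with the fact that both sides are well-defined automorphisms of the Hecke algebra.

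First I would make the identification precise: the Hecke algebra $\Hh(\GGamma,\LLambda) = c_c(\LLambda\bs\GGamma/\LLambda)$ sits inside $\ell^\infty(\GGamma)=C_b(\GGamma)$, which for the Schlichting completion $\GG$ is the algebra of bi-$\HH$-invariant multipliers; via Proposition~\ref{prp_heckeident} an element $f\in\Pold(\HH\bs\GG/\HH)$ is matched with $\hat\sigma_{-i/2}(f)\in c_c(\LLambda\bs\GGamma/\LLambda)$. Under this dictionary the convolution product of $\Hh(\GG,\HH)$ (the restricted product of $\Polc(\GG)$) becomes the discrete Hecke convolution product $*$, and the $\sharp$-structures match. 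The point is then to locate, inside the ambient algebraic quantum group $\GG$, the analogues of $\nabla$ and $\sigma^R$. I expect $\nabla$ to be the restriction to $\Hh$ of the modular element $\delta$ of $\GG$ (the formula $f^\sharp=S(f)^*\delta$ from the compact-open section, compared with $f^\sharp=S(f^*)$ for the discrete pair, forces $\delta$ to act trivially on $\Hh$ in a suitable sense, but the dual modular structure enters precisely through $\hat\sigma$); more pertinently, the scaling group $\tau$ of $\GGamma$ should correspond to the scaling group of $\GG$, and the interplay $\hat\sigma_t = $ (dual scaling) $\circ$ (something involving $\nabla^{it}$) should reproduce $\theta_t$. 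Concretely I would compute $\hat\sigma_t(\Ff(f))$ using the explicit formula for $\hat\varphi$ (namely $\hat\varphi(\Ff(f))=\epsilon(f)$) and the known relation between $\hat\sigma$ on $\Dd(\GG)$ and the modular/scaling data of $\GG$, then translate back to $\ell^\infty(\GGamma)$ and recognize $\nabla^{it}\sigma_t^R$.

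The cleanest route, and the one I would actually carry out, avoids computing $\hat\sigma_t$ head-on: instead, observe that $\omega$ is a faithful state on $\Hh(\GG,\HH)=\Hh(\GGamma,\LLambda)$ with a known KMS$_1$-modular group $\theta$ (Theorem~\ref{thm_modularautomorphism}); on the other hand $\hat\varphi$ restricted to the convolution subalgebra $\Ff(\Pold(\HH\bs\GG/\HH))$ is also a faithful KMS weight — being the restriction of the dual left Haar weight to a $\hat\sigma$-invariant $*$-subalgebra (invariance holds because $p_\HH$ is $\hat\sigma$-fixed and the subalgebra is $p_\HH*\Polc(\GG)*p_\HH$, preserved by $\hat\sigma$ since $\hat\sigma_t$ is an algebra homomorphism of $(\Polc(\GG),*)$ fixing $p_\HH$) — whose modular group is therefore $\hat\sigma|$. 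Since $\omega$ agrees with the normalization $\hat\varphi(\Ff(p_\HH))=\epsilon(p_\HH)=1$ and both are the canonical state, $\omega$ \emph{is} this restricted weight, and modular groups of a faithful KMS state are unique; hence $\theta_t = \hat\sigma_t|$.

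The main obstacle I anticipate is verifying that $(\hat\sigma_t)$ genuinely restricts to $\Ff(\Pold(\HH\bs\GG/\HH))$ — i.e.\ that the Hecke subalgebra is $\hat\sigma$-invariant — and that its restriction really is the modular group of the restricted weight (not merely \emph{a} one-parameter automorphism group leaving the weight invariant). The first is handled by the description $\Pold(\HH\bs\GG/\HH)=p_\HH*\Polc(\GG)*p_\HH$ together with $\hat\sigma_t(p_\HH)=p_\HH$ and multiplicativity of $\hat\sigma_t$ for $*$; the second follows because $\Dd(\GG)$ with $\hat\varphi$ is an algebraic quantum group whose modular group is $\hat\sigma$, a (conditional-expectation-preserving, $\hat\sigma$-invariant) corner inherits the KMS property, and one then matches with $\theta$ via uniqueness and the identification of Proposition~\ref{prp_heckeident}. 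A subsidiary check is that the $*$-structure translation in Proposition~\ref{prp_heckeident} — $f\leftrightarrow\hat\sigma_{-i/2}(f)$ — is exactly the analytic continuation consistent with both modular groups, which is automatic once $\theta_t=\hat\sigma_t|$ is established.
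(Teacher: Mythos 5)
Your proposal is correct and follows essentially the same route as the paper: identify the canonical state $\omega$ with the restriction of $\hat{\varphi}$ under the correspondence $f\leftrightarrow\hat{\sigma}_{-i/2}(f)$ of Proposition~\ref{prp_heckeident} (using $\epsilon=\hat{\varphi}\circ\Ff$ and the $\hat{\sigma}$-invariance of $\hat{\varphi}$ and of the corner $p_\HH*\Polc(\GG)*p_\HH$), then conclude by uniqueness of the modular group of a faithful KMS state. The only step you leave implicit is the short verification $\omega(\hat{\sigma}_{-i/2}(f))=\epsilon(\hat{\sigma}_{-i/2}(f))=\hat{\varphi}(\hat{\sigma}_{-i/2}(\Ff(f)))=\hat{\varphi}(\Ff(f))$, which is exactly the computation the paper's proof writes out.
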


\begin{proof}
  Recall that the element in $\Hh(\GGamma,\LLambda)$ corresponding to
  $f\in \Hh(\GG,\HH)$ is $\tilde f := \hat\sigma_{-i/2}(f)$.
  We have then
  \begin{align*}
    \omega(\tilde f) &= (p_\LLambda \mid \tilde f) = (p_\HH \mid \hat\sigma_{-i/2}(f))
                       = \varphi(\pi_\HH(\hat{\sigma}_{-i/2}(f)) \\
                     &= \epsilon(\pi_\HH (\hat{\sigma}_{-i/2}(f))
                       = \epsilon(\hat{\sigma}_{-i/2}(f)) = \hat{\varphi}(\hat{\sigma}_{-i/2}(\Ff(f)))
                       = \hat{\varphi}(\Ff(f)).
  \end{align*} 
  Note that we use in the fourth equality the fact that
  $\pi_\HH(\hat{\sigma}_{-i/2}(f))$ is $\HH$-invariant, hence constant.
\end{proof}

\subsection{Reduction procedure}
\label{sec_reduction}

Starting from a discrete Hecke pair $(\GGamma,\LLambda)$, it can well happen
that the Schlichting completion $\GG$ is in fact discrete or even trivial. This
is connected to the faithfulness of the action of $\GGamma$ on
$\GGamma/\LLambda$ and to the reduction procedure that we describe now.

Suppose that $\GGamma$ is a discrete quantum group with a quantum subgroup
$\LLambda$ and the corresponding projection
$p_{\LLambda} \in \ell^\infty(\GGamma)$.  By $c_c(\GGamma/\LLambda)^\vee$ we
denote finitely supported functionals on $\GGamma/\LLambda$ as in the previous
subsection.

Recall from \cite[Definition~2.8]{KKSV_Furstenberg} that the cokernel of an action
of a discrete quantum group $\GGamma$ on a $C^*$-algebra $A = C_0(\XX)$, given by a
$*$-homomorphism $\alpha : C_0(\XX) \to M(c_0(\GGamma)\otimes C_0(\XX))$, is
defined as the following weak closure in $\ell^\infty(\GGamma)$:
\begin{equation}\label{eq_cokernel}
\CokerX\GGamma\XX = \{(\id\otimes\mu)\alpha(a) ; a\in A, \mu\in A^*\}''.
\end{equation}
Here we are concerned with the case $C_0(\XX) = c_0(\GGamma/\LLambda)$, with $\alpha$
being the appropriate restriction of $\Delta$. 

\begin{definition}
  We say that the pair $(\GGamma,\LLambda)$ is reduced if the canonical action
  of $\GGamma$ on $\GGamma/\LLambda$ is faithful, i.e.\ the
  cokernel $\Coker\GGamma\LLambda$ coincides with $\ell^\infty(\GGamma)$.
\end{definition}

Note that in the definition~\eqref{eq_cokernel} of the cokernel, when
$\XX = \GGamma/\LLambda$, we can also work with
$a\in\ell^\infty(\GGamma/\LLambda)$ and $\mu\in\ell^\infty(\GGamma/\LLambda)_*$,
or with $a\in c_c(\GGamma/\LLambda)$ and $\mu\in c_c(\GGamma/\LLambda)^\vee$.
In particular we see, following the discussion after
Definition~\ref{def_schlichting}, that a Hecke pair $(\GGamma,\LLambda)$ is
reduced if and only if $\GGamma$ embeds into its Schlichting completion $(\GG, \HH)$, i.e.\
the canonical map $\iota : \Polc(\GG) \to \ell^\infty(\GGamma)$ has strictly
dense image.

It is shown in \cite[Proposition 2.9]{KKSV_Furstenberg} that the cokernel
$\Coker\GGamma\LLambda$ is a Baaj-Vaes subalgebra of $\ell^\infty(\GGamma)$, so
that there exists a discrete quantum group $\tilde{\GGamma}$ such that
$\ell^{\infty}(\tilde{\GGamma})=\Coker\GGamma\LLambda$ (with the
comultiplication given simply by the restriction). Putting $a=p_\LLambda$ and
taking for $\mu$ the restriction of the counit $\epsilon$, we see that
$p_\LLambda$ belongs also to $\Coker\GGamma\LLambda$ and thus defines a quantum
subgroup $\tilde{\LLambda}$ of $\tilde \GGamma$ such that
$\ell^{\infty}(\tilde{\LLambda}) = p_\LLambda\ell^{\infty}(\tilde{\GGamma})$.

When $\LLambda$ is normal in $\GGamma$, it follows for example from the proof of
\cite[Theorem 2.11]{VaesVainerman} (or from results of
\cite{KalantarKasprzakSkalski_Open}) that
$\Coker\GGamma\LLambda = \ell^{\infty}(\GGamma/\LLambda)$. In this case it is
easy to see that $\tilde{\LLambda} = \{e\}$: as
$\ell^{\infty}(\GGamma/\LLambda)$ is the space of left slices of
$\Delta(p_\LLambda)$ by \cite[Theorem 3.3]{KalantarKasprzakSkalski_Open}, we have that
\begin{align*}
 p_\LLambda \ell^{\infty}(\GGamma/\LLambda) &= p_\LLambda \{(\omega \otimes \id
)\Delta(p_\LLambda) \mid \omega \in \ell^1(\GGamma)\}''\\ &= \{(\omega \otimes
\id )((1 \otimes p_\LLambda)\Delta(p_\LLambda)) \mid \omega \in
\ell^1(\GGamma)\}''= \CC p_\LLambda.
\end{align*}

\begin{proposition}\label{reductiondiscrete}
  Let $(\GGamma, \LLambda)$ and $(\tilde\GGamma, \tilde\LLambda)$ be as above.
  Then $(\tilde\GGamma, \tilde\LLambda)$ (called further the \emph{reduction} of
  $(\GGamma, \LLambda)$) is reduced,
  $\ell^{\infty}(\GGamma/\LLambda) =
  \ell^{\infty}(\tilde\GGamma/\tilde\LLambda)$,
  $\ell^{\infty}(\LLambda \backslash \GGamma) = \ell^{\infty}(\tilde\LLambda
  \bs\tilde\GGamma)$.  Moreover if
  $\theta : \ell^{\infty}(\GGamma/\LLambda)_+\to \RR_+$ is an nsf weight, then
  it is $\GGamma$-invariant iff it is $\tilde\GGamma$-invariant.
\end{proposition}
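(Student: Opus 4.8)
The plan is to establish the four assertions in order, with the bulk of the work concentrated on identifying the homogeneous spaces and translating invariance between $\GGamma$ and $\tilde\GGamma$. First I would recall that, by construction, $\ell^\infty(\tilde\GGamma) = \Coker\GGamma\LLambda$ is a Baaj--Vaes subalgebra of $\ell^\infty(\GGamma)$ with comultiplication the restriction of $\Delta$, and that $p_\LLambda \in \ell^\infty(\tilde\GGamma)$ (obtained from $a = p_\LLambda$, $\mu = \epsilon$) defines $\tilde\LLambda$. The key structural input is that $c_c(\GGamma/\LLambda) \subset \Coker\GGamma\LLambda$: indeed every element $a \in c_c(\GGamma/\LLambda)$ can be written as $(\id\otimes hp_\LLambda)\Delta(x)$-type slices, or more directly, taking $a \in c_c(\GGamma/\LLambda)$ and $\mu \in c_c(\GGamma/\LLambda)^\vee$ with $\mu(p_\LLambda) = 1$ one recovers elements of the generating set $\{(\id\otimes\mu)\Delta(a)\}$; actually the cleanest route is to observe $a = (\id\otimes \mu_a)\Delta(p_\LLambda)$ is false in general, so instead I would argue that $c_c(\GGamma/\LLambda) \cdot c_c(\GGamma/\LLambda)^\vee$ lands in the cokernel using $\Delta(a) \in \ell^\infty(\GGamma) \bar\otimes \ell^\infty(\GGamma/\LLambda)$ together with the fact (already recorded after Definition~\ref{def_schlichting}) that $\Polc(\GG)$, hence $c_c(\GGamma/\LLambda)$, lies in $\Coker\GGamma\LLambda$. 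Granting this, $p_\LLambda \ell^\infty(\tilde\GGamma)$ contains $p_\LLambda c_c(\GGamma/\LLambda) = \CC p_\LLambda$ and more importantly $c_c(\GGamma/\LLambda) \subset \ell^\infty(\tilde\GGamma)$, and the defining condition $(1\otimes p_\LLambda)\Delta(a) = a\otimes p_\LLambda$ is literally the same equation whether read in $\ell^\infty(\GGamma)$ or in the subalgebra $\ell^\infty(\tilde\GGamma)$ (since $\Delta_{\tilde\GGamma}$ is the restriction of $\Delta$ and $p_\LLambda \in \ell^\infty(\tilde\GGamma)$). This gives $\ell^\infty(\tilde\GGamma/\tilde\LLambda) \supseteq \ell^\infty(\GGamma/\LLambda)$; for the reverse, any $a \in \ell^\infty(\tilde\GGamma/\tilde\LLambda) \subseteq \ell^\infty(\tilde\GGamma) \subseteq \ell^\infty(\GGamma)$ satisfies the quotient-space equation in $\ell^\infty(\GGamma)$, so lies in $\ell^\infty(\GGamma/\LLambda)$. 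The left version is identical, using the right cokernel description or applying the unitary antipode.

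Next I would prove that $(\tilde\GGamma, \tilde\LLambda)$ is reduced. Since $\ell^\infty(\GGamma/\LLambda) = \ell^\infty(\tilde\GGamma/\tilde\LLambda)$ and the $\tilde\GGamma$-action on $\tilde\GGamma/\tilde\LLambda$ is the restriction of the $\GGamma$-action, the cokernel $\Coker{\tilde\GGamma}{\tilde\LLambda}$ computed inside $\ell^\infty(\tilde\GGamma)$ is generated (as a weak closure) by exactly the same slices $\{(\id\otimes\mu)\Delta(a) \mid a \in \ell^\infty(\GGamma/\LLambda), \mu \in \ell^\infty(\GGamma/\LLambda)_*\}$ that generate $\Coker\GGamma\LLambda = \ell^\infty(\tilde\GGamma)$. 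Here the one subtlety is that a normal functional $\mu$ on the subalgebra $\ell^\infty(\tilde\GGamma)$ need not extend normally to $\ell^\infty(\GGamma)$, but we only need functionals supported on $\ell^\infty(\GGamma/\LLambda)$, which is a von Neumann direct product of matrix algebras by the Remark after Proposition~\ref{prp_quotient_description}, and such functionals certainly transfer both ways. Hence $\Coker{\tilde\GGamma}{\tilde\LLambda} = \ell^\infty(\tilde\GGamma)$, i.e.\ the pair is reduced. (Alternatively, and perhaps more cleanly, one invokes idempotency of the cokernel construction: applying $N(-\curvearrowright -)$ twice gives the same algebra, which is the content of \cite[Prop.~2.9]{KKSV_Furstenberg} combined with the fact that the action restricts.)

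Finally, for the invariance statement: if $\theta$ is an nsf weight on $\ell^\infty(\GGamma/\LLambda)_+ = \ell^\infty(\tilde\GGamma/\tilde\LLambda)_+$, then $\GGamma$-invariance means $(\id\otimes\theta)((p_\alpha\otimes 1)\Delta(x)) = \theta(x)p_\alpha$ for $\alpha \in I(\GGamma)$, $x \in c_c(\GGamma/\LLambda)$ in the domain, while $\tilde\GGamma$-invariance is the same with $p_\alpha$ ranging over the minimal central projections of $\ell^\infty(\tilde\GGamma)$. One direction is immediate: $\tilde\GGamma$-invariance is the restriction of $\GGamma$-invariance since the minimal central projections of $\ell^\infty(\tilde\GGamma)$ are sums of those of $\ell^\infty(\GGamma)$ lying in $\ell^\infty(\tilde\GGamma)$ — actually one must be careful here, so the honest argument is that invariance is equivalent to $(\id\otimes\theta)\Delta(x) = \theta(x)1$ as a multiplier identity, which is formulated purely in terms of $\Delta$ restricted to $c_c(\GGamma/\LLambda) = c_c(\tilde\GGamma/\tilde\LLambda)$ and the coproduct restricting correctly (Proposition~\ref{prop_schlichting_homogeneous} and its analogue), hence is literally the same condition for $\GGamma$ and $\tilde\GGamma$. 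For the converse I would use the defining property of the cokernel: if $\theta$ is $\tilde\GGamma$-invariant and $y \in \ell^\infty(\GGamma) = \Coker\GGamma\LLambda$ is arbitrary, write $y$ as a weak limit of combinations of slices $(\id\otimes\varphi)\Delta(a)$ with $a \in \ell^\infty(\GGamma/\LLambda)$, and use the coassociativity relation $(y\otimes 1)\Delta(x)$ together with $\tilde\GGamma$-invariance (applied on the $\ell^\infty(\tilde\GGamma)$-leg, which is where the slices live) to conclude $(\id\otimes\theta)((y\otimes 1)\Delta(x)) = \theta(x)y$ for all $y \in \ell^\infty(\GGamma)$, i.e.\ full $\GGamma$-invariance. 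The main obstacle throughout is the bookkeeping around normality of functionals when passing between $\ell^\infty(\GGamma)$ and its Baaj--Vaes subalgebra $\ell^\infty(\tilde\GGamma)$, and around the fact that the quotient-space invariance conditions are genuinely intrinsic to $\ell^\infty(\GGamma/\LLambda)$ equipped with the restricted coproduct action; once it is clear that all relevant data ($c_c(\GGamma/\LLambda)$, the coaction $\Delta$, the projection $p_\LLambda$, and the test functionals on $\ell^\infty(\GGamma/\LLambda)$) live inside $\ell^\infty(\tilde\GGamma)$ and agree with their $\tilde\GGamma$-counterparts, each of the four assertions reduces to reading the same equation in two algebras, one contained in the other.
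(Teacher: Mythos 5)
Your overall structure matches the paper's: show the homogeneous spaces sit inside the cokernel, conclude that the defining equation for $\ell^\infty(\GGamma/\LLambda)$ reads the same in $\ell^\infty(\tilde\GGamma)$, handle the left cosets via the (unitary) antipode and the $R$-invariance of $\Coker\GGamma\LLambda$, deduce reducedness from the equality of cokernels, and observe that the invariance condition for a weight is literally the same equation in both algebras. The one substantive step — the inclusion $\ell^\infty(\GGamma/\LLambda)\subset\Coker\GGamma\LLambda$ — is, however, where your argument is shaky. After proposing and discarding several routes, you ultimately justify it by citing the remark after Definition~\ref{def_schlichting} that $\Polc(\GG)$, hence $c_c(\GGamma/\LLambda)$, lies in the cokernel. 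This is unsatisfactory for two reasons: it is close to circular (that remark is itself an instance of the fact to be proved), and, more importantly, $\Polc(\GG)$ and the identification $\Pold(\GG/\HH)=c_c(\GGamma/\LLambda)$ of Proposition~\ref{prop_schlichting_homogeneous} are only defined and proved under the Hecke hypothesis, whereas Proposition~\ref{reductiondiscrete} is stated for an arbitrary quantum subgroup $\LLambda\subset\GGamma$ (and is in fact used later to transfer the Hecke condition between a pair and its reduction, so it must not presuppose it). The paper's argument is a one-liner that works in full generality: take $\mu=\epsilon$, the counit, which restricts to a bounded functional on $c_0(\GGamma/\LLambda)$; then $(\id\otimes\epsilon)\Delta(a)=a$ exhibits every $a\in c_0(\GGamma/\LLambda)$, hence after weak closure all of $\ell^\infty(\GGamma/\LLambda)$, as an element of $\Coker\GGamma\LLambda$. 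With that substitution your proof goes through; the remaining parts (the reverse inclusion, the left-coset version, and the transfer of invariance, where your extra care about normal functionals on the Baaj--Vaes subalgebra is harmless but not needed, since the coaction and the test functionals all live in $\ell^\infty(\tilde\GGamma)$) agree with the paper, which simply declares the last statement obvious.
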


\begin{proof}
  Denote by $\tilde\alpha$ the action of $\tilde{\GGamma}$ on
  $\ell^\infty(\tilde\GGamma/\tilde\LLambda)$. Note that this is again given by
  the (suitable restriction of) the coproduct of $\ell^\infty(\GGamma)$. Thus to
  see that $(\tilde\GGamma, \tilde\LLambda)$ is reduced it suffices to show that
  $\Coker\GGamma\LLambda = \Coker{\tilde\GGamma}{\tilde\LLambda}$; this in turn will
  follow once we establish the latter part of the proposition.

  Note that
  \begin{displaymath}
    \ell^{\infty}(\tilde\GGamma/\tilde\LLambda)= \{a\in \Coker\GGamma\LLambda \mid
    (1\otimes p_\LLambda)\Delta(a) = p_\LLambda \otimes a\}.
  \end{displaymath}
  Thus it suffices to show that we have
  $\ell^{\infty}(\GGamma/\LLambda)\subset \Coker\GGamma\LLambda$. That however
  follows immediately as we can simply put $\mu=\epsilon$ in the definition of
  $\Coker\GGamma\LLambda$.

  The equality of the left coset spaces would follow in a similar manner once we
  observe that
  $\ell^{\infty}(\LLambda \backslash \GGamma) \subset
  \Coker\GGamma\LLambda$. This is true as $\Coker\GGamma\LLambda$ is
  $R$-invariant -- and
  $\ell^{\infty}(\LLambda \backslash \GGamma) = R
  \big(\ell^{\infty}(\GGamma/\LLambda)\big)$.
  The last statement is then obvious (the invariance condition is literally the
  same).
\end{proof}

Note that the inclusion
$\ell^{\infty}(\GGamma/\LLambda)\subset \Coker\GGamma\LLambda$ can be informally
understood as the classically obvious fact that the kernel of the action of
$\GGamma$ on $\GGamma /\LLambda$ is contained in $\LLambda$.

\begin{proposition}
  Let $(\GGamma, \LLambda)$ be as above and let
  $(\tilde\GGamma, \tilde\LLambda)$ be its reduction. Then $(\GGamma, \LLambda)$
  satisfies the Hecke condition if and only if $(\tilde\GGamma, \tilde\LLambda)$
  does, and if this is the case, the corresponding Hecke algebras are
  isomorphic.
\end{proposition}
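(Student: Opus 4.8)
The plan is to reduce both assertions to Proposition~\ref{reductiondiscrete}. Recall that $\ell^\infty(\tilde\GGamma) = \Coker\GGamma\LLambda$ is a Baaj--Vaes subalgebra of $\ell^\infty(\GGamma)$, that the comultiplication of $\tilde\GGamma$ is the restriction of $\Delta$, that $p_{\tilde\LLambda} = p_\LLambda$, and, by Proposition~\ref{reductiondiscrete}, that $\ell^\infty(\GGamma/\LLambda) = \ell^\infty(\tilde\GGamma/\tilde\LLambda)$ and $\ell^\infty(\LLambda\bs\GGamma) = \ell^\infty(\tilde\LLambda\bs\tilde\GGamma)$ as von Neumann subalgebras of $\ell^\infty(\GGamma)$, with $\GGamma$-invariance of weights on $c_c(\GGamma/\LLambda)$ being the same as $\tilde\GGamma$-invariance.

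For the first assertion I would invoke Proposition~\ref{prop_hecke_orbits}: a pair is a Hecke pair iff the relation $\equiv_\LLambda$ on $I(\GGamma/\LLambda)$, given by $i\equiv_\LLambda j$ iff $\Delta(q_i)(p_\LLambda\otimes q_j)\neq 0$ with the $q$'s the minimal central projections of the quotient algebra, has only finite classes. Since $\ell^\infty(\GGamma/\LLambda) = \ell^\infty(\tilde\GGamma/\tilde\LLambda)$, these minimal central projections, the index set $I(\GGamma/\LLambda) = I(\tilde\GGamma/\tilde\LLambda)$, the projection $p_\LLambda = p_{\tilde\LLambda}$, and the comultiplication on this algebra all coincide; hence $\equiv_\LLambda$ and $\equiv_{\tilde\LLambda}$ are literally the same relation, and one has finite classes iff the other does. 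This shows $(\GGamma,\LLambda)$ is a Hecke pair iff $(\tilde\GGamma,\tilde\LLambda)$ is.

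Assuming this holds, I would prove that the Hecke algebras coincide in two steps. First, $c_c(\GGamma/\LLambda)$ is intrinsic to the von Neumann algebra $\ell^\infty(\GGamma/\LLambda)$: by \cite[Lemma~3.3]{VergniouxVoigt} and the Remark after Proposition~\ref{prp_quotient_description} each $p_\sigma c(\GGamma/\LLambda) = p_\sigma\ell^\infty(\GGamma/\LLambda)$ is finite dimensional, and since $\sum_\sigma p_\sigma = 1$ it follows that $c_c(\GGamma/\LLambda)$ is exactly the algebraic direct sum of the finite-dimensional simple summands of $\ell^\infty(\GGamma/\LLambda)$; the identical description holds for $c_c(\tilde\GGamma/\tilde\LLambda)$, so $c_c(\GGamma/\LLambda) = c_c(\tilde\GGamma/\tilde\LLambda)$, and likewise $c_c(\LLambda\bs\GGamma) = c_c(\tilde\LLambda\bs\tilde\GGamma)$; hence $\Hh(\GGamma,\LLambda) = c_c(\GGamma/\LLambda)\cap c_c(\LLambda\bs\GGamma) = \Hh(\tilde\GGamma,\tilde\LLambda)$ as $*$-vector spaces inside $\ell^\infty(\tilde\GGamma)$. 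Second, to match the convolution products I would use Theorem~\ref{thm_adjoint} and Proposition~\ref{prp_endo}. The crucial point is that a linear endomorphism $F$ of $c_c(\GGamma/\LLambda)$ is $\GGamma$-equivariant iff it is $\tilde\GGamma$-equivariant: for $a\in c_c(\GGamma/\LLambda)\subset\Coker\GGamma\LLambda$ the element $\Delta(a)$ already lies in $\Coker\GGamma\LLambda\bar\otimes c_c(\GGamma/\LLambda) = \ell^\infty(\tilde\GGamma)\bar\otimes c_c(\tilde\GGamma/\tilde\LLambda)$, by the Baaj--Vaes property of the cokernel together with the slice-map property of von Neumann tensor products, and there it is precisely the algebraic $\tilde\GGamma$-coaction; since $F$ acts only on the second leg, the condition $(\id\otimes F)\Delta(a) = \Delta(F(a))$ is the same for both groups, so $\End_\GGamma(c_c(\GGamma/\LLambda)) = \End_{\tilde\GGamma}(c_c(\tilde\GGamma/\tilde\LLambda))$. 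Moreover the functional $\mu$ of Definition~\ref{def_scalar} is, by Proposition~\ref{prp_invariant_measure}, the unique $\GGamma$-invariant weight on $c_c(\GGamma/\LLambda)$ with $\mu(p_\LLambda) = 1$, so by Proposition~\ref{reductiondiscrete} it equals its $\tilde\GGamma$-analogue; hence the norm $\|\cdot\|_{\GGamma/\LLambda}$ and the notion of adjointability agree, and $\End'_\GGamma(c_c(\GGamma/\LLambda)) = \End'_{\tilde\GGamma}(c_c(\tilde\GGamma/\tilde\LLambda))$ as $*$-algebras under composition. Since by Proposition~\ref{prp_endo} both $\Tdisc$ and its analogue for $(\tilde\GGamma,\tilde\LLambda)$ are inverse to the evaluation $F\mapsto F(p_\LLambda)$ on these now identical data, they coincide; consequently $a*f = \Tdisc(f)(a)$ agrees with the convolution computed in $(\tilde\GGamma,\tilde\LLambda)$, and $\Hh(\GGamma,\LLambda) = \Hh(\tilde\GGamma,\tilde\LLambda)$ as $*$-algebras, in particular they are isomorphic.

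The step I expect to require the most care is this equivariance matching: although $I(\GGamma)$ and $I(\tilde\GGamma)$ are genuinely different index sets, one must verify cleanly that the $\GGamma$-coaction on $c_c(\GGamma/\LLambda)$ takes values in $\ell^\infty(\tilde\GGamma)\bar\otimes c_c(\tilde\GGamma/\tilde\LLambda)$ and there coincides with the algebraic $\tilde\GGamma$-coaction, so that the algebras of equivariant adjointable endomorphisms --- and with them the Hecke algebras and their convolution products --- literally coincide; the remaining verifications are routine.
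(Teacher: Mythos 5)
Your argument is correct and follows essentially the same route as the paper: the first assertion is reduced, exactly as in the paper's proof, to Proposition~\ref{prop_hecke_orbits} together with the observation that the $\LLambda$- and $\tilde\LLambda$-actions on the common quotient algebra are the same von Neumann algebraic morphism, and the second assertion to the identification of the Hecke algebra with the algebra of $\GGamma$-equivariant (adjointable) endomorphisms of $c_c(\GGamma/\LLambda)=c_c(\tilde\GGamma/\tilde\LLambda)$. The paper dispatches this second step in one line, whereas you supply the supporting details (the intrinsic description of $c_c(\GGamma/\LLambda)$ inside $\ell^\infty(\GGamma/\LLambda)$, the coincidence of the two equivariance notions via the Baaj--Vaes property of the cokernel, and the uniqueness of the invariant functional $\mu$), all of which are sound, with the equivariance matching --- which you rightly flag as the delicate point --- needing only the routine localization to single matrix blocks in the first leg to be made fully rigorous.
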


\begin{proof}
  The first statement follows from Proposition~\ref{prop_hecke_orbits}, as the
  proposition above implies that the actions of $\LLambda$ on
  $\ell^{\infty}(\GGamma/\LLambda)$ and of $\tilde\LLambda$ on
  $\ell^{\infty}(\tilde\GGamma/\tilde\LLambda)$ are given by the same von
  Neumann algebraic morphism and the notion of finite orbits does not formally
  involve any quantum group structure. The second statement follows now from the
  identification of Hecke algebras as certain commutants with respect to these
  actions.
\end{proof}

We can now characterize the Hecke pairs that give rise to non-discrete
Schlichting completions as follows.

\begin{lemma}\label{lem_schlichting_discrete}
  Let $(\GGamma, \LLambda)$ be a Hecke pair, $(\tilde\GGamma,\tilde\LLambda)$
  its reduction and $(\GG, \HH)$ its Schlichting completion. Then $\GG$ is
  discrete if and only if $\tilde\LLambda$ is finite.
\end{lemma}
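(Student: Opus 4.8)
The statement is an "if and only if" characterizing discreteness of the Schlichting completion $\GG$ in terms of finiteness of $\tilde\LLambda$. After replacing $(\GGamma,\LLambda)$ by its reduction we may assume the pair is reduced, i.e. $\Coker\GGamma\LLambda = \ell^\infty(\GGamma)$, so $\GGamma$ embeds into $\GG$; this is harmless since $(\GGamma,\LLambda)$ and $(\tilde\GGamma,\tilde\LLambda)$ have isomorphic Hecke algebras and the Schlichting completion is built out of $\Polc(\GG)\subset\ell^\infty(\GGamma)$, which by the discussion after Definition~\ref{def_schlichting} depends only on the cokernel. So I will prove: for a reduced Hecke pair, $\GG$ is discrete iff $\LLambda$ is finite.

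\emph{The easy direction ($\LLambda$ finite $\Rightarrow$ $\GG$ discrete).} If $\LLambda$ is finite, then $p_\LLambda \in c_c(\GGamma)$ and the central projection $p_\HH=p_\LLambda$ corresponding to the compact open quantum subgroup $\HH\subset\GG$ is itself compactly supported over $I(\GGamma)$. Since $p_\HH$ lies in $c_c(\GGamma)$, the Woronowicz $C^*$-algebra $C(\HH)=p_\HH C_0(\GG)$ is finite dimensional, i.e.\ $\HH$ is a finite quantum group. A locally compact quantum group possessing a \emph{finite} open quantum subgroup is discrete: indeed $1 = \sum$ of the translates of $p_\HH$, more precisely $C_0(\GG)p_\HH\cong$ a finite-dimensional block and the whole algebra decomposes as a direct sum of such blocks indexed by the (classical) quotient. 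Concretely, from $\Pold(\GG/\HH)=c_c(\GGamma/\LLambda)$ and the fact that each $p_{[\alpha]}\ell^\infty(\GGamma/\LLambda)$ is finite dimensional (Remark after Proposition~\ref{prp_quotient_description}), together with $p_\HH\in c_c(\GGamma)$, one reads off that $\Polc(\GG)$ is a direct sum of matrix algebras, which is the definition of $\GG$ being discrete.

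\emph{The hard direction ($\GG$ discrete $\Rightarrow$ $\LLambda$ finite).} Suppose $\GG$ is discrete. Then $\HH\subset\GG$ is a compact open quantum subgroup of a \emph{discrete} quantum group, hence $\HH$ is a \emph{finite} quantum group: this is precisely the content of the structure theory recalled in Section~\ref{sec_quantum_groups}, since a central group-like projection $p_\HH$ in $c_c(\GG)=\Polc(\GG)$ must be finitely supported (a discrete quantum group has $\Polc=\bigoplus B(H_\alpha)$, and any central idempotent is a \emph{finite} sum of minimal central projections because it belongs to the algebraic direct sum). Thus $C(\HH)$ is finite dimensional. Now $C(\HH)=p_\HH C_0(\GG)$ and under the identification $p_\HH=p_\LLambda$, $C(\HH)$ is the norm closure of $p_\LLambda\Polc(\GG)$ in $\ell^\infty(\GGamma)$. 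I claim $p_\LLambda\Polc(\GG)\supseteq c_c(\LLambda)$: indeed for $b\in c_c(\LLambda\bs\GGamma)$ we have $p_\LLambda b\in c_c(\LLambda)$, and $p_\LLambda*b$ restricted by $p_\LLambda$ recovers enough of $c_c(\LLambda)$ — more carefully, using $p_\LLambda = p_\LLambda*p_\LLambda$ and $\Pold(\HH\bs\GG)=c_c(\LLambda\bs\GGamma)$ one gets $p_\LLambda\cdot\Pold(\HH\bs\GG) = c_c(\LLambda)$ inside $\ell^\infty(\GGamma)$ (this mirrors $\Pol(H)$ being representative functions on $H$ in the classical picture). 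Hence $c_c(\LLambda)$ has finite-dimensional norm closure, which forces $I(\LLambda)$ to be finite, i.e.\ $\LLambda$ is a finite quantum group.

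\emph{Main obstacle.} The delicate point is the second direction: one must argue cleanly that $\GG$ discrete forces $p_\HH$ to be finitely supported over $I(\GGamma)$, and then that this support finiteness transfers to $p_\LLambda$ having finite support over $I(\LLambda)$, equivalently $\Pol(\HH)=p_\LLambda\Polc(\GG)$ being finite dimensional. The cleanest route is: (i) $\GG$ discrete $\Rightarrow$ the group-like projection $p_\HH\in\Polc(\GG)$ is a finite sum of minimal central projections of $\Polc(\GG)$, so $C(\HH)$ is finite dimensional; (ii) identify $C(\HH)$ inside $\ell^\infty(\GGamma)$ as the closure of $p_\LLambda\Polc(\GG)$, and show this algebra contains $c_c(\LLambda)$ as a $*$-subalgebra with the same closure — this uses Proposition~\ref{prop_schlichting_homogeneous} ($\Pold(\GG/\HH)=c_c(\GGamma/\LLambda)$, and symmetrically $\Pold(\HH\bs\GG)=c_c(\LLambda\bs\GGamma)$) together with $p_\LLambda c_c(\LLambda\bs\GGamma)=c_c(\LLambda)$; (iii) conclude $c_c(\LLambda)$ is finite dimensional, hence $\LLambda$ finite. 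Each of these is short once the right identification is pinned down, and for the converse the key is simply the observation that a locally compact quantum group with a finite open quantum subgroup is discrete, via the block decomposition of $\Polc(\GG)$ coming from the finitely many cosets in each class $[\alpha]$.
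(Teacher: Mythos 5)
There is a genuine gap in the hard direction. Your key claim that $p_\LLambda\Polc(\GG)\supseteq c_c(\LLambda)$, and in particular the asserted identity $p_\LLambda\cdot\Pold(\HH\bs\GG)=c_c(\LLambda)$, is false. Indeed $p_\LLambda=p_{[1]}$ for the class of the trivial corepresentation in $\LLambda\bs I(\GGamma)$, and by the left-coset analogue of Proposition~\ref{prp_quotient_description} one has $p_{[1]}\,\ell^\infty(\LLambda\bs\GGamma)=\CC\, p_\LLambda$, so $p_\LLambda\cdot\Pold(\HH\bs\GG)$ is one-dimensional, not $c_c(\LLambda)$. More globally, $\Pol(\HH)=p_\LLambda\Polc(\GG)$ need not contain $c_c(\LLambda)$: already for the classical pair $(SL_2(\ZZ[1/p]),SL_2(\ZZ))$ with Schlichting completion $(SL_2(\mathbb{Q}_p),SL_2(\ZZ_p))$, the elements of $\Pol(\HH)$ restrict to functions on $\Lambda$ that are constant on cosets of congruence subgroups, hence are never finitely supported, so $\delta_e\notin\Pol(\HH)$. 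Your inclusion therefore cannot be used to transfer finite-dimensionality of $C(\HH)$ to $c_c(\LLambda)$.

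The correct bridge runs in the opposite direction and uses density rather than containment: by reducedness, $C_0(\GG)$ is strictly (equivalently, weakly) dense in $\ell^\infty(\tilde\GGamma)$, and multiplying by $p_\HH=p_{\tilde\LLambda}$ shows $C(\HH)$ is strictly dense in $\ell^\infty(\tilde\LLambda)$. Hence if $C(\HH)$ is finite dimensional it is weakly closed and therefore equals $\ell^\infty(\tilde\LLambda)$, which forces $\tilde\LLambda$ to be finite; conversely if $\tilde\LLambda$ is finite then $C(\HH)\subseteq\ell^\infty(\tilde\LLambda)$ is finite dimensional. This is exactly how the paper argues, after which it settles ``$\HH$ finite $\iff$ $\GG$ discrete'' by citing the structure results for open and closed quantum subgroups (your more elementary observation that a group-like central projection in $\Polc(\GG)=c_c(\GG)$ of a discrete $\GG$ is a finite sum of minimal central projections is a fine substitute for one of these citations, and your sketch of the converse is consistent with the cited result). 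So the architecture of your proof is sound and the reduction step is legitimate, but the pivotal identification in step (ii) must be replaced by the strict-density argument.
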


\begin{proof}
  Recall that we have by construction the strictly dense (equivalently,
  so-dense) inclusion $C_0(\GG) \subset \ell^\infty(\tilde\GGamma)$. Multiplying
  by $p_\LLambda = p_{\tilde\LLambda} = p_\HH$ we see that $C(\HH)$ is strictly
  dense (equivalently, so-dense) in $\ell^\infty(\tilde\LLambda)$, so that
  $\tilde\LLambda$ is finite if and only if $\HH$ is finite.

  Now if $\HH$ is finite, \cite[Proposition 4.5]{KalantarKasprzakSkalski_Open}
  implies that $\GG$ is discrete. On the other hand if $\GG$ is discrete, $\HH$,
  being an open (hence also closed by \cite[Theorem
  3.6]{KalantarKasprzakSkalski_Open}) quantum subgroup of $\GG$, must be discrete by
  \cite[Theorem 6.2]{DKSS_ClosedSubgroups}. Finally a discrete and compact
  quantum group must be finite.
\end{proof}

In particular if $(\GGamma,\LLambda)$ is a reduced Hecke pair, the associated
Schlichting completion is discrete if and only if $\LLambda$ is finite. This
shows, with the help of Lemma \ref{lemma_fusion_faithful}, that the examples of
quantum Hecke pairs discussed in the previous section lead to non-discrete
Schlichting completions.

\begin{corollary}
  The Schlichting completions associated with the HNN Hecke pairs of
  Example~\ref{exple_HNN_profinite} are non-discrete locally compact quantum
  groups, with non trivial modular group as soon as
  $\#\Sigma_1 \neq \#\Sigma_{-1}$.
\end{corollary}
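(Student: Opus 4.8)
The plan is to combine the explicit data of Example~\ref{exple_HNN_profinite} with Lemma~\ref{lem_schlichting_discrete} and Theorem~\ref{thm_modularautomorphism}. I would first record that the Hecke pair $(\GGamma,\GGamma_0)$ of Example~\ref{exple_HNN_profinite} is reduced: it is checked there that $\bigcap_{k\in\ZZ}\Dom\theta^k=\{1\}$ and hence (via Lemma~\ref{lemma_fusion_faithful}) that the action of $\GGamma$ on $\GGamma/\GGamma_0$ is faithful, which is precisely reducedness, so that $\tilde\GGamma=\GGamma$ and $\tilde\LLambda=\GGamma_0$. Since $\GGamma_0=\prod'_{k\in\ZZ^*}\Sigma_{\sgn(k)}$ is an infinite restricted product of non-trivial finite quantum groups, it is not finite, and Lemma~\ref{lem_schlichting_discrete} applied to the reduced pair $(\GGamma,\GGamma_0)$ then gives that the Schlichting completion $\GG$ is non-discrete.

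For the modular statement I would start from the computation at the end of Example~\ref{exple_HNN_profinite}, which uses Proposition~\ref{prop_compute_nabla} to give $\nabla_\alpha=p_\alpha$ for $\alpha\in I(\GGamma_0)$ and $\nabla_w=(\#\Sigma_1/\#\Sigma_{-1})\,p_w$; thus $\nabla\neq 1$ whenever $\#\Sigma_1\neq\#\Sigma_{-1}$. As $\GGamma$ is unimodular (Example~\ref{exple_HNN_profinite}) we have $\sigma^R_t=\id$ for all $t$, so Theorem~\ref{thm_modularautomorphism} identifies the modular automorphism group of the canonical state $\omega$ on $\Hh(\GGamma,\GGamma_0)$ with $\theta_t:a\mapsto\nabla^{it}a$. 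This is non-trivial: for $t\neq 0$ the element $\theta_t(p_{\dc w})=\nabla^{it}p_{\dc w}$ has $w$-component $(\#\Sigma_1/\#\Sigma_{-1})^{it}p_w\neq p_w$, since $p_{\dc w}=\sum_{\gamma\in\dc w}p_\gamma$ and $w\in\dc w$.

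Finally, by the proposition at the end of Paragraph~\ref{sec_schlichting} identifying, via the Fourier transform $\Ff$, the modular group of $\omega$ with the restriction to $\Ff(\Pold(\HH \bs \GG /\HH))$ of the modular group of the dual Haar weight $\hat\varphi$ of $\GG$, non-triviality of $\theta$ forces that modular group to be non-trivial on $\Dd(\GG)$; hence $\hat\varphi$ is not a trace and $\GG$ is not of Kac type, i.e.\ has non-trivial modular group. I expect the only point needing a little care --- rather than a genuine obstacle --- is this last transfer: one should make sure the cited identification of modular groups is applied to the dual Haar weight of the Schlichting completion at hand, and that the non-triviality witnessed at $p_{\dc w}$ indeed survives in $\Ff(\Pold(\HH \bs \GG /\HH))$. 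All the remaining ingredients are immediate consequences of Example~\ref{exple_HNN_profinite}, Lemma~\ref{lem_schlichting_discrete} and Theorem~\ref{thm_modularautomorphism}.
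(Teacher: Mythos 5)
Your argument is correct and is essentially the paper's own (the corollary is stated without a separate proof, as an immediate consequence of the preceding discussion): faithfulness from Example~\ref{exple_HNN_profinite} via Lemma~\ref{lemma_fusion_faithful} gives reducedness, Lemma~\ref{lem_schlichting_discrete} with $\GGamma_0$ infinite gives non-discreteness, and the computation $\nabla_w=(\#\Sigma_1/\#\Sigma_{-1})p_w$ together with Theorem~\ref{thm_modularautomorphism} and the final proposition of Paragraph~\ref{sec_schlichting} gives the non-trivial modular structure. Your closing caution is well placed but harmless: the paper's own notion of ``non-trivial modular group'' here is exactly the non-triviality of $\hat\sigma$ on $\Ff(\Pold(\HH\bs\GG/\HH))$, which your witness $p_{\dc w}\in\Hh(\GGamma,\GGamma_0)$ establishes directly.
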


Note that the scaling constant of these quantum groups equals $1$, since they
arise from algebraic quantum groups.

\bigskip

We address now the reduction procedure for compact open Hecke pairs.  Suppose
that $\GG$ is an algebraic quantum group and that $\HH$ is an algebraic compact
open quantum subgroup of $\GG$, given by a projection $p_\HH \in \Polc(\GG)$. We
shall check that the procedure described above again yields a reduction of the
pair $(\GG, \HH)$.

Consider the $*$-algebra generated as follows:
\begin{displaymath}
  \Cokeralg\GG\HH:= *{\text{-}}\mathrm{alg}\{(\id \otimes \mu)(\Delta(a)) \mid
  a \in \Pold(\GG/\HH), \mu \in \Pold(\GG/\HH)^\vee\},
\end{displaymath}
where $\Pold(\GG/\HH)^\vee$ denotes the finitely supported functionals on
$\Pold(\GG/\HH))$; note that as each of these can be written in the form $\nu b$
with $\nu \in \Pold(\GG/\HH))^\vee$ and $b \in \Pold(\GG/\HH)$, the formula
above makes sense. Note also that we can identify $\Pold(\GG/\HH))^\vee$ with
elements of the form $\psi b|_{\Pold(\GG/\HH)}$ for $b \in \Pold(\GG/\HH))$,
where $\psi$ is the right Haar weight of $\GG$. 

\begin{proposition} \label{prop_reduction_lc} The algebra $\Cokeralg\GG\HH$ defines
  an algebraic quantum group (with the structure inherited from $\Polc(\GG)$). Moreover
  $p_\HH \in \Cokeralg\GG\HH$ and $\Pold(\GG/\HH) \subset {\Cokeralg\GG\HH}$.
\end{proposition}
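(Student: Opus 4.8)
The plan is to follow, more or less step by step, the structure of the discrete Schlichting completion carried out earlier in Section~\ref{sec_schlichting}, but now inside the algebraic quantum group $\GG$ rather than inside $\ell^\infty(\GGamma)$. First I would check that $\Cokeralg\GG\HH$ is a multiplier Hopf $*$-subalgebra of $\Polc(\GG)$. The key point is coproduct-closedness: one has to show that $\Delta$ maps $\Cokeralg\GG\HH$ into (the appropriate multiplier space of) $\Cokeralg\GG\HH \odot \Cokeralg\GG\HH$. This is the exact analogue of Proposition~\ref{prop_coprod_schlichting}, and the argument should be essentially the same: write a generator as $(\id\otimes\mu)\Delta(a)$ with $a \in \Pold(\GG/\HH)$, $\mu \in \Pold(\GG/\HH)^\vee$, use coassociativity to push a second copy of $\Delta$ inside, and use finiteness of the support of $\mu$ together with the Hecke condition (which in the compact-open setting is automatic, since $\Hh(\GG,\HH) = \Pold(\HH\bs\GG/\HH)$ is already built from the honest convolution product on $\Polc(\GG)$) to control the support of the result. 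Closedness under the $*$-operation of $\Polc(\GG)$ and under the antipode $S$ then follows as before, since $S$ and $*$ interact well with slicing and $\Delta$. Counit-closedness is immediate because the counit of $\GG$ restricts.

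Next I would identify $p_\HH$ and $\Pold(\GG/\HH)$ inside $\Cokeralg\GG\HH$. For $p_\HH$: take $a = p_\HH \in \Pold(\GG/\HH)$ and for $\mu$ the restriction of the counit $\epsilon$ of $\GG$; then $(\id\otimes\epsilon)\Delta(p_\HH) = p_\HH$, so $p_\HH \in \Cokeralg\GG\HH$, and $p_\HH$ is a central projection satisfying $\Delta(p_\HH)(1\otimes p_\HH) = p_\HH \otimes p_\HH$ also inside $\Cokeralg\GG\HH$, hence defines a compact open quantum subgroup there. For the inclusion $\Pold(\GG/\HH) \subset \Cokeralg\GG\HH$: given $a \in \Pold(\GG/\HH)$, apply $(\id\otimes \epsilon)\Delta$ to recover $a$; since $a = (\id\otimes\epsilon)\Delta(a)$ and $\epsilon$ restricted to $\Pold(\GG/\HH)$ is a finitely supported functional (its support is a single point, namely the trivial matrix block), we get $a \in \Cokeralg\GG\HH$. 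This also simultaneously shows $p_\HH \in \Cokeralg\GG\HH$ again. One should also note $\Pold(\GG/\HH) = \Cokeralg\GG\HH \cap \{a \mid (1\otimes p_\HH)\Delta(a) = a \otimes p_\HH\}$, which is the analogue of Proposition~\ref{prop_schlichting_homogeneous}, and in particular that the left coaction of $\GG$ restricts to a coaction on $\Pold(\GG/\HH)$ with values in $\Cokeralg\GG\HH \odot \Pold(\GG/\HH)$.

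The remaining point is the existence of invariant functionals, which upgrades $\Cokeralg\GG\HH$ from a multiplier Hopf $*$-algebra to an algebraic quantum group. I would reuse the mechanism of Proposition~\ref{prop_conditional_expect}: the restriction $\pi_\HH$ of $\Delta$ followed by the Haar state $h$ of $\HH$ gives an equivariant conditional-expectation-type map $T : \Cokeralg\GG\HH \to \Pold(\GG/\HH)$, $T(x) = (\id\otimes h p_\HH)\Delta(x)$, and composing with a $\GG$-invariant functional $\mu$ on $\Pold(\GG/\HH)$ produces a left-invariant functional on $\Cokeralg\GG\HH$. Such a $\mu$ is available: the restriction to $\Pold(\GG/\HH)$ of the left Haar weight $\varphi$ of $\GG$ is $\GG$-invariant (by left invariance of $\varphi$) and nonzero (since $\varphi(p_\HH) = 1 \neq 0$), and it is positive since $\varphi$ is, so $\mu \otimes h$-positivity gives positivity of the resulting functional; one gets a right-invariant functional similarly using the unitary antipode or $\psi$. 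Thus $\Cokeralg\GG\HH$ has faithful positive left and right invariant functionals and is an algebraic quantum group.

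The main obstacle, as in the discrete case, is the coproduct-closedness step, and specifically verifying that the support-control argument of Proposition~\ref{prop_coprod_schlichting} goes through verbatim: one needs the analogue of Lemma~\ref{lem_convol_support} (decomposition of the support of a convolution product in terms of tensor products with $\Corep(\HH)$-objects) in the compact-open language, and one needs to know that $\Pold(\GG/\HH)$ is a direct sum of matrix algebras so that ``finitely supported functional'' is a meaningful finiteness condition — but both of these are available, the former being the direct translation of Lemma~\ref{lem_convol_support} and the latter being the cited result of \cite{LandstadVanDaele_AlgSubgroups1}. Given these, I expect the proof to be a relatively mechanical transcription of the arguments of Propositions~\ref{prop_coprod_schlichting}, \ref{prop_schlichting_homogeneous} and~\ref{prop_conditional_expect}, and of the Schlichting completion theorem that follows them.
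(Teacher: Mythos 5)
Your overall architecture matches the paper's for two of the three claims: the inclusion $\Pold(\GG/\HH)\subset\Cokeralg\GG\HH$ (hence $p_\HH\in\Cokeralg\GG\HH$) via the observation that $\epsilon=\epsilon p_\HH$ restricts to a finitely supported functional on $\Pold(\GG/\HH)$ is exactly the paper's argument, and your route to the invariant functionals (the conditional-expectation map $T$ composed with an invariant functional on $\Pold(\GG/\HH)$, as in Proposition~\ref{prop_conditional_expect}) is a legitimate alternative to the paper's terser ``restrict the Haar weights of $\Polc(\GG)$''. However, there is a genuine gap in the central step, coproduct-closedness. You assert that the support-control argument of Proposition~\ref{prop_coprod_schlichting} and ``the analogue of Lemma~\ref{lem_convol_support} in the compact-open language'' go through essentially verbatim. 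They do not: both of those results are proved using the block decomposition of $\ell^\infty(\GGamma)$ over $I(\GGamma)$, Frobenius reciprocity and the fusion rules of $\Corep(\GGamma)$, none of which is available for a general algebraic quantum group $\GG$ — $\Polc(\GG)$ is not a direct sum of matrix algebras, so ``the support of $\Delta(a)(y\otimes 1)$ in the second leg'' cannot be controlled by decomposing tensor products of irreducibles. The paper replaces this machinery by two specific ingredients you do not supply: the Podle\'s-type identity $\Polc(\GG)\odot\Pold(\GG/\HH)=\Delta(\Pold(\GG/\HH))(\Polc(\GG)\otimes 1)$ (proved using the expectation $T$ and the multiplier Hopf axioms), and the fact, deduced from Van Daele's description of $\widehat{\Polc(\GG)}$, that $\Pold(\GG/\HH)^\vee$ coincides with $\{\varphi b|_{\Pold(\GG/\HH)}\mid b\in\Pold(\GG/\HH)\}$ and is stable under convolution by elements of $\widehat{\Polc(\GG)}$. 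It is these two facts, not a fusion-rule count, that let one slice $\Delta((\id\otimes\mu)\Delta(a))(b\otimes 1)$ on either leg and land back in $\Cokeralg\GG\HH$. Also note that no Hecke-type finiteness condition enters here at all, so your appeal to it is a red herring.

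A smaller but related gap: closedness under the antipode does not ``follow as before''. In the discrete Schlichting construction, $S$-invariance was checked on convolution products via $S((a*b)^*)=S(b^*)*S(a^*)$; here the paper instead derives it from the strong left invariance identity $S\bigl((\id\otimes\varphi)(\Delta(a^*)(1\otimes b))\bigr)=(\id\otimes\varphi)\bigl((1\otimes a^*)\Delta(b)\bigr)$ combined with the description of $\Pold(\GG/\HH)^\vee$ as $\varphi$-slices. Some such argument is needed; ``$S$ interacts well with slicing'' is not a proof.
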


\begin{proof}
  We will use the map $T : \Polc(\GG)\to \Pold(\GG/\HH)$ given by the formula
  \begin{displaymath}
    T(a) = (\id \otimes h_{\HH}) (\Delta(a)(1 \otimes p_\HH)), \qquad a \in \Polc(\GG).
  \end{displaymath}
  Note that this map extends to a $C^*$-algebraic conditional expectation
  (preserving the right invariant Haar weight) from $C_0(\GG)$ onto
  $c_0(\GG/\HH)$, with the property $T(\Polc(\GG))= \Pold(\GG/\HH)$. This,
  together with \cite[Remarks, p.~342]{VanDaele_Algebraic} shows the following
  facts:
  \begin{align*}
    \Pold(\GG/\HH)^\vee &= \{\omega|_{\Pold(\GG/\HH)} \mid
                          \omega \in \widehat{\Polc(\GG)}\}, \quad\text{and}  \\
    \Pold(\GG/\HH)^\vee &= \{b\psi|_{\Pold(\GG/\HH)} \mid b \in \Pold(\GG/\HH)\} =
                          \{b\varphi|_{\Pold(\GG/\HH)}\mid b \in \Pold(\GG/\HH)\} \\
                        &= \{\varphi b|_{\Pold(\GG/\HH)}\mid b \in \Pold(\GG/\HH)\},
  \end{align*}
  where $\varphi$ is the left-invariant weight.

  Using the properties of $T$ and the fact that $\Polc(\GG)$ is a multiplier
  Hopf algebra one can show the following fact:
  $\Polc(\GG) \odot \Pold(\GG/\HH) = \Delta(\Pold(\GG/\HH))(\Polc (\GG) \otimes
  1)$. This implies (via the arguments of \cite[Proposition
  4.2]{VanDaele_Algebraic}) that convolving a functional
  $\omega \in \widehat{\Polc(\GG)}$ and a functional
  $\mu \in \Pold(\GG/\HH)^\vee$ yields
  $\omega \star \mu \in \Pold(\GG/\HH)^\vee$.

  We need to show that for every $a, b \in \Cokeralg\GG\HH$ we have (for
  example)
  $\Delta(a) (b \otimes 1) \in \Cokeralg\GG\HH \otimes \Cokeralg\GG\HH$. To this
  end it suffices to prove that for all $\omega \in \widehat{\Polc(\GG)},$
  $a \in \Pold(\GG/\HH)$ and $\mu \in \Pold(\GG/\HH)^\vee$ the elements
  $(\omega \otimes \id)\big(\Delta((\id \otimes \mu)(\Delta(a)))(b\otimes
  1)\big)$ and
  $(\id \otimes \omega)\big(\Delta((\id \otimes \mu)(\Delta(a))(b \otimes
  1)\big)$ belong to $\Cokeralg\GG\HH$ ;
  and the latter amount to noting that $\Pold(\GG/\HH)$ is right-invariant (for
  the first expression) and exploiting the convolution statement of the previous
  paragraph (for the second expression).

  Furthermore, $\Cokeralg\GG\HH$ is $S$-invariant. This is an easy consequence
  of the strong invariance of the left Haar weight, which says that for all
  $a, b \in \Polc(\GG)$ we have
  \begin{displaymath}
    S \big((\id \otimes \phi)(\Delta(a^*)(1 \otimes b)\big)  =
    (\id \otimes \phi)((1\otimes a^*)(\Delta(b)),
  \end{displaymath}
  combined with the statements in the beginning of the proof. This suffices to
  complete the proof that $\Cokeralg\GG\HH$ is a multiplier Hopf $*$-algebra,
  and in fact an algebraic quantum group, as we can just use the invariant
  weights of $\Polc(\GG)$.

  Then it suffices to show that as we have $\epsilon(p_\HH)=1$ we also have
  $\epsilon = \epsilon p_\HH$, so that $\epsilon|_{\Pold(\GG/\HH)}$ is finitely
  supported. This implies that $\Pold(\GG/\HH) \subset \Cokeralg\GG\HH$.
\end{proof}

\begin{definition}
  Let $(\GG, \HH)$ be as above, denote the algebraic quantum group
  corresponding to $\Cokeralg\GG\HH$ by $\tilde\GG$, and its compact quantum
  subgroup given by $p_\HH \in \Cokeralg\GG\HH$ by $\tilde\HH$. We call the pair
  $(\tilde\GG, \tilde\HH)$ the reduction of $(\GG, \HH)$ and say that a pair
  $(\GG, \HH)$ as above is reduced if $\Cokeralg\GG\HH = \Polc(\GG)$.
\end{definition}

\begin{proposition}
  Let $(\GG, \HH)$ and $(\tilde\GG, \tilde\HH)$ be as above.  Then
  $(\tilde\GG, \tilde\HH)$ is reduced,
  $\Pold(\GG/\HH) = \Pold(\tilde\GG/\tilde\HH)$ and
  $ \Pold(\HH \bs \GG) = \Pold(\tilde\HH\bs\tilde\GG)$.  Moreover a functional
  $\theta: \Pold(\GG/\HH) \to \CC$ is $\GG$-invariant iff it is
  $\tilde{\GG}$-invariant.
\end{proposition}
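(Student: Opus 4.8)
The plan is to mirror the proof of Proposition~\ref{reductiondiscrete} almost verbatim, exploiting that all the relevant structure maps on $\Cokeralg\GG\HH$ are simply the restrictions of those on $\Polc(\GG)$. First I would note that by Proposition~\ref{prop_reduction_lc} we already have $\Pold(\GG/\HH) \subset \Cokeralg\GG\HH$, and since the coproduct of $\tilde\GG$ is the restriction of $\Delta$, the quantum homogeneous space of $(\tilde\GG,\tilde\HH)$ is
\begin{displaymath}
  \Pold(\tilde\GG/\tilde\HH) = \{ a \in \Cokeralg\GG\HH \mid \Delta(a)(1\otimes p_\HH) = a\otimes p_\HH \}.
\end{displaymath}
The inclusion $\Pold(\tilde\GG/\tilde\HH) \subset \Pold(\GG/\HH)$ is immediate from $\Cokeralg\GG\HH \subset \Polc(\GG)$ and the defining condition, while the reverse inclusion $\Pold(\GG/\HH) \subset \Pold(\tilde\GG/\tilde\HH)$ follows because every element of $\Pold(\GG/\HH)$ already lies in $\Cokeralg\GG\HH$ and already satisfies the invariance condition. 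This gives $\Pold(\GG/\HH) = \Pold(\tilde\GG/\tilde\HH)$.

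Next I would handle $\Pold(\HH\bs\GG)$. The cleanest route is to invoke the unitary antipode: as in Lemma~\ref{lemma_quotient_antipode} (applied in the algebraic quantum group setting, or using \cite{KasprzakSoltan_Projection}) the unitary antipode $R$ of $\GG$ exchanges $\Pold(\GG/\HH)$ and $\Pold(\HH\bs\GG)$, and since $R(p_\HH) = p_\HH$ it follows from the $S$-invariance of $\Cokeralg\GG\HH$ established in Proposition~\ref{prop_reduction_lc} (together with $R = S\tau_{i/2}$ and $\tau$-invariance, or directly from the fact that $R$ is built from $S$ and the scaling group, both of which preserve $\Cokeralg\GG\HH$) that $\Cokeralg\GG\HH$ is $R$-invariant. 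Applying $R$ to the equality $\Pold(\GG/\HH) = \Pold(\tilde\GG/\tilde\HH)$ yields $\Pold(\HH\bs\GG) = \Pold(\tilde\HH\bs\tilde\GG)$. Alternatively, one can repeat the argument of the previous paragraph with the roles of the two tensor legs swapped, using that $\Pold(\HH\bs\GG)$ is generated by suitable slices and is visibly contained in $\Cokeralg\GG\HH$ by the symmetric version of the generation statement for the cokernel.

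For the invariance equivalence, observe that the $\GG$-action on $\Pold(\GG/\HH)$ and the $\tilde\GG$-action on $\Pold(\tilde\GG/\tilde\HH) = \Pold(\GG/\HH)$ are given by the same map, namely the restriction of $\Delta$, with values in $\Polc(\GG)\odot\Pold(\GG/\HH)$ and $\Cokeralg\GG\HH\odot\Pold(\GG/\HH)$ respectively; the latter space sits inside the former. Thus $\tilde\GG$-invariance of $\theta$ --- the identity $(\id\otimes\theta)((y\otimes 1)\Delta(x)) = \theta(x) y$ for all $y \in \Cokeralg\GG\HH$, $x \in \Pold(\GG/\HH)$ --- is a priori weaker than $\GG$-invariance, where $y$ ranges over all of $\Polc(\GG)$. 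So the only real content is the forward implication: $\tilde\GG$-invariance implies $\GG$-invariance. Here I would argue exactly as in the discrete case: for $x \in \Pold(\GG/\HH)$, the element $(\id\otimes\theta)((y\otimes 1)\Delta(x))$ depends on $y$ only through its "action on $x$", and since $\Pold(\GG/\HH) \subset \Cokeralg\GG\HH$ and $\Cokeralg\GG\HH$ is a $*$-subalgebra containing $p_\HH$, one can reduce the test elements $y$ to lie in $\Cokeralg\GG\HH$; more precisely, using $\Polc(\GG)\odot\Pold(\GG/\HH) = \Delta(\Pold(\GG/\HH))(\Polc(\GG)\otimes 1)$ from the proof of Proposition~\ref{prop_reduction_lc}, any slice $(\id\otimes\theta)((y\otimes 1)\Delta(x))$ can be rewritten using coassociativity so that the outer leg lands in $\Cokeralg\GG\HH$, at which point $\tilde\GG$-invariance applies. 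Finally, reducedness of $(\tilde\GG,\tilde\HH)$ is then formal: by construction $\Cokeralg{\tilde\GG}{\tilde\HH}$ is the $*$-algebra generated by slices $(\id\otimes\mu)\Delta(a)$ with $a\in\Pold(\tilde\GG/\tilde\HH)=\Pold(\GG/\HH)$ and $\mu\in\Pold(\GG/\HH)^\vee$, which is literally the generating set of $\Cokeralg\GG\HH$, so $\Cokeralg{\tilde\GG}{\tilde\HH} = \Cokeralg\GG\HH = \Polc(\tilde\GG)$.

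The main obstacle I anticipate is the bookkeeping in the forward implication of the invariance statement: one must be careful that passing from test elements $y\in\Polc(\GG)$ to test elements in $\Cokeralg\GG\HH$ is legitimate, which is exactly where the identity $\Polc(\GG)\odot\Pold(\GG/\HH) = \Delta(\Pold(\GG/\HH))(\Polc(\GG)\otimes 1)$ and the $S$-invariance of $\Cokeralg\GG\HH$ do the work; everything else is a direct transcription of Proposition~\ref{reductiondiscrete}.
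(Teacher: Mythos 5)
Your proposal is correct and follows essentially the same route as the paper, which simply declares that the argument "follows exactly the same lines" as the discrete reduction (Proposition~\ref{reductiondiscrete}): equality of the right homogeneous spaces from $\Pold(\GG/\HH)\subset\Cokeralg\GG\HH$, the left version via the (unitary) antipode, invariance of functionals because the coproduct restricted to $\Pold(\GG/\HH)$ already has its first legs in the cokernel algebra, and reducedness because the generating slices are unchanged. Your extra care on the direction "$\tilde\GG$-invariance implies $\GG$-invariance" is a reasonable elaboration of what the paper treats as immediate, and the ingredients you cite (the slices $(\id\otimes\mu)\Delta(a)$ lying in $\Cokeralg\GG\HH$ by definition, plus nondegeneracy of the inclusion) do close that gap.
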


\begin{proof}
  Follows exactly the same lines as in Proposition \ref{reductiondiscrete}.
\end{proof}

Observe that the Schlichting completion is constructed specifically so that the
resulting pair $(\GG, \HH)$ is reduced. Further we will call $(\GG, \HH)$ a
\emph{Schlichting pair} whenever $\GG$ is an algebraic quantum group, $\HH$ is an
algebraic compact open quantum subgroup of $\GG$ and the pair $(\GG, \HH)$ is
reduced.

Suppose that we have two locally compact quantum groups $\GG_1$, $\GG_2$ with
respective open quantum subgroups $\HH_1$, $\HH_2$ corresponding to projections
$P_1\in C_b(\GG_1)$, $P_2\in C_b(\GG_2)$.  We say that a morphism from $\GG_1$
to $\GG_2$, described via a Hopf-$C^*$-algebra morphism
$\pi : C_0(\GG_2) \to C_b(\GG_1)$, maps $\HH_1$ to $\HH_2$ if
$\pi(P_2)\geq P_1$. One may check, using \cite[Corollary
3.8]{KalantarKasprzakSkalski_Open} that indeed one obtains then (by restriction
and multiplying by $P_1$) a quantum group morphism from $\HH_1$ to $\HH_2$.

The following abstract characterization of the Schlichting completion for
classical groups appears in \cite[Proposition~4.1]{Tzanev}. 
The injectivity of the map $\iota'$ corresponds in the classical case to the
density of the image of $\Gamma$ in $G'$, and the identity
$\iota'(p_{H'}) = p_\Lambda$, to the fact that $\Lambda$ is the preimage of $H'$.

\begin{proposition}\label{prop_schlichting_univ}
  Let $(\GGamma, \LLambda)$ be a Hecke pair and $(\GG, \HH)$ its Schlichting
  completion, with the canonical embedding
  $\iota : \Polc(\GG)\to \ell^\infty(\GGamma)$ defining the morphism from
  $\GGamma$ to $\GG$. Then for any other Schlichting pair $(\GG', \HH')$ and any
  morphism from $\GGamma$ to $\GG'$ mapping $\LLambda$ to $\HH'$ and given by
  an {\em injective} map $\iota' : \Polc(\GG')\to \ell^\infty(\GGamma)$, there
  exists a unique morphism from $\GG$ to $\GG'$, described by a map
  $\sigma : \Polc(\GG') \to \Polc(\GG)$, such that
  $\iota \circ \sigma = \iota'$. If in addition we assume that
  $\iota'(p_{\HH'}) = p_\LLambda$ then the morphism from $\GG$ to $\GG'$ is an
  isomorphism.
\end{proposition}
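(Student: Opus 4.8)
The plan is to exploit the concrete description of the Schlichting completion as a subalgebra of $\ell^\infty(\GGamma)$ generated by Hecke convolution products $a*b$, together with the fact that all data (coproduct, antipode, counit) on $\Polc(\GG)$, $\Polc(\GG')$ are simply restrictions of the corresponding maps on $\ell^\infty(\GGamma)$ via the embeddings $\iota$, $\iota'$. First I would observe that, since $\iota$ and $\iota'$ both identify the relevant multiplier Hopf $*$-algebras with $*$-subalgebras of $\ell^\infty(\GGamma)$ on which $\Delta$, $S$, $\epsilon$ agree with the ambient $\GGamma$-structure, constructing a morphism $\sigma : \Polc(\GG') \to \Polc(\GG)$ with $\iota\circ\sigma = \iota'$ amounts precisely to showing the inclusion of subalgebras $\iota'(\Polc(\GG')) \subset \iota(\Polc(\GG))$ inside $\ell^\infty(\GGamma)$; uniqueness is then automatic since $\iota$ is injective. (One has to be slightly careful that $\sigma$ is a morphism of algebraic quantum groups in the sense recalled in the preliminaries, i.e.\ it lands in the multiplier algebra and intertwines coproducts, but this is immediate once the inclusion of subalgebras of $\ell^\infty(\GGamma)$ is established, because $\Delta$ restricts correctly on both sides.)

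Next I would establish the inclusion $\iota'(\Polc(\GG'))\subset \iota(\Polc(\GG))$. The morphism from $\GGamma$ to $\GG'$ mapping $\LLambda$ to $\HH'$ means that, identifying everything inside $\ell^\infty(\GGamma)$, the projection $p_{\HH'} := \iota'(p_{\HH'})$ satisfies $p_{\HH'} \geq p_\LLambda$ (this is the meaning of a morphism mapping a subgroup to a subgroup, as spelled out just before Proposition~\ref{prop_schlichting_univ}) and that $\Delta(p_{\HH'})(1\otimes p_{\HH'}) = p_{\HH'}\otimes p_{\HH'}$, so $p_{\HH'}$ is a group-like projection of $\Polc(\GG')$. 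Then $\Pold(\GG'/\HH') = \{x \in \Polc(\GG') \mid (1\otimes p_{\HH'})\Delta(x) = x\otimes p_{\HH'}\}$, and since $(\GG',\HH')$ is reduced, $\Polc(\GG')$ is generated by convolution products of elements of $\Pold(\GG'/\HH')$ and $\Pold(\HH'\bs\GG')$ (this is the ``cokernel'' description: for a reduced pair $\Polc(\GG') = \Aa(\GG'\curvearrowright \GG'/\HH')$, whose generators by Proposition~\ref{prop_reduction_lc} are exactly the $(\id\otimes\mu)\Delta(a)$ with $a\in\Pold(\GG'/\HH')$, and these coincide with $a*b$-type elements). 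So it suffices to show that $\Pold(\GG'/\HH')\subset c_c(\GGamma/\LLambda) = \Pold(\GG/\HH) \subset \Polc(\GG)$ and similarly on the right; the former follows because $p_{\HH'}\geq p_\LLambda$ forces $(1\otimes p_\LLambda)\Delta(x) = (1\otimes p_\LLambda)(1\otimes p_{\HH'})\Delta(x) = (1\otimes p_\LLambda)(x\otimes p_{\HH'}) = x\otimes p_\LLambda$, so $x\in c(\GGamma/\LLambda)$, and finite support over $I(\GGamma)/\LLambda$ is inherited since $x$ has finite support over $I(\GGamma/\HH')$ and each $\HH'$-class refines into finitely many $\LLambda$-classes (here the Hecke condition, or rather just $[\HH':\LLambda]<\infty$ in the relevant sense, enters). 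Conjugating the Hecke convolution products then gives $\iota'(\Polc(\GG')) \subset \iota(\Polc(\GG))$, yielding $\sigma$.

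For the final assertion, assume in addition $\iota'(p_{\HH'}) = p_\LLambda$, i.e.\ $p_{\HH'} = p_\HH$ inside $\ell^\infty(\GGamma)$. Then $\Pold(\GG'/\HH')$ and $\Pold(\GG/\HH)$ are defined by the \emph{same} condition inside the same ambient algebra, so $\Pold(\GG'/\HH') = c(\GGamma/\LLambda)\cap\Polc(\GG')$; but by Proposition~\ref{prop_schlichting_homogeneous} $\Pold(\GG/\HH) = c_c(\GGamma/\LLambda)$, and one checks $\Pold(\GG'/\HH') = c_c(\GGamma/\LLambda)$ as well (the inclusion $\subseteq$ was shown above; for $\supseteq$ note that each $a\in c_c(\GGamma/\LLambda)$ lies in $\Polc(\GG)\subseteq$ the weak closure, and using that $(\GG',\HH')$ is reduced together with $a = (\id\otimes h_{\HH'})\Delta(a)$ and the conditional-expectation argument of Proposition~\ref{prop_reduction_lc} one sees $a\in\Cokeralg{\GG'}{\HH'} = \Polc(\GG')$). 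Hence $\Pold(\GG'/\HH') = \Pold(\GG/\HH)$ and likewise on the right, so $\Polc(\GG')$ and $\Polc(\GG)$ are generated by the same Hecke convolution products in $\ell^\infty(\GGamma)$, forcing $\iota'(\Polc(\GG')) = \iota(\Polc(\GG))$; thus $\sigma$ is bijective, and being a $*$-isomorphism intertwining coproducts it is an isomorphism of algebraic quantum groups carrying $\HH'$ to $\HH$.

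The main obstacle I anticipate is the bookkeeping in the surjectivity/equality step: one must argue that a reduced pair $(\GG',\HH')$ is genuinely \emph{recovered} from its homogeneous space, i.e.\ that $\Polc(\GG')$ equals the algebra generated by the Hecke products of $\Pold(\GG'/\HH')$ and $\Pold(\HH'\bs\GG')$ — this is essentially the content of $(\GG',\HH')$ being reduced (so that $\Cokeralg{\GG'}{\HH'}=\Polc(\GG')$) but needs to be matched carefully against the discrete-side generators $a*b$, using that the compact-open convolution product restricted to bi-invariant functions and the discrete Hecke convolution product are identified under $\iota$ (as in the discussion around Proposition~\ref{prp_heckeident}). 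Once that dictionary is in place the rest is formal.
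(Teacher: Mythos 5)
Your overall route coincides with the paper's: uniqueness is free from injectivity of $\iota$; existence is reduced to the inclusion $\iota'(\Polc(\GG'))\subset\iota(\Polc(\GG))$ of subalgebras of $\ell^\infty(\GGamma)$; using that both pairs are reduced this is reduced further to $\Pold(\GG'/\HH')\subset\Pold(\GG/\HH)$; and that inclusion is extracted from $p_\LLambda\le\iota'(p_{\HH'})$, with the isomorphism statement obtained by upgrading the inclusion to an equality when $\iota'(p_{\HH'})=p_\LLambda$. Your computation $(1\otimes p_\LLambda)\Delta(x)=(1\otimes p_\LLambda)(x\otimes p_{\HH'})=x\otimes p_\LLambda$ is exactly what the paper compresses into ``this follows as we have $P\le P'$''.

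There is, however, a genuine gap at the step you flag yourself: to conclude $\Pold(\GG'/\HH')\subset c_c(\GGamma/\LLambda)=\Pold(\GG/\HH)$ you need finite support over $I(\GGamma)/\LLambda$, and you obtain it by asserting that each ``$\HH'$-class'' splits into finitely many $\LLambda$-classes, i.e.\ $[\HH':\LLambda]<\infty$ ``in the relevant sense''. This is not among the hypotheses and does not follow from them. Already classically: take $\LLambda=\{e\}$, so that $(\GG,\HH)=(\GGamma,\{e\})$ and $\Pold(\GG/\HH)=c_c(\GGamma)$, and let $\GGamma\to G'$ be a dense embedding into a non-discrete reduced pair $(G',H')$ — e.g.\ $PSL_2(\ZZ[1/p])\subset PSL_2(\mathbb{Q}_p)$ with $H'=PSL_2(\ZZ_p)$. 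All hypotheses of the first part hold, yet $\iota'(p_{H'})=1_{H'\cap\GGamma}$ has infinite support, so $\Pold(G'/H')\not\subset c_c(\GGamma)$; the expected morphism exists but the corresponding $\sigma$ only lands in $\Mm(\Polc(\GG))=c(\GGamma)$, not in $\Polc(\GG)$. The finiteness you need is essentially that $\iota'^{-1}(\HH')$ be commensurable with $\LLambda$ (in the extreme case equal to it), which is the hypothesis of the \emph{second} part only. To be fair, the paper's own proof is exactly as terse here — it simply asserts that the inclusion ``follows as we have $P\le P'$'' — so your write-up reproduces the published argument and in fact makes its weak spot explicit; but as written the step would fail without an extra hypothesis or a reformulation of where $\sigma$ takes values. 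For the remaining loose end you mention in the isomorphism part, the reverse inclusion $c_c(\GGamma/\LLambda)\subset\Pold(\GG'/\HH')$ is obtained most cleanly from Theorem~\ref{thm_strong_inv}: every element of $c_c(\GGamma/\LLambda)$ is a finitely supported slice of $\Delta(p_\LLambda)=\Delta(\iota'(p_{\HH'}))$, hence lies in the cokernel algebra of $(\GG',\HH')$, which equals $\Polc(\GG')$ by reducedness — this avoids the conditional-expectation detour you sketch.
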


\begin{proof}
  A moment of thought shows that it suffices to show that
  $\iota'(\Polc(\GG')) \subset \iota (\Polc(\GG))$. Ignoring the injective
  embedding maps, and using the fact that both $(\GG, \HH)$ and $(\GG', \HH')$
  are Schlichting pairs, it suffices to note that
  $\Pold(\GG'/\HH') \subset \Pold(\GG/\HH)$. But this follows as we have
  $P\leq P'$ (again viewing both as projections in $\ell^{\infty}(\GGamma))$.
  The second part follows similarly.
\end{proof}

\subsection*{Acknowledgments} 
A.S. was partially supported by the National Science Center (NCN) grant no.~2020/39/I/ST1/01566.  R.V. was partially supported by the Agence Nationale de la
Recherche (ANR) grant ANR-19-CE40-0002 and the CEFIPRA project 6101-1.  C.V. was supported by EPSRC grant EP/T03064X/1.
This projected was started and concluded during visits of C.V.\ and R.V.\ at IMPAN Warsaw. The second and third author would like to thank the first author for the kind hospitality.

\bibliographystyle{alpha}
\bibliography{hecke}

\end{document}